\numberwithin{equation}{section}
\numberwithin{figure}{section}
\theoremstyle{plain}
\newtheorem{thm}{\protect\theoremname}[section]
\theoremstyle{plain}
\newtheorem{cor}[thm]{\protect\corollaryname}
\theoremstyle{plain}
\newtheorem{conjecture}[thm]{\protect\conjecturename}
\theoremstyle{plain}
\newtheorem{lem}[thm]{\protect\lemmaname}
\theoremstyle{definition}
\newtheorem{defn}[thm]{\protect\definitionname}
\theoremstyle{plain}
\newtheorem{prop}[thm]{\protect\propositionname}
\theoremstyle{remark}
\newtheorem{rem}[thm]{\protect\remarkname}
\theoremstyle{definition}
\newtheorem*{example*}{\protect\examplename}
\providecommand{\conjecturename}{Conjecture}
\providecommand{\corollaryname}{Corollary}
\providecommand{\definitionname}{Definition}
\providecommand{\examplename}{Example}
\providecommand{\lemmaname}{Lemma}
\providecommand{\propositionname}{Proposition}
\providecommand{\remarkname}{Remark}
\providecommand{\theoremname}{Theorem}
\begin{document}
\title{Moments of unramified 2-group extensions of quadratic fields}
\author{Jack Klys}
\begin{abstract}
Let $f\left(K\right)$ be the number of unramified extensions $L/K$
of a quadratic number field $K$ with $\mathrm{Gal}\left(L/K\right)=H$
and $\mathrm{Gal}\left(L/\mathbb{Q}\right)=G$ where $G$ is a central
extension of $\mathbb{F}_{2}^{n}$ by $\mathbb{F}_{2}$. We find a
function $g\left(K\right)$ such that $f/g$ has finite moments and
a distribution on its values. We show this distribution is a point
mass when $H$ is non-abelian and the Cohen-Lenstra distribution when
$H$ is abelian, despite the fact that the set of values of $f/g$
do not form a discrete set. We prove an explicit formula for $f$
as well as a refined counting function with local conditions. We also
determine correlations of such counting functions for different groups
$G$. Lastly we formulate a conjecture about moments and correlations
for any pair of 2-groups $\left(G,H\right)$.
\end{abstract}

\maketitle

\section{Introduction}

\subsection{Background}

The Cohen-Lenstra heuristics are a well known series of conjectures
and heuristics for the distributions of class groups of number fields.
There has been much recent interest and progress on these conjectures
and their variants, including non-abelian versions. In this paper
we make progress towards a potential (currently not yet formulated)
non-abelian version of the Cohen-Lenstra conjectures for 2-groups
in the setting of quadratic fields by proving distributional results
about unramified 2-extensions of quadratic number fields.

Let $\mathcal{D}_{X}^{\pm}$ denote the set of positive (resp. negative)
quadratic discriminants of absolute value less than $X$. For any
function $f\left(K\right)$ defined on the set of quadratic fields
let
\[
\boldsymbol{E}_{k}^{\pm}\left(f\right)=\lim_{X\longrightarrow\infty}\frac{\sum_{K,D_{K}\in\mathcal{D}_{X}^{\pm}}f^{k}\left(K\right)}{\left|\mathcal{D}_{X}^{\pm}\right|}
\]
 be the average of $f^{k}$ over positive (resp. negative) discriminants.

For any pair of groups $H\le G$ with $\left[G:H\right]=2$ define
\[
f\left(K\right)=\left|\left\{ L/K\text{ unramified}\mid\exists\phi:G\left(L/\mathbb{Q}\right)\cong G,\phi\left(G\left(L/K\right)\right)=H\right\} \right|.
\]
We call any such extension as in the definition of $f$ a $\left(G,H\right)$-extension.

The Cohen-Lenstra heuristics predict $\boldsymbol{E}_{k}^{\pm}\left(f\right)$
for all $k$ when $H$ is an abelian $p$-group with $p$ odd and
$G=H\rtimes C_{2}$ with $C_{2}$ acting by inversion \cite{cohenlenstra}.
In this case $f\left(K\right)=\left|\mathrm{Surj}\left(Cl_{K},H\right)\right|\cdot\left|\mathrm{Aut}\left(H\right)\right|^{-1}$.
In fact it can be shown that the Cohen-Lenstra conjectures about the
distribution of $Cl_{K,p}$ are equivalent to the statement that $\boldsymbol{E}_{1}^{\pm}(\left|\mathrm{Surj}\left(Cl_{K},H\right)\right|)=m_{\pm}$
for all finite abelian $p$-groups $H$, where $m_{\pm}$ equals $1/\mathrm{Aut}\left(H\right)$
and $1$ in the postive and negative cases respectively.

If we instead restrict to asking about the distribution of $Cl_{K,p}\left[p\right]$
this is equivalent to asking for a distribution of the values of the
function $\left|\mathrm{Surj}\left(Cl_{K},H\right)\right|$ and in
turn is equivalent to computing $\boldsymbol{E}_{k}^{\pm}(\left|\mathrm{Surj}\left(Cl_{K},\mathbb{Z}/p\mathbb{Z}\right)\right|)$
for all $k\in\mathbb{Z}_{\ge1}$. Thus computation of the moments
$\boldsymbol{E}_{k}^{\pm}\left(f\right)$ for any pair of groups $\left(G,H\right)$
as above is a natural generalization of the abelian Cohen-Lenstra
conjectures.

For any even group $G$ Wood conjectured that $\boldsymbol{E}_{1}^{\pm}\left(f\right)$
is finite when there is a unique conjugacy class of order 2 elements
not in $H$ and infinite otherwise \cite{Wood}. Alberts and the author
investigated the latter case for the pair $\left(G,H\right)=\left(Q_{8}\rtimes C_{2},C_{2}\right)$
by computing a finite value for $\boldsymbol{E}_{k}^{\pm}\left(f/3^{\omega\left(D_{k}\right)}\right)$
for all $k$ (here $Q_{8}\rtimes C_{2}$ is the central product of
$D_{8}$ and $C_{4}$ along $C_{2}$) where $f/3^{\omega\left(D_{k}\right)}$
is normalized precisely to make the moments finite \cite{alberts,albertsklys}
($\omega\left(n\right)$ denotes the number of unique prime divisors
of $n$). We refer the reader there for a more detailed exposition
of all the above results.

In this paper we consider the generalization of this to all 2-groups
$\left(G,H\right)$. In particular when $G$ is a central extension
of $\mathbb{F}_{2}^{n}$ by $\mathbb{F}_{2}$ we determine a value
$c$ such that the function $f/c^{\omega\left(D_{k}\right)}$ determines
a distribution, and we compute this distribution and its moments.
Furthermore we conjecture this value of $c$ works for all pairs of
finite 2-groups $\left(G,H\right)$.

\subsection{\label{subsec:Main-results}Main results}

Let $G$ be a finite $2$-group with $H\le G$ a subgroup such that
$\left[G:H\right]=2$. We will call the pair $\left(G,H\right)$ admissible
if there exists a $\left(G,H\right)$-extension. This implies $G$
can be generated by order 2 elements not contained in $H$. For example
any pair $\left(G,H\right)$ with $G$ an abelian 2-group of exponent
greater than 2 is not admissible.

Let $\Phi\left(G\right)$ be the Frattini subgroup of $G$. Let $T_{0}\subset G/\Phi\left(G\right)$
be the subset of all elements whose lifts to $G$ have order 2 and
are not contained in $H$. We will refer to $T_{0}$ as the maximal
admissible generating set. Let $c=c_{T_{0}}$ be the number of conjugacy
classes of order 2 elements in $G$ which are not contained in $H$.
This can be viewed as the number of orbits of the set of lifts of
$T_{0}$ acting on itself by conjugation. Let $\mathrm{Aut}_{H}(G,\Phi\left(G\right))$
be the subgroup of $\mathrm{Aut}\left(G/\Phi\left(G\right)\right)$
preserving the set $H$ whose elements lift to $\mathrm{Aut}\left(G\right)$.

Let $\mu_{CL}^{\pm}\left(H\right)$ be the Cohen-Lenstra probability
measure on finite abelian 2-groups $H$ which is proportional to $1/(\left|\mathrm{Aut}H\right|\cdot\left|H\right|^{a})$
where $a=0,1$ in the positive (resp. negative) case. Let $P_{CL}^{\pm}\left(i\right)$
be the probability with respect to the Cohen-Lenstra measure that
a finite abelian 2-group has rank $i$.

Finally for any real valued function $h$ defined on quadratic discriminants
we will denote by $\mu_{h}$ the distribution on $\mathbb{R}$ (with
the Borel $\sigma$-algebra) determined by its values (see Section
\ref{subsec:Preliminaries-for-distributions} for a precise definition
of $\mu_{h}$).

Our main result is the following.
\begin{thm}
\label{thm:main thm}Let $G$ be a central extension of $\mathbb{F}_{2}^{n}$
by $\mathbb{F}_{2}$ with $\left(G,H\right)$ is admissible. Let $h\left(d\right)=f\left(d\right)/c^{\omega\left(d\right)}$
and $a=\left|\mathrm{Aut}_{H}\left(G,\Phi(G)\right)\right|$.

If $H$ is not abelian then the distribution of $h\left(d\right)$
is $\mu_{h}=\delta_{P}$ where $\delta_{P}$ is the point-mass at
$P\in\mathbb{Q}$ and $P\neq0$.

If $H$ is abelian then $h\left(d\right)$ has distribution $\mu_{h}$
supported on the set $\left\{ \left(2^{i}-1\right)/a\mid i\in\mathbb{Z}_{\ge0}\right\} $
given by
\[
\mu_{h}\left(\left(2^{i}-1\right)/2^{n-1}a\right)=P_{CL}^{\pm}\left(i\right)
\]
 over the set of real (respectively imaginary) quadratic fields.
\end{thm}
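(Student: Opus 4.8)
The plan is to convert the count of $(G,H)$-extensions into a central embedding problem for $G_{\mathbb{Q}}=\mathrm{Gal}(\overline{\mathbb{Q}}/\mathbb{Q})$ and then exploit the filtration $1\to\Phi(G)\to G\to G/\Phi(G)\to1$; here $G/\Phi(G)$ is elementary abelian, $\Phi(G)\cong\mathbb{F}_{2}$, and $\Phi(G)\subseteq H$ (since $[G,G],G^{2}\subseteq\Phi(G)$ and admissibility forces $\Phi(G)\le H$). First I would record that a $(G,H)$-extension $L/\mathbb{Q}$ is the fixed field of $\ker\pi$ for a surjection $\pi\colon G_{\mathbb{Q}}\twoheadrightarrow G$ with $\pi^{-1}(H)=G_{K}$ and $\pi$ unramified at every prime of $K$, and that $f(d)=|\{\pi\}|/|\mathrm{Aut}_{H}(G)|$ (with $\mathrm{Aut}_{H}(G)$ the automorphisms of $G$ preserving $H$) because $\mathrm{Aut}(G)$ acts freely on surjections. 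Passing to the quotient, $\pi$ induces $\bar{\pi}\colon G_{\mathbb{Q}}\twoheadrightarrow G/\Phi(G)$ whose fixed field $M$ is a multiquadratic field with $K\subseteq M\subseteq(\text{genus field of }K)$ and $M/K$ unramified, and $L$ is then a solution, unramified over $K$, of the $\mathbb{F}_{2}$-embedding problem attached to $\bar{\pi}$. Hence $f(d)=\tfrac{1}{|\mathrm{Aut}_{H}(G)|}\sum_{\bar{\pi}}N(\bar{\pi})$, the sum running over admissible $\bar{\pi}$ and $N(\bar{\pi})$ being the number of admissible lifts $L$.

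The second step is genus-theoretic bookkeeping that isolates the factor $c^{\omega(d)}$. At each of the $\omega(d)$ primes $p\mid d$ the inertia image $\pi(I_{p})$ has order $2$ and is not contained in $H$, so it represents one of the $c$ conjugacy classes of order-$2$ elements of $G$ outside $H$; conversely, genus theory parametrizes the admissible $\bar{\pi}$ — equivalently the admissible $M$ together with an identification $\mathrm{Gal}(M/\mathbb{Q})\cong G/\Phi(G)$ — by a choice of such local datum at each ramified prime, subject to a single global product relation. Carrying this out I expect a factorization $f(d)=c^{\omega(d)}\cdot\tilde f(d)$, where $\tilde f(d)$ no longer records the $\omega(d)$ independent local choices and depends only on the $2$-part of the class group of $K$ (via $M$); thus $h(d)=\tilde f(d)$.

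To evaluate $\tilde f(d)$: for fixed admissible $\bar{\pi}$, $N(\bar{\pi})$ counts the solutions of an $\mathbb{F}_{2}$-embedding problem that are unramified over $K$; when this set is nonempty it is a torsor under $\mathrm{Hom}(A,\mathbb{F}_{2})$ for a specific subquotient $A$ of a ray class group of $K$, so $N(\bar{\pi})\in\{0\}\cup\{2^{\dim_{\mathbb{F}_{2}}A}\}$, and nonemptiness is controlled by an obstruction class in $H^{2}(G_{\mathbb{Q}},\mathbb{F}_{2})=\mathrm{Br}(\mathbb{Q})[2]$, a product of R\'edei symbols. When $H$ is non-abelian one has $\Phi(G)=[H,H]$, and the point is that this obstruction together with the torsor is rigid: after absorbing the local data of the previous step, it is independent of the fine structure of $Cl_{K}$ and depends on $d$ only through $\omega(d)$ in a way that cancels $c^{\omega(d)}$, leaving $\tilde f(d)=P$ a fixed rational number, nonzero since $(G,H)$ is admissible. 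When $H$ is abelian the lift is a twisted $C_{2}$-extension, and a reflection / R\'edei-matrix computation identifies $\tilde f(d)$ with $(2^{i}-1)/2^{n-1}a$, where $i$ is the $\mathbb{F}_{2}$-rank of the relevant subquotient — a corank of the R\'edei matrix of $K$, essentially its $4$-rank — and the constant $2^{n-1}a$ records counting $(H/\Phi(G))$-surjections versus subgroups together with $a=|\mathrm{Aut}_{H}(G,\Phi(G))|$.

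In the non-abelian case $\mu_{h}=\delta_{P}$ follows at once. In the abelian case I would invoke the equidistribution of $i$ over positive, respectively negative, fundamental discriminants — the theorem of Fouvry and Kl\"uners confirming the Cohen--Lenstra--Gerth prediction — whose limiting law is exactly $P_{CL}^{\pm}$, giving $\mu_{h}((2^{i}-1)/2^{n-1}a)=P_{CL}^{\pm}(i)$; the non-discreteness of the value set of $h$ is harmless because $\omega(d)\to\infty$ on a density-one family, so the masses concentrate on the limiting atoms. I expect the crux to be the third step: showing that the $H^{2}$-obstruction to the $\mathbb{F}_{2}$-embedding problem vanishes for exactly the right proportion of admissible $\bar{\pi}$ (and, when $H$ is non-abelian, vanishes always once the local data of the second step are accounted for), and pinning down exactly which class-group subquotient $A$ governs the torsor so that the normalization is precisely $2^{n-1}a$ rather than off by a bounded factor. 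Keeping the genus product relation of the second step disentangled from the torsor structure of the third — so that the combinatorial factor comes out to exactly $c^{\omega(d)}$ with nothing left over — is the technical heart of the argument.
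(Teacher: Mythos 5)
Your proposal has a genuine gap at the heart of the argument. The second step claims that genus-theoretic bookkeeping gives an exact pointwise factorization $f(d)=c^{\omega(d)}\cdot\tilde f(d)$ with $\tilde f(d)$ depending only on the $2$-part of $Cl_K$, so that $h(d)=\tilde f(d)$ would take values in a discrete class-group-determined set. This is false, and the paper explicitly flags it: even in the abelian case, $h(d)=f(d)/c^{\omega(d)}$ takes values outside the support of $\mu_h$ on a positive-density set of discriminants. The $\omega(d)$ local choices at ramified primes are not independent of the solvability of the embedding problem at each lift of $\bar\pi$ — solvability is governed by a Brauer-group product formula that ties the local choices together with the Legendre symbols among the factors of $d$. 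What actually emerges from the Embedding Criterion is a formula for $f_T(d)$ as a weighted sum over factorizations $d=d_1\cdots d_r$ of products $\prod_i\prod_{p\mid d_i}\bigl(1+\bigl(\tfrac{\prod_{j\in S_i}d_j}{p}\bigr)\bigr)$, which does not collapse to $c^{\omega(d)}\times(\text{class-group invariant})$. Consequently, in the non-abelian case $h(d)$ is not identically equal to $P$; it is only distributed as a point mass at $P$, and establishing that requires computing the full sequence of $k$th moments via Fouvry--Kl\"uners-type character-sum cancellation and a classification of maximal disconnected subgraphs of the graph $\mathcal{G}^*(T_0^k)$ encoding the commutation relations. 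Your "rigidity" claim for the obstruction torsor is precisely what would need such an analysis, and cannot be asserted directly.

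In the abelian case your invocation of Fouvry--Kl\"uners is pointed in the right direction, but the mechanism is subtler than "masses concentrate because $\omega(d)\to\infty$." The paper's route is to write $g_{\mathcal{D}_{T_0}}/c^{\omega}$ as $g_{\widehat{\mathcal{D}}_{T_0}}/c^{\omega}$ minus contributions from non-generating subsets $T\subsetneq T_0$, identify $g_{\widehat{\mathcal{D}}_{T_0}}/c^{\omega}$ exactly with $2\lvert Cl_K[4]/Cl_K[2]\rvert$ via the $(D_4,C_4)$ case, show separately that the remaining terms have moments tending to zero (again by maximal-unlinked-set bounds) hence distribution $\delta_0$, and then apply a bespoke weak-convergence lemma to add a $\delta_0$-distributed error to a discretely supported limit. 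Without that moment computation there is no control on the error terms, and without the weak-convergence lemma there is no way to pass from moments of the pieces to the distribution of the sum. Your sketch omits both the combinatorics (isotropic subspaces of the associated $\mathbb{F}_2$-quadratic form, maximal unlinked sets) and the measure-theoretic gluing, which together are the technical content of the theorem.
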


See (Theorem \ref{thm:k_moments pointmass}) for a precise formula
for $P$. Considering the function $f\left(d\right)/c^{\omega\left(d\right)}$
rather than $f\left(d\right)$ is crucial. The latter has infinite
moments of all orders and there is no distribution on its values.

We note that even though in the case when $H$ is abelian and $\mu_{h}$
is supported on a discrete subset of $\mathbb{Q}$, the values of
$h$ themselves do not form a discrete subset. In fact $h$ takes
values not in the support of $\mu_{h}$ with positive density in the
set of quadratic fields. The proof of Theorem \ref{thm:main thm}
shows that in this case $h$ can be partially expressed in terms of
$\left|Cl_{K}\left[4\right]/Cl_{K}\left[2\right]\right|$ and this
term is what controls the distribution of $h$ and results in the
occurence of the distribution $P_{CL}^{\pm}\left(i\right)$.

We can additionally deduce a result about moments of the above function
as well as correlations of functions for several pairs $\left(G_{i},H_{i}\right)$
. Let $N\left(k\right)$ denote the number of subspaces of $\mathbb{F}_{2}^{k}$.
\begin{thm}
Let $\left(G_{1},H_{1}\right),\ldots,\left(G_{k},H_{k}\right)$ be
a sequence of admissible pairs. Let $T_{i,0}$ be the maximal admissible
generating set corresponding to $\left(G_{i},H_{i}\right)$. Let $f_{i}$
be the function counting $\left(G_{i},H_{i}\right)$-extensions. Let
$Y\subset\left[k\right]$ be the set of indices $i$ for which $H_{i}$
is abelian. Then
\[
\lim_{X\longrightarrow\infty}\frac{\sum_{d\in\mathcal{D}_{X}^{\pm}}\prod_{i=1}^{k}\left(\frac{f_{i}\left(d\right)}{c_{T_{i,0}}^{\omega\left(d\right)}}\right)}{\sum_{d\in\mathcal{D}_{X}^{\pm}}1}=\left(\prod_{j=1}^{k}q_{j}^{-1}\right)\left(\prod_{j\in\left[k\right]\backslash Y}P_{j}\right)\left(\sum_{i=0}^{\left|Y\right|}\left(-1\right)^{i}\binom{\left|Y\right|}{i}M^{\pm}\left(\left|Y\right|-i\right)\right)
\]
 where
\begin{itemize}
\item $q_{j}=2^{n_{j}-1}\left|\mathrm{Aut}_{H_{j}}(G_{j},\Phi\left(G_{j}\right))\right|$
and $P_{j}\in\mathbb{Q}$
\item $M^{-}\left(j\right)=N\left(j\right)$ and $M^{+}\left(j\right)=2^{-j}\left(N\left(j+1\right)-N\left(j\right)\right)$
\end{itemize}
\end{thm}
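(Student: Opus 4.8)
The plan is to reduce the $k$-fold correlation to the single-pair content of Theorem~\ref{thm:main thm} by linearising each factor and then separating the abelian indices $i\in Y$ from the non-abelian ones. The starting point is the explicit formula for $f$ and its refinement with local conditions, established earlier in the paper. After dividing by $c_{T_{i,0}}^{\omega(d)}$, that formula expresses $h_i(d):=f_i(d)/c_{T_{i,0}}^{\omega(d)}$, up to a term that is harmless for the averages considered (it is $o(1)$ on a density-$1$ set and does not affect the limiting mean, though it does push $h_i$ off the support of $\mu_{h_i}$ on a set of positive density), as
\[
h_i(d)=\frac{P_i}{q_i}\quad(i\notin Y),\qquad h_i(d)=\frac{2^{r(d)}-1}{q_i}\quad(i\in Y),
\]
where $q_i=2^{n_i-1}\left|\mathrm{Aut}_{H_i}(G_i,\Phi(G_i))\right|$ and $P_i\in\mathbb{Q}$, and where $r(d)$ is a \emph{single universal} $4$-rank-type quantity attached to $Cl_K$ (expressible through $\left|Cl_K[4]/Cl_K[2]\right|$, cf.\ the remark after Theorem~\ref{thm:main thm}) that is the \emph{same} for every abelian index $i$. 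The point that has to be extracted from the explicit formula is precisely this last independence: for every abelian $H_i$, counting $(G_i,H_i)$-extensions of $K$ reduces to a weight-$q_i^{-1}$ count of index-$2$ subgroups of one and the same group attached to $Cl_K$, with all the dependence on $(G_i,H_i)$ concentrated in the weight. This is the structural reason why $P_{CL}^\pm(i)$ in Theorem~\ref{thm:main thm} is independent of $(G,H)$, and it is what collapses the correlation to a one-variable problem.

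Granting this, multiplying the factors and invoking the point-mass convergence (for $i\notin Y$) together with its uniform-integrability strengthening gives
\[
\lim_{X\to\infty}\frac{\sum_{d\in\mathcal{D}_X^\pm}\prod_{i=1}^{k}h_i(d)}{\left|\mathcal{D}_X^\pm\right|}=\left(\prod_{i=1}^{k}q_i^{-1}\right)\left(\prod_{j\in[k]\backslash Y}P_j\right)\lim_{X\to\infty}\frac{1}{\left|\mathcal{D}_X^\pm\right|}\sum_{d\in\mathcal{D}_X^\pm}\left(2^{r(d)}-1\right)^{|Y|}.
\]
Expanding $\left(2^{r(d)}-1\right)^{|Y|}=\sum_{i=0}^{|Y|}(-1)^i\binom{|Y|}{i}2^{(|Y|-i)r(d)}$ reduces the remaining limit to evaluating, for $0\le m\le|Y|$, the quantity $M^\pm(m):=\lim_{X\to\infty}\left|\mathcal{D}_X^\pm\right|^{-1}\sum_{d\in\mathcal{D}_X^\pm}2^{m\,r(d)}$. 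By Theorem~\ref{thm:main thm} in the single abelian case the limiting distribution of $r(d)$ over $\mathcal{D}_X^\pm$ is the relevant Cohen--Lenstra (Gerth) rank distribution, so $M^\pm(m)=\sum_{i\ge0}p^\pm(i)2^{mi}$ for that distribution $p^\pm$. Writing $2^{mi}=\left|\mathrm{Hom}(\mathbb{F}_2^i,\mathbb{F}_2^m)\right|=\sum_{j=0}^{m}\binom{m}{j}_{2}\left|\mathrm{Surj}(\mathbb{F}_2^i,\mathbb{F}_2^j)\right|$ (stratifying homomorphisms by their image), interchanging the two sums, and using the Cohen--Lenstra surjection moment $\sum_i p^\pm(i)\left|\mathrm{Surj}(\mathbb{F}_2^i,\mathbb{F}_2^j)\right|=\lambda^{j}$ with $\lambda=1$ in the negative case and $\lambda=\tfrac12$ in the positive case, one gets $M^-(m)=\sum_j\binom{m}{j}_{2}=N(m)$ and $M^+(m)=\sum_j 2^{-j}\binom{m}{j}_{2}=2^{-m}\bigl(N(m+1)-N(m)\bigr)$, the last equality being the $q$-Pascal recursion $\binom{m+1}{j}_2=2^{m+1-j}\binom{m}{j-1}_2+\binom{m}{j}_2$. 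Combining the displays yields the asserted formula.

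The work lies in two places. First, one must actually prove that a single $r(d)$ governs all abelian factors simultaneously, rather than merely that each $h_i$ has the marginal law of Theorem~\ref{thm:main thm}; this is the crux, and it requires reading the explicit formula for $f$ closely enough to realise every $(G,H)$-extension count with $H$ abelian as a weight-$q^{-1}$ count of index-$2$ subgroups of a $(G,H)$-independent group built from $Cl_K$. Second, one needs uniform-in-$X$ moment bounds $\left|\mathcal{D}_X^\pm\right|^{-1}\sum_{d}h_i(d)^{N}\ll_N 1$ for $N$ a little larger than $k$, in order to discard the $o(1)$ terms inside a product (via H\"older, each such term being small in every $L^p$ along a density-$1$ set), to pass the point-mass limit through the product for the non-abelian factors, and to interchange limit and summation in the definition of $M^\pm(m)$; these bounds come from the moment estimates underlying Theorem~\ref{thm:main thm}. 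Once those two ingredients are in place, the remaining steps — the binomial expansion and the $q$-binomial/$q$-Pascal evaluation of $M^\pm$ — are routine.
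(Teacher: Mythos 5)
Your proposal follows essentially the same route as the paper: identify the formula (Remark \ref{rem:classgroup formula for f}) expressing each abelian factor as $\bigl(|Cl_K[4]/Cl_K[2]|-1\bigr)/q_i$ plus error terms with vanishing limiting measure, observe that all abelian factors are governed by the same class-group quantity, reduce the non-abelian factors to point masses (Theorem \ref{thm:k_moments pointmass}, Corollary \ref{cor:distribution point mass}), and then combine via moment/weak-convergence machinery (which the paper packages in Lemmas \ref{lem:distribution plus pointmass}, \ref{lem:distribution times pointmass}, \ref{lem:limiting moments} and Remark \ref{rem:limiting moments}). The one place you diverge is in how $M^{\pm}$ is obtained: the paper simply cites Fouvry--Kl\"uners for $\lim_{X}\left|\mathcal{D}_X^{\pm}\right|^{-1}\sum_d\left|Cl_K[4]/Cl_K[2]\right|^{m}=M^{\pm}(m)$, whereas you re-derive the formula from the rank distribution via the stratification $2^{mi}=\sum_j\binom{m}{j}_2\left|\mathrm{Surj}(\mathbb{F}_2^i,\mathbb{F}_2^j)\right|$, the surjection moments $\lambda^j$ with $\lambda=1$ (resp.\ $1/2$), and the $q$-Pascal identity $\binom{m+1}{j}_2=\binom{m}{j}_2+2^{m+1-j}\binom{m}{j-1}_2$; this computation is correct and perhaps instructive, but is logically interchangeable with the direct citation since Fouvry--Kl\"uners' distribution result and moment result are equivalent. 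Your sketch of the ``uniform integrability / H\"older'' step is looser than the paper's lemmas but is the right shape of argument, and your identification of the crux — that a single $4$-rank governs all abelian factors simultaneously — is exactly the point of Remark \ref{rem:classgroup formula for f}.
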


See (Theorem \ref{thm:correlations}) for an explicit formula for
$P_{j}$. Note when $\left(G_{i},H_{i}\right)$ are all equal this
theorem gives the $k$th moment of $f\left(d\right)/c_{T_{0}}^{\omega\left(d\right)}$.

We have the following corollary of Theorem \ref{thm:main thm} on
the density of quotients of the Galois group of the maximal unramified
extension of quadratic fields.
\begin{cor}
\label{cor:main thm corollary}Let $\left(G_{1},H_{1},T_{0,1}\right),\ldots,\left(G_{k},H_{k},T_{0,k}\right)$
be a set of admissible tuples. Let $\mathcal{K}_{0}\subset\mathcal{K}$
be the set of all quadratic fields $K$ whose maximal unramified extension
$K^{un}/\mathbb{Q}$ contains a $\left(G_{i},H_{i}\right)$ extension
for all $i$.

If $H_{i}$ is non-abelian for all $i$ then $\mathcal{K}_{0}$ has
density 1. Otherwise $\mathcal{K}_{0}$ has density $1-P_{CL}\left(0\right)$.
\end{cor}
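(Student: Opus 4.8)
The plan is to translate the statement about $\mathcal{K}_{0}$ into one about the vanishing loci $\{f_{i}=0\}$ and then read off their densities from Theorem~\ref{thm:main thm} together with the finer information in its proof.

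\emph{Reduction.} Every finite unramified extension of $K$ lies inside $K^{un}$, and every $\left(G_{i},H_{i}\right)$-extension is by definition unramified, so $K\in\mathcal{K}_{0}$ exactly when $f_{i}(K)\ge 1$ for all $i$. Admissibility of $\left(G_{i},H_{i}\right)$ forces $c_{T_{i,0}}\ge 1$, and $\omega(d_{K})\ge 1$, so $c_{T_{i,0}}^{\omega(d_{K})}>0$ and $f_{i}(K)\ge 1\iff h_{i}(K):=f_{i}(d_{K})/c_{T_{i,0}}^{\omega(d_{K})}>0$. Hence
\[
\mathcal{K}_{0}=\bigcap_{i=1}^{k}\{K:h_{i}(K)>0\}=\mathcal{K}\setminus\bigcup_{i=1}^{k}\{K:h_{i}(K)=0\},
\]
and it remains to compute the density of $\bigcup_{i}\{h_{i}=0\}$ inside $\mathcal{D}_{X}^{\pm}$.

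\emph{Non-abelian indices.} If $H_{i}$ is non-abelian, Theorem~\ref{thm:main thm} gives $\mu_{h_{i}}=\delta_{P_{i}}$ with $P_{i}\ne 0$; since $h_{i}\ge 0$ pointwise we get $P_{i}>0$, and the point mass at $P_{i}$ forces $\{K:h_{i}(K)=0\}$ to have density $0$ (if $\mu_{h}$ is understood as a weak limit of empirical measures, this still follows since $(-\infty,P_{i}/2)$ is open with $\mu_{h_{i}}$-null boundary, so the density of $\{h_{i}<P_{i}/2\}\supseteq\{h_{i}=0\}$ tends to $\mu_{h_{i}}((-\infty,P_{i}/2))=0$). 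A finite union of density-zero sets has density $0$, so if every $H_{i}$ is non-abelian then $\mathcal{K}_{0}$ has density $1$, proving the first assertion.

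\emph{Abelian indices.} For $H_{i}$ abelian, the term of Theorem~\ref{thm:main thm} corresponding to the trivial $2$-group (rank $0$) is $(2^{0}-1)/(2^{n_{i}-1}a_{i})=0$ and carries weight $P_{CL}^{\pm}(0)$, so each set $\{h_{i}=0\}$ has density $P_{CL}^{\pm}(0)$. The crucial point is that all of these sets coincide up to density zero. For this I would invoke not the statement but the proof of Theorem~\ref{thm:main thm}, which shows that for $K$ outside a set of density zero the value $h_{i}(K)$ vanishes precisely when $Cl_{K}[4]=Cl_{K}[2]$ — it is the factor $|Cl_{K}[4]/Cl_{K}[2]|$ appearing in $h_{i}$ that produces the Cohen-Lenstra weights, and this condition does not depend on the admissible pair. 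Consequently $\bigcup_{i:\,H_{i}\text{ abelian}}\{h_{i}=0\}$ differs by a density-zero set from the single set $\{K:Cl_{K}[4]=Cl_{K}[2]\}$, whose density is $P_{CL}^{\pm}(0)$ (equivalently, the density of quadratic fields of $4$-rank $0$). Combining with the non-abelian indices, $\bigcup_{i}\{h_{i}=0\}$ has density $P_{CL}^{\pm}(0)$, hence $\mathcal{K}_{0}$ has density $1-P_{CL}^{\pm}(0)$ over the real (resp.\ imaginary) quadratic fields.

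\emph{Main obstacle.} The difficulty is concentrated in the abelian case: Theorem~\ref{thm:main thm} as stated only pins down the density of each individual level set $\{h_{i}=0\}$, and one genuinely needs the finer fact — visible only inside the proof — that the exceptional event $f_{i}(K)=0$ is governed by the single arithmetic invariant $Cl_{K}[4]/Cl_{K}[2]$, uniformly over all admissible abelian pairs $\left(G_{i},H_{i}\right)$. A secondary, more technical point is to make rigorous the passage from the measure $\mu_{h_{i}}$ of Section~\ref{subsec:Preliminaries-for-distributions} to honest natural densities of $\{h_{i}=0\}$; this is legitimate because $0$ is the least point of the support and the relevant sets are cut out by the value of $|Cl_{K}[4]/Cl_{K}[2]|$, whose limiting distribution is known, but it should be spelled out.
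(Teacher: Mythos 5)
Your proposal is correct and follows essentially the same strategy as the paper's own (very terse) proof: reduce to the densities of the level sets $\{h_{i}=0\}$, handle the non-abelian indices via the point-mass distribution, and handle the abelian indices via the fact that all such $h_{i}$ are governed by the single invariant $|Cl_{K}[4]/Cl_{K}[2]|$. The paper dispatches the abelian case in one parenthetical phrase (``maximally correlated''), so your more careful version is welcome. Two places are worth tightening. First, in the non-abelian case, the cleanest way to pass from weak convergence to density is to apply the Portmanteau estimate to the \emph{closed} set $(-\infty,P_{i}/2]$ rather than the open interval, giving $\limsup_{X}\mu_{h_{i},X}\bigl((-\infty,P_{i}/2]\bigr)\le\delta_{P_{i}}\bigl((-\infty,P_{i}/2]\bigr)=0$ directly. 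Second, your claim that ``the proof of Theorem~\ref{thm:main thm} shows that for $K$ outside a set of density zero, $h_{i}(K)$ vanishes precisely when $Cl_{K}[4]=Cl_{K}[2]$'' overstates what that proof actually delivers — it proves a distributional identity, not a pointwise one. The pointwise coincidence of level sets does hold, but you should extract it from the exact identity in Remark~\ref{rem:classgroup formula for f}: writing $q_{i}h_{i}=2|Cl_{K}[4]/Cl_{K}[2]|-X_{i}(d)$ with $X_{i}=\sum''g_{\mathcal{D}_{T}}/c^{\omega}$, one has $X_{i}(d)\ge 2$ (the two terms $T\in\{W_{1},W_{2}\}$ are identically $1$, the remaining terms are nonnegative); hence $|Cl_{K}[4]/Cl_{K}[2]|=1$ forces $X_{i}=2$ and $h_{i}=0$ outright, while the converse inclusion fails only on $\{X_{i}\ge 4\}$, a closed set of $\delta_{2}$-measure zero and hence of vanishing natural density. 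Since the resulting level set $\{h_{i}=0\}$ is (modulo density zero) the $i$-independent set $\{|Cl_{K}[4]/Cl_{K}[2]|=1\}$, the union over the abelian $i$ still has density $P_{CL}^{\pm}(0)$. With these two refinements your argument is a complete and slightly more rigorous rendering of the paper's proof.
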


The first step to proving the above results is obtaining an explicit
formula for $f$ expressed as a sum of Legendre symbols. In fact we
obtain a formula for a refined counting function. Let $T\subset G/\Phi\left(G\right)$
be any subset. We will call $T$ admissible for $\left(G,H\right)$,
or say that $\left(G,H,T\right)$ is admissible if $T$ lifts to a
generating set of order 2 elements of $G$ not contained in $H$.

For any $L/K$ a $\left(G,H\right)$-extension there is a canonical
generating set $U$ for\\
 $G\left(L/\mathbb{Q}\right)/\Phi\left(G\left(L/\mathbb{Q}\right)\right)$
which is not contained in $G\left(L/K\right)$ and lifts to a generating
set of order 2 elements ($U$ is given by projecting the inertia groups).
We define
\[
f_{T}\left(K\right)=\left|\left\{ L/K\text{ unramified}\mid\exists\phi:\left(L/\mathbb{Q}\right)\cong G,\phi\left(G\left(L/K\right)\right)=H,\phi\left(U\right)=T\right\} \right|
\]
 and call such an extension a $\left(G,H,T\right)$-extension.

Let $\mathrm{Aut}_{H,T}(G,\Phi\left(G\right))$ be the subgroup of
$\mathrm{Aut}\left(G/\Phi\left(G\right)\right)$ preserving the sets
$H$ and $T$ whose elements lift to $\mathrm{Aut}\left(G\right)$.
Let $c_{T}$ denote the number of conjugacy classes of lifts of $T$.
This can be interpreted as the number of orbits of the set of lifts
of $T$ acting on itself by conjugation.
\begin{thm}
\label{thm:Let--and}Suppose $\left(G,H,T\right)$ is admissible.
Let $T=\left\{ t_{1},\ldots,t_{r}\right\} $ and let $\left\{ x_{1},\ldots,x_{r}\right\} \subset G$
be any set of lifts. Let $S_{i}=\left\{ 1\le j\le r\mid\left[x_{i},x_{j}\right]\neq1\right\} $.
Then
\begin{align*}
f_{T}\left(d\right) & =\frac{1}{2^{n}\left|\mathrm{Aut}_{H,T}(G,\Phi\left(G\right))\right|}\sum_{\left(d_{1},\ldots,d_{r}\right)}\prod_{i=1}^{r}\prod_{p\mid d_{i}}\left(1+\left(\frac{\prod_{j\in S_{i}}d_{j}}{p}\right)\right)
\end{align*}
where the sum is over tuples of coprime fundamental discriminants
such that $d=\prod_{i=1}^{r}d_{i}$.
\end{thm}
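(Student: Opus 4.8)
The plan is to count $(G,H,T)$-extensions $L/K$ by parametrizing them via their ramification data and then translating the existence of such an $L$ into a congruence condition on the prime factorization of $d=D_K$. Since $L/\mathbb{Q}$ is Galois with group $G$ and $L/K$ is unramified, every prime ramifying in $L/\mathbb{Q}$ ramifies only in $K/\mathbb{Q}$, hence has inertia group of order $2$ generated by one of the lifts $x_i$ (up to conjugacy), and the ramified primes partition the set of prime divisors of $d$ according to which class in $T$ their inertia maps to. First I would fix a surjection $\psi\colon G(\mathbb{Q}^{ab,(2)}/\mathbb{Q})\twoheadrightarrow G/\Phi(G)\cong\mathbb{F}_2^n$ realizing $K$ together with the labelling by $T$: concretely, writing $d=\prod_{i=1}^r d_i$ over coprime fundamental discriminants records the decomposition $\mathbb{F}_2^n$-worth of quadratic characters $\chi_{d_i}$, and the tuple $(d_1,\dots,d_r)$ is exactly the data of a homomorphism $G_{\mathbb{Q}}\to G/\Phi(G)$ whose inertia at each $p\mid d$ lands in the line spanned by some $t_i$. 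This accounts for the outer sum in the statement.

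Next I would lift such a $\psi$ to a surjection $G_{\mathbb{Q}}\twoheadrightarrow G$ with the prescribed inertia and unramified-over-$K$ behavior, and count the lifts. Because $G$ is a central extension of $\mathbb{F}_2^n$ by $\mathbb{F}_2$, the obstruction to lifting lies in $H^2(G_{\mathbb{Q}},\mathbb{F}_2)$ and the relevant local conditions at the ramified primes $p\mid d_i$ are governed by the commutator pairing: the requirement that the order-$2$ inertia generator at $p$ lifts to an order-$2$ element of $G$ (so that $L/\mathbb{Q}$ is unramified above $K$ at $p$, i.e. $p$ is unramified in $L/K$) forces, via the standard formula $[x_i,x_j]$ measuring the symplectic-type form on $\mathbb{F}_2^n$ attached to the extension class, the Legendre-symbol relation $\bigl(\prod_{j\in S_i} d_j \mid p\bigr)=1$ for each $p\mid d_i$. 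This is where $S_i=\{j: [x_i,x_j]\neq 1\}$ enters, and summing the indicator of this condition over all primes gives the product $\prod_{p\mid d_i}\tfrac12\bigl(1+(\tfrac{\prod_{j\in S_i}d_j}{p})\bigr)$; the aggregate $2^{-\sum(\#\{p\mid d_i\})}=2^{-\omega(d)}$ and the leftover factors of $2$ get absorbed into the constant.

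Then I would pass from counting pointed/labelled surjections $G_{\mathbb{Q}}\twoheadrightarrow G$ to counting the fields $L$ themselves: each $(G,H,T)$-extension $L/K$ corresponds to an $\mathrm{Aut}(G)$-orbit of surjections, but the labelling by $T$ and the marking of $H$ cut this down to an $\mathrm{Aut}_{H,T}(G,\Phi(G))$-orbit on the $\Phi(G)$-quotient level (the kernel $\Phi(G)$-part of the automorphism action is already quotiented out when we fixed $\psi$ and counted lifts mod coboundaries), together with a factor $2^n=|G/\Phi(G)|$ coming from the difference between surjections and their images / the choice of base character — this yields the denominator $2^n|\mathrm{Aut}_{H,T}(G,\Phi(G))|$. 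Care is needed to check that distinct tuples $(d_1,\dots,d_r)$ with $\prod d_i=d$ give genuinely distinct data and that no $L$ is counted twice, which follows from $T$ lifting to a \emph{generating} set (admissibility) so that $\psi$ is determined by $L$ up to $\mathrm{Aut}_{H,T}$.

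The main obstacle I anticipate is the lifting/obstruction bookkeeping in the middle step: one must verify that the \emph{global} obstruction in $H^2(G_{\mathbb{Q}},\mathbb{F}_2)$ vanishes precisely when all the local conditions at the ramified primes (and the conditions at $2$ and $\infty$, which here are automatically satisfied because the $d_i$ are fundamental discriminants) are met, and then that the number of lifts is exactly $|\Phi(G)|^{\text{(rank of }H^1)}$-independent of $d$ in a way that the constant factors out cleanly. The commutator-pairing computation identifying the local lifting condition with the symbol $(\tfrac{\prod_{j\in S_i}d_j}{p})$ is a Hilbert-symbol calculation (essentially $\prod_v (a,b)_v=1$ applied to the quaternion-algebra class cutting out the central $\mathbb{F}_2$) that I would carry out prime-by-prime; it is routine but is the crux, and everything else is orbit-counting.
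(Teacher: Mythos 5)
Your overall strategy — biject the outer sum with tuples of coprime fundamental discriminants tracking the inertia labels, turn the lifting problem into a Hilbert-symbol/quaternion-algebra criterion whose local conditions produce the Legendre-symbol products, and then mod out by the automorphism group — matches the paper's approach (the paper uses the Embedding Criterion of [cfields] in place of your explicit $H^2$-obstruction language, but these are the same computation). However there is a concrete error in your power-of-$2$ bookkeeping, and a genuinely missing lemma.

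You write that the indicator condition contributes $\prod_{p\mid d_i}\tfrac12\bigl(1+(\tfrac{\prod_{j\in S_i}d_j}{p})\bigr)$ and that "the aggregate $2^{-\omega(d)}$ and the leftover factors of $2$ get absorbed into the constant." This cannot be right: $2^{\omega(d)}$ depends on $d$, so it cannot be absorbed into the prefactor $\tfrac{1}{2^n|\mathrm{Aut}_{H,T}(G,\Phi(G))|}$, which does not. What actually happens is that when the embedding criterion is satisfied, the product $\prod_i\prod_{p\mid d_i}\bigl(1+(\tfrac{\prod_{j\in S_i}d_j}{p})\bigr)$ equals exactly $2^{\omega(d)}$, and this is cancelled by the fact that there are exactly $2^{\omega(d)-n}$ distinct $(G,H,T)$-extensions $L$ with the same genus subfield $L'$ and the same labelling $\phi'$. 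That multiplicity is a separate theorem (Proposition \ref{prop:If-there-exists} in the paper), proved by a Kummer-theoretic argument: one writes $L=L'(\sqrt{\mu})$, defines a commutator invariant $S(\mu)$, shows two choices $L'(\sqrt{\mu})$ and $L'(\sqrt{\nu})$ give the same $G$-structure iff $\mu\nu$ differs by a rational fundamental discriminant $\delta\mid d$, and concludes the set of such $L$ is a torsor under a group of size $2^{\omega(d)-n}$. You explicitly say the number of lifts should be "independent of $d$," which is false; it is this $d$-dependent multiplicity that exactly eats the $2^{\omega(d)}$, leaving the constant $2^{-n}$. Without this lemma the formula does not close.

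A second, smaller issue: your explanation of the $2^n$ factor as coming from "the difference between surjections and their images / the choice of base character" does not match its actual role — as above, it is the ratio $2^{\omega(d)}/2^{\omega(d)-n}$, not an automorphism count. The $|\mathrm{Aut}_{H,T}(G,\Phi(G))|$ factor is what accounts for overcounting the pairs $(L',\phi')$ (the group acts freely transitively on labellings $\phi'$ compatible with a fixed $L'$). Conflating these two sources of overcounting is where your argument loses contact with the precise constant. You also gloss over the prime $2$: one must check that the extra sign $\chi(p)$ arising from the Hilbert symbol at ramified primes is always even; this is a real (if elementary) calculation in the paper, not automatic from the $d_i$ being fundamental discriminants.
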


In the proof we utilize the Embedding Theorem \cite{cfields} which
gives a condition for the existence of the type of extensions we are
interested in. This formula generalizes the one in \cite{albertsklys}
used to count $Q_{8}$-extensions.

We define a graph $\mathcal{G}\left(T_{0}\right)$ with vertex set
$T_{0}$, and say $u,v\in T_{0}$ are connected by an edge if their
lifts do not commute. We show $H$ being abelian is equivalent to
$\mathcal{G}\left(T_{0}\right)$ being complete and bipartite and
split the proof up based on this property. We then proceeds by determining
the $k$th moments and hence the distribution of $f_{T_{0}}$ in the
case when $\mathcal{G}\left(T_{0}\right)$ is not complete bipartite,
and building on this information to obtain the distribution directly
in the complete bipartite case by re-expressing $f_{T_{0}}$ in terms
of the class group and other functions of the form $f_{T}$ and applying
measure-theoretic arguments.

The theorems of Fouvry and Kl\"uners on character sum cancellation
\cite{fk1} can be applied to $f_{T}$ to show that the computation
of the $k$th moments of $f_{T}$ depends on classifying the maximal
disconnected subgraphs of a graph $\mathcal{G}^{*}\left(T^{k}\right)$
(see Section \ref{sec:The graph} for the definition). In particular
the sizes of these subgraphs determine the asymptotics of $f_{T}$,
whereas their particular form is needed to compute the coefficient
of the main term.

While for $k=1$ $\mathcal{G}^{*}\left(T\right)$ is closely related
to $\mathcal{G}\left(T\right)$ above which is easy to handle, the
structure of $\mathcal{G}^{*}\left(T^{k}\right)$ is more complicated
and necessitates additional tools. Since $\mathcal{G}^{*}\left(T^{k}\right)$
depends on relations in $G$ we encode these using a bilinear form
in characteristic 2 which can be used to study the graph by employing
the theory of bilinear forms and linear algebra in characteristic
2 (see Section \ref{sec:isotropic subsets quad forms}).

\subsection{Related work}

There are some result known regarding the values $\boldsymbol{E}_{k}^{\pm}\left(f\right)$
for various pairs of groups $\left(G,H\right)$ and variants of $f$.

The well known theorem of Davenport-Heilbronn gives $\boldsymbol{E}_{1}^{\pm}\left(f\right)$
for $H\cong\mathbb{Z}/3\mathbb{Z}$ \cite{davenportHeilbronn}. Bhargava
computed $\boldsymbol{E}_{1}^{\pm}\left(f\right)$ for $\left(G,H\right)=\left(S_{n},A_{n}\right)$
for $n=3,4,5$ \cite{BhargavaNonabelian}. The work of Fouvry and
Kl\"uners on $4$-ranks of class groups of quadratic fields \cite{fk1}
(an extension of the Cohen-Lenstra heuristics to $p=2$) can be rephrased
as computing $\boldsymbol{E}_{k}^{\pm}\left(f/2^{\omega\left(D_{k}\right)}\right)$
for $\left(D_{8},C_{4}\right)$ for all $k$ (see Section \ref{subsec:Comparison-with-Fouvry-Klners}).

The Embedding Theorem \cite{cfields} used in the proof of Theorem
\ref{thm:Let--and} has been generalized by Alberts \cite{alberts2}
to a larger class of extensions, and consequently can be used to obtain
more general counting functions, the study of which we expect to form
the basis of future work.

\subsection{Conjectures}

We state some conjectures which are suggested by our main theorems.
\begin{conjecture}
\label{conj:Let--be}Let $G$ be a finite $2$-group. Let $H\le G$
be a subgroup with $\left[G:H\right]=2$ such that $\left(G,H\right)$
is admissible. Then
\begin{enumerate}
\item 
\[
0<\boldsymbol{E}_{k}^{\pm}\left(\frac{f}{c^{\omega\left(D_{K}\right)}}\right)<\infty
\]
and these moments determine a distribution on the values of $f$.
\item There exists a constant $C\left(G,H\right)$ such that 
\[
\sum_{K,0<\pm D_{K}<X}f^{k}\left(K\right)\sim C\left(G,H\right)X\left(\log X\right)^{c^{k}-1}.
\]
\end{enumerate}
\end{conjecture}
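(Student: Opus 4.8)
The plan is to carry the proof of Theorem \ref{thm:main thm} through to an arbitrary finite $2$-group $G$. The first move is to decompose $f=\sum_{T}f_{T}$ over admissible subsets $T\subseteq G/\Phi(G)$ and expand $f^{k}=\sum_{T_{1},\dots,T_{k}}\prod_{i}f_{T_{i}}$, so that both the finiteness statement in part~(1) and the asymptotic in part~(2) reduce to controlling the correlations $\sum_{d\in\mathcal{D}_{X}^{\pm}}\prod_{i}f_{T_{i}}(d)$. The essential new ingredient is an analogue of Theorem \ref{thm:Let--and} without the central-extension hypothesis. For this I would fix a chief series $G=G_{0}\triangleright G_{1}\triangleright\cdots\triangleright G_{m}=1$ with each $G_{i}/G_{i+1}\cong\mathbb{F}_{2}$ central in $G/G_{i+1}$ and apply the Embedding Theorem \cite{cfields}, or Alberts' extension \cite{alberts2}, one level at a time: lifting a surjection from $G(K^{un}/\mathbb{Q})$ onto $G/G_{i+1}$ to one onto $G/G_{i}$ is a central $\mathbb{F}_{2}$-embedding problem whose obstruction vanishes exactly when a class assembled from Legendre symbols does, and which, when solvable, has a number of solutions equal to a fixed power of $2$. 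Iterating up the series should express $f_{T}(d)$ as an \emph{iterated} character sum over tuples of coprime fundamental discriminants decorated with the auxiliary data recording the solution chosen at each level.

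Granting such a formula, part~(2) and the leading constant in part~(1) would be handled as in the central case. One feeds the character sum for $\prod_{i}f_{T_{i}}$ into the Fouvry--Kl\"uners cancellation estimates \cite{fk1}; the non-diagonal configurations cancel, and the surviving terms are indexed by the maximal disconnected subgraphs of a graph $\mathcal{G}^{*}(T_{1},\dots,T_{k})$ generalizing the $\mathcal{G}^{*}(T^{k})$ of the central case. One then shows that the structure of these subgraphs is governed by the conjugacy data, since a conjugacy class of an order-$2$ element outside $H$ is exactly an orbit for the conjugation action of the lifts of $T_{0}$ on themselves, and it is this orbit structure that dictates which discriminant variables may be identified without killing the sum. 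Tracking this through should pin the exponent of $\log X$ to $c^{k}-1$ and leave the computation of $C(G,H)$ as a Landau/Selberg--Delange-type asymptotic for the resulting multiplicative-type sum of the local densities $\prod_{p\mid d_{i}}(1+(\tfrac{\cdot}{p}))$ weighted by the factor $c^{-k\omega(d)}$.

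For the limiting distribution one must be more careful than the phrase ``these moments determine a distribution'' suggests: already in the central case $\boldsymbol{E}_{k}^{\pm}(f/c^{\omega})$ can grow like $2^{\Theta(k^{2})}$ (the Cohen--Lenstra-type growth of the $N(j)$), so the moment problem is a priori not determinate and the limit cannot simply be read off from the moments. Following the proof of Theorem \ref{thm:main thm}, I would instead identify the limit structurally: re-express $f/c^{\omega}$, off a density-zero set of $d$, in terms of invariants such as $\left|Cl_{K}[2^{j}]/Cl_{K}[2^{j-1}]\right|$ (whose joint distribution over quadratic fields converges, by Fouvry--Kl\"uners-type equidistribution) together with lower refined functions $f_{T}$ already controlled inductively, and then run the same measure-theoretic argument to assemble $\mu_{h}$; finiteness in part~(1) and weak convergence of the empirical measures would fall out together.

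The main obstacle is the conjunction of the construction in the first paragraph with the cancellation step in the second. For a central extension of $\mathbb{F}_{2}^{n}$ by $\mathbb{F}_{2}$ the whole embedding obstruction collapses to a single bilinear form in characteristic $2$ (Section \ref{sec:isotropic subsets quad forms}), which is precisely what makes $\mathcal{G}^{*}(T^{k})$ analyzable; for a $2$-group of nilpotency class $\ge3$ the obstruction at level $i$ genuinely depends on the solution selected at level $i-1$, the characters at different levels become entangled, and the clean characteristic-$2$ linear-algebra description is lost. Establishing character-sum cancellation and isolating the main term for such towers is exactly the open part. Indeed, even the abelian sub-case $G=H\rtimes C_{2}$ with $C_{2}$ acting by inversion already amounts to the distribution of $Cl_{K}(2^{\infty})$ and its higher $2$-power ranks, which is itself famously open beyond the $4$-rank, so I would expect the conjecture to be reachable in full only after substantial new input, with the methods here yielding at best the next few cases --- short towers, such as $2$-groups of exponent $4$ or of class $2$.
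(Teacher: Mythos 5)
You are being asked about Conjecture \ref{conj:Let--be}, which the paper states \emph{as a conjecture} and does not prove; it is established only for $G$ a central extension of $\mathbb{F}_2^n$ by $\mathbb{F}_2$ (Corollary \ref{cor:conj is true}), together with Malle--Bhargava heuristic evidence for the general case in Section \ref{subsec:Malle-Bhargava-heuristics}. There is therefore no paper proof to compare against. Your proposal, which you are candid is a sketch rather than a proof, tracks faithfully the structure of the argument that works in the central case: decompose $f=\sum_T f_T$, expand $f^k$, express $f_T$ as a Legendre-symbol sum via the Embedding Theorem, run Fouvry--Kl\"uners cancellation, classify the surviving diagonal configurations via a graph attached to conjugacy-class data, and then compute the distribution directly rather than attempting to recover it from its moments. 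On that last point your caution is well founded: in the abelian sub-case the limiting moments involve $M^-(j)=N(j)\gg 2^{j^2/4}$, so the Carleman/Stieltjes moment conditions fail and moment determinacy is not automatic; the paper accordingly identifies $\mu_h$ structurally via $\lvert Cl_K[4]/Cl_K[2]\rvert$ in Theorem \ref{thm:dist complete bipartite} rather than by inverting moments.

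The genuine gap is the one you name at the end, and it sits in your first paragraph: iterating central $\mathbb{F}_2$-embedding problems up a chief series does not, with present technology, produce a closed formula of the shape of Theorem \ref{thm:f_T_formula}. That formula depends on the total obstruction being a single fixed quadratic form on $G/\Phi(G)$, independent of any choice of solution at an earlier level --- and this is exactly what the central-extension-of-$\mathbb{F}_2^n$-by-$\mathbb{F}_2$ hypothesis buys. For nilpotency class $\ge 3$ the obstruction at step $i$ depends on the lift chosen at step $i-1$, the inner sum over lower-level solutions does not factor through a fixed bilinear/quadratic form, and neither the characteristic-$2$ linear algebra of Section \ref{sec:isotropic subsets quad forms} nor the Fouvry--Kl\"uners cancellation apparatus applies as stated. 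So this is not a flaw in reasoning but an acknowledged open problem: you have not proved the conjecture, but you have correctly located the obstruction, and your estimate of which cases might be reachable (class-$2$, short towers) is consistent with what the paper actually achieves.
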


The suggestion that $c$ controls the asymptotics for even groups
appeared previously in the work of Wood. She has shown that a suitable
modification of the Malle-Bhargava field counting heuristics predicts
the above asymptotic for $k=1$ for any even group. She also proved
this asymptotic for $G$ elementary abelian and obtained a lower bound
in a function field analogue (see Section 6 and Theorem 1.2 in \cite{Wood}).

In Section \ref{subsec:Malle-Bhargava-heuristics} we restate her
heuristic argument in the language of this paper and show that it
can be modified to apply to the following refined conjecture involving
the counting function $f_{T}$.
\begin{conjecture}
\label{conj:Let--be-1}Let $\left(G_{1},H_{1},T_{1}\right),\ldots,\left(G_{k},H_{k},T_{k}\right)$
be a sequence of admissible tuples of finite 2-groups and generating
sets. Then
\begin{enumerate}
\item For all positive integers $k$
\[
0<\boldsymbol{E}_{1}^{\pm}\left(\frac{\prod_{i=1}^{k}f_{T_{i}}}{\prod_{i=1}^{k}c_{T_{i}}^{\omega\left(D_{K}\right)}}\right)<\infty.
\]
 If $\left(G_{i},H_{i},T_{i}\right)=\left(G,H,T\right)$ for all $i$
these moments determine a distribution on the values of $f_{T}$.
\item There exists a constant $C\left(G,H\right)$ such that 
\[
\sum_{K,0<\pm D_{K}<X}\prod_{i=1}^{k}f_{T_{i}}\left(K\right)\sim C\left(G,H,T\right)X\left(\log X\right)^{\prod_{i=1}^{k}c_{T_{i}}^{k}-1}.
\]
\end{enumerate}
\end{conjecture}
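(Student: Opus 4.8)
Since this is a conjecture rather than a theorem, my proposal has two strands: a heuristic derivation that fixes the precise shape of the predicted asymptotic, and an outline of how the machinery of this paper would prove it unconditionally in the range where the required character-sum estimates are available.

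For the heuristic strand I would adapt the modified Malle--Bhargava field-counting heuristic of Section~\ref{subsec:Malle-Bhargava-heuristics}. For a single tuple $(G,H,T)$ one models $f_T(K)$ by an Euler product of local masses: at each rational prime $p$ one records the density of \'etale $\mathbb{Q}_p$-algebras whose local behaviour is compatible with a $(G,H,T)$-extension, weighted by the local contribution to the discriminant, and at the archimedean place one imposes the real or imaginary condition. Comparing the associated Dirichlet series with $\sum_{0<\pm D_K<X}1$, the order of its rightmost pole equals the number of admissible ramified local conditions, which is exactly $c_T$; hence $f_T/c_T^{\omega(D_K)}$ has finite positive mean and $\sum_{0<\pm D_K<X}f_T(K)\sim C\,X(\log X)^{c_T-1}$. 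For a product $\prod_{i=1}^k f_{T_i}$ one multiplies the local mass functions place by place; the pole order of the product of local factors gives the exponent of $\log X$ in part~(2), and dividing by $\prod_{i=1}^k c_{T_i}^{\omega(D_K)}$ cancels that pole to leave a finite nonzero constant, which is part~(1).

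To turn part of this into a proof I would start from the explicit formula of Theorem~\ref{thm:Let--and}, writing each $f_{T_i}(d)$ as a sum over coprime factorizations $d=\prod_j d_j$ of products $\prod_j\prod_{p\mid d_j}\bigl(1+\bigl(\tfrac{\cdot}{p}\bigr)\bigr)$, multiplying the $k$ expressions together, expanding every factor as $1+\chi$, and interchanging the order of summation. The outcome is a sum of real character sums indexed by combinatorial data; by the Fouvry--Kl\"uners cancellation estimates each such sum contributes to the main term only when its character is a perfect square, and these non-oscillating terms correspond exactly to the maximal disconnected subgraphs of the graph $\mathcal{G}^{*}(T_1\cdots T_k)$ of Section~\ref{sec:The graph}. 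Enumerating those subgraphs by the bilinear-form and linear-algebra apparatus of Section~\ref{sec:isotropic subsets quad forms} produces the power of $\log X$, while tracking the diagonal (``all primes coincide'') terms shows that they would inflate that exponent by exactly the amount absorbed into the normalizers $c_{T_i}^{\omega(D_K)}$, which is why this normalization is the correct one. When the tuples all coincide, a final step bounds the growth of the resulting $k$-th moments (for instance by verifying a Carleman-type condition) and upgrades the moment sequence to a genuine probability measure on the values of $f_T$, exactly as in the proof of Theorem~\ref{thm:main thm}.

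The hard part is that both the Fouvry--Kl\"uners input and the enumeration of disconnected subgraphs of $\mathcal{G}^{*}(T^k)$ are presently tractable only when the commutator structure of $G$ is captured by a single $\mathbb{F}_2$-bilinear form, which is exactly the situation of central extensions of $\mathbb{F}_2^n$ by $\mathbb{F}_2$ treated in this paper; for a general finite $2$-group $G/\Phi(G)$ is larger, the relations feeding $\mathcal{G}^{*}$ are no longer bilinear, and neither the identification of the non-oscillating terms nor the subgraph count carries over. I would expect the way through this to be the generalized Embedding Theorem of \cite{alberts2}, redeveloping Sections~\ref{sec:The graph} and~\ref{sec:isotropic subsets quad forms} in that generality. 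A secondary, purely analytic obstacle is that even granting the combinatorics, a genuine asymptotic (rather than matching upper and lower bounds of the right order) requires power-saving cancellation in these character sums uniformly in all parameters, which is already delicate for $k=1$ outside the smallest groups.
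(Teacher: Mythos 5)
This is a conjecture, not a theorem, so there is no full proof in the paper to compare against; the paper instead supports it on two fronts, by proving it in special cases (Corollary~\ref{cor:conj is true}, which rests on the machinery of Sections~\ref{sec:isotropic subsets quad forms}--\ref{sec:Moments-and-Correlations}) and by giving heuristic evidence via the modified Malle--Bhargava local-mass computation of Section~\ref{subsec:Malle-Bhargava-heuristics}. Your proposal correctly identifies both strands, and your sketch of the heuristic (local masses $m_p' = 1 + c_T/p$, so the pole order is $c_T$) and of the rigorous pipeline (Theorem~\ref{thm:Let--and}, expansion into character sums, Fouvry--Kl\"uners cancellation, enumeration of maximal disconnected subgraphs of $\mathcal{G}^*$ via the quadratic-form apparatus) accurately reproduces what the paper does for central extensions of $\mathbb{F}_2^n$ by $\mathbb{F}_2$. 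You also put your finger on the genuine obstruction to full generality: when $\Phi(G)$ is larger than $\mathbb{F}_2$ the commutator data is no longer a single bilinear form, and the Embedding Theorem and the graph-combinatorics of Sections~\ref{sec:The graph}--\ref{sec:Maximal disconnected subgraphs} would need to be rebuilt, plausibly via Alberts's generalization.

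There is, however, one concrete gap in your final step. You propose upgrading the computed moments to a distribution on the values of $f_T$ by "verifying a Carleman-type condition," and assert this is what the proof of Theorem~\ref{thm:main thm} does. That is not the paper's route, and it could not be. In the abelian-$H$ (complete-bipartite) case the $k$th moments involve $M^\pm(k)$, essentially the number of subspaces of $\mathbb{F}_2^k$, which grows like $2^{k^2/4}$; a moment sequence of that size fails the Carleman criterion, so the classical moment method cannot by itself identify the limit law. The paper circumvents this entirely: in Section~\ref{subsec:The-complete-bipartite} it rewrites $f_{T_0}/c^{\omega(d)}$ as $\tfrac{1}{2^{n}|\mathrm{Aut}_{H}(G/\Phi(G))|}\bigl(2|Cl_K[4]/Cl_K[2]|\bigr)$ plus a sum of auxiliary functions $g_{\mathcal{D}_T}/c^\omega$ whose distributions are point masses (Lemma~\ref{lem:lower order dist}), and then invokes Lemma~\ref{lem:distribution plus pointmass} together with Fouvry--Kl\"uners' known distribution of $|Cl_K[4]/Cl_K[2]|$. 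That is a direct identification of the limit measure, not a moment-determinacy argument. In the non-abelian case no determinacy question even arises because the moments are exact $k$th powers, forcing a point mass. So the last paragraph of your outline would need to be replaced by this class-group decomposition; as written, it would stall at exactly the place where the abelian case is most interesting.
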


It is easy to show that Conjecture \ref{conj:Let--be} follows from
Conjecture \ref{conj:Let--be-1} using the fact that $c_{T}<c_{T_{0}}$
for any admissible generating set $T\subsetneq T_{0}$ and $f=\sum_{T\subseteq T_{0}}f_{T}$.

As a consequence of our main theorems we have
\begin{cor}
\label{cor:conj is true}Let $G$ be a central extension of $\mathbb{F}_{2}^{n}$
by $\mathbb{F}_{2}$ and let $\left(G,H,T\right)$ be admissible.
Suppose $G$ is not elementary abelian. Then Conjecture \ref{conj:Let--be}
(1) is true.

If either of the following conditions is true:
\begin{enumerate}
\item $T_{i}=T_{i,0}$ is the maximal admissible generating set for all
$i=1,\ldots,k$
\item $k=1$ and $T_{1}$ is any admissible generating set for $\left(G_{1},H_{1}\right)$
\end{enumerate}
then Conjecture \ref{conj:Let--be-1} (1) true.
\end{cor}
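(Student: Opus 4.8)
The plan is to deduce everything from the two main theorems (Theorem \ref{thm:main thm} and Theorem \ref{thm:correlations}), together with the elementary identity $f = \sum_{T \subseteq T_0} f_T$ and the monotonicity $c_T < c_{T_0}$ for proper admissible subsets $T \subsetneq T_0$. The key observation is that Conjecture \ref{conj:Let--be}(1) and Conjecture \ref{conj:Let--be-1}(1) are exactly the finiteness-and-positivity statements $0 < \boldsymbol{E}_k^{\pm}(\cdot) < \infty$ plus the assertion that these moments determine a distribution, and all of these are established (in the relevant cases) by the main theorems.

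First I would handle Conjecture \ref{conj:Let--be}(1) when $G$ is a central extension of $\mathbb{F}_2^n$ by $\mathbb{F}_2$ that is not elementary abelian. By Theorem \ref{thm:correlations} (applied with all $(G_i,H_i) = (G,H)$), the limit defining $\boldsymbol{E}_k^{\pm}(f/c^{\omega})$ exists and equals an explicit expression; one checks this expression is strictly positive and finite — the factor $q^{-k}$ is a positive rational, each $P_j = P$ is a nonzero rational by Theorem \ref{thm:main thm}, and when $H$ is abelian the alternating-sum factor $\sum_{i} (-1)^i \binom{|Y|}{i} M^{\pm}(|Y|-i)$ is a positive integer (it is the number of something, or can be checked directly to be positive via the formulas for $M^{\pm}$ and $N$). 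When $H$ is non-abelian, $\mu_h = \delta_P$ with $P \neq 0$ gives $\boldsymbol{E}_k^{\pm} = P^k$ directly. Finiteness of all moments together with the fact that we actually exhibit the distribution $\mu_h$ in Theorem \ref{thm:main thm} shows the moments determine a distribution (indeed $\mu_h$ is compactly supported — it is a point mass or is supported on $\{(2^i-1)/2^{n-1}a\}$, a bounded set — so it is the unique measure with those moments). The hypothesis that $G$ is not elementary abelian is used to ensure $c = c_{T_0} > 1$, so that dividing by $c^{\omega(d)}$ is a genuine normalization; for elementary abelian $G$ one has $c = 1$ and the moments of $f$ itself are already governed by Fouvry--Klüners-type results outside the scope claimed here.

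Next, for Conjecture \ref{conj:Let--be-1}(1): in case (1), where $T_i = T_{i,0}$ for all $i$, the function $f_{T_{i,0}}$ is handled by exactly the same machinery — Theorem \ref{thm:main thm} is in fact proved for $f_{T_0}$ and then transferred to $f$, so the distributional statement and moment finiteness hold verbatim for $f_{T_0}$, and the mixed correlation $\boldsymbol{E}_1^{\pm}(\prod_i f_{T_{i,0}}/\prod_i c_{T_{i,0}}^{\omega})$ is precisely the content of Theorem \ref{thm:correlations}. In case (2), $k = 1$ and $T_1$ is an arbitrary admissible generating set: here I would invoke Theorem \ref{thm:Let--and}, which gives the closed formula for $f_{T}(d)$ as a sum of products of $(1 + \text{Legendre symbol})$ terms over coprime factorizations $d = \prod d_i$. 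Applying the Fouvry--Klüners cancellation estimates \cite{fk1} to this sum — as described in the paper's overview of the $k=1$ analysis via the graph $\mathcal{G}^*(T)$, which for $k=1$ is elementary — yields $\boldsymbol{E}_1^{\pm}(f_T/c_T^{\omega})$ as a finite positive constant, the exponent $c_T - 1$ of $\log X$ being read off from the number of connected components of $\mathcal{G}(T)$ (equivalently the number of conjugacy classes of lifts of $T$).

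The main obstacle will be verifying positivity of the constants, particularly the alternating binomial sum $\sum_{i=0}^{|Y|} (-1)^i \binom{|Y|}{i} M^{\pm}(|Y|-i)$ appearing in Theorem \ref{thm:correlations}: one must confirm it never vanishes and is in fact positive. For $M^- = N$ this is the statement that the $|Y|$-th finite difference of the subspace-counting function $N(j)$ is positive, which follows from the fact that $N(j) = \sum_m \binom{j}{m}_2$ grows super-exponentially in a way that makes all finite differences positive (or one identifies the alternating sum combinatorially as counting tuples of subspaces with a spanning/linear-independence condition); the $M^+$ case reduces to the $M^-$ case by the given relation. Aside from this bookkeeping, everything is a direct citation of results proved earlier in the paper, so no new ideas are needed.
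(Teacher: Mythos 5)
Your overall strategy is the right one and matches the paper's intent: the corollary is asserted as "a consequence of our main theorems," and everything reduces to citing Theorem~\ref{thm:main thm}, Theorem~\ref{thm:correlations}, Theorem~\ref{thm:k_moments pointmass}, and the asymptotic machinery of Theorem~\ref{thm:asymptotic_analysis}, then checking positivity of the explicit constants. However, two of the supporting claims you make along the way are wrong as stated.

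First, the compact-support argument for moment determinacy is incorrect in the abelian case. You write that $\mu_h$ ``is supported on $\{(2^{i}-1)/2^{n-1}a\}$, a bounded set.'' The set $\left\{ (2^{i}-1)/q\mid i\in\mathbb{Z}_{\ge0}\right\} $ is unbounded, so compactness does not help. In fact the moments $M^{\pm}(k)\sim N(k)$ grow roughly like $2^{k^{2}/4}$, which is fast enough to violate the Carleman sufficiency criterion for Stieltjes moment determinacy; uniqueness of the CL-type distribution given its integer moments is a genuinely nontrivial fact that must be invoked (it is available in the Cohen--Lenstra literature, e.g.\ Fouvry--Kl\"uners and Wood), not deduced from boundedness of the support. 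If instead ``determine a distribution'' is read as mere existence, the paper exhibits $\mu_h$ explicitly in Theorem~\ref{thm:main thm} and no argument is needed at all; either way the step as you wrote it is a gap.

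Second, the explanation for excluding elementary abelian $G$ is wrong. For $G\cong\mathbb{F}_{2}^{n+1}$ and $[G:H]=2$, every element of $G\setminus H$ has order $2$ and is central, so each is its own conjugacy class and $c=|G\setminus H|=2^{n}$, not $1$. The real reason for the exclusion is structural: the entire apparatus (the distinguished central element $a$ with $\langle a\rangle=\Phi(G)$, the quadratic form $Q$ encoding commutators and squares, and the Embedding Criterion of Theorem~\ref{thm:-Let-,}, which is stated for a \emph{non-split} central extension $G^{*}$) is built under the hypothesis that $G$ is nonabelian. For elementary abelian $G$ the Frattini quotient is all of $G$ and the count reduces to genus theory ($f$ depends only on $\omega(d)$), outside the scope of the paper's machinery. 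You should also note that for case~(2) with $T_{1}\subsetneq T_{1,0}$ you cannot simply appeal to ``Fouvry--Kl\"uners cancellation'' without distinguishing whether $\mathcal{G}(T_{1})$ is complete bipartite, since Proposition~\ref{prop:T_moments} requires that it is not, and the complete bipartite situation is handled differently in Section~\ref{subsec:The-complete-bipartite}.
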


The proofs of our main theorems imply that in fact part (2) of Conjectures
\ref{conj:Let--be} and \ref{conj:Let--be-1} is true but for the
function counting extensions unramified away from infinity (indeed
throughout the proof we work with this counting function, and show
that when normalized it is asymptotic to $f_{T}/c^{\omega}$). This
at least gives a bound for the quantity in part (2) of the above conjectures.

\subsection{Plan of the proof}

In Section \ref{sec:Counting 2-extensions} we prove a formula for
$f$ the number of $\left(G,H,T\right)$-extensions of any quadratic
field. In Sections \ref{sec:isotropic subsets quad forms} we prove
necessary technical results about quadratic forms in characteristic
2. We define the graphs we will use in our computation of the moments
of $f$ in Section \ref{sec:The graph} and classify the disconnected
subgraphs in Section \ref{sec:Maximal disconnected subgraphs}.

Applying these results we compute the asymptotics and moments of $f$
in Sections \ref{sec:Asymptotic-analysis-of} and \ref{sec:Computing-the-moments}.
Finally we compute the distributions and moments and correlations
in Sections \ref{sec:Determining-the-distribution} and \ref{sec:Moments-and-Correlations}.

\section{Counting 2-extensions\label{sec:Counting 2-extensions}}

The main goal of this section is to obtain a formula for the function
$f_{T}\left(K\right)$ which gives the number of $\left(G,H,T\right)$-extensions
(defined below) of a quadratic field $K$ for any fixed admissible
$\left(G,H,T\right)$.

\subsection{\label{subsec:Definitions-and-preliminaries}Preliminaries}

Let $K=\mathbb{Q}\left(\sqrt{d}\right)$. Let $G$ be a central extension
of $\mathbb{F}_{2}^{n}$ by $\mathbb{F}_{2}$ and let $L$ be an unramified
extension of $K$ with $\phi:G\left(L/\mathbb{Q}\right)\cong G$ such
that $\phi\left(G\left(L/K\right)\right)=H$.

Let $L^{g}=L\cap K^{gen}$ where $K^{gen}=\mathbb{Q}\left(\sqrt{q_{1}},\ldots,\sqrt{q_{r}}\right)$
and the $q_{i}$ are prime discriminants with $\prod q_{i}=d$. It
is easy to show that $L^{g}=\mathbb{Q}\left(\sqrt{d_{1}'},\ldots,\sqrt{d_{n}'}\right)$
where $d_{i}'\mid d$ and are independent modulo $\left(\mathbb{Q}^{*}\right)^{2}$.

It is also clear that $G\left(L^{g}/\mathbb{Q}\right)\cong C_{2}^{n}$
and we can pick the isomorphism such that the standard basis element
$e_{i}\in C_{2}^{n}$ projects nontrivially onto $G_{i}=G\left(K(\sqrt{d_{i}'})/\mathbb{Q}\right)$.
Let $y_{i}$ be any chosen lift of $e_{i}$ in $G\left(L/\mathbb{Q}\right)$.
 Let $\left\langle a\right\rangle $ be the distinguished central
subgroup of $G$ of order 2. Note this implies there are only two
possibilities for $y_{i}$ and they differ by a multiple of $a$.
\begin{lem}
\label{lem:y generate}The set of $y_{i}$ generates $G\left(L/\mathbb{Q}\right)$.
\end{lem}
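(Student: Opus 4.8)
The plan is to show that the subgroup $N=\langle y_1,\dots,y_n\rangle \le G(L/\mathbb{Q})$ is all of $G(L/\mathbb{Q})$ by a Frattini-type argument. First I would observe that the images $\bar y_i$ of the $y_i$ in $G(L/\mathbb{Q})/\langle a\rangle \cong C_2^n$ are exactly the standard basis elements $e_i$ (by construction, since $y_i$ lifts $e_i$ and $\langle a\rangle$ is the kernel of the projection to $G(L^g/\mathbb{Q})\cong C_2^n$). Hence $N$ together with $\langle a\rangle$ generates all of $G(L/\mathbb{Q})$, i.e. $N\langle a\rangle = G(L/\mathbb{Q})$. So the only thing to rule out is the case $a\notin N$, in which case $N$ would be an index-2 subgroup.

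Next I would bring in the Frattini subgroup. Since $G(L/\mathbb{Q})\cong G$ is a central extension of $\mathbb{F}_2^n$ by $\mathbb{F}_2 = \langle a\rangle$, the Frattini quotient $G/\Phi(G)$ is elementary abelian $2$-group, and $\Phi(G) = G^2[G,G]$. The subgroup $\langle a\rangle$ is central of order $2$; the key dichotomy is whether $a\in\Phi(G)$ or not. If $a\in\Phi(G)$, then a set of elements generates $G(L/\mathbb{Q})$ as soon as its image generates $G(L/\mathbb{Q})/\Phi(G)$; since the $\bar y_i$ already surject onto $C_2^n = G/\langle a\rangle$, they certainly surject onto the further quotient $G/\Phi(G)$, so $N = G(L/\mathbb{Q})$ and we are done. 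If instead $a\notin\Phi(G)$, then $\Phi(G)=1$ (as $\langle a\rangle$ has order $2$ and $\Phi(G)\le\langle a\rangle$ would force $\Phi(G)\in\{1,\langle a\rangle\}$), so $G$ is elementary abelian of rank $n+1$; but then $G\cong C_2^{n+1}$ and any generating set of order-$2$ elements not contained in $H$ must have rank $\ge n+1$ in the Frattini quotient, and here I would invoke admissibility of $(G,H)$ together with the structure of the inertia-generated set $U$ to conclude. Actually the cleanest route in the elementary abelian case: the inertia groups at ramified primes of $K$ project to a generating set of $G(L/\mathbb{Q})$, and one checks directly that the $y_i$ can be taken among these, so they already generate.

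An alternative (and perhaps preferable) unified argument avoids case analysis: I would use that $L/\mathbb{Q}$ is unramified over $K$ but $K/\mathbb{Q}$ is ramified at the primes dividing $d$, so all inertia subgroups $I_p\le G(L/\mathbb{Q})$ have order $2$ and map onto $G(L^g/\mathbb{Q})=C_2^n$ via the quotient by $\langle a\rangle$ (since $L^g$ is the genus field piece and the ramification in $L/\mathbb{Q}$ is exactly that of $L^g/\mathbb{Q}$). The subgroup generated by all inertia $I_p$ is normal, and its fixed field is an unramified extension of $\mathbb{Q}$, hence equals $\mathbb{Q}$; so the $I_p$ generate $G(L/\mathbb{Q})$. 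One then shows the $y_i$ can be chosen to be generators of a subset of these $I_p$ whose images form a basis of $C_2^n$, and that such a choice already hits $a$: if it did not, the subgroup generated would be index $2$ and its fixed field a quadratic extension of $\mathbb{Q}$ unramified outside $d$, forcing it to be contained in the genus field $L^g$ — but $L^g$ has degree $2^n$ and this subgroup already maps onto $C_2^n$, a contradiction unless it is all of $G(L/\mathbb{Q})$.

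The main obstacle I expect is the elementary abelian (equivalently $a\notin\Phi(G)$) case: there the Frattini argument gives nothing for free, and one genuinely needs the hypothesis that $L/K$ is unramified — equivalently that the $y_i$ arise from inertia — to force $a$ into the subgroup they generate. The rest is bookkeeping with the genus field and the central extension structure. I would structure the write-up so that the non-elementary-abelian case is dispatched in one line by the Frattini argument, and the elementary abelian case by the inertia/genus-field argument.
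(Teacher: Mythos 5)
Your Frattini argument for the non-split case ($a\in\Phi(G)$) is correct and is essentially a cleaner, more abstract packaging of the paper's proof. The paper is more hands-on: it observes that $\langle y_i\rangle\cup\{a\}$ generates $G(L/\mathbb{Q})$, that this group is non-abelian, and hence that $[y_i,y_j]=a$ for some $i,j$, which puts $a$ into $\langle y_i\rangle$ explicitly. Your version invokes the non-generator property of $\Phi(G)$ instead of exhibiting $a$ as a commutator; the two are interchangeable here, and yours is if anything slightly more robust (it would also cover an abelian-but-not-elementary-abelian $G$, which the paper's phrasing ``$G(L/\mathbb{Q})$ is not abelian'' would not, though no such $G$ is admissible).

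Where the proposal goes wrong is the elementary abelian case. You correctly identify it as the obstacle, but you then try to \emph{prove} the lemma there — ``one checks directly that the $y_i$ can be taken among these, so they already generate.'' This cannot work: the $y_i$ are by definition $n$ specific elements (one lift of each $e_i$), and $n$ elements cannot generate $C_2^{n+1}$. The lemma is simply false for $G\cong C_2^{n+1}$, and the correct move is to note that this case is excluded by the standing hypotheses, not to argue through it. The paper does this by asserting in the proof that $G(L/\mathbb{Q})$ is not abelian; this is forced by the setup earlier in Section 2.1, where $L^g=L\cap K^{gen}$ is asserted to have degree $2^n$. If $L/\mathbb{Q}$ were multi-quadratic (i.e.\ $G$ elementary abelian), then $L$ itself would lie in the genus field, so $L^g=L$ would have degree $2^{n+1}$, a contradiction. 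Your alternative ``unified'' inertia argument is unnecessary and also has gaps: the $y_i$ are lifts of the fixed basis $\{e_i\}$ and are not, in general, inertia generators, and the step ``a quadratic subfield of $L$ unramified outside $d$ is forced to lie in $L^g$'' requires justification (one needs that any quadratic subfield of $L$ already lies in $K^{gen}$, which is precisely the content of $L^g=L\cap K^{gen}$, so the argument risks circularity).
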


\begin{proof}
The $e_{i}$ generate $C_{2}^{n}\cong G\left(L/\mathbb{Q}\right)/\left\langle a\right\rangle $
so the $y_{i}$ generate at least $2^{n}$ elements in $G\left(L/\mathbb{Q}\right)\backslash\left\{ a\right\} $
and hence $\left\langle y_{i}\right\rangle _{i=1}^{n}\cup\left\{ a\right\} $
generates $G\left(L/\mathbb{Q}\right)$. Since $a\in Z\left(G\right)$
and $G\left(L/\mathbb{Q}\right)$ is not abelian there must exist
$y_{i}$ and $y_{j}$ which don't commute, so that $\left[y_{i},y_{j}\right]=a$.
\end{proof}
For any such extension $L/\mathbb{Q}$ let $U\subset G\left(L/\mathbb{Q}\right)/\Phi\left(G\left(L/\mathbb{Q}\right)\right)$
be the projection of the generators of all the inertia subgroups of
$G\left(L/\mathbb{Q}\right)$.
\begin{defn}
\label{def:ght-extension}We call $L/K$ an unramified (resp. unramified
away from infinity) $\left(G,H,T\right)$-extension if it is unramified
everywhere (resp. away from infinity) and there exists an isomorphism
$\phi:G\left(L/\mathbb{Q}\right)\cong G$ such that $\phi\left(G\left(L/K\right)\right)=H$
and $\overline{\phi}\left(U\right)=T$. We call $\phi$ the associated
isomorphism of the $\left(G,H,T\right)$-extension (note $\phi$ is
not unique in general).
\end{defn}

Until Section \ref{sec:Determining-the-distribution} we will work
with unramified away from infinity $\left(G,H,T\right)$-extensions,
and use $f_{T}$ to denote the number of these. Note this differs
from the definition used up to this point. To prove our theorems for
unramified everywhere $\left(G,H,T\right)$-extensions we will apply
Lemma \ref{lem:counting unram everywhere}. Thus until further notice
we will simply say $\left(G,H,T\right)$-extensions to refer to those
unramified away from infinity.

\subsection{\label{subsec:Counting-extensions-with}Extensions with a fixed genus
subfield}

Throughout this subsection we will fix an unramified away from infinity
extension $L'/K$ such that $G\left(L'/K\right)\cong C_{2}^{n-1}$
and $G\left(L'/\mathbb{Q}\right)\cong C_{2}^{n}$. We will count $\left(G,H,T\right)$-extensions
$L/K$ with $L^{g}=L'$. Fix an isomorphism $\phi':\mathrm{Gal}\left(L'/\mathbb{Q}\right)\cong G/\Phi\left(G\right)$
such that $\phi'\left(U\right)=T$ and $\phi'\left(\mathrm{Gal}\left(L'/K\right)\right)=\overline{H}$.

First we define a notion which will be helpful to this end, using
ideas of Lemmermeyer from \cite{lemmermeyerh8}. The field $L$ can
be written as $L=L^{g}\left(\sqrt{\mu}\right)$ for some $\mu\in L^{g}$.
Then since $L/L^{g}$ is a Kummer extension we see that $\mu^{e_{i}e_{j}}\stackrel{2}{=}\mu$
in $L^{g}$. Let $\alpha_{ij}^{2}=\mu^{e_{i}e_{j}-1}$. Then $\left(\alpha_{ij}^{2}\right)^{1+e_{i}e_{j}}=\mu^{\left(e_{i}e_{j}-1\right)\left(e_{i}e_{j}+1\right)}=1$.
Hence $\alpha_{ij}^{1+e_{i}e_{j}}=\pm1$. Define $S\left(\mu\right)\in\mathbb{F}_{2}^{\binom{n}{2}}$
by 
\[
S\left(\mu\right)_{ij}=\begin{cases}
1 & \text{if }\alpha_{ij}^{1+e_{i}e_{j}}=-1\\
0 & \text{if }\alpha_{ij}^{1+e_{i}e_{j}}=+1.
\end{cases}
\]

\begin{lem}
$y_{i}$ and $y_{j}$ commute in $G$ if and only if $S\left(\mu\right)_{ij}=0$.
\end{lem}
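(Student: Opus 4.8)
The plan is to translate the group-theoretic statement ``$y_i$ and $y_j$ commute'' into a statement about how the element $(y_iy_j)^2$ of $G(L/\mathbb{Q})$ acts on $\sqrt\mu$, and then to recognise the resulting sign as $(-1)^{S(\mu)_{ij}}$.

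First I would record two structural facts. Since $L/K$ is unramified, $L/\mathbb{Q}$ is ramified only at primes dividing $d$, so every quadratic subfield of $L$ lies in $K^{gen}$; hence $L^g=L\cap K^{gen}$ is the maximal multiquadratic subfield of $L$, which (in the non-abelian case, where $\Phi(G)=[G,G]=\langle a\rangle$) is the fixed field of $\langle a\rangle$. Thus $G(L/L^g)=\langle a\rangle$ and $(\sqrt\mu)^a=-\sqrt\mu$. Second, each $y_i$ has order $2$: the inertia subgroups at the primes $p\mid d$ generate $G(L^g/\mathbb{Q})\cong C_2^n$ (by Minkowski's theorem a number field unramified over $\mathbb{Q}$ at all finite primes is $\mathbb{Q}$), so a basis of $C_2^n$ may be chosen among their generators, and such an inertia generator lifts to the order-$2$ inertia subgroup of the corresponding prime in $G(L/\mathbb{Q})$ because $L/K$ is unramified; since the two lifts of $e_i$ differ by $a$ and $(y_ia)^2=y_i^2a^2=y_i^2$, this is independent of the choice of $y_i$.

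Next I would compute the action of $(y_iy_j)^2$ on $\sqrt\mu$. As $y_iy_j$ acts on $L^g$ as $e_ie_j$, it sends $\mu$ to $\mu^{e_ie_j}=\mu\cdot\mu^{e_ie_j-1}=\mu\alpha_{ij}^2$, and squaring shows $(\sqrt\mu)^{y_iy_j}=\pm\alpha_{ij}\sqrt\mu$. Applying $y_iy_j$ once more, and using that it acts on $\alpha_{ij}\in L^g$ as $e_ie_j$ while fixing the sign $\pm1$, one gets
\[
(\sqrt\mu)^{(y_iy_j)^2}=\alpha_{ij}^{1+e_ie_j}\sqrt\mu=(-1)^{S(\mu)_{ij}}\sqrt\mu,
\]
where the last equality is the definition of $S(\mu)_{ij}$ together with the already-noted fact that $\alpha_{ij}^{1+e_ie_j}=\pm1$. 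On the other hand $(y_iy_j)^2$ fixes $L^g$, since its image in $G(L^g/\mathbb{Q})$ is $(e_ie_j)^2=1$. Hence on $L=L^g(\sqrt\mu)$ the element $(y_iy_j)^2$ is the identity when $S(\mu)_{ij}=0$ and equals $a$ when $S(\mu)_{ij}=1$; that is, $(y_iy_j)^2=a^{S(\mu)_{ij}}$ in $G$.

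Finally, because $y_i$ and $y_j$ are involutions, $[y_i,y_j]=y_iy_jy_i^{-1}y_j^{-1}=y_iy_jy_iy_j=(y_iy_j)^2=a^{S(\mu)_{ij}}$, and since $a\neq1$ this is trivial exactly when $S(\mu)_{ij}=0$, which is the assertion. The one point requiring care is the input that the $y_i$ are involutions (more precisely that $y_i^2=y_j^2$): without it one only obtains $(y_iy_j)^2=y_i^2y_j^2[y_i,y_j]$, and the stated equivalence could be reversed. Everything else is a routine Kummer-theoretic computation of the type used by Lemmermeyer.
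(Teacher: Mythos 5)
Your proof is correct and follows essentially the same route as the paper: compute the action of $(y_iy_j)^2$ on $\sqrt{\mu}$ via the Kummer generator $\alpha_{ij}$, identify the resulting sign with $(-1)^{S(\mu)_{ij}}$, and use $[y_i,y_j]=(y_iy_j)^2$ for the involutions $y_i,y_j$. You supply a bit more scaffolding than the paper does (the identification $G(L/L^g)=\langle a\rangle$, the $\pm$ ambiguity in $(\sqrt\mu)^{y_iy_j}$, and a justification that the $y_i$ are involutions, which the paper asserts without comment), but the essential argument is the same.
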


\begin{proof}
Since the $y_{i}$ have order 2 we have $\left[y_{i},y_{j}\right]=\left(y_{i}y_{j}\right)^{2}$.
We have $\sqrt{\mu}^{y_{i}y_{j}}=\alpha_{ij}\sqrt{\mu}$ and hence
$\left(a+b\sqrt{\mu}\right)^{\left(y_{i}y_{j}\right)^{2}}=a+\alpha_{ij}^{1+e_{i}e_{j}}b\sqrt{\mu}$
for $a,b\in L^{g}$. Thus $y_{i}y_{j}$ has order 2 if and only if
$\alpha_{ij}^{1+e_{i}e_{j}}=1$ and the result follows.
\end{proof}
This lemma in particular shows that the group $G$ is determined by
the choice of $e_{i}$'s along with $S\left(\mu\right)$ (up to permutation
of the entries) since by Lemma \ref{lem:y generate} $G$ is generated
by the $y_{i}$ which by definition have order 2, and $S\left(\mu\right)$
encodes the relations between them. Then it is clear that $S\left(\mu\right)=0$
in $\mathbb{F}_{2}^{\binom{n}{2}}$ if and only if $G=C_{2}^{n+1}$.
\begin{lem}
\label{lem:Suppose--and}Let $L_{1}=L_{1}^{g}\left(\sqrt{\mu}\right)$
and $L_{2}=L_{2}^{g}\left(\sqrt{\nu}\right)$ be $\left(G,H,T\right)$-extensions
with $L_{i}^{g}=L'$ and associated isomorphisms $\phi_{1}$ and $\phi_{2}$.
Assume $\overline{\phi_{1}}=\overline{\phi_{2}}=\phi'$.

Then $\mu\overset{2}{=}\nu\delta$ for some $\delta\in\mathbb{Z}$
a fundamental discriminant such that $\delta\mid d$. Conversely for
any fundamental discriminant $\delta\in\mathbb{Z}$ such that $\delta\mid d$
we have that $L_{1}^{g}\left(\sqrt{\delta\mu}\right)$ is a $\left(G,H,T\right)$-extension
with associated isomorphism that projects to $\phi'$.
\end{lem}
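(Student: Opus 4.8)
The plan is to argue that, because both $L_1$ and $L_2$ have the same genus subfield $L'$ and compatible Kummer data, the quotient $\mu/\nu$ must lie in a very restricted set, and then use the unramified-away-from-infinity hypothesis to pin down that set as the divisors of $d$ that are fundamental discriminants. First I would observe that $L_1 = L'(\sqrt\mu)$ and $L_2 = L'(\sqrt\nu)$ are both degree-$2$ extensions of $L'$ that are Galois over $\mathbb{Q}$ with the prescribed quotient structure; the condition $\overline{\phi_1} = \overline{\phi_2} = \phi'$ says the induced action of $G(L'/\mathbb{Q}) \cong C_2^n$ on the Kummer generator is the same in both cases, i.e. $\mu^{e_ie_j-1}$ and $\nu^{e_ie_j-1}$ agree up to squares (this is exactly the content of $S(\mu) = S(\nu)$, forced by the previous lemma since both give the group $G$). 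Consequently $\mu/\nu$ is fixed modulo squares by every $e_ie_j$, hence by all of $G(L'/\mathbb{Q})$ since these generate it; so $\mu/\nu \equiv \delta \pmod{(L'^*)^2}$ for some $\delta \in \mathbb{Q}^*$. That $\delta$ can be taken in $\mathbb{Z}$ and squarefree is immediate after clearing square factors.

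Next I would extract the divisibility and fundamental-discriminant conditions from ramification. The extension $L_1/K$ is unramified away from infinity, and so is $L_2/K$; since $L' = L_1^g = L_2^g$ is itself unramified away from infinity over $K$, the quadratic extension $L'(\sqrt\delta)/L'$ — which is $L_1$ over $L_2$ up to the $G$-structure, more precisely $L_1 = L_2 \cdot \mathbb{Q}(\sqrt\delta)$ inside a common closure — must be unramified away from infinity over $L'$. A prime $p$ ramifying in $\mathbb{Q}(\sqrt\delta)$ but not dividing $d$ would then ramify in $L_1/L'$ (one checks it is not "absorbed" because $p$ is unramified in $L'/\mathbb{Q}$), contradicting that $L_1/\mathbb{Q}$ is ramified only at primes dividing $D_K = d$. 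Hence every prime dividing $\delta$ divides $d$, and combined with squarefreeness and the requirement that $\mathbb{Q}(\sqrt\delta)/\mathbb{Q}$ contributes no ramification beyond that already present in $K$, one gets that $\delta$ is a fundamental discriminant with $\delta \mid d$. For the converse, given such a $\delta$ I would set $L_1' = L'(\sqrt{\delta\mu})$ and verify directly: it is Galois over $\mathbb{Q}$ with the same $C_2^n$-action on the Kummer generator as $L_1$ (since $\delta$ is $G(L'/\mathbb{Q})$-invariant mod squares), so by the earlier lemma $G(L_1'/\mathbb{Q}) \cong G$ with the same $S$-invariant; the subgroup structure $\phi(G(L_1'/K)) = H$ and $\overline\phi(U) = T$ is inherited because changing $\mu$ by the invariant $\delta$ does not change the mod-$\Phi(G)$ data; and $L_1'/K$ is unramified away from infinity because $\delta \mid d$ is a fundamental discriminant so $\mathbb{Q}(\sqrt\delta) \subseteq K^{gen}$ contributes only tame ramification already accounted for in passing to $K$, and $L_1'/L'$ is unramified away from infinity since $\delta$'s ramified primes all divide $d$ and split or are already ramified in $L'$.

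The main obstacle I expect is the ramification bookkeeping in both directions: precisely controlling which primes ramify in $L_1'/\mathbb{Q}$ versus $L'/\mathbb{Q}$ versus $K/\mathbb{Q}$, and making sure that twisting by $\sqrt\delta$ with $\delta \mid d$ genuinely keeps the extension unramified over $K$ away from infinity (rather than introducing ramification at primes dividing $\delta$ that happen to be unramified in $L'$). This is where Lemmermeyer's framework from \cite{lemmermeyerh8} and the structure of the genus field $K^{gen}$ should be invoked: the key point is that $\delta \mid d$ forces $\sqrt\delta \in K^{gen} \subseteq L'$ already in the abelian part, so no new ramification appears in $L'(\sqrt{\delta\mu})/K$ compared to $L'(\sqrt\mu)/K = L_1/K$. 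Once that is cleanly established, the rest is the routine observation that the Kummer generator is only well-defined modulo squares and modulo the $G(L'/\mathbb{Q})$-invariant part, which is exactly generated by the fundamental discriminants dividing $d$.
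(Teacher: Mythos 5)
Your forward direction has a genuine gap, and it is in the algebraic core, not the ramification bookkeeping you flagged as the main obstacle.

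You assert that $S(\mu)=S(\nu)$ means ``$\mu^{e_ie_j-1}$ and $\nu^{e_ie_j-1}$ agree up to squares,'' and then conclude that $\mu/\nu$ is $\mathrm{Gal}(L'/\mathbb{Q})$-invariant modulo $(L'^*)^2$, hence lies in $\mathbb{Q}^*\cdot(L'^*)^2$. Both steps misfire. By construction $\mu^{e_ie_j-1}=\alpha_{ij}^2$ and $\nu^{e_ie_j-1}=\beta_{ij}^2$ are \emph{already} squares in $L'$, so the phrase ``agree up to squares'' carries no information; the invariant $S(\mu)_{ij}$ is the finer sign $\alpha_{ij}^{1+e_ie_j}=\pm 1$, not a statement about $\mu^{e_ie_j-1}$ modulo squares. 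More seriously, Galois-invariance of $\mu/\nu$ modulo $(L'^*)^2$ is automatic: both $L_1/\mathbb{Q}$ and $L_2/\mathbb{Q}$ are Galois, so $\mu$ and $\nu$ are each individually $\mathrm{Gal}(L'/\mathbb{Q})$-invariant modulo squares, and if your inference were valid it would show that $\mu$ itself is rational modulo squares, forcing every such $L_1$ to be abelian over $\mathbb{Q}$ --- which is false and would trivialize the whole setup. The obstruction to descent from ``Galois-invariant mod squares'' to ``rational mod squares'' is precisely what $S$ measures: $\mu$ is rational modulo squares if and only if $S(\mu)=0$, i.e.\ if and only if $\mathrm{Gal}(L'(\sqrt\mu)/\mathbb{Q})\cong C_2^{n+1}$.

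The step you are missing is the multiplicativity of $S$. Set $L_3=L'(\sqrt{\mu\nu})$, the third quadratic subextension of $L_1L_2$ over $L'$. From $\alpha_{ij}^2=\mu^{e_ie_j-1}$ and $\beta_{ij}^2=\nu^{e_ie_j-1}$ one gets $(\alpha_{ij}\beta_{ij})^2=(\mu\nu)^{e_ie_j-1}$, whence $S(\mu\nu)=S(\mu)+S(\nu)$. Since $\overline{\phi_1}=\overline{\phi_2}$ and both give the same group $G$, one has $S(\mu)=S(\nu)$, so $S(\mu\nu)=0$. That vanishing --- not Galois-invariance --- is what forces $\mathrm{Gal}(L_3/\mathbb{Q})\cong C_2^{n+1}$; being elementary abelian over $\mathbb{Q}$, $L_3$ is a compositum of quadratic fields containing $L'$, so $\mu\nu\overset{2}{=}\delta$ for some fundamental discriminant $\delta\in\mathbb{Z}$. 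Your ramification argument is then essentially the paper's one-sentence observation that $L_3/K$ is unramified because $L_1/K$ and $L_2/K$ are, which forces $\delta\mid d$; and your converse, resting on $S(\delta\mu)=S(\mu)$ and $\mathbb{Q}(\sqrt\delta)\subseteq K^{gen}$, matches the paper. But without the $S(\mu\nu)=0$ computation the forward direction does not go through.
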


\begin{proof}
Let $M=L_{1}L_{2}$ and let $L_{3}$ be the third quadratic extension
of $L_{1}^{g}$ contained in $M$, so $L_{3}=L_{1}^{g}\left(\sqrt{\mu\nu}\right)$.
Let $\beta_{ij}^{2}=\nu^{e_{i}e_{j}-1}$. Then $\left(\alpha_{ij}\beta_{ij}\right)^{2}=\left(\mu\nu\right)^{e_{i}e_{j}-1}$.
Hence $S\left(\mu\nu\right)_{ij}=S\left(\mu\right)_{ij}\cdot S\left(\nu\right)_{ij}$.

Let $\varphi=\phi_{2}^{-1}\circ\phi_{1}$. Then $\varphi:G\left(L_{1}/\mathbb{Q}\right)\cong G\left(L_{2}/\mathbb{Q}\right)$
such that the morphism $\overline{\varphi}$ induced on the quotient
$G\left(L'/\mathbb{Q}\right)$ is the identity. From the existence
of $\varphi$ we see that $S\left(\mu\right)_{ij}=S\left(\nu\right)_{ij}$
for all $i,j$, and thus $G\left(L_{3}/\mathbb{Q}\right)\cong C_{2}^{n+1}$.
This implies $\mu\nu\overset{2}{=}\delta\in\mathbb{Z}$ and $\delta$
can be chosen to be a fundamental discriminant. Since $L_{1}$ and
$L_{2}$ are unramified over $K=\mathbb{Q}(\sqrt{d})$ so is $L_{3}$,
and hence $\delta\mid d$.

Now suppose $\delta\in\mathbb{Z}$ is a fundamental discriminant such
that $\delta\mid d$. From the definitions it is clear that $S\left(\mu\right)=S\left(\delta\mu\right)$
and hence there is an isomorphism $\varphi:G\left(L_{1}^{g}\left(\sqrt{\delta\mu}\right)/\mathbb{Q}\right)\cong G\left(L_{1}^{g}\left(\sqrt{\mu}\right)/\mathbb{Q}\right)$
such that $\overline{\varphi}=id$ on $G\left(L_{1}^{g}/\mathbb{Q}\right)$.
Additionally $L_{1}^{g}\left(\sqrt{\delta\mu}\right)/K$ is unramified
since it is a subfield of the composite of $L_{1}^{g}\left(\sqrt{\mu}\right)/K$
and $K\left(\sqrt{\delta}\right)/K$, both of which are unramified
(the latter because it is contained in $K^{gen}$). This implies $L_{1}^{g}\left(\sqrt{\delta\mu}\right)$
is a $\left(G,H,T\right)$-extension with associated isomorphism $\phi_{1}\circ\varphi$.
\end{proof}
\begin{prop}
\label{prop:If-there-exists} Let $L'=K(\sqrt{d_{1}'},\ldots\sqrt{d_{n}'})$.
If there exists a $\left(G,H,T\right)$-extension $L/\mathbb{Q}$
such that $L^{g}=L'$ with associated isomorphism $\phi$ satisfying
$\overline{\phi}=\phi'$ then there are exactly $2^{\omega\left(d\right)-n}$
such extensions.
\end{prop}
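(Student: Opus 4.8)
The plan is to count the $\left(G,H,T\right)$-extensions $L/\mathbb{Q}$ with $L^{g}=L'$ and $\overline{\phi}=\phi'$ by comparing any two such extensions using Lemma \ref{lem:Suppose--and}. Fix one such extension $L_{1}=L'\left(\sqrt{\mu}\right)$, whose existence is the hypothesis. By Lemma \ref{lem:Suppose--and}, every other $\left(G,H,T\right)$-extension $L_{2}=L'\left(\sqrt{\nu}\right)$ with the same genus field and inducing $\phi'$ satisfies $\nu\overset{2}{=}\delta\mu$ for some fundamental discriminant $\delta\in\mathbb{Z}$ dividing $d$; conversely every such $\delta$ produces a valid extension. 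So the map $\delta\mapsto L'\left(\sqrt{\delta\mu}\right)$ surjects from the set of fundamental discriminants dividing $d$ (inside $\mathbb{Q}^{*}/\left(\mathbb{Q}^{*}\right)^{2}$) onto the set of extensions we are counting.

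The next step is to determine the fibers of this map, i.e. to count, for a given extension, how many $\delta$'s yield it, and then to count the number of classes $\delta$. Two discriminants $\delta,\delta'$ dividing $d$ give the same field $L'\left(\sqrt{\delta\mu}\right)=L'\left(\sqrt{\delta'\mu}\right)$ precisely when $\delta\delta'\in\left(L'^{*}\right)^{2}\cdot\mathbb{Q}^{*}$; since $L'=\mathbb{Q}\left(\sqrt{d_{1}'},\ldots,\sqrt{d_{n}'}\right)$ has $G\left(L'/\mathbb{Q}\right)\cong C_{2}^{n}$, the classes of rational numbers that become squares in $L'$ are exactly the $2^{n}$ classes generated by the $d_{i}'$. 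Thus the fiber over $L_{1}$ (corresponding to $\delta\in\left(L'^{*}\right)^{2}$ within the rationals, i.e. $\delta$ a product of the $d_{i}'$) has size $2^{n}$, and every fiber has the same size by homogeneity. The total number of fundamental discriminants $\delta$ dividing $d$ is $2^{\omega\left(d\right)}$ (choosing, for each of the $\omega\left(d\right)$ prime discriminants $q_{i}$ dividing $d$, whether to include it), so the number of distinct extensions is $2^{\omega\left(d\right)}/2^{n}=2^{\omega\left(d\right)-n}$.

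The one point requiring care — and the main obstacle — is verifying that distinct fields $L'\left(\sqrt{\delta\mu}\right)$ really do give distinct $\left(G,H,T\right)$-extensions with the \emph{same} induced $\phi'$, and that the count of $\delta$-classes dividing $d$ modulo squares-in-$L'$ is exactly $2^{\omega\left(d\right)-n}$ rather than something smaller; this amounts to checking that the $n$ classes $d_{1}',\ldots,d_{n}'$ are independent in $\mathbb{Q}^{*}/\left(\mathbb{Q}^{*}\right)^{2}$ and that every square class of $L'$ meeting $\mathbb{Q}^{*}$ lies in the span of the $d_{i}'$ — both of which follow from $\mathrm{Gal}\left(L'/\mathbb{Q}\right)\cong C_{2}^{n}$ together with Kummer theory — and that the fundamental discriminants dividing $d$ surject onto $\langle d_{1}',\ldots,d_{n}'\rangle$ so the fiber computation is correct. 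Once independence is in hand the rest is a clean orbit-counting argument.
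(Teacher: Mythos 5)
Your proposal is correct and follows essentially the same route as the paper: both invoke Lemma \ref{lem:Suppose--and} to parametrize all such extensions as $L'(\sqrt{\delta\mu})$ with $\delta$ a fundamental discriminant dividing $d$, and both then compute the count as the index $2^{\omega(d)-n}$ of $\langle d_1',\ldots,d_n'\rangle$ in the $\mathbb{F}_2$-space of fundamental discriminants dividing $d$ — the paper phrases this as the size of a quotient space, while you phrase it as an orbit/fiber count, but these are the same argument. (Minor: in your fiber criterion the condition should read $\delta\delta'\in(L'^*)^2\cdot\mathbb{Q}^{*2}$, not $\cdot\,\mathbb{Q}^*$; and the "obstacles" you flag at the end are non-issues — independence of the $d_i'$ mod squares is part of the setup, and $\langle d_1',\ldots,d_n'\rangle$ automatically sits inside the span of fundamental discriminants dividing $d$.)
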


\begin{proof}
If $L=K(\sqrt{d_{1}'},\ldots\sqrt{d_{n}'})\left(\sqrt{\mu}\right)$
is a $\left(G,H,T\right)$-extension with associated isomorphism $\phi'$
then by Lemma \ref{lem:Suppose--and} every other such extension is
of the form $L^{g}\left(\sqrt{\delta\mu}\right)$ for some fundamental
discriminant $\delta\mid d$. Define the $\mathbb{F}_{2}$ vector
space $V=\left\langle q_{1},\ldots,q_{\omega\left(d\right)}\right\rangle /\left\langle q_{1}^{2},\ldots,q_{\omega\left(d\right)}^{2}\right\rangle $
where the $q_{i}$ are the divisors of $d$ which are prime fundamental
discriminants or one of $\left\{ -4,\pm8\right\} $. Then the number
of such extensions which are distinct is the size of the vector space
$V/\left\langle d_{1}',\ldots,d_{n}'\right\rangle $, which is clearly
$2^{\omega\left(d\right)-n}$ (recall the $d_{i}'$ are all independent
mod $\mathbb{Q}^{*2}$).

\end{proof}
For $a,b\in\mathbb{Q}$ let $\left(a,b\right)$ denote the cyclic
algebra in $\mathrm{Br}\left(\mathbb{Q}\right)$ defined by 
\[
\left(a,b\right)=K\left\langle u,v\right\rangle /\left\langle u^{2}=a,v^{2}=b,uv=-vu\right\rangle .
\]
 Recall that there is an injection 
\[
inv:\mathrm{Br}\left(\mathbb{Q}\right)\hookrightarrow\prod_{v}\mathrm{Br}\left(\mathbb{Q}_{v}\right)\cong\prod_{v}\mathbb{Q}/\mathbb{Z}.
\]
Since $\left(a,b\right)$ has order 2 we can view each factor of the
image of $inv$ as lying in $\frac{1}{2}\mathbb{Z}/\mathbb{Z}$. Identify
this group with $\left\langle \pm1\right\rangle $. Denote the composition
of $inv$ with the projection onto the factor corresponding to $v$
by $inv_{v}$. Denote by $\left(a,b\right)_{v}$ the quadratic Hilbert
symbol over the field $\mathbb{Q}_{v}$. Then the crucial property
we will need is 
\[
inv_{v}\left(\left(a,b\right)\right)=\left(a,b\right)_{v}.
\]

We will use the following theorem, stated as Theorem 1.2 in \cite{cfields}.
\begin{thm}[Embedding Criterion]
\label{thm:-Let-,} Let $L'=F(\sqrt{d_{1}'},\ldots,\sqrt{d_{n}'})$,
where the $d_{i}'$ are elements of $F^{*}$ independent modulo $F^{*2}$.
Let $G=\mathrm{Gal}\left(L'/F\right)$, and consider a non-split central
extension $G^{*}$ of $\mathbb{F}_{2}$ by $G$. Let $e_{1},\ldots,e_{n}$
generate $G$, where $e_{i}\sqrt{d_{j}'}=\left(-1\right)^{\delta\left(i,j\right)}\sqrt{d_{j}'}$
and let $x_{1},\ldots,x_{n}$ be any set of preimages of $e_{1},\ldots,e_{n}$
in $G^{*}$. Define $c_{ij}=1$ if $\left[x_{i},x_{j}\right]\neq1$
and 0 otherwise for $i\neq j$ and $c_{ii}=1$ if $x_{i}^{2}\neq1$.

Then there exists a Galois extension $L/F$ with $L'\subset L$ such
that $\mathrm{Gal}\left(L/F\right)\cong G^{*}$ and the surjection
$G^{*}\longrightarrow G$ is the natural surjection of Galois groups,
if and only if 
\[
\prod_{i\le j}\left(d_{i}',d_{j}'\right)^{c_{ij}}=1\in\mathrm{Br}\left(F\right).
\]
\end{thm}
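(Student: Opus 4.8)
I would phrase this as the solvability of a Galois embedding problem and invoke the standard theory of central embedding problems together with the cohomology of elementary abelian $2$-groups. Write $\pi\colon G_{F}\twoheadrightarrow G=\mathrm{Gal}(L'/F)$ for the surjection attached to $L'/F$, and let $\epsilon\in H^{2}(G,\mathbb{Z}/2)$ be the class of the central extension $1\to\mathbb{Z}/2\to G^{*}\to G\to1$ (the kernel carries the trivial action, and since $\mathrm{char}\,F\neq2$ we may identify $\mathbb{Z}/2$ with $\mu_{2}$). A Galois extension $L/F$ with $L'\subset L$, $\mathrm{Gal}(L/F)\cong G^{*}$ and $G^{*}\to G$ the restriction map is exactly a continuous homomorphism $s\colon G_{F}\to G^{*}$ with $\mathrm{pr}\circ s=\pi$; equivalently, a splitting of the pullback of $G^{*}$ along $\pi$, which is a central extension of $G_{F}$ with class $\mathrm{inf}(\epsilon)=\pi^{*}\epsilon\in H^{2}(G_{F},\mathbb{Z}/2)$. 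Hence such an $s$ exists (a priori as a possibly non-surjective weak solution) if and only if $\mathrm{inf}(\epsilon)=0$. Since $G^{*}$ is a \emph{non-split} extension of $G$, any weak solution is automatically proper: $\mathrm{im}(s)$ surjects onto $G$, so $\mathrm{im}(s)\cdot\ker(G^{*}\to G)=G^{*}$, and if $\mathrm{im}(s)\neq G^{*}$ then $\mathrm{im}(s)$ would be a complement to the kernel, contradicting non-splitness. Thus the existence of $L$ is equivalent to $\mathrm{inf}(\epsilon)=0$, and it remains to compute $\epsilon$ and its inflation.

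\textbf{Identifying $\epsilon$.} Since $G\cong(\mathbb{Z}/2)^{n}$ we have $H^{*}(G,\mathbb{F}_{2})=\mathbb{F}_{2}[t_{1},\dots,t_{n}]$, where $t_{i}\in H^{1}(G,\mathbb{F}_{2})=\mathrm{Hom}(G,\mathbb{F}_{2})$ is dual to $e_{i}$, so $H^{2}(G,\mathbb{F}_{2})$ has the $\binom{n+1}{2}$-dimensional basis $\{t_{i}\cup t_{j}:i\le j\}$. To a central $\mathbb{Z}/2$-extension of $G$ one attaches the squaring map $q(\bar{x})=x^{2}$ and the commutator pairing $b(\bar{x},\bar{y})=[x,y]$, both valued in the kernel; $q$ is a quadratic form with polar form $b$ because $(xy)^{2}=x^{2}y^{2}[y,x]$, and this assignment is additive in the extension class, hence gives a linear map $H^{2}(G,\mathbb{F}_{2})\to\{\text{quadratic forms on }\mathbb{F}_{2}^{n}\}$. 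It is injective (if $q\equiv0$ then also $b\equiv0$, so the preimages $x_{i}$ and the kernel generator pairwise commute and have order $2$, forcing $G^{*}\cong(\mathbb{Z}/2)^{n+1}$ split), and domain and target both have dimension $\binom{n+1}{2}$, so it is an isomorphism. A direct computation with the cocycle $(g,h)\mapsto t_{i}(g)t_{j}(h)$ shows that the form attached to $t_{i}\cup t_{j}$ for $i<j$ is $v\mapsto t_{i}(v)t_{j}(v)$, whose polar form detects exactly the pair $\{e_{i},e_{j}\}$, and the form attached to $t_{i}\cup t_{i}$ is $v\mapsto t_{i}(v)$, detecting $e_{i}^{2}$. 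Matching with the definition of the $c_{ij}$ (and $c_{ii}$) gives $\epsilon=\sum_{i<j}c_{ij}\,t_{i}\cup t_{j}+\sum_{i}c_{ii}\,t_{i}\cup t_{i}$.

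\textbf{Computing the inflation.} Inflation $H^{2}(G,\mathbb{Z}/2)\to H^{2}(G_{F},\mathbb{Z}/2)$ is a homomorphism for cup products, and under the identifications $H^{1}(G_{F},\mathbb{Z}/2)=F^{*}/F^{*2}$ (Kummer theory) and $H^{2}(G_{F},\mu_{2})\cong\mathrm{Br}(F)[2]$ it carries $t_{i}$ to the class of $d_{i}'$ (the character cutting out $F(\sqrt{d_{i}'})$) and carries a cup product of two such classes to the corresponding quaternion symbol in the Brauer group. Therefore
\[
\mathrm{inf}(\epsilon)=\sum_{i<j}c_{ij}(d_{i}',d_{j}')+\sum_{i}c_{ii}(d_{i}',d_{i}')=\prod_{i\le j}(d_{i}',d_{j}')^{c_{ij}}\in\mathrm{Br}(F),
\]
and the equivalence ``$L$ exists $\iff\mathrm{inf}(\epsilon)=0$'' from the first paragraph becomes precisely the stated criterion.

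\textbf{Main obstacle.} The substantive points — none individually deep, but each requiring care with conventions — are (i) pinning down $\epsilon$ via the ``central $\mathbb{Z}/2$-extensions of $(\mathbb{Z}/2)^{n}\leftrightarrow$ quadratic forms'' dictionary, in particular verifying that $t_{i}\cup t_{j}$ corresponds to the commutator/squaring datum indexed by $(i,j)$ and not to some twist of it, and (ii) the clean identification of the inflation of a cup product of Kummer classes with a quaternion symbol, so that the $H^{2}(G_{F},\mathbb{Z}/2)$-computation collapses to the displayed Brauer-group product. The properness step in the first paragraph is where the non-split hypothesis is genuinely needed.
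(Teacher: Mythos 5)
The paper does not prove this statement: it cites it verbatim as Theorem~1.2 of \cite{cfields} and uses it as a black box, so there is no internal proof to compare against. That said, your argument is a correct and essentially self-contained derivation along the standard lines for central embedding problems. The reduction of ``$L$ exists'' to $\pi^*\epsilon=0$ is right, and the promotion of a weak solution to a proper one genuinely uses non-splitness exactly as you say (if $\mathrm{im}(s)$ missed the kernel it would be a complement). The identification $\epsilon=\sum_{i\le j}c_{ij}\,t_i\cup t_j$ via the bijection between central $\mathbb{F}_2$-extensions of $(\mathbb{Z}/2)^n$ and quadratic forms is correct; the dimension count plus the observation that $q\equiv 0$ forces a split extension gives injectivity, and the cocycle computation pins down $t_i\cup t_j\leftrightarrow t_i(\cdot)t_j(\cdot)$ with the right polar form. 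Finally the inflation step $t_i\mapsto [d_i']$ (Kummer) and $t_i\cup t_j\mapsto (d_i',d_j')$ in $\mathrm{Br}(F)[2]$ is the standard identification, using Hilbert~90 to get $H^2(G_F,\mu_2)\cong\mathrm{Br}(F)[2]$. I see no gap; the only places you gesture at rather than spell out --- the $\binom{n+1}{2}$ dimension count and the explicit cup-product cocycle --- are routine and go through.
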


\subsection{Extensions of a fixed quadratic field}

For the next proposition we define some notation. Let $\overline{H}$
be the projection of $H$ to $G/\Phi\left(G\right)$. Let $\mathcal{K}_{T}$
be the set of pairs $\left(L',\phi'\right)$ where $L'/K$ is a subfield
of $K^{gen}$ with isomorphism $\phi':\mathrm{Gal}\left(L'/\mathbb{Q}\right)\cong G/\Phi\left(G\right)$
such that $\phi'\left(U\right)=T$ and $\phi'\left(\mathrm{Gal}\left(L'/K\right)\right)=\overline{H}$.
Let $\mathcal{D}$ be the set of $r$-tuples $\left(d_{1},\ldots,d_{r}\right)$
of coprime fundamental discriminants with each $d_{i}\neq1$ and such
that $d=\prod_{i=1}^{r}d_{i}$. Define $U_{i}\subset\left[n\right]$
by the expression $t_{i}=\sum_{j\in U_{i}}e_{j}$ for each $i=1,\ldots,r$.
\begin{prop}
\label{prop:There-is-a}There is a bijective correspondence between
$\mathcal{K}_{T}$ and $\mathcal{D}$.
\end{prop}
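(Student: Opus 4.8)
The plan is to exhibit the correspondence explicitly and then check it is well-defined in both directions. Given a pair $\left(L',\phi'\right)\in\mathcal{K}_{T}$, the field $L'$ is a subfield of $K^{gen}=\mathbb{Q}\left(\sqrt{q_{1}},\ldots,\sqrt{q_{\omega\left(d\right)}}\right)$ with $\mathrm{Gal}\left(L'/\mathbb{Q}\right)\cong C_{2}^{n}$, so $L'=\mathbb{Q}\left(\sqrt{d_{1}'},\ldots,\sqrt{d_{n}'}\right)$ for discriminants $d_{i}'\mid d$ independent mod squares. The isomorphism $\phi'$ pins down which coset of $\overline{H}$ each generator lies in and which element $t_{i}\in T$ the inertia at each prime projects to; concretely $\phi'$ identifies $\mathrm{Gal}\left(L'/\mathbb{Q}\right)$ with $G/\Phi\left(G\right)=\mathbb{F}_{2}^{n}$ carrying $e_{1},\ldots,e_{n}$, and the inertia generator at a prime $p\mid d$ maps to the class of $q_{p}$ (the prime discriminant above $p$) in $\mathbb{F}_{2}^{n}$ via $\phi'$, which must equal some $t_{i}\in T$. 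Grouping the primes $p\mid d$ according to which $t_{i}$ their inertia hits partitions the prime-discriminant factorization of $d$ into blocks; let $d_{i}$ be the product of the prime discriminants in block $i$. This yields a tuple $\left(d_{1},\ldots,d_{r}\right)\in\mathcal{D}$: the $d_{i}$ are coprime fundamental discriminants multiplying to $d$, and since $T$ lifts to a generating set each $t_{i}$ is actually used (so $d_{i}\neq1$), using admissibility.

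For the reverse map, given $\left(d_{1},\ldots,d_{r}\right)\in\mathcal{D}$, I would set $d_{i}'=\prod_{j\in U_{i}}\left(\text{appropriate product}\right)$ — more precisely, recall $t_{i}=\sum_{j\in U_{i}}e_{j}$, and the relation $d=\prod d_{i}$ together with the block structure forces $L':=\mathbb{Q}\left(\sqrt{d_{1}'},\ldots,\sqrt{d_{n}'}\right)$ where $d_{j}'$ is read off by inverting the linear system relating the $t_{i}$ to the $e_{j}$; the key point is that the assignment ``prime $p$ in block $i$ has inertia projecting to $t_{i}$'' determines a unique surjection $\mathrm{Gal}\left(K^{gen}/\mathbb{Q}\right)\to C_{2}^{n}$ sending the Frobenius/inertia generator at $p$ to $t_{i}$, provided $T$ generates $\mathbb{F}_{2}^{n}$, which holds by admissibility. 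Fixing once and for all a labelling identifies the resulting quotient field $L'$ and gives $\phi'$; one then checks $\phi'\left(U\right)=T$ (immediate from construction, since $U$ is exactly the set of inertia projections and every block is nonempty) and $\phi'\left(\mathrm{Gal}\left(L'/K\right)\right)=\overline{H}$ (because $K=\mathbb{Q}\left(\sqrt{d}\right)$ corresponds to the sum of all $e_{j}$ appearing, i.e.\ $\sum_i t_i$, which lies in the right coset relative to $\overline{H}$ precisely because each $t_i\notin\overline H$ and $[G:H]=2$ — here one uses that the number $r$ of blocks and the parity work out so that $\sum t_i$ is the class of $\sqrt d$).

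I would then verify the two maps are mutually inverse: starting from $\left(L',\phi'\right)$, extracting the blocks and rebuilding gives back the same subfield (the $d_{i}'$ are recovered from the $d_{i}$ by the same linear relations) and the same $\phi'$ (since $\phi'$ was determined by the inertia data, which is exactly the block data); starting from a tuple, the blocks read off from the reconstructed $\left(L',\phi'\right)$ are the original ones because coprimality of the $d_{i}$ means each prime $p\mid d$ sits in a unique $d_{i}$. The main obstacle I expect is bookkeeping the linear algebra over $\mathbb{F}_{2}$ cleanly: one must argue that specifying, for each prime $p\mid d$, the target $t_{i}\in T$ of its inertia generator determines a unique $L'\subset K^{gen}$ with $\mathrm{Gal}\left(L'/\mathbb{Q}\right)\cong C_{2}^{n}$ and a unique $\phi'$ — this is where the hypothesis that $T$ generates $G/\Phi\left(G\right)$ is essential (otherwise the quotient has rank $<n$), and where one must be careful that a fixed choice of isomorphism has been made so that $\phi'$ is genuinely well-defined rather than defined up to $\mathrm{Aut}$ (this ambiguity is precisely what the $\left|\mathrm{Aut}_{H,T}\right|$ factor in Theorem \ref{thm:Let--and} will later absorb). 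A secondary subtlety is checking $d_{i}\neq1$, i.e.\ that in a genuine $\left(G,H,T\right)$-configuration every element of $T$ is hit by some inertia group; this follows because $U$, the projection of the inertia generators, must equal all of $T$ by Definition \ref{def:ght-extension}.
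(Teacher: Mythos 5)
Your proposal follows essentially the same route as the paper's proof: in one direction you group the primes dividing $d$ according to which $t_{i}\in T$ their inertia projects to under $\phi'$, and in the other you recover $L'=\mathbb{Q}(\sqrt{d_{1}'},\ldots,\sqrt{d_{n}'})$ by inverting the relations $t_{i}=\sum_{j\in U_{i}}e_{j}$, i.e.\ setting $d_{j}'=\prod_{i:j\in U_{i}}d_{i}$, and then verify that $\phi'$ is an isomorphism, that $\phi'(U)=T$, and that the two maps are mutually inverse. That is exactly the paper's argument.

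The one place your write-up is noticeably looser is the verification that $\phi'(\mathrm{Gal}(L'/K))=\overline{H}$. You assert that ``$K=\mathbb{Q}(\sqrt{d})$ corresponds to the sum of all $e_{j}$ appearing, i.e.\ $\sum_{i}t_{i}$,'' and that a parity condition involving the number $r$ of blocks must hold. But $\mathrm{Gal}(L'/K)$ is a codimension-one \emph{subgroup}, not a single element, so identifying it with the element $\sum_{i}t_{i}$ is not well-formed, and in fact neither $r$ nor its parity enters the correct argument. The clean statement (and the one the paper uses) is at the subgroup level: every inertia projection $t_{i}'$ maps nontrivially onto $\mathrm{Gal}(K/\mathbb{Q})\cong\mathbb{F}_{2}$, so $\mathrm{Gal}(L'/K)$ is precisely the set of products of an even number of the $t_{i}'$; on the other side $\overline{H}$ is precisely the set of products of an even number of the $t_{i}$ (since $[G:H]=2$ and each $t_{i}\notin\overline{H}$); and $\phi'$ carries the one description to the other by construction. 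Your underlying intuition is gesturing at this parity observation, but it needs to be phrased for subgroups rather than for single elements for the step to actually close. Everything else in your outline matches the paper's proof.
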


\begin{proof}
Suppose $\left(L',\phi'\right)\in\mathcal{K}_{T}$. Define $d_{i}$
to be the product of all prime fundamental discriminants $p$ (if
$p=2$ use $\pm2^{\mathrm{ord}_{2}d}$) such that $\phi'\left(I_{\mathfrak{p}}\right)=t_{i}$
for some (equivalently all) $\mathfrak{p}\mid p$ in $L'$. Then all
the $d_{i}$ are coprime and $d=\prod_{i=1}^{r}d_{i}$ since each
$t_{i}$ is in the image of some inertia group by definition.

Conversely given a factorization $d=\prod_{i=1}^{r}d_{i}$ in $\mathcal{D}$
define $d_{j}'=\prod_{i:j\in U_{i}}d_{i}$ for each $j=1,\ldots,n$.
Let $L'=\mathbb{Q}(\sqrt{d_{1}'},\ldots,\sqrt{d_{n}'})$. For each
$j=1,\ldots,n$ let $\phi'_{j}:\mathrm{Gal}\left(L'/\mathbb{Q}\right)\longrightarrow\mathrm{Gal}\left(\mathbb{Q}\left(\sqrt{d_{j}'}\right)\right)$
be the projection. Then $\phi'=\left(\phi'_{1},\ldots,\phi'_{n}\right)$
is a homomorphism $\phi':\mathrm{Gal}\left(L'/\mathbb{Q}\right)\longrightarrow G/\Phi\left(G\right)$
where $G/\Phi\left(G\right)$ has basis $\left\{ e_{1},\ldots,e_{n}\right\} $.

We will show $\phi'$ is an isomorphism. The relation $d_{j}'=\prod_{i:j\in U_{i}}d_{i}$
implies that if $p\mid d_{i}$ then $p\mid d_{j}'$ exactly when $j\in U_{i}$.
This implies that for any prime $\mathfrak{p}\mid p$ in $L'$ if
$I_{\mathfrak{p}}$ is the inertia group then $\phi'_{j}\left(I_{\mathfrak{p}}\right)=1$
exactly when $j\in U_{i}$. Thus $\phi'\left(I_{\mathfrak{p}}\right)=t_{i}$.
Since the $\left\{ t_{1},\ldots,t_{n}\right\} $ generate $G/\Phi\left(G\right)$
this implies $\phi'$ is surjective. Since $\mathrm{Gal}\left(L'/\mathbb{Q}\right)$
is a quotient of $\mathbb{F}_{2}^{n}$ this implies $\phi'$ is an
isomorphism.

Pick a basis $\left\{ e_{1}',\ldots,e_{n}'\right\} $ for $\mathrm{Gal}\left(L'/\mathbb{Q}\right)$
such that $e_{i}'\sqrt{d_{j}'}=\left(-1\right)^{\delta_{ij}}\sqrt{d_{j}'}$.
Then clearly $\phi'\left(e_{j}'\right)=e_{j}$. Let $t_{i}'=\sum_{j\in U_{i}}e_{i}'$.
It follows from the above argument that $\phi'\left(U\right)=T$.
Each $t_{i}'$ projects nontrivially to $\mathrm{Gal}\left(K/\mathbb{Q}\right)=\mathbb{F}_{2}$
so $\mathrm{Gal}\left(L'/K\right)$ is equal to the trace 0 subspace
of $\mathrm{Gal}\left(L'/\mathbb{Q}\right)$ with respect to the generating
set $\left\{ t_{1}',\ldots,t_{n}'\right\} $. The same is true for
$\overline{H}\subset G/\Phi\left(G\right)$ with $\left\{ t_{1},\ldots,t_{n}\right\} $.
Thus $\phi'\left(\mathrm{Gal}\left(L'/K\right)\right)=\overline{H}$.

Next we will show the above maps are inverses of each other.

Let $\left(L',\phi'\right)\in\mathcal{K}_{T}$. We can write $L'=\mathbb{Q}(\sqrt{d_{1}'},\ldots,\sqrt{d_{n}'})$
where $\mathbb{Q}(\sqrt{d_{j}'})$ is the subfield corresponding to
$\phi'^{-1}\left(e_{j}\right)$. Let $d=\prod_{i=1}^{r}d_{i}$ be
the corresponding factorization.

We claim that $d_{j}'=\prod_{i,j\in U_{i}}d_{i}$ for each $j=1,\ldots,n$.
Fix $p\mid d$. Then $p\mid d_{j}'$ if and only if $I_{\mathfrak{p}}$
projects non-trivially to $\mathbb{Q}(\sqrt{d_{j}'})$. This is equivalent
to $\phi'\left(I_{\mathfrak{p}}\right)=t_{i}$ for some $i$ satisfying
$j\in U_{i}$ which by definition of the correspondence is equivalent
to $p\mid d_{i}$ for some $i$ satisfying $j\in U_{i}$. Furthermore
it is clear that $\phi'=\left(\phi'_{1},\ldots,\phi'_{n}\right)$
as defined above. This proves one direction.

Let $d=\prod_{i=1}^{r}d_{i}$ in $\mathcal{D}$. It is shown above
that if $p\mid d_{i}$ then $\phi'\left(I_{\mathfrak{p}}\right)=t_{i}$
for any $\mathfrak{p}\mid p$ in $L'$ where $\left(L',\phi'\right)$
is the corresponding pair. Thus the maps are inverse to each other,
which completes the proof.
\end{proof}
Recall $\left\{ e_{1},\ldots,e_{n}\right\} $ is a fixed basis for
$G/\Phi\left(G\right)$. For each $i=1,\ldots,n$ let $x_{i}'\in G$
be any lift of $e_{i}$. Let $S_{i}'=\left\{ 1\le j\le n\mid j\neq i,\left[x_{i}',x_{j}'\right]\neq1\right\} \cup\left\{ i\mid\mathrm{ord}\left(x_{i}'\right)=4\right\} $.
\begin{lem}
\label{lem:asd}Let $\left(L',\phi'\right)\in\mathcal{K}_{T}$ with
$L'=K(\sqrt{d'_{1}},\ldots\sqrt{d'_{n}})$. There exists a $\left(G,H,T\right)$-extension
$L/L'/\mathbb{Q}$ with isomorphism $\phi$ such that $\phi'^{-1}\circ\overline{\phi}$
is the natural surjection of Galois groups if and only if 
\[
\prod_{p\mid d}\left(1+\left(-1\right)^{\chi\left(p\right)}\left(\frac{\prod_{j\le i,j\in S_{i}'}\left(d'_{j}/p^{\alpha_{j}\left(p\right)}\right)^{\alpha_{i}\left(p\right)}\left(d'_{i}/p^{\alpha_{i}\left(p\right)}\right)^{\alpha_{j}\left(p\right)}}{p}\right)\right)\neq0
\]
 where 
\[
\chi\left(p\right)=\begin{cases}
\frac{\left(p-1\right)}{2}\sum_{j\le i,j\in S_{i}'}\alpha_{i,j}\left(p\right) & p\text{ odd}\\
\sum_{j\le i,j\in S_{i}'}\frac{\left(d_{j}'/2^{\alpha_{j}\left(2\right)}\right)-1}{2}\frac{\left(d_{i}'/2^{\alpha_{i}\left(2\right)}\right)-1}{2} & p=2.
\end{cases}
\]
\end{lem}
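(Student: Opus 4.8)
The plan is to recognize the asserted existence as the solvability of a non-split central embedding problem, and then to make the Embedding Criterion (Theorem \ref{thm:-Let-,}) completely explicit by unwinding its Brauer-group obstruction into local Hilbert symbols at the primes dividing $d$.

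First I would set up the embedding problem. Pulling back the central extension $1\to\langle a\rangle\to G\to G/\Phi(G)\to 1$ along the isomorphism $\phi':\mathrm{Gal}(L'/\mathbb{Q})\xrightarrow{\sim}G/\Phi(G)$ produces a non-split central extension of $\mathrm{Gal}(L'/\mathbb{Q})$ by $\mathbb{F}_2$, call it $\widetilde G$, with $\widetilde G\cong G$; I would show that a $(G,H,T)$-extension $L/L'/\mathbb{Q}$ with $\phi'^{-1}\circ\overline\phi$ the natural surjection exists if and only if this embedding problem is solvable. For the substantive direction, any Galois $L/\mathbb{Q}$ lying over $L'$ with $\mathrm{Gal}(L/\mathbb{Q})\cong\widetilde G\cong G$ compatibly has $\Phi(\mathrm{Gal}(L/\mathbb{Q}))=\mathrm{Gal}(L/L')$ (since $\Phi(G)=\langle a\rangle$), so $L^g=L'$ and $\overline\phi=\phi'$ automatically; moreover $\mathrm{Gal}(L/K)\supseteq\Phi$, and once $L/K$ is unramified away from $\infty$ the inertia generators $U$ of $\mathrm{Gal}(L/\mathbb{Q})$ surject onto those of $\mathrm{Gal}(L'/\mathbb{Q})$, so the equalities $\phi'(\mathrm{Gal}(L'/K))=\overline H$ and $\phi'(U)=T$ (which hold because $(L',\phi')\in\mathcal K_T$) propagate to $\phi(\mathrm{Gal}(L/K))=H$ and $\overline\phi(U)=T$. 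It then remains to choose the solution with $L/K$ unramified away from $\infty$: the set of solutions, when nonempty, is a torsor under $\mathbb{Q}^*/\mathbb{Q}^{*2}$ via $L'(\sqrt\mu)\mapsto L'(\sqrt{\delta\mu})$ (as $S(\delta\mu)=S(\mu)$, cf.\ the proof of Lemma \ref{lem:Suppose--and}), so by a standard twisting argument one can pick $\delta$ clearing all finitely many ramified primes of $L/K$ outside $\infty$, exploiting that $L'/K\subseteq K^{gen}/K$ is already unramified at every finite prime. Hence the existence statement is equivalent to the Embedding Criterion condition
\[
\beta:=\prod_{j\le i,\ j\in S_i'}(d_i',d_j')=1\in\mathrm{Br}(\mathbb{Q}),
\]
where $c_{ij}=1$ precisely when $j\in S_i'$ after identifying the $x_i'$ with lifts in $G$ of the basis $\{e_i\}$ of $G/\Phi(G)$.

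Next I would pass to local invariants: $\beta=0$ in $\mathrm{Br}(\mathbb{Q})$ if and only if $\mathrm{inv}_v(\beta)=\prod_{j\le i,\ j\in S_i'}(d_i',d_j')_v=1$ for every place $v$. Since every $d_i'$ divides $d$, at a finite prime $p\nmid d$ all $d_i'$ are $p$-adic units when $p$ is odd and are $\equiv 1\pmod 4$ when $p=2$ (being products of odd prime discriminants, each $\equiv 1\pmod 4$), so $\mathrm{inv}_p(\beta)=1$ automatically; the product formula $\prod_v\mathrm{inv}_v(\beta)=1$ then forces $\mathrm{inv}_\infty(\beta)=1$ once all $\mathrm{inv}_p(\beta)$ with $p\mid d$ equal $1$. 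Therefore $\beta=0$ in $\mathrm{Br}(\mathbb{Q})$ if and only if $\mathrm{inv}_p(\beta)=1$ for all $p\mid d$, which is exactly the condition that each factor of the displayed product $\prod_{p\mid d}\bigl(1+(-1)^{\chi(p)}(\cdots)\bigr)$ equals $2$ rather than $0$.

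Finally I would evaluate $\mathrm{inv}_p(\beta)$ for $p\mid d$ from the explicit Hilbert symbol formulas, writing $\alpha_i(p)=v_p(d_i')$ and $u_i=d_i'/p^{\alpha_i(p)}$. For odd $p$ one has $(a,b)_p=(-1)^{v_p(a)v_p(b)(p-1)/2}\left(\tfrac{a/p^{v_p(a)}}{p}\right)^{v_p(b)}\left(\tfrac{b/p^{v_p(b)}}{p}\right)^{v_p(a)}$; multiplying this over the pairs $j\le i$ with $j\in S_i'$ collects the powers of $-1$ into $(-1)^{\chi(p)}$ with $\chi(p)=\tfrac{p-1}{2}\sum\alpha_i(p)\alpha_j(p)$ and the Legendre symbols into $\left(\tfrac{\prod u_i^{\alpha_j(p)}u_j^{\alpha_i(p)}}{p}\right)$, which is the stated expression. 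For $p=2$ I would instead use $(a,b)_2=(-1)^{\epsilon(u)\epsilon(w)+v_2(a)\omega(w)+v_2(b)\omega(u)}$ with $\epsilon(n)=\tfrac{n-1}{2}\bmod 2$ and $\omega(n)=\tfrac{n^2-1}{8}\bmod 2$: the $\epsilon\epsilon$-terms sum to the stated $\chi(2)$, while the $v_2\cdot\omega$-terms reassemble, through the identity $\left(\tfrac{n}{2}\right)=(-1)^{\omega(n)}$ for the Kronecker symbol, into $\left(\tfrac{\prod(d_j'/2^{\alpha_j})^{\alpha_i}(d_i'/2^{\alpha_i})^{\alpha_j}}{2}\right)$. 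Since the argument of each symbol is a unit at $p$, this yields $\mathrm{inv}_p(\beta)=(-1)^{\chi(p)}\left(\tfrac{\cdots}{p}\right)$, and assembling the equivalences proves the lemma. I expect the two main obstacles to be the careful reduction of the second paragraph — that all the $(G,H,T)$- and unramifiedness constraints collapse to the bare embedding problem, in particular the twisting argument eliminating spurious ramification — and the sign bookkeeping for $p=2$, where one must separate the $\epsilon$- and $\omega$-contributions correctly and recognize the latter as the Kronecker symbol at $2$.
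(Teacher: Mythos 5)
Your proof takes essentially the same route as the paper: apply the Embedding Criterion (Theorem \ref{thm:-Let-,}) to reduce the existence of the desired extension to the vanishing of $\prod_{j\le i,\,j\in S_i'}(d_i',d_j')$ in $\mathrm{Br}(\mathbb{Q})$, then reduce to local invariants at $p\mid d$ via the product formula, and finally unwind the local Hilbert symbols into the stated Legendre symbols and sign factors $(-1)^{\chi(p)}$. The Hilbert-symbol computations at odd $p$ and at $p=2$, including identifying the $\omega$-contributions with the Kronecker symbol $\left(\tfrac{\cdot}{2}\right)$, match the paper's calculation.

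Where you go further than the paper is in justifying the reduction itself. The paper dispatches the ramification condition with a parenthetical asserting that the extension $L$ produced by the Embedding Criterion is automatically unramified above $K$ at finite places. Taken literally this is too strong: a solution $L=L'(\sqrt\mu)$ can be ramified over $K$ at primes $\ell\nmid d$ (and potentially also wildly at $2$), and one must twist by $\delta\in\mathbb{Q}^*/\mathbb{Q}^{*2}$ to clear that ramification. You supply exactly this twisting argument, noting that the solution set is a torsor under $\mathbb{Q}^*/\mathbb{Q}^{*2}$ and that $L'/K$ is unramified at finite places, which is the honest version of the paper's claim. You also explicitly verify that the $(G,H,T)$ constraints on $(L,\phi)$ propagate from $(L',\phi')$, which the paper treats as obvious. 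Overall your argument is correct and fills in genuine gaps that the paper leaves implicit, but the core strategy is the same.
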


\begin{proof}
We identify $\mathrm{Gal}\left(L'/\mathbb{Q}\right)$ with $G/\Phi\left(G\right)$
using $\phi'$. Then by Theorem \ref{thm:-Let-,} such an extension
exists if and only if $\prod_{j\le i}\left(d_{i}',d_{j}'\right)^{c_{ij}}=1\in\mathrm{Br}\left(\mathbb{Q}\right)$
(it is easy to show that when $F=\mathbb{Q}$ and $L'/K$ is unramified
at the finite places, the extension $L$ given by Theorem \ref{thm:-Let-,}
is always also unramified above $K$ at the finite places). Clearly
$c_{ij}=1$ if and only if $j\in S_{i}'$.

By the discussion preceding the theorem, since $inv_{v}$ is an injection
such an extension exists if and only if

\begin{align*}
1 & =inv_{v}\left(\prod_{j\le i,j\in S_{i}'}\left(d_{i}',d_{j}'\right)\right)\\
 & =\prod_{j\le i,j\in S_{i}'}\left(inv_{v}\left(d_{i}',d_{j}'\right)\right)\\
 & =\prod_{j\le i,j\in S_{i}'}\left(d_{i}',d_{j}'\right)_{v}.
\end{align*}
 at all places $v$ of $\mathbb{Q}$. By the product formula for the
Hilbert symbol this condition only needs to be checked at the finite
primes. This is trivially satisfied for $v\nmid d$.

To simplify notation let $\alpha_{i}\left(p\right)=\mathrm{ord}_{p}d_{i}'$
and $\alpha_{i,j}\left(p\right)=\alpha_{i}\left(p\right)\alpha_{j}\left(p\right)$.
Note the property of the quadratic Hilbert symbol $\left(d'_{i},d'_{j}\right)_{v}=\left(d'_{j},d'_{i}\right)_{v}$.
Then the condition at odd $p$ becomes
\[
\left(-1\right)^{\frac{\left(p-1\right)}{2}\sum_{j\le i,j\in S_{i}'}\alpha_{i,j}\left(p\right)}\left(\frac{\prod_{j\le i,j\in S_{i}'}\left(d'_{j}/p^{\alpha_{j}\left(p\right)}\right)^{\alpha_{i}\left(p\right)}\left(d'_{i}/p^{\alpha_{i}\left(p\right)}\right)^{\alpha_{j}\left(p\right)}}{p}\right)=1
\]
 The condition at $p=2$ becomes
\[
-1^{\sum_{j\le i,j\in S_{i}'}\frac{\left(d_{j}'/2^{\alpha_{j}\left(2\right)}\right)-1}{2}\frac{\left(d_{i}'/2^{\alpha_{i}\left(2\right)}\right)-1}{2}+\alpha_{i}\left(2\right)\frac{\left(d_{j}'/2^{\alpha_{j}\left(2\right)}\right)^{2}-1}{8}+\alpha_{j}\left(2\right)\frac{\left(d_{i}'/2^{\alpha_{i}\left(2\right)}\right)^{2}-1}{8}}=1
\]
 which can be rewritten as 
\[
\left(-1\right)^{\sum_{j\le i,j\in S_{i}'}\frac{\left(d_{j}'/2^{\alpha_{j}\left(2\right)}\right)-1}{2}\frac{\left(d_{i}'/2^{\alpha_{i}\left(2\right)}\right)-1}{2}}\left(\frac{\prod_{j\le i,j\in S_{i}'}\left(d'_{j}/p^{\alpha_{j}\left(2\right)}\right)^{\alpha_{i}\left(2\right)}\left(d'_{i}/p^{\alpha_{i}\left(2\right)}\right)^{\alpha_{j}\left(2\right)}}{2}\right).
\]
The result follows.
\end{proof}
For each $i=1,\ldots,r$ let $x_{i}\in G$ be any lift of $t_{i}$.
Let $S_{i}=\left\{ 1\le j\le r\mid\left[x_{i},x_{j}\right]\neq1\right\} $.
\begin{lem}
\label{lem:extension_exists_above_di}Let $\left(L',\phi'\right)\in\mathcal{K}_{T}$
and $d=\prod_{i=1}^{r}d_{i}\in\mathcal{D}$ the corresponding factorization
(see Proposition \ref{prop:There-is-a}). There exists a $\left(G,H,T\right)$-extension
$L/\mathbb{Q}$ with isomorphism $\phi$ such that $L'\subset L$
and $\phi'^{-1}\circ\overline{\phi}$ is the natural surjection of
Galois groups if and only if 
\begin{equation}
\prod_{i=1}^{r}\prod_{p\mid d_{i}}\left(1+\left(\frac{\prod_{j\in S_{i}}d_{j}}{p}\right)^{\mathrm{ord}_{p}\left(d\right)}\right)\ne0.\label{eq:extension_exists_above_di}
\end{equation}
\end{lem}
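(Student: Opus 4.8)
The idea is to return to Hilbert symbols and then rewrite everything in terms of the factorization $d=\prod_{i=1}^r d_i$. As in the proof of Lemma~\ref{lem:asd}, after identifying $\mathrm{Gal}(L'/\mathbb{Q})$ with $G/\Phi(G)$ via $\phi'$ the Embedding Criterion (Theorem~\ref{thm:-Let-,}) tells us that a Galois extension $L\supseteq L'$ of $\mathbb{Q}$ with $\mathrm{Gal}(L/\mathbb{Q})\cong G$ restricting over $L'$ to the quotient map $G\to G/\Phi(G)$ exists if and only if
\[
\prod_{\substack{i\le j\\ [x_i',x_j']\ne1\ \text{or}\ (x_i')^2\ne1}}\bigl(d_i',d_j'\bigr)_v=1\qquad\text{for every finite place }v ,
\]
and, as noted there, such an $L$ is automatically unramified above $K$ at the finite places; since $\Phi(G)\subseteq H$, it is then a $(G,H,T)$-extension with $\overline\phi=\phi'$. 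By Proposition~\ref{prop:There-is-a}, $d_j'=\prod_{i\in V_j}d_i$ with $V_j=\{\,i:j\in U_i\,\}$, so the task becomes a computation with Hilbert symbols of the $d_i$.

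First I would expand bilinearly, $(d_i',d_j')_v=\prod_{k\in V_i}\prod_{l\in V_j}(d_k,d_l)_v$, so the displayed product equals $\prod_{k\le l}(d_k,d_l)_v^{e_{kl}}$ for integers $e_{kl}$ to be determined mod $2$. Since $G$ is a central extension of $\mathbb{F}_2^{n}$ by $\langle a\rangle$, the commutator subgroup $[G,G]\subseteq\langle a\rangle$ is central of order at most $2$, so commutators are bilinear on $G$; choosing $x_i=\prod_{j\in U_i}x_j'$ as a lift of $t_i$ gives $[x_i,x_j]=\prod_{k\in U_i}\prod_{l\in U_j}[x_k',x_l']$. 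A short count, using $k\in V_i\iff i\in U_k$, then shows that for $k\ne l$ the exponent $e_{kl}$ is odd exactly when $[x_k,x_l]\ne1$, i.e. when $l\in S_k$. For the diagonal, $(x_k)^2$ is independent of the choice of lift of $t_k$, and since $T$ is admissible $t_k$ has an order-$2$ lift, so $(x_k)^2=1$; expanding $1=(x_k)^2=\prod_{j\in U_k}(x_j')^2\cdot\prod_{\{i,j\}\subseteq U_k,\,i\ne j}[x_i',x_j']$ inside $\langle a\rangle$ yields $\#\{j\in U_k:(x_j')^2\ne1\}+\#\{\{i,j\}\subseteq U_k:[x_i',x_j']\ne1\}\equiv0\pmod 2$, which is exactly $e_{kk}\equiv0$. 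Hence the criterion simplifies to: $\prod_{k<l,\ l\in S_k}(d_k,d_l)_v=1$ for every finite place $v$.

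Next I would evaluate $(d_k,d_l)_p$ at a prime $p$, using that the $d_i$ are pairwise coprime fundamental discriminants, so $p$ divides at most one of $d_k,d_l$. If $p\nmid d_kd_l$ the symbol is trivial --- for $p=2$ because a fundamental discriminant prime to $2$ is $\equiv1\pmod4$. If $p\mid d_k$ and $p\nmid d_l$, then $(d_k,d_l)_p=\bigl(\tfrac{d_l}{p}\bigr)^{\mathrm{ord}_p d_k}=\bigl(\tfrac{d_l}{p}\bigr)^{\mathrm{ord}_p d}$ as a Kronecker symbol, the uniformity at $p=2$ coming from $\bigl(\tfrac{d_l}{2}\bigr)^2=1$ (so the value $\mathrm{ord}_2 d\in\{2,3\}$ enters correctly). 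Fixing $p\mid d$ and letting $i_0$ be the unique index with $p\mid d_{i_0}$, the pairs $\{k,l\}$ with $l\in S_k$ contributing nontrivially are precisely those containing $i_0$; since $j\in S_{i_0}\iff i_0\in S_j$ they are indexed by $j\in S_{i_0}$, giving
\[
\prod_{k<l,\ l\in S_k}(d_k,d_l)_p=\prod_{j\in S_{i_0}}\Bigl(\tfrac{d_j}{p}\Bigr)^{\mathrm{ord}_p d}=\Bigl(\tfrac{\prod_{j\in S_{i_0}}d_j}{p}\Bigr)^{\mathrm{ord}_p d}.
\]
Since $i_0\notin S_{i_0}$ this symbol is $\pm1$, so it equals $1$ for all $p\mid d$ if and only if $1+\bigl(\tfrac{\prod_{j\in S_i}d_j}{p}\bigr)^{\mathrm{ord}_p d}\ne0$ for every $i$ and every $p\mid d_i$, which is exactly \eqref{eq:extension_exists_above_di}.

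The main obstacle is the bookkeeping in the middle step: pinning down that the off-diagonal exponents $e_{kl}$ single out exactly the neighbours $l\in S_k$, and that the diagonal exponents vanish --- the latter being where admissibility of $T$ (through $(x_k)^2=1$) is genuinely used. Alongside this is the routine but care-demanding verification that the $2$-adic Hilbert symbols reassemble into a Kronecker symbol carrying the correct power $\mathrm{ord}_2 d$.
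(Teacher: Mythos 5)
Your proof is correct, and it takes a genuinely different route from the paper's. The paper first passes through its Lemma~\ref{lem:asd}, which unpacks the Embedding Criterion into a product over $p\mid d$ of Legendre symbols in the $d_j'$ (indexed by $1\le j\le n$) carrying an explicit sign character $\chi(p)$, and then proves the lemma by a two-stage parity chase: showing $\chi(p)$ is even (using order-$2$ lifts, i.e.\ admissibility), and then grouping primes by the $d_i$ and re-indexing so that the $d_j'$-symbols recombine into $\bigl(\tfrac{\prod_{j\in S_i}d_j}{p}\bigr)^{\mathrm{ord}_p(d)}$. You instead bypass Lemma~\ref{lem:asd} entirely and work at the level of Hilbert symbols: you expand $\prod_{i\le j}(d_i',d_j')_v^{c_{ij}}$ bilinearly via $d_j'=\prod_{i\in V_j}d_i$, identify the resulting exponent $e_{kl}$ of $(d_k,d_l)_v$ with the vanishing or not of $[x_k,x_l]$ via bilinearity of commutators in the central extension, and kill the diagonal $e_{kk}$ by the identity $(x_k)^2=\prod_{j\in U_k}(x_j')^2\prod_{i<j,\;i,j\in U_k}[x_i',x_j']$ together with the admissibility of $T$. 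This is conceptually cleaner: the sign factor $\chi(p)$ the paper has to track and cancel simply never appears, because the same parity fact (order-$2$ lifts) is used at the group-theoretic level to show the diagonal Hilbert symbols drop out before any Legendre symbols are written down. The only caveat is that the final reassembly into Kronecker symbols at $p=2$ still requires the same $2$-adic care as the paper's version, which you flag correctly.
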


\begin{proof}
By Lemma \ref{lem:asd} there exists a $\left(G,H,T\right)$-extension
$L/\mathbb{Q}$ with isomorphism $\phi$ such that $L'\subset L$
and $\phi'^{-1}\circ\overline{\phi}$ is the natural surjection of
Galois groups if and only if 
\begin{equation}
\prod_{p\mid d}\left(1+\left(-1\right)^{\chi\left(p\right)}\left(\frac{\prod_{j\le i,j\in S_{i}'}\left(d'_{j}/p^{\alpha_{j}\left(p\right)}\right)^{\alpha_{i}\left(p\right)}\left(d'_{i}/p^{\alpha_{i}\left(p\right)}\right)^{\alpha_{j}\left(p\right)}}{p}\right)\right)\neq0\label{eq:firstcondition}
\end{equation}
 where 
\[
\chi\left(p\right)=\begin{cases}
\frac{\left(p-1\right)}{2}\sum_{j\le i,j\in S_{i}'}\alpha_{i,j}\left(p\right) & p\text{ odd}\\
\sum_{j\le i,j\in S_{i}'}\frac{\left(d_{j}'/2^{\alpha_{j}\left(2\right)}\right)-1}{2}\frac{\left(d_{i}'/2^{\alpha_{i}\left(2\right)}\right)-1}{2} & p=2.
\end{cases}
\]

Since each $x_{i}$ has order 2 this implies that the size of the
set 
\[
\left\{ \left(j_{1},j_{2}\right)\mid1\le j_{1}\le n,j_{2}\le j_{1},j_{2}\in S_{j_{1}}',j_{1},j_{2}\in U_{i}\right\} 
\]
 is even for all $1\le i\le r$. Note that for $1\le j\le n$ and
any $p$, $\alpha_{j}\left(p\right)\ge1$ if and only if $p\mid d_{i}$
and $j\in U_{i}$ for some $1\le i\le r$.

Hence for $p$ odd
\begin{align*}
\chi\left(p\right) & =\frac{\left(p-1\right)}{2}\sum_{j_{1}\le j_{2},j_{1}\in S_{j_{2}}';j_{1},j_{2}\in U_{i}}1\\
 & =\frac{\left(p-1\right)}{2}\left|\left\{ \left(j_{1},j_{2}\right)\mid1\le j_{1}\le n,j_{2}\le j_{1},j_{2}\in S_{j_{1}}',j_{1},j_{2}\in U_{i}\right\} \right|
\end{align*}
 for some $i$. Thus $\chi\left(p\right)$ is even for $p$ odd. Since
every odd fundamental discriminant is congruent to 1 mod4, $\frac{\left(d_{i}'/2^{\alpha_{i}\left(2\right)}\right)-1}{2}$
is odd exactly when $\alpha_{i}\left(2\right)\ge1$. This similarly
implies that $\chi\left(2\right)$ is even.

Now we partition the product over $p\mid d$ in $\left(\ref{eq:firstcondition}\right)$by
the $d_{i}$. Fix $1\le i\le r$ and $p\mid d_{i}$. Recall this
implies $\alpha_{j}\left(p\right)\ge1$ exactly when $j\in U_{i}$.
Fix $1\le j\le n$. Then 
\begin{align*}
\prod_{k\in S_{j}'}\left(d'_{j}/p^{\alpha_{j}\left(p\right)}\right)^{\alpha_{k}\left(p\right)}\left(d'_{k}/p^{\alpha_{k}\left(p\right)}\right)^{\alpha_{j}\left(p\right)} & =\prod_{k\in S_{j}'\cap U_{i}}\left(d'_{j}/p^{\alpha_{j}\left(p\right)}\right)^{\alpha_{k}\left(p\right)}\left(d'_{k}/p^{\alpha_{k}\left(p\right)}\right)^{\alpha_{j}\left(p\right)}\\
 & \times\prod_{k\in S_{j}'\backslash U_{i}}\left(d'_{k}\right)^{\alpha_{j}\left(p\right)}\\
 & =\left(d_{j}'/p^{\alpha_{j}\left(p\right)}\right)^{\mathrm{ord}_{p}\left(d\right)\left|S_{j}'\cap U_{i}\right|}\cdot a
\end{align*}
for some $a$. Hence considering the contribution from each $j$ we
see that for any $1\le l\le r$ with $l\neq i$ the power of $d_{l}$
in the factor corresponding to $d_{i}$ in $\left(\ref{eq:firstcondition}\right)$
will be $\mathrm{ord}_{p}\left(d\right)\sum_{j\in U_{l}}\left|S_{j}'\cap U_{i}\right|$
and $\sum_{j\in U_{l}}\left|S_{j}'\cap U_{i}\right|$ is odd exactly
when $x_{l}$ and $x_{i}$ do not commute. Note division by $p$ only
occurs in $\left(\ref{eq:firstcondition}\right)$ when $\alpha_{j,k}\left(p\right)\ge1$
hence the numerator of the legendre symbol will in fact be a product
of fundamental discriminants (negative signs remaining after any division
operation cancel). Thus this factor is
\[
\prod_{p\mid d_{i}}\left(1+\left(\frac{\prod_{j\in S_{i}}d_{j}}{p}\right)^{\mathrm{ord}_{p}\left(d\right)}\right).
\]
\end{proof}
Let $\mathrm{Aut}_{H,T}(G/\Phi\left(G\right))$ be the subgroup of
$\mathrm{Aut}(G/\Phi\left(G\right))$ preserving the set $\overline{H}$
and the set $T$. Let $\mathrm{Aut}_{H,T}(G,\Phi\left(G\right))$
be the subgroup of $\mathrm{Aut}_{H,T}(G/\Phi\left(G\right))$ consisting
of elements which lift to $\mathrm{Aut}\left(G\right)$.
\begin{thm}
\label{thm:f_T_formula}Suppose $\left(G,H,T\right)$ is admissible.
Then
\begin{align}
f_{T}\left(d\right) & =\frac{1}{2^{n}\left|\mathrm{Aut}_{H,T}(G,\Phi\left(G\right))\right|}\sum_{\left(d_{1},\ldots,d_{r}\right)\in\mathcal{D}}\prod_{i=1}^{r}\prod_{p\mid d_{i}}\left(1+\left(\frac{\prod_{j\in S_{i}}d_{j}}{p}\right)^{\mathrm{ord}_{p}\left(d\right)}\right).\label{eq:formulathmstatement}
\end{align}
\end{thm}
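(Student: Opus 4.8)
The plan is to count in two ways the set $\Sigma$ of triples $(L,L',\phi')$ in which $L/K$ is an (unramified away from infinity) $(G,H,T)$-extension, $(L',\phi')\in\mathcal K_T$, and $L$ admits an associated isomorphism $\phi$ for which $\phi'^{-1}\circ\overline\phi$ is the natural surjection of Galois groups. Note that this last condition already forces $L^g=L'$: it says $\ker(\overline\phi)=\operatorname{Gal}(L/L')$, i.e. $\Phi(\operatorname{Gal}(L/\mathbb Q))$ fixes exactly $L'$, and $L^{\Phi(\operatorname{Gal}(L/\mathbb Q))}$ and $L^g=L\cap K^{\mathrm{gen}}$ both equal the maximal elementary-abelian-over-$\mathbb Q$ subextension of $L$. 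The first point I would record is that every $(G,H,T)$-extension $L/K$ does yield triples in $\Sigma$: given any associated isomorphism $\phi$, the induced isomorphism $\phi'$ on the genus quotient $\operatorname{Gal}(L^g/\mathbb Q)\cong G/\Phi(G)$ lies in $\mathcal K_T$, because $\phi'(\operatorname{Gal}(L^g/K))=\overline H$ (as $H$ is the unique index-$2$ subgroup containing $\Phi(G)$ with that Frattini image, using $\Phi(G)\subseteq H$), and $\phi'$ carries the inertia projections of $L^g$ to $T$ (inertia in a quotient being the image of inertia, so these projections are the image of $U$).

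Counting $\Sigma$ by its first coordinate $L$: I would show each $L$ contributes exactly $\lvert\mathrm{Aut}_{H,T}(G,\Phi(G))\rvert$ triples. The set of associated isomorphisms of $L$ is a torsor under $\mathrm{Aut}_{H,T}(G):=\{\psi\in\mathrm{Aut}(G)\mid\psi(H)=H,\ \overline\psi(T)=T\}$ via post-composition, and the map $\phi\mapsto\phi'$ has fibers the cosets of $K:=\ker(\mathrm{Aut}(G)\to\mathrm{Aut}(G/\Phi(G)))$, which lies in $\mathrm{Aut}_{H,T}(G)$. The induced homomorphism $\mathrm{Aut}_{H,T}(G)\to\mathrm{Aut}(G/\Phi(G))$ has kernel $K$ and image exactly $\mathrm{Aut}_{H,T}(G,\Phi(G))$: surjectivity holds because a lift to $\mathrm{Aut}(G)$ of any $\sigma\in\mathrm{Aut}_{H,T}(G,\Phi(G))$ automatically preserves $H$ (its image is an index-$2$ subgroup containing $\Phi(G)$ with Frattini image $\sigma(\overline H)=\overline H$). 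Hence the number of distinct pairs $(L^g,\phi')$ compatible with a fixed $L$ is $\lvert\mathrm{Aut}_{H,T}(G)\rvert/\lvert K\rvert=\lvert\mathrm{Aut}_{H,T}(G,\Phi(G))\rvert$, so $\lvert\Sigma\rvert=f_T(d)\cdot\lvert\mathrm{Aut}_{H,T}(G,\Phi(G))\rvert$.

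Counting $\Sigma$ by $(L',\phi')$: for each $(L',\phi')\in\mathcal K_T$, Lemma \ref{lem:extension_exists_above_di} tells us exactly when an $L$ exists, namely when the product $\prod_{i}\prod_{p\mid d_i}(1+(\prod_{j\in S_i}d_j/p)^{\mathrm{ord}_p(d)})$ is nonzero for the factorization $d=\prod d_i$ attached to $(L',\phi')$ by Proposition \ref{prop:There-is-a}, and Proposition \ref{prop:If-there-exists} says that when one exists there are exactly $2^{\omega(d)-n}$ of them. Re-indexing the sum over $\mathcal K_T$ by $\mathcal D$ via Proposition \ref{prop:There-is-a} gives $\lvert\Sigma\rvert=2^{\omega(d)-n}\sum_{(d_1,\ldots,d_r)\in\mathcal D}[\text{product}\neq 0]$. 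Finally, each of the $\omega(d)$ factors in that product equals $0$ or $2$, so $[\text{product}\neq 0]=2^{-\omega(d)}\prod_{i}\prod_{p\mid d_i}(1+(\prod_{j\in S_i}d_j/p)^{\mathrm{ord}_p(d)})$; substituting and dividing by $\lvert\mathrm{Aut}_{H,T}(G,\Phi(G))\rvert$ collapses the power of $2$ to $2^{-n}$ and gives the stated formula.

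I expect the main obstacle to be the second step, the automorphism bookkeeping: establishing cleanly that a single field $L$ is compatible with precisely $\lvert\mathrm{Aut}_{H,T}(G,\Phi(G))\rvert$ genus-data pairs $(L',\phi')$ — not $\lvert\mathrm{Aut}_{H,T}(G)\rvert$ nor the order of some intermediate group — which rests on identifying $K$ inside $\mathrm{Aut}_{H,T}(G)$ and on the surjectivity of $\mathrm{Aut}_{H,T}(G)\twoheadrightarrow\mathrm{Aut}_{H,T}(G,\Phi(G))$, the latter using $\Phi(G)\subseteq H$ and the fact that $\mathrm{Aut}_{H,T}(G,\Phi(G))$ is by definition the subgroup that lifts to $\mathrm{Aut}(G)$. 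A secondary care point is that "admits an associated isomorphism inducing $\phi'$" already encodes $L^g=L'$, so that the $2^{\omega(d)-n}$ of Proposition \ref{prop:If-there-exists} is exactly the fiber of the second count, with no further normalization needed.
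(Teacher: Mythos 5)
Your proof is correct and follows essentially the same route as the paper's: count pairs $(L',\phi')$ via Proposition \ref{prop:There-is-a}, apply Lemma \ref{lem:extension_exists_above_di} and Proposition \ref{prop:If-there-exists} to evaluate the fiber over each, and show the resulting sum overcounts $f_T$ by exactly $\left|\mathrm{Aut}_{H,T}(G,\Phi(G))\right|$. Your treatment of the automorphism bookkeeping — identifying the associated isomorphisms as a torsor under $\mathrm{Aut}_{H,T}(G)$, noting the kernel of reduction mod $\Phi(G)$ sits inside it because $\Phi(G)\subseteq H$, and computing the image to be $\mathrm{Aut}_{H,T}(G,\Phi(G))$ — is a somewhat more explicit unpacking of the paper's one-sentence claim that each extension is counted $\left|\mathrm{Aut}_{H,T}(G,\Phi(G))\right|$ times, but it is the same underlying argument.
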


\begin{proof}
Let $L'=\mathbb{Q}(\sqrt{d_{1}'},\ldots,\sqrt{d_{n}'})$ and suppose
$\left(L',\phi'\right)\in\mathcal{K}_{T}$ with corresponding tuple
$\left(d_{1},\ldots,d_{r}\right)\in\mathcal{D}$ given by Proposition
\ref{prop:There-is-a}.  By Lemma \ref{lem:extension_exists_above_di}
there exists a $\left(G,H,T\right)$-extension $L/\mathbb{Q}$ with
$L^{g}=L'$ and associated isomorphism $\phi$ such that $\phi'^{-1}\circ\overline{\phi}$
is the natural surjection of Galois groups (equivalently $\phi'=\overline{\phi}$
) if and only if 
\begin{equation}
\prod_{i=1}^{r}\prod_{p\mid d_{i}}\left(1+\left(\frac{\prod_{j\in S_{i}}d_{j}}{p}\right)^{\mathrm{ord}_{p}\left(d\right)}\right)\ne0.\label{eq:formulaproof}
\end{equation}

This function takes on the values 0 or $2^{\omega\left(d\right)}$.
By Proposition \ref{prop:If-there-exists} if a $\left(G,H,T\right)$-extension
$L/\mathbb{Q}$ with $L^{g}=L'$ and associated isomorphism $\phi$
such that $\phi'=\overline{\phi}$ exists, there are exactly $2^{\omega\left(d\right)-n}$
such extensions. Hence the formula $\left(\ref{eq:formulaproof}\right)$
divided by $2^{n}$ gives the number of $\left(G,H,T\right)$-extensions
$L/\mathbb{Q}$ with $L^{g}=L'$ and associated isomorphism $\phi$
such that $\phi'=\overline{\phi}$.

For fixed $L'$ the group $\mathrm{Aut}_{H,T}(G/\Phi\left(G\right))$
acts freely transitively on the set $I=\left\{ \left(L',\phi'\right)\in\mathcal{K}_{T}\right\} $
by composition with $\phi'$. The set of $\left(G,H,T\right)$-extensions
$L/\mathbb{Q}$ with $L^{g}=L'$ and associated isomorphism $\phi$
such that $\phi'=\overline{\phi}$ is equal for every element in the
orbit of $\phi'$ under $\mathrm{Aut}_{H,T}(G,\Phi\left(G\right))$.
Hence summing over $\mathcal{D}$ overcounts by a factor of $\left|\mathrm{Aut}_{H,T}(G,\Phi\left(G\right))\right|$.
This completes the proof.
\end{proof}
\begin{rem}
We will eventually look at the growth of the function \ref{eq:formulathmstatement}
as $d$ ranges over all fundamental discriminants. We can make some
heuristic considerations to see what kind of behaviour to expect.
If we expect the legendre symbol to take each value $\pm1$ with probability
$1/2$ then the expected value of $\prod_{i,S_{i}\neq\varnothing}\prod_{p\mid d_{i}}\left(1+\left(\frac{\prod_{j\in S_{i}}d_{j}}{p}\right)\right)$
is 
\[
\frac{1}{2^{\omega\left(\prod_{i,S_{i}\neq\varnothing}d_{i}\right)}}2^{\omega\left(\prod_{i,S_{i}\neq\varnothing}d_{i}\right)}+\left(1-\frac{1}{2^{\omega\left(\prod_{i,S_{i}\neq\varnothing}d_{i}\right)}}\right)0=1.
\]
Let $r_{1}=\left|\left\{ i\mid S_{i}\neq\varnothing\right\} \right|$
and $r_{2}=r-r_{1}$. Hence on average we expect 
\begin{align*}
2^{n}\left|\mathrm{Aut}_{H,T}(G,\Phi\left(G\right))\right|f_{T}\left(d\right) & =\sum_{d=d_{1}\cdots d_{r}}2^{\omega\left(\prod_{i,S_{i}=\varnothing}d_{i}\right)}\\
 & =\sum_{i=0}^{\omega\left(d\right)}2^{i}\binom{\omega\left(d\right)}{i}r_{2}^{i}r_{1}^{\omega\left(d\right)-i}\\
 & =r_{1}^{\omega\left(d\right)}\sum_{i=0}^{\omega\left(d\right)}\left(\frac{2r_{2}}{r_{1}}\right)^{i}\binom{\omega\left(d\right)}{i}\\
 & =r_{1}^{\omega\left(d\right)}\left(1+\frac{2r_{2}}{r_{1}}\right)^{\omega\left(d\right)}\\
 & =\left(r_{1}+2r_{2}\right)^{\omega\left(d\right)}.
\end{align*}
Thus we expect $\sum_{d<X}f_{T}\left(d\right)\sim C\left(G,H,T\right)X\left(\log X\right)^{r_{1}+2r_{2}-1}$
for some constant $C\left(G,H,T\right)$. We will show that this is
indeed the case.

We can interpret the quantity $r_{1}+2r_{2}$ as the number of conjugacy
classes of lifts of $T$ in $G$. Recall $a$ is the distinguished
central element of order 2 in $G$. If $S_{i}=\varnothing$ then $x_{i}$
commutes with all the $x_{j}$. Hence $x_{i}\in Z\left(G\right)$
and there are the two conjugacy classes $\left\{ x_{i}\right\} $
and $\left\{ x_{i}a\right\} $. If $S_{i}\neq\varnothing$ then there
is the conjugacy class $\left\{ x_{i},x_{i}a\right\} $.
\end{rem}

\section{\label{sec:isotropic subsets quad forms}Isotropic subsets for quadratic
forms}

In this section we prove a result about maximal isotropic subsets
of quadratic forms in characteristic 2. We will use this result to
deduce statements about maximal disconnected subgraphs of the graph
$\mathcal{G}^{*}\left(T\right)$ on the generating set $T$ (see \ref{sec:The graph})
by encoding the graph structure using a particular quadratic form
defined in the next section.

We collect some facts regarding quadratic forms in characteristic
2. Let $Q:\mathbb{F}_{2}^{n}\longrightarrow\mathbb{F}_{2}$ be a quadratic
form and let $B\left(u,v\right)=Q\left(u+v\right)+Q\left(u\right)+Q\left(v\right)$
be the corresponding bilinear form.

For any subspace $W\subset\mathbb{F}_{2}^{n}$ we define $\mathrm{rad}\left(W\right)=W\cap W^{\perp}$
(where $W^{\perp}$ is taken with respect to $B$). We can decompose
$\mathbb{F}_{2}^{n}=V\oplus\mathrm{rad}(\mathbb{F}_{2}^{n})$, and
$\mathrm{rad}(\mathbb{F}_{2}^{n})=R\oplus R_{0}$ such that $Q\left(R_{0}\right)=0$.
We say $Q$ is non-degenerate if $\mathrm{rad}(\mathbb{F}_{2}^{n})=0$.
Hence in the above notation $Q$ is non-degenerate on $V$ and it
is easy to see that $\dim R=0$ or 1.

We say any $v\in\mathbb{F}_{2}^{n}$ is singular if $v\neq0$ and
$\mathbb{F}_{2}^{n}$. We say a subspace $W\subset\mathbb{F}_{2}^{n}$
is totally singular if $Q\left(v\right)=0$ for all $v\in W$.

The following result is Proposition 14.6 from \cite{grovesbook}.
\begin{lem}
\label{lem:if--is}If $Q$ is nondegenerate, given any totally singular
space there exists another one disjoint from it.
\end{lem}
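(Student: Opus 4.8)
The plan is to prove that if $Q$ is a nondegenerate quadratic form on $\mathbb{F}_2^n$ and $W$ is a totally singular subspace, then there is another totally singular subspace disjoint from $W$. The natural strategy is induction on $\dim W$, using the structure theory of quadratic spaces in characteristic $2$, in particular the decomposition of a nondegenerate space into an orthogonal sum of hyperbolic planes (plus possibly a one-dimensional or anisotropic piece, depending on parity of $n$).

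First I would handle the base case $\dim W \le 1$. If $W = 0$ there is nothing to do (take any totally singular line, which exists since $n \ge 2$ and a nondegenerate form on $\mathbb{F}_2^n$ with $n\ge 2$ always represents $0$ nontrivially once $n$ is large enough, or handle small cases directly). If $W = \langle w\rangle$ is a singular line, pick $u$ with $B(w,u) = 1$ (possible since $Q$ is nondegenerate, so $w \notin w^\perp$). Then the plane $P = \langle w, u\rangle$ is nondegenerate (the restriction of $B$ to $P$ is a hyperbolic pairing), hence $P$ is a hyperbolic plane and contains a second totally singular line $\langle w'\rangle$ with $w' \notin \langle w\rangle$; this $w'$ works.

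For the inductive step, given a totally singular $W$ of dimension $\ge 2$, write $W = W_0 \oplus \langle w\rangle$ with $w \ne 0$. By the base-case argument, inside the hyperbolic plane $P = \langle w, u\rangle$ built from $w$ we can split off $P$ orthogonally: $\mathbb{F}_2^n = P \perp P^\perp$, and $Q|_{P^\perp}$ is again nondegenerate. The point is to project $W_0$ into $P^\perp$ and show the projection is still totally singular of the same dimension, so the inductive hypothesis applies; then combine the resulting totally singular subspace of $P^\perp$ with a suitable totally singular line of $P$ chosen to avoid $\langle w\rangle$. The bookkeeping here — arranging that $W \cap (\text{new totally singular space}) = 0$ — is the main obstacle, because the projection of $W_0$ onto $P^\perp$ need not be disjoint from $W_0$ itself, so one has to track how $W$ sits relative to $P$ and adjust the chosen complement in $P^\perp$ accordingly.

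An alternative, possibly cleaner, route is a counting/dimension argument: a nondegenerate quadratic space over $\mathbb{F}_2$ has Witt index $m$ with $n = 2m$ or $n = 2m+1$, all maximal totally singular subspaces have dimension $m$, and $W$ (being totally singular) satisfies $\dim W \le m$. One can then invoke the fact that the stabilizer in $O(Q)$ of $W$ acts transitively enough, or more concretely extend $W$ to a maximal totally singular subspace $M_1$ and separately build a second maximal totally singular subspace $M_2$ with $M_1 \cap M_2 = 0$ (standard for hyperbolic spaces: the two "halves" of an orthogonal sum of hyperbolic planes), then observe $W \cap M_2 \subseteq M_1 \cap M_2 = 0$. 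I would lean toward this second approach since Lemma~\ref{lem:if--is} is quoted from \cite{grovesbook} anyway, and the cleanest proof just reduces to the hyperbolic-space case; the only real work is confirming that the chosen $M_2$ is genuinely totally singular in characteristic $2$ (where "totally isotropic for $B$" and "totally singular for $Q$" differ), which the explicit hyperbolic basis makes transparent.
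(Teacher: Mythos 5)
The paper does not supply a proof of this lemma; it simply cites Proposition~14.6 of Grove's book. So there is nothing internal to compare your argument against, and what matters is whether your sketch closes.

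Your second approach is the right one, and it is the standard proof. But the crucial step — producing a maximal totally singular subspace $M_2$ with $M_1\cap M_2=0$ for an \emph{arbitrary} given maximal totally singular $M_1$ — is not literally ``the two halves of an orthogonal sum of hyperbolic planes.'' Those two standard halves $U=\langle u_1,\dots,u_m\rangle$, $U'=\langle v_1,\dots,v_m\rangle$ are disjoint from each other, but your $M_1$ (obtained by extending $W$) is a priori neither of them, so building $M_2$ ``separately'' does not give $M_1\cap M_2=0$. You need one of two things: (a) Witt's extension theorem for quadratic forms in characteristic $2$, to conclude $O(Q)$ is transitive on maximal totally singular subspaces and then replace $M_1$ by $U$; or (b) a direct construction of a hyperbolic mate for $M_1$ — pick $v_1$ with $B(u_1,v_1)=1$, $B(u_i,v_1)=0$ for $i>1$, replace $v_1$ by $v_1+Q(v_1)u_1$ to force $Q(v_1)=0$, pass to $\langle u_1,v_1\rangle^\perp$, and iterate, yielding $M_2=\langle v_1,\dots,v_k\rangle$ totally singular with $B(u_i,v_j)=\delta_{ij}$, hence $M_1\cap M_2=0$. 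Option (b) is also exactly what the paper later needs (in Proposition~\ref{lem:Let--be} it uses not just disjointness but the hyperbolic bases $\{u_i\},\{v_i\}$), so it is worth spelling out. Your first, inductive approach has the same unresolved bookkeeping you yourself flag, and I would drop it in favor of (b). One small correction: with the paper's definition of nondegeneracy ($\mathrm{rad}(\mathbb{F}_2^n)=0$, i.e.\ the alternating form $B$ is nondegenerate), $n$ is necessarily even, so the case $n=2m+1$ you mention does not arise.
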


We extend this result to the case when $Q$ is degenerate and $\ker Q\mid_{\mathbb{F}_{2}^{n\perp}}=0$.
In the above notation this means $\mathbb{F}_{2}^{n}=V\oplus R$.
Let $p_{V},p_{R}$ be the projections onto $V$ and $R$.
\begin{prop}
\label{lem:Let--be}Let $Q$ be a quadratic form on $\mathbb{F}_{2}^{n}$
and let $R=\mathrm{rad}(\mathbb{F}_{2}^{n})$ be non-trivial. Suppose
$\ker(Q\mid_{R})=0$. Given any totally singular subspace $W$ there
exists another totally singular subspace $W'$ of the same size such
that $W\cap W'\subset\left\langle u\right\rangle $ for some $u\in\mathbb{F}_{2}^{n}$
with $p_{R}\left(u\right)\neq0$.
\end{prop}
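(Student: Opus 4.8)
The plan is to reduce to the non-degenerate case (Lemma \ref{lem:if--is}) by peeling off the radical direction $R = \langle r \rangle$, where $\dim R = 1$ since $\ker(Q\mid_R) = 0$ forces $Q(r) = 1$. Write $\mathbb{F}_2^n = V \oplus R$ with $Q$ non-degenerate on $V$. Given a totally singular subspace $W$, decompose it according to how it meets $R$: either $W \cap R = 0$, in which case $p_V$ restricts to an isomorphism $W \cong p_V(W)$, or $W \cap R = R$, which cannot happen since $Q(r) = 1 \neq 0$. So in fact $p_V\mid_W$ is always injective, and $W_0 := p_V(W)$ has the same dimension as $W$. The subtlety is that $W_0$ need not be totally singular in $V$: for $w = v + \varepsilon r$ with $v \in V$, $\varepsilon \in \mathbb{F}_2$, singularity of $w$ gives $0 = Q(v) + \varepsilon Q(r) + B(v, r) = Q(v) + \varepsilon$ since $r \in R = \mathrm{rad}$, so $Q(v) = \varepsilon$. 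Thus $W_0$ splits into the part where $\varepsilon = 0$ (genuinely singular in $V$) and the part where $\varepsilon = 1$.

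Concretely, I would set $W_0^{\mathrm{sing}} = \{v \in W_0 : Q(v) = 0\}$, a subspace of $V$ of codimension at most $1$ in $W_0$, and note it is totally singular for the non-degenerate form $Q\mid_V$: indeed for $v, v' \in W_0^{\mathrm{sing}}$ we have $v + v' \in W_0$ and $Q(v+v') = Q(v) + Q(v') + B(v,v') = B(v,v')$, while $v + v'$ lifts to $w + w' \in W$ (using injectivity of $p_V\mid_W$) which is singular, forcing the $\varepsilon$-component of $w + w'$ — equal to $Q(v+v')$ by the computation above — to vanish; hence $B(v,v') = 0$ on $W_0^{\mathrm{sing}}$. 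Now apply Lemma \ref{lem:if--is} inside $V$ to get a totally singular $W_0' \subset V$ with $W_0' \cap W_0^{\mathrm{sing}} = 0$ and $\dim W_0' = \dim W_0^{\mathrm{sing}}$. I would then build $W'$ by lifting $W_0'$ back to $\mathbb{F}_2^n$ (trivially, since $W_0' \subset V$ is already totally singular there and lies in $\mathbb{F}_2^n$), and if $\dim W_0^{\mathrm{sing}} < \dim W_0$ — i.e. $W$ genuinely meets the $\varepsilon = 1$ stratum — I would enlarge $W_0'$ by one dimension by adjoining a vector of the form $v_1 + r$ where $v_1 \in V$ satisfies $Q(v_1) = 1$ and $v_1$ is chosen (generically) outside $W_0^{\mathrm{sing}} + W_0'$ and with the right orthogonality to keep the enlarged space totally singular; this adjoined vector has nonzero $p_R$-component, which is exactly the vector $u$ in the statement. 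One checks $\dim W' = \dim W$ and $W \cap W' \subseteq \langle v_1 + r\rangle$ since the $\varepsilon = 0$ parts are disjoint by construction.

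The main obstacle I anticipate is the case analysis around the single "extra" dimension coming from the $\varepsilon = 1$ stratum: one must verify that the adjoined vector $v_1 + r$ can be chosen so that (i) the span of $W_0'$ together with $v_1 + r$ remains totally singular — this needs $B(v_1, w_0') = 0$ for all $w_0' \in W_0'$, which is a codimension-$\dim W_0'$ linear condition on $v_1 \in V$ that must be compatible with $Q(v_1) = 1$ and with avoiding $W_0^{\mathrm{sing}}$ — and (ii) that the resulting $W'$ really intersects $W$ only inside $\langle v_1 + r \rangle$. A counting argument (the set of $v_1$ with $Q(v_1) = 1$ in a non-degenerate form over $\mathbb{F}_2$ is large, roughly half the space, and the linear constraints cut it down by a controlled factor) should show such a $v_1$ exists provided $\dim V$ is large enough relative to $\dim W$; the edge cases where $V$ is small will need to be handled separately, possibly by a direct check. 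The rest of the argument is bookkeeping once the stratification $W \leftrightarrow W_0^{\mathrm{sing}} \sqcup (\text{$\varepsilon=1$ part})$ is set up correctly.
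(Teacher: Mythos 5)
Your plan parallels the paper's proof closely: both decompose $\mathbb{F}_2^n = V \oplus R$, restrict attention to $W_0^{\mathrm{sing}} = W \cap V$ (your $W_0^{\mathrm{sing}} = \{v \in p_V(W) : Q(v)=0\}$ is literally this set, since for $w = v + \varepsilon r \in W$ one has $Q(v)=\varepsilon$), apply the non-degenerate lemma to get a disjoint totally singular $W_0'$ (the paper's $W_1$), and, when $W \neq W_0^{\mathrm{sing}}$, enlarge $W_0'$ by one extra vector with nonzero $R$-component. The difference is purely in how that extra vector is produced. The paper keeps the hyperbolic basis $\{u_i\}, \{v_i\}$ furnished by the cited Lemma, decomposes $V = W_0 \oplus W_1 \oplus W_2$, shows the $W_1$-component of $p_V(u)$ vanishes for any $u \in W\setminus W_0$, and then obtains the new vector by the explicit transposition $u_i \leftrightarrow v_i$. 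You instead ask for a generic $v_1 \in V$ with $Q(v_1)=1$ and $B(v_1, W_0')=0$ and adjoin $v_1+r$. These do the same job — one can check the paper's transposed vector is one admissible choice of your $v_1$ — but the paper's version is constructive and avoids the existence question.

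Two things in your sketch need tightening. First, the existence of $v_1$ is not an "edge case" issue that requires separate handling for small $V$: it always holds. Since $W_0'$ is totally singular for the non-degenerate $Q\mid_V$, $\dim W_0' \le \tfrac{1}{2}\dim V$; and because $W$ itself is totally singular and $p_V(W)$ is $B$-isotropic, $\dim W \le \tfrac{1}{2}\dim V$, so in the case $\dim W_0^{\mathrm{sing}} = \dim W - 1$ one gets $\dim W_0' < \tfrac{1}{2}\dim V$ strictly, hence $\dim W_0'^\perp > \tfrac{1}{2}\dim V$; if $Q$ vanished on all of $W_0'^\perp$ that would be a totally singular space of dimension exceeding $\tfrac{1}{2}\dim V$, impossible. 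So some $v_1 \in W_0'^\perp$ with $Q(v_1)=1$ exists. Second, your claim that $W \cap W' \subseteq \langle v_1 + r\rangle$ is a bit too strong: an element of $W\cap W'$ with $\varepsilon=1$ has the form $w_0' + v_1 + r$ for some $w_0' \in W_0'$, and there is no reason this forces $w_0'=0$. What is true (and is all the proposition asks) is that the $\varepsilon=0$ part of $W\cap W'$ is trivial by the disjointness $W_0' \cap W_0^{\mathrm{sing}} = 0$, so the sum of any two $\varepsilon=1$ elements of $W\cap W'$ vanishes; hence $W \cap W'$ has at most two elements, and any nonzero one has $p_R \neq 0$, so $W\cap W' \subset \langle u\rangle$ with $p_R(u) \neq 0$ as required. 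The condition $v_1 \notin W_0^{\mathrm{sing}} + W_0'$ you impose is unnecessary for this. With these corrections the argument is complete.
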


\begin{proof}
As noted above we have $\mathbb{F}_{2}^{n}=V\oplus R$. Note the subspace
$W_{0}=W\cap V$ has index $\le2$ in $W$.

We can apply Proposition 14.6 from \cite{grovesbook} to $W_{0}$
to obtain a totally singular subspace $W_{1}\subset V$ of the same
size such that $W_{0}\cap W_{1}=0$. Furthermore $W_{0}$ has basis
$\left\{ u_{1},\ldots,u_{k}\right\} $ and $W_{1}$ has basis $\left\{ v_{1},\ldots,v_{k}\right\} $
and $H_{i}=\left\langle u_{i},v_{i}\right\rangle $ is hyperbolic,
meaning $B\left(u_{i},v_{i}\right)=1$, and $B\left(H_{i},H_{j}\right)=0$
for $i\neq j$.

If $W_{0}=W$ then we let $W'=W_{1}$. Then $W'$ satisfies the desired
properties and we are done.  Otherwise let $u\in W\backslash W_{0}$.
Since $Q$ is non-degenerate on $V$ we can write $V=W_{0}\oplus W_{1}\oplus W_{2}$
with $W_{2}=\left(W_{0}\oplus W_{1}\right)^{\perp}$ (taken in $V$).

Write $p_{V}\left(u\right)$ in this basis and let $v$ be the element
obtained by applying to $p_{V}\left(u\right)$ the transposition exchanging
$u_{i}$ and $v_{i}$ for all $i$ (note that $u=v$ is possible).

First we will show $Q\left(v\right)=0$. Write $u=w_{0}+w_{1}+w_{2}+r$
with $w_{i}\in W_{i}$ and $r\in R$. Recall $B\left(u,v\right)=Q\left(u+v\right)+Q\left(u\right)+Q\left(v\right)$.
We have 
\begin{align*}
Q\left(u\right) & =Q\left(w_{0}+w_{1}+w_{2}+r\right)\\
 & =Q\left(w_{0}+w_{1}+w_{2}\right)+Q\left(r\right)\\
 & =Q\left(w_{0}+w_{1}\right)+Q\left(w_{2}\right)+Q\left(r\right)\\
 & =B\left(w_{0},w_{1}\right)+Q\left(w_{2}\right)+Q\left(r\right)
\end{align*}
where we are using $B\left(w_{0}+w_{1}+w_{2},r\right)=0$, $B\left(w_{0}+w_{1},w_{2}\right)=0$
and the fact that $W_{0}$ and $W_{1}$ are totally singular. This
implies in particular that $Q\left(u+w_{0}'\right)=B\left(w_{0}+w_{0}',w_{1}\right)+Q\left(w_{2}\right)+Q\left(r\right)$
for any $w_{0}'\in W_{0}$. Since $Q\left(u+w_{0}'\right)=0$ for
all $w_{0}'\in W_{0}$, by the properties of $B$ on hyperbolic subspaces
listed above this implies $w_{1}=0$. Thus $Q\left(u\right)=Q\left(w_{2}\right)+Q\left(r\right)$.
By symmetry also $Q\left(v\right)=0$.

Then also by symmetry $v\in W_{1}^{\perp}$ (taken in $\mathbb{F}_{2}^{n}$)
and hence $W'=\left\langle W_{1},v\right\rangle $ is totally singular.
Clearly $p_{R}\left(v\right)\neq0$ and $\left|W'\right|=\left|W\right|$.

It remains to show $W\cap W'\subset\left\langle u\right\rangle $.
Suppose $w_{0}+u=w_{1}+v$ with $w_{i}\in W_{i}$. As shown above
$u$ is not supported on $W_{1}$ and $v$ is not supported on $W_{0}$.
Thus we have equality if and only if $u=v$ and $w_{i}=0$. This completes
the proof.
\end{proof}
We now prove some facts about maximal isotropic subsets of certain
generating sets of $\mathbb{F}_{2}^{n}$. We start with the case when
$\ker(Q\mid_{\mathbb{F}_{2}^{n\perp}})=0$.
\begin{lem}
\label{lem:perp_sets_bound1}Suppose $\ker(Q\mid_{\mathbb{F}_{2}^{n\perp}})=0$.
Let $H\subset\mathbb{F}_{2}^{n}$ be a codimension 2 subspace. Let
$S=(\ker Q)\backslash H$ and suppose $S$ generates $\mathbb{F}_{2}^{n}$.
Then for any subset $T\subset S$ 
\[
\left|T^{\perp}\cap T\right|\le\left|S\right|-\left|T\right|.
\]
\end{lem}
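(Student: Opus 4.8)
The plan is to exploit the decomposition $\mathbb{F}_2^n = V \oplus R$ guaranteed by the hypothesis $\ker(Q\mid_{\mathbb{F}_2^{n\perp}}) = 0$, together with Proposition \ref{lem:Let--be}, which produces for any totally singular $W$ a totally singular $W'$ of the same size meeting $W$ in at most a line $\langle u\rangle$ with $p_R(u) \neq 0$. The key observation is that $S = (\ker Q)\setminus H$ is a \emph{subset} of the totally singular set $\ker Q$, and that the subset $T\subset S$ spans a totally singular subspace $\langle T\rangle$; setting $W = \langle T\rangle$, we obtain a disjoint-up-to-a-line totally singular companion $W'$. The idea is then to transport a basis of $W'$ back into $S$: since $S$ generates $\mathbb{F}_2^n$, we want to find within $S\setminus T$ enough elements to "witness" that $T^\perp\cap T$ cannot be too large. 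Concretely, I would try to show $|T^\perp\cap T| + |T| \le |\langle T\rangle| \cdot(\text{something}) $ is not the right bookkeeping — rather, the count must be set-theoretic, so the heart of the argument is a combinatorial/linear-algebra lemma relating the size of the \emph{set} $T$ to the dimension of the radical $\mathrm{rad}(\langle T\rangle) = \langle T\rangle \cap \langle T\rangle^\perp$, and then relating $\langle T\rangle^\perp$ to $T^\perp$.

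First I would note $T^\perp\cap T = \{t\in T : B(t,t')=0 \text{ for all } t'\in T\} = T \cap \mathrm{rad}(\langle T\rangle)$, since $B$ is bilinear and $T$ spans $\langle T\rangle$; call this set $T_0$. So the claim becomes $|T_0| \le |S| - |T|$, i.e. $|T_0| + |T| \le |S|$. Next I would use that $S$ generates $\mathbb{F}_2^n$ but $\langle T\rangle$ may be a proper subspace: the elements of $S\setminus T$ together with $T$ span everything, and I would argue that $|S\setminus T| \ge \dim\mathbb{F}_2^n - \dim\langle T\rangle + |T_0|$ by a counting argument. The mechanism: each element $t_0 \in T_0$ lies in the radical of the form restricted to $\langle T\rangle$, and because $\ker(Q\mid_R)=0$ forces $\dim R \le 1$ and $\mathrm{rad}(\langle T\rangle)$ to inject into... — here I would invoke Proposition \ref{lem:Let--be} applied to $W = \langle T\rangle$ to get $W'$ with $W\cap W' \subseteq \langle u\rangle$, so $\dim W' = \dim W$ and $\dim(W+W') \ge 2\dim W - 1$; combined with the fact that a basis of $W'$ can be chosen inside $\ker Q$ (totally singular set), I would compare the size needed to generate the remaining directions.

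The main obstacle, I expect, is the passage from \emph{subspace} inequalities (dimensions of $W$, $W'$, $\mathrm{rad}(W)$) to \emph{subset} inequalities ($|S|$, $|T|$, $|T^\perp\cap T|$), since $S$ and $T$ are sets, not spaces, and a totally singular subspace of dimension $k$ contains $2^k - 1$ nonzero singular vectors but $S$ need only contain a generating subset of them. The resolution should come from the specific structure $S = (\ker Q)\setminus H$ with $H$ of codimension $2$: this means $S$ is a \emph{coset-complement} type set, and I would try to show that $T$ and $T_0$ sit inside $\ker Q$ in a way where the "missing" elements $S\setminus T$ are forced to contain at least $\dim\langle T\rangle - (\text{codim bound}) + |T_0|$ independent vectors — essentially because each radical element $t_0$ of $\langle T\rangle$ must be "paired" in $S$ with a hyperbolic partner lying outside $H$ (provided by Proposition \ref{lem:Let--be}), and these partners are distinct elements of $S\setminus T$. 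I would carry out: (1) identify $T^\perp\cap T$ with $T\cap\mathrm{rad}(\langle T\rangle)$; (2) bound $\dim\mathrm{rad}(\langle T\rangle)$ using the hypothesis on $\ker(Q\mid_R)$; (3) for each of these radical directions produce, via Proposition \ref{lem:Let--be} and the generation hypothesis, a distinct element of $S\setminus T$; (4) conclude by counting. Step (3) is where I anticipate the real work, as it requires checking the produced companions actually land in $S = (\ker Q)\setminus H$ and are genuinely distinct and not already in $T$.
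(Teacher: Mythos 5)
The proposal correctly identifies the right tools — the decomposition $\mathbb{F}_{2}^{n}=V\oplus R$, Proposition \ref{lem:Let--be}, and the idea of producing ``hyperbolic partners'' in $S\setminus T$ for elements of $T^{\perp}\cap T$ — and your identification $T^{\perp}\cap T=T\cap\mathrm{rad}(\langle T\rangle)$ is correct. But there is both a concrete error and a real gap.

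The error: you propose setting $W=\langle T\rangle$ and applying Proposition \ref{lem:Let--be} to it, but $\langle T\rangle$ is not totally singular in general. Having $T\subset\ker Q$ gives $Q(t)=0$ for each $t\in T$, but for $t_{1},t_{2}\in T$ with $B(t_{1},t_{2})=1$ one has $Q(t_{1}+t_{2})=Q(t_{1})+Q(t_{2})+B(t_{1},t_{2})=1$. So Proposition \ref{lem:Let--be} does not apply with $W=\langle T\rangle$. The paper instead extends the set $\widehat{T}=T^{\perp}\cap T$ (which \emph{does} span a totally singular subspace, since every element of $\widehat{T}$ is perpendicular to all of $T$ and $Q$ vanishes on $\widehat{T}$) to a \emph{maximal} totally singular subspace $W$, and applies the proposition to that.

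The gap: step (3) is where the entire lemma lives, and you have only gestured at it. Given the companion $W'$ from Proposition \ref{lem:Let--be}, you need to show that the elements of $W'$ paired nontrivially with $\widehat{T}$ supply at least $|\widehat{T}|$ distinct elements of $S\setminus T$. You correctly note this is ``the real work,'' but bounding $\dim\mathrm{rad}(\langle T\rangle)$ (your step (2)) does not produce a set-theoretic count of this kind, and it is not what the paper does. The paper's mechanism is a maximality argument by contradiction: let $W_{0}\subset W'$ be the set of vectors pairing nontrivially with some $w_{i}\in\widehat{T}$; if $|W_{0}\cap S|$ were smaller than $|\widehat{T}|$, then $\langle(W'\setminus W_{0})\cup\widehat{T}\rangle$ would be totally singular, contain $\widehat{T}$, and (after a count of elements outside $H$) be strictly larger than $W$, contradicting maximality of $W$. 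Disjointness of $W_{0}\cap S$ from $T$ then follows because any $w_{0}\in W_{0}$ pairs nontrivially with $\widehat{T}$ while every element of $T$ pairs trivially with $\widehat{T}$. Without this maximality step — or a substitute for it — the passage from ``a totally singular companion exists'' to ``$|S\setminus T|\ge|T^{\perp}\cap T|$'' is not established.
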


\begin{proof}
Decompose $\mathbb{F}_{2}^{n}=V\oplus R$, where $R=\mathbb{F}_{2}^{n\perp}$
and $\ker(Q\mid_{R})=0$. To simplify notation let $\widehat{T}=T^{\perp}\cap T$
and $r=\left|S\right|$. Suppose $T=\left\{ w_{1},\ldots,w_{r'}\right\} \subset S$
and $\left\{ w_{1},\ldots,w_{m}\right\} =\widehat{T}$. We want to
show $m\le r-r'$. Assume $m\ge1$ otherwise this is trivial.

By definition $B\left(w_{i},w_{j}\right)=0$ and $Q\left(w_{i}\right)=0$
for all $w_{i},w_{j}\in\widehat{T}$. The set $\widehat{T}$ can be
extended to a maximal totally singular subspace $W\subset\mathbb{F}_{2}^{n}$
containing $\widehat{T}$.

If $\dim(V\oplus R)$ is even then $R=0$. If $\dim(V\oplus R)$ is
odd then $R=\left\langle z\right\rangle $ and $Q\left(z\right)=1$.
Let $p_{R}$ be the projection onto $R$.  By Lemma \ref{lem:Let--be}
there exists a totally singular subspace $W'\subset Y$ of the same
size as $W$ such that $W\cap W'\subset\left\langle w\right\rangle $
for some $w$ with $p_{R}\left(w\right)\neq0$.

First suppose $w\notin\widehat{T}$, so that $W\cap W'=\left\{ 0\right\} $.
Let $W_{0}\subset W'$ be the set of all elements $w_{0}$ such that
$B\left(w_{i},w_{0}\right)=1$ for some $i=1,\ldots,m$. Suppose
towards a contradiction that $|W_{0}\cap S|<m$. Then since $\hat{T}\cap W'=\varnothing$
we see that $W_{1}=\left\langle \left(W'\backslash W_{0}\right)\cup\widehat{T}\right\rangle $
is a totally singular subspace containing $\widehat{T}$ which has
strictly more elements not in $H$ than $W'$. Since $\dim(W_{1}/(W_{1}\cap H))=1$,
that is half the elements in $W_{1}$ are in $H$, this implies $W_{1}$
is a totally singular space containing $\widehat{T}$ which is strictly
larger than $W'$ and hence $W$. Thus it must be that $|W_{0}\cap S|\ge m$.

If $w\in\widehat{T}$ then by the same argument we get $|W_{0}\cap S|\ge m-1$.
Note $B\left(w,w_{i}\right)=0$ for $i=1,\ldots,r'$. Since $\ker(Q\mid_{R})=0$
it follows that $w\notin R$. If $B\left(w,u\right)=0$ for all $u\in S$
then this would imply $w\in R$ since $S$ is a generating set for
$Y$. Thus there exists some $w'\in S$ such that $B\left(w,w'\right)=1$.
Since $w\in W'$ this implies $w'\notin W'$. Thus $\left|(W_{0}\cup\left\{ w'\right\} )\cap S\right|\ge m$.

From the definition of $W_{0}$ and $w'$ it follows that $\left((W_{0}\cup\left\{ w'\right\} )\cap S\right)\cap\left\{ w_{1},\ldots,w_{r'}\right\} =\varnothing$.
Thus $m\le r-r'$ as desired.
\end{proof}
\begin{rem}
\label{rem:A-simpler-version}A simpler version of the above proof
gives the same result when $S=(\ker Q)\backslash\left\{ 0\right\} $
and $S$ generates $\mathbb{F}_{2}^{n}$. We will require this fact
in the proof of the next proposition.
\end{rem}

\begin{prop}
\label{prop:perp_sets_bound2}Let $H\subset\mathbb{F}_{2}^{n}$ be
a codimension 2 subspace. Let $S=(\ker Q)\backslash H$ and suppose
$S$ generates $\mathbb{F}_{2}^{n}$. Then for any subset $T\subset S$
satisfying $T\cap\ker(Q\mid_{\mathbb{F}_{2}^{n\perp}})=\varnothing$
we have

\[
\left|T^{\perp}\cap T\right|\le\left|S\right|-\left|T\right|.
\]
\end{prop}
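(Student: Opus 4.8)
The goal is to remove the hypothesis $\ker(Q\mid_{\mathbb{F}_2^{n\perp}})=0$ from Lemma \ref{lem:perp_sets_bound1}, i.e. to allow the form $Q$ to be totally degenerate on part of its radical. The plan is to reduce to the already-established cases by quotienting out the offending part of the radical. Write $R=\mathrm{rad}(\mathbb{F}_2^n)=R_1\oplus R_0$ with $Q\mid_{R_1}$ anisotropic (so $\dim R_1\le 1$) and $Q\mid_{R_0}=0$, following the decomposition recalled just before Lemma \ref{lem:if--is}. The subspace $R_0$ lies in $\ker Q$ and is central for $B$, so passing to $\overline{V}:=\mathbb{F}_2^n/R_0$ gives a well-defined quadratic form $\overline{Q}$ with $\ker(\overline{Q}\mid_{\overline{V}^{\perp}})=0$, and the image $\overline{S}$ of $S$ still generates $\overline{V}$.

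The key steps, in order, are as follows. First I would check that the image $\overline{H}$ of $H$ in $\overline{V}$ has codimension $2$ \emph{or} codimension $1$ (the latter occurring exactly when $R_0\not\subset H$); in the codimension-$1$ case one instead intersects with a further hyperplane, or, more cleanly, observes that $S=(\ker Q)\setminus H$ maps bijectively onto its image since $R_0\subset\ker Q$ and no two elements of $S$ differ by a nonzero element of $R_0\cap(\text{complement of }H)$ — here one uses that both $w$ and $w+\rho$ for $\rho\in R_0$ cannot simultaneously lie outside $H$ unless $\rho\in H$. So in all cases $|\overline{S}|=|S|$ and, for $T\subset S$ with $T\cap(\ker Q\cap \mathbb{F}_2^{n\perp})=\varnothing$, the map $T\to\overline{T}$ is injective, giving $|\overline{T}|=|T|$. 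Second, I would verify that $\overline{T}^{\perp}\cap\overline{T}$ in $\overline{V}$ pulls back to a subspace of $\mathbb{F}_2^n$ of the same dimension containing $T^{\perp}\cap T$ modulo $R_0$; concretely $|T^{\perp}\cap T|\le |\overline{T}^{\perp}\cap\overline{T}|$ because the projection $T^{\perp}\cap T\to \overline{T}^{\perp}\cap\overline{T}$ is injective on $T^{\perp}\cap T$ (any element of $T$ in the kernel of this projection would lie in $R_0\cap T=\varnothing$ by hypothesis). Third, apply Lemma \ref{lem:perp_sets_bound1} (when $\overline{H}$ has codimension $2$ and $\overline{S}=(\ker\overline{Q})\setminus\overline{H}$ generates) or Remark \ref{rem:A-simpler-version} (when $\overline{H}$ collapses, so effectively $S=(\ker\overline{Q})\setminus\{0\}$ after a further hyperplane cut) to conclude $|\overline{T}^{\perp}\cap\overline{T}|\le|\overline{S}|-|\overline{T}|=|S|-|T|$, and chain the inequalities.

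The main obstacle I anticipate is bookkeeping around $\overline{H}$: when $R_0\not\subset H$, the image $\overline{H}$ is only codimension $1$, so Lemma \ref{lem:perp_sets_bound1} does not apply directly to $\overline{V}$ and $\overline{H}$. The fix is to choose a hyperplane $H'\supset R_0$ with $H'\cap(\text{preimage of }\overline H)=H$ — this exists since $\dim(R_0)\ge 1$ lets us split off a coordinate — so that $\overline{H}$ is recovered as a codimension-$2$ subspace of $\mathbb{F}_2^n/R_0'$ for a suitably enlarged $R_0'$, or alternatively to observe directly that $(\ker Q)\setminus H = (\ker Q)\setminus H'$ when $R_0\cap(H'\setminus H)\ne\varnothing$ forces the extra hyperplane to contribute nothing new to $\ker Q$ outside $H$. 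Once the correct hyperplane is pinned down the rest is formal. One should also double-check the edge case $\ker Q\cap\mathbb{F}_2^{n\perp}=R_0\ne 0$ but $\dim R_0$ large: nothing changes, since the quotient construction is insensitive to $\dim R_0$, and the hypothesis $T\cap\ker(Q\mid_{\mathbb{F}_2^{n\perp}})=\varnothing$ is exactly what guarantees $T$ injects into the quotient.
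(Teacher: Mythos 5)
Your reduction-by-quotient plan has a genuine gap, and it is in the step you treat as routine bookkeeping. You claim that $S$ maps bijectively to $\overline{S}=S/R_0$, that $T$ injects into $\overline{T}$, and that the projection $T^\perp\cap T\to\overline{T}^\perp\cap\overline{T}$ is injective. All three are false in general. The hypothesis $T\cap\ker(Q|_{\mathbb{F}_2^{n\perp}})=\varnothing$ says only that no element of $T$ lies \emph{in} $R_0$; it does not prevent two elements of $T$ (or $S$) from \emph{differing} by a nonzero element of $R_0$, which is exactly what kills injectivity of the quotient map on these subsets. Your justification, ``any element of $T$ in the kernel of this projection would lie in $R_0\cap T=\varnothing$,'' conflates the kernel of the linear quotient $\mathbb{F}_2^n\to\mathbb{F}_2^n/R_0$ with the fibers of its restriction to a subset: a fiber can have two points $t,t'\in T$ with $t-t'\in R_0\setminus\{0\}$ even though neither $t$ nor $t'$ lies in $R_0$. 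In fact if $\rho\in R_0\cap H$ is nonzero and $w\in S$, then $Q(w+\rho)=Q(w)+Q(\rho)+B(w,\rho)=0$ and $w+\rho\notin H$, so $w+\rho\in S$ as well; thus $|\overline{S}|<|S|$ whenever $R_0\cap H\neq 0$, and the same collapse can occur inside $T$.

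A concrete counterexample to your chain: take $\mathbb{F}_2^3$ with $Q(u)=u_1u_2$, so $R_0=\langle e_3\rangle$, $H=\langle e_1+e_2,\,e_3\rangle$, $S=\ker Q\setminus H=\{e_1,e_2,e_1+e_3,e_2+e_3\}$ (which generates), and $T=\{e_1,e_1+e_3\}$ (so $T\cap R_0=\varnothing$). Here $T^\perp=\langle e_1,e_3\rangle$, so $|T^\perp\cap T|=2$ and $|S|-|T|=2$: the proposition holds with equality. But in the quotient $\overline{V}=\mathbb{F}_2^3/R_0$ one has $\overline{T}=\{\bar e_1\}$, $\overline{S}=\{\bar e_1,\bar e_2\}$, $|\overline{T}^\perp\cap\overline{T}|=1$, so your chain would assert $2=|T^\perp\cap T|\le|\overline{T}^\perp\cap\overline{T}|=1$, which is false; and $|\overline{S}|-|\overline{T}|=1\neq 2=|S|-|T|$, so the right end of the chain never even reaches the target. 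The paper avoids this by \emph{not} quotienting: it slices $T$ into pieces $T_z=T\cap(Y+z)$ over the cosets $z\in R_0$ of $Y=V\oplus R$, translates each piece back to $Y$ to form $T'=\bigcup_z(T_z+z)$, and then bounds $|T^\perp\cap T|$ by a sum over $z$ in which the collapse is explicitly accounted for by a factor of $|R_0|$; the same factor appears on the $|S|-|T|$ side because $|S|-|T|=\sum_z\bigl(|(\ker Q|_{Y+z})\setminus H|-|T_z|\bigr)$. That bookkeeping across cosets is precisely what your proposal is missing. (Separately, the side remark ``both $w$ and $w+\rho$ cannot lie outside $H$ unless $\rho\in H$'' only helps you rule out $\rho\notin H$, which is the harmless case; the collapses come from $\rho\in R_0\cap H$, and your argument is silent there.)
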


\begin{proof}
By Lemma \ref{lem:perp_sets_bound1} the result holds when $\ker(Q\mid_{\mathbb{F}_{2}^{n\perp}})=0$,
so assume $\ker(Q\mid_{\mathbb{F}_{2}^{n\perp}})\neq0$. Decompose
$\mathbb{F}_{2}^{n}=V\oplus R\oplus R_{0}$, where $R\oplus R_{0}=\mathbb{F}_{2}^{n\perp}$
and $R_{0}=\ker(Q\mid_{R\oplus R_{0}})$. Let $Y=V\oplus R$.

Let $T_{z}=T\cap\left(Y+z\right)$ for any $z\in R_{0}$. By translating
back each of these cosets we obtain the subset $T'=\cup_{z\in R_{0}}(T_{z}+z)$
of $Y$. Note $T'^{\perp}=T^{\perp}$. We have
\begin{align*}
\left|T^{\perp}\cap T\right| & =\sum_{z\in R_{0}}\left|T^{\perp}\cap T_{z}\right|\\
 & \le\sum_{z\in R_{0}\backslash H}\left|T'^{\perp}\cap\left(T'\cap H\right)\right|+\sum_{z\in R_{0}\cap H}\left|T'^{\perp}\cap\left(T'\backslash H\right)\right|\\
 & =\frac{\left|R_{0}\right|}{\left[R_{0}:R_{0}\cap H\right]}\left|T'^{\perp}\cap T'\right|.
\end{align*}

We consider two cases. 

\textit{Case 1:} If $R_{0}\subset H$ then since $T_{z}\cap H=\varnothing$
this implies $T'\subset Y\backslash H$ so by Lemma \ref{lem:perp_sets_bound1}
$\left|T'^{\perp}\cap T'\right|\le\left|(\ker Q\mid_{Y})\backslash H\right|-\left|T'\right|.$

If $x\in(\ker Q\mid_{Y})\backslash(H\cup T')$ then clearly $x+z\in(\ker Q\mid_{Y+z})\backslash(H\cup T_{z})$
for any $z\in R_{0}$. Hence
\begin{align*}
\left|R_{0}\right|\left(\left|(\ker Q\mid_{Y})\backslash H\right|-\left|T'\right|\right) & \le\left|R_{0}\right|\min_{z\in R_{0}}\left(\left|(\ker Q\mid_{Y+z})\backslash H\right|-\left|T_{z}\right|\right)\\
 & \le\sum_{z\in R_{0}}\left|(\ker Q\mid_{Y+z})\backslash H\right|-\left|T_{z}\right|\\
 & =\left|S\right|-\left|T\right|.
\end{align*}

\textit{Case 2:} If $R_{0}\nsubseteq H$ then by Remark \ref{rem:A-simpler-version}
(note that the condition $T\cap\ker(Q\mid_{\mathbb{F}_{2}^{n\perp}})=\varnothing$
implies $T'\subset(\ker Q\mid_{Y})\backslash\left\{ 0\right\} $)
we have $\left|T'^{\perp}\cap T'\right|\le\left|(\ker Q\mid_{Y})\backslash\left\{ 0\right\} \right|-\left|T'\right|.$
Then 
\begin{align*}
\left|T^{\perp}\cap T\right| & \le\frac{\left|R_{0}\right|}{2}\left(\left|(\ker Q\mid_{Y})\backslash\left\{ 0\right\} \right|-\left|T'\right|\right).
\end{align*}
 Let $U_{1}=(\ker(Q\mid_{Y})\backslash(\left\{ 0\right\} \cup T'))\cap H$
and $U_{2}=\ker(Q\mid_{Y})\backslash(\left\{ 0\right\} \cup T'\cup H)$.
If $x\in U_{1}$ then $x+z\in(\ker Q\mid_{Y+z})\backslash(H\cup T_{z})$
for $z\in R_{0}\backslash H$ and similarly if $x\in U_{2}$ and $z\in R_{0}\cap H$.
Thus 
\begin{align*}
\frac{\left|R_{0}\right|}{2}\left(\left|(\ker Q\mid_{Y})\backslash\left\{ 0\right\} \right|-\left|T'\right|\right) & =\sum_{z\in R_{0}\backslash H}\left|U_{1}\right|+\sum_{z\in R_{0}\cap H}\left|U_{2}\right|\\
 & \le\sum_{z\in R_{0}}\left|(\ker Q\mid_{Y+z})\backslash H\right|-\left|T_{z}\right|\\
 & =\left|S\right|-\left|T\right|.
\end{align*}
 This completes the proof.
\end{proof}

\section{\label{sec:The graph}The graph $\mathcal{G}^{*}$}

In this section we define the graph whose maximal disconnected subgraphs
we will be interested in classifying.

Recall we have a fixed isomorphism $G/\Phi\left(G\right)\cong\mathbb{F}_{2}^{n}$.
Define the graph $\mathcal{G}\left(\mathbb{F}_{2}^{n}\right)$ with
vertex set $\mathbb{F}_{2}^{n}$ and let $u,v\in\mathbb{F}_{2}^{n}$
be connected by an edge if their lifts in $G$ do not commute.

Define a set of elements $F=\left\{ u_{w},v_{w}\mid w\in\mathbb{F}_{2}^{n}\right\} $.
We say $u\in F$ is supported on $w\in\mathbb{F}_{2}^{n}$ if $u$
is of the form $u_{w}$ or $v_{w}$. Let $s:F\longrightarrow\mathbb{F}_{2}^{n}$
be the function sending an element to its support. Additionally for
any subsets $F_{1}\subset F$ and $V\subset\mathbb{F}_{2}^{n}$ we
say $F_{1}$ is supported on $V$ if $s\left(F_{1}\right)\subset V$.
We say that $F_{1}$ has full support on $V$ if $s\left(F_{1}\right)=V$.
For any subset $T\subset F$ we let $F\left(T\right)=\left\{ u\in F\mid s\left(u\right)\in T\right\} $.

We define $u,v\in F$ to be connected by an edge if $u=u_{w}$ and
$v=v_{z}$ and $\left(w,z\right)$ is an edge in $\mathcal{G}\left(\mathbb{F}_{2}^{n}\right)$.
Define $L\left(u,v\right)=1$ if $u,v$ are connected and $0$ if
not.
\begin{defn}
Let $T_{1},\ldots,T_{k}$ be subsets of $\mathbb{F}_{2}^{n}$ and
let $\widehat{T}=\prod_{i=1}^{k}T_{i}$. Define the graph $\mathcal{G}^{*}(\widehat{T})$
with vertex set $F(\widehat{T})=\prod_{i=1}^{k}F\left(T_{i}\right)$
and an edge between $u,v\in F(\widehat{T})$ if $\sum_{i=1}^{k}L\left(u_{i},v_{i}\right)=1$.
\end{defn}

We explain the relationship $\mathcal{G}^{*}(\widehat{T})$ holds
to the counting function (\ref{eq:formulathmstatement}).

For each $i=1,\ldots,r$ let $d_{i}=D_{2i-1}D_{2i}$. Let $U=\left[2r\right]$.
Note that the expression in the summation in formula (\ref{eq:formulathmstatement})
can be expanded as 
\begin{align*}
\sum_{\left(d_{1},\ldots,d_{r}\right)\in\mathcal{D}}\prod_{i=1}^{r}\prod_{p\mid d_{i}}\left(1+\left(\frac{\prod_{j\in S_{i}}d_{j}}{p}\right)^{\mathrm{ord}_{p}\left(d\right)}\right) & =\sum_{d=\prod D_{u}}\prod_{u,v\in U}\left(\frac{D_{u}}{D_{v}}\right)^{\Phi\left(u,v\right)}
\end{align*}
where $\Phi\left(u,v\right)\in\mathbb{F}_{2}$. That is $\Phi\left(u,v\right)=1$
if the symbol $(\frac{D_{u}}{D_{v}})$ appears in the expression and
$0$ otherwise.

Let $U_{1},\ldots,U_{k}$ be $k$ sets with $U_{i}=\left[2r_{i}\right]$.
For $u,v\in\prod_{i=1}^{k}U_{i}$ define $\Phi_{k}\left(u,v\right)=\sum_{i=1}^{k}\Phi\left(u_{i},v_{i}\right)$.
Then we define $u,v\in\prod_{i=1}^{k}U_{i}$ to be unlinked if 
\[
\Delta_{k}\left(u,v\right)=\Phi_{k}\left(u,v\right)+\Phi_{k}\left(v,u\right)=0.
\]

For any $1\le i\le r$, if $x_{i}=\prod_{j=1}^{n}x_{j}^{w_{j}}$ define
a bijective mapping $\varphi=\left(\varphi_{1},\ldots,\varphi_{k}\right):\prod_{i=1}^{k}U_{i}\longrightarrow F(\widehat{T})$
coordinate-wise with $\varphi_{i}$ given by
\begin{align*}
 & 2i-1\mapsto u_{w}\\
 & 2i\mapsto v_{w}.
\end{align*}
Then it is easy to see that $\Delta_{k}=L_{k}\circ\varphi$, and we
will use this fact to view $\Delta_{k}$ as defined on $\mathcal{G}^{*}(\widehat{T})$.

Let $T\subset\mathbb{F}_{2}^{n}$ be an admissible generating set
of $G/\Phi\left(G\right)$. Define a quadratic form on $\mathbb{F}_{2}^{n}$
by 
\[
Q\left(u\right)=\sum_{i\le j}a_{ij}u_{i}u_{j}
\]
 where $a_{ij}=1$ if $\left[x_{i},x_{j}\right]\neq1$ for $i\neq j$
and $a_{ii}=1$ if $\mathrm{ord}x_{i}=4$ and 0 otherwise. It has
an associated symmetric bilinear form $B\left(u,v\right)=Q\left(u+v\right)+Q\left(u\right)+Q\left(v\right)$
on $\mathbb{F}_{2}^{n}\times\mathbb{F}_{2}^{n}$. Alternately $B\left(u,v\right)=u^{T}Av$
where $A$ is the symmetric matrix given by $A_{ij}=a_{ij}$ as defined
above.
\begin{lem}
\label{lem:ker Q is order 2}Let $x=\prod_{i=1}^{n}x_{i}^{u_{i}}$
be an element of $G$ with $u\in\mathbb{F}_{2}^{n}$. Then $x$ has
order 2 if and only if $Q\left(u\right)=0$.
\end{lem}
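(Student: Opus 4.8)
The plan is to compute the order of $x = \prod_{i=1}^n x_i^{u_i}$ directly by repeatedly applying the commutator relations, exploiting that $G$ is a central extension of $\mathbb{F}_2^n$ by the order-2 group $\langle a\rangle$. Since $x$ projects to $u\in G/\Phi(G)\cong\mathbb{F}_2^n$ and $\Phi(G)=\langle a\rangle\subseteq Z(G)$, the element $x^2$ lies in $\langle a\rangle$; thus $x$ has order $1$ or $2$ precisely when $x^2=1$, and otherwise order $4$. So it suffices to show $x^2 = a^{Q(u)}$ (reading $Q(u)\in\mathbb{F}_2$ as an exponent).

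First I would expand $x^2 = \bigl(\prod_{i=1}^n x_i^{u_i}\bigr)^2$ and move the second copy of each $x_i$ leftwards past the intervening generators, picking up one factor of $[x_i,x_j] = a^{a_{ij}}$ for each transposition of $x_i$ past $x_j$ with $i \ne j$, where $a_{ij}=1$ iff $[x_i,x_j]\ne 1$ (these commutators are central, so the order of collection does not matter and the bookkeeping is clean). After fully collecting, $x^2$ becomes $\prod_{i=1}^n x_i^{2u_i}$ times $a$ raised to $\sum_{i<j} a_{ij}u_iu_j$. Now $x_i^{2u_i} = a^{a_{ii}u_i}$ since $x_i$ has order $2$ (when $a_{ii}=0$) or order $4$ with $x_i^2=a$ (when $a_{ii}=1$), and $u_i^2=u_i$ in $\mathbb{F}_2$. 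Assembling these contributions gives $x^2 = a^{\sum_{i\le j} a_{ij}u_iu_j} = a^{Q(u)}$, which is exactly what is needed.

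The main obstacle — such as it is — is purely notational: doing the "collection process" carefully enough to be sure every commutator is counted exactly once and that no hidden non-central terms arise. The latter worry is dispelled immediately because all commutators $[x_i,x_j]$ lie in $\langle a\rangle=\Phi(G)\subseteq Z(G)$, so every rearrangement only introduces central factors and the generators $x_i$ themselves never get "scrambled" in a way that matters. One can organize the count cleanly by induction on $n$, writing $x = x_n^{u_n}\cdot y$ with $y\in\langle x_1,\dots,x_{n-1}\rangle$, computing $x^2 = x_n^{u_n} y x_n^{u_n} y = x_n^{2u_n}\cdot (y^{x_n^{u_n}}) \cdot y$, and noting $y^{x_n^{u_n}} = y\cdot a^{u_n\sum_{j<n} a_{nj}u_j}$ since conjugation by $x_n$ multiplies each $x_j$ appearing in $y$ by $a^{a_{nj}}$; the inductive hypothesis handles $y^2$. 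This makes the identity $x^2 = a^{Q(u)}$ transparent, and the lemma follows since $x^2=1$ iff $Q(u)=0$.
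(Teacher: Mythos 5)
Your proposal is correct and follows essentially the same route as the paper: both expand $x^2$ by collecting the central commutators $[x_i,x_j]=a^{a_{ij}}$ and using $x_i^{2u_i}=a^{a_{ii}u_i}$ to arrive at $x^2=a^{Q(u)}$, from which the claim is immediate. The inductive organization you suggest is a helpful bookkeeping device but not a different argument.
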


\begin{proof}
If $x_{i}$ and $x_{j}$ do not commute then $\left[x_{i},x_{j}\right]=a$
where $a$ is the distinguished central element of $G$. If $\mathrm{ord}x_{i}=4$
then $x_{i}^{2}=a$. Explicitly writing out $x^{2}$ as a product
and interchanging the basis elements gives 
\begin{align*}
x^{2} & =\left(\prod_{i=1}^{n}x_{i}^{u_{i}}\right)^{2}\\
 & =\prod_{i=1}^{n}x_{i}^{2u_{i}}a^{\sum_{i<j}a_{ij}u_{i}u_{j}}\\
 & =a^{\sum_{i\le j}a_{ij}u_{i}u_{j}}\\
 & =a^{Q\left(u\right)}.
\end{align*}
 The result follows.
\end{proof}
\begin{lem}
\label{lem:B detects edges}Let $x=\prod_{i=1}^{n}x_{i}^{u_{i}}$
and $y=\prod_{i=1}^{n}x_{i}^{v_{i}}$. Then $\left[x,y\right]=1$
if and only if $B\left(u,v\right)=0$.
\end{lem}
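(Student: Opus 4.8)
The plan is to express $[x,y]$ entirely in terms of squares of elements of $G$ and then invoke Lemma~\ref{lem:ker Q is order 2}. Write $\langle a\rangle$ for the distinguished central subgroup of order $2$, so that $G/\langle a\rangle\cong\mathbb{F}_2^n$ and the quotient map sends $\prod_i x_i^{u_i}$ to $u$. Since $a$ is central and $a^2=1$, Lemma~\ref{lem:ker Q is order 2} gives $x^2=a^{Q(u)}$ and $y^2=a^{Q(v)}$, hence $x^{-1}=a^{Q(u)}x$ and $y^{-1}=a^{Q(v)}y$. Substituting into $[x,y]=xyx^{-1}y^{-1}$ and pulling the central factors out of the product yields
\[
[x,y]=xy\,a^{Q(u)}x\,a^{Q(v)}y=a^{Q(u)+Q(v)}\,(xy)^2 .
\]

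Next I would compute $(xy)^2$. The image of $xy$ in $G/\langle a\rangle$ is $u+v$, which is also the image of $z:=\prod_i x_i^{(u+v)_i}$; since the kernel of $G\to G/\langle a\rangle$ is exactly $\{1,a\}$, we get $xy=a^{c}z$ for some $c\in\mathbb{F}_2$. Therefore $(xy)^2=a^{2c}z^2=z^2=a^{Q(u+v)}$, again by Lemma~\ref{lem:ker Q is order 2}. Combining the two displays,
\[
[x,y]=a^{Q(u)+Q(v)+Q(u+v)}=a^{B(u,v)},
\]
using the definition $B(u,v)=Q(u+v)+Q(u)+Q(v)$. Since $a$ has order $2$, in particular $a\neq1$, we conclude $[x,y]=1$ if and only if $B(u,v)=0$.

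There is no substantial obstacle here; the one point worth being careful about is that one should not try to reorder the word $xy$ by hand into $\prod_i x_i^{(u+v)_i}$ while tracking the resulting powers of $a$, since the ``carries'' coming from those $x_i$ with $x_i^2=a$ make that bookkeeping awkward. The observation that $(xy)^2$ depends only on the coset $xy\langle a\rangle$ sidesteps this entirely and lets the previous lemma do all the work. If one preferred an argument closer in spirit to the proof of Lemma~\ref{lem:ker Q is order 2}, one could instead note that $G/\langle a\rangle$ being abelian forces the commutator pairing $(u,v)\mapsto[x,y]\in\langle a\rangle$ to be $\mathbb{F}_2$-bilinear and alternating, compute its values $[x_i,x_j]=a^{a_{ij}}$ on the basis, and check these agree with $B(e_i,e_j)$; but the computation above is shorter.
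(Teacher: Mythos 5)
Your proof is correct and follows essentially the same route as the paper: both reduce $[x,y]$ to the relation among $x^2$, $y^2$, and $(xy)^2$, then use the observation that $(xy)^2$ depends only on the coset $xy\langle a\rangle$ together with Lemma~\ref{lem:ker Q is order 2} to conclude $[x,y]=a^{B(u,v)}$. The paper writes this as $(xy)^2=x^2y^2[x,y]$ while you solve for $[x,y]$ directly, but the underlying computation is identical.
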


\begin{proof}
Let $c\left(x,y\right)=1$ if $\left[x,y\right]\neq1$ and 0 otherwise.
By an argument analogous to the above we have 
\begin{align*}
\left(xy\right)^{2} & =x^{2}y^{2}\left[x,y\right]\\
 & =a^{Q\left(u\right)+Q\left(v\right)+c\left(x,y\right)}
\end{align*}
Note that, up to a multiple by $a$, $xy$ equals $\prod_{i=1}^{n}x_{i}^{u_{i}+v_{i}}$.
Since $\mathrm{ord}\left(xy\right)=\mathrm{ord}\left(xya\right)$
we have $\left(xy\right)^{2}=a^{Q\left(u+v\right)}$. Thus $B\left(u,v\right)=c\left(x,y\right)$
which proves the result.

\end{proof}

\section{\label{sec:Maximal disconnected subgraphs}Maximal disconnected subgraphs}

Fix a set $T\subset G/\Phi\left(G\right)$ given by $T=\left\{ t_{1},\ldots t_{r}\right\} $.
Let $\left\{ x_{1},\ldots,x_{r}\right\} $ be any choice of lifts
of $\left\{ t_{1},\ldots,t_{r}\right\} $. Recall we defined $S_{i}=\left\{ 1\le j\le r\mid\left[x_{i},x_{j}\right]\neq1\right\} $.
This is the set of indices $j$ such that $t_{j}$ has an edge to
$t_{i}$ in the graph $\mathcal{G}\left(T\right)$.

Let $C_{1},\ldots,C_{s}$ be the connected components of $\mathcal{G}\left(T\right)$.
Without loss of generality we can assume $i\in\left\{ 1,\ldots,r_{1}\right\} $
satisfies $S_{i}\neq\varnothing$.

For a fixed pair $\left(G,H\right)$ we will let $T_{0}\subset G/\Phi\left(G\right)$
be the set of all elements which lift to order 2 elements in $G\backslash H$
and call $T_{0}$ the maximal admissible generating set. The graph
$\mathcal{G}\left(T_{0}\right)$ determines $\left(G,H\right)$ as
follows.
\begin{lem}
\label{lem:complete bipartite abelian}The group $H$ is abelian if
and only if $\mathcal{G}\left(T_{0}\right)$ is a complete and bipartite
graph.
\end{lem}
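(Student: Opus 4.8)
The claim is that $H$ is abelian if and only if the graph $\mathcal{G}(T_0)$ on the maximal admissible generating set is complete bipartite. I would prove this by translating the condition ``$H$ abelian'' into a statement about commutators among lifts of elements of $T_0$, using the structure of $G$ as a central extension of $\mathbb{F}_2^n$ by $\mathbb{F}_2 = \langle a\rangle$, together with the quadratic form $Q$ and bilinear form $B$ from Section~\ref{sec:The graph}. The key structural fact is that edges of $\mathcal{G}(T_0)$ correspond to $B(u,v)=1$ (Lemma~\ref{lem:B detects edges}), and membership in $H$ versus $G\setminus H$ is controlled by a single linear functional on $G/\Phi(G)\cong\mathbb{F}_2^n$, since $[G:H]=2$ forces $\overline H$ to be a hyperplane, and $T_0 = (\ker Q)\setminus\overline H$ by definition (elements lifting to order-$2$ elements are exactly $\ker Q$ by Lemma~\ref{lem:ker Q is order 2}, and we want those outside $H$).

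\medskip

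\textbf{Forward direction.} Suppose $H$ is abelian. First I would observe that since $G$ is generated by $T_0$ (admissibility), every element of $G$ is a product of lifts $x_i$ of $t_i\in T_0$. The commutator subgroup $[G,G]$ is contained in $\langle a\rangle$ (central extension of an elementary abelian group), and since $G$ is nonabelian in the relevant case, $[G,G]=\langle a\rangle$. Now I claim $a\in H$: indeed $a$ is central, and $H$ has index $2$; if $a\notin H$ then $G = H\sqcup Ha$ and $H\cong G/\langle a\rangle \cong \mathbb{F}_2^n$ would be abelian automatically — so this direction needs care. The real content: $H$ abelian means that for any two lifts $x_i,x_j$ whose product lies in $H$, we have $[x_i,x_j]$ trivial or at least that $H$ has no noncommuting pair. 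I would argue that if $t_i,t_j\in T_0$ lie on the ``same side'' in an appropriate sense, then $x_ix_j\in H$ (since $t_i,t_j\notin\overline H$ but their sum is in $\overline H$), forcing $[x_i,x_j]=1$, i.e.\ no edge. This gives that $\mathcal G(T_0)$ has no edges within the two fibers of $T_0\to G/\langle \overline H\rangle$-type partition; to get completeness of the bipartite graph I would show that if $t_i,t_j$ are ``on opposite sides'' they must fail to commute — otherwise $x_ix_j$ would have order $2$, lie outside $H$, be in $T_0$, and the three elements $t_i,t_j,t_i+t_j$ would force $H$ to contain a noncommuting pair or else collapse the extension; essentially using that $B$ restricted to $\ker Q$ must be nondegenerate modulo its radical to generate a nonabelian $G$.

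\medskip

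\textbf{Converse.} Suppose $\mathcal G(T_0)$ is complete bipartite with parts $A, B$. Then $B(u,v)=0$ for $u,v\in A$ and for $u,v\in B$, while $B(u,v)=1$ for $u\in A$, $v\in B$. I would show $H = \langle\,\{x_ix_j : t_i,t_j\in T_0 \text{ same part}\}\,,\{x_i x_j a^{\epsilon}\}\ldots\rangle$ — more cleanly: $\overline H$ is the hyperplane complementary to $T_0$, and one computes directly that any two generators of $H$ arising as products of an even number of $T_0$-lifts commute, because the bilinear form $B$ evaluated on sums of elements from a complete bipartite configuration vanishes (a sum of $a$-even-many crossings). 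Using Lemmas~\ref{lem:ker Q is order 2} and~\ref{lem:B detects edges} this is a finite linear-algebra check: $H$ is generated by elements whose pairwise $B$-values all vanish, hence $H$ is abelian (commutators are powers of $a$, and $B=0$ detects commuting).

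\medskip

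\textbf{Main obstacle.} The delicate point is pinning down precisely how $\overline H$, the maximal admissible set $T_0$, and the form $Q$ interact — in particular verifying that $T_0$ really is $\ker Q$ minus a hyperplane and that ``complete bipartite'' corresponds exactly to $B$ being, up to the radical, the standard alternating pairing between two totally singular complements. Getting the bipartition correct (it should be induced by a coset decomposition, not just any $2$-coloring) and handling the radical of $B$ (the elements $t_i$ with $S_i = \varnothing$, which are central lifts) carefully is where the argument could go wrong; I expect the bookkeeping around which products of lifts land in $H$ versus $G\setminus H$, and whether they have order $2$, to be the crux.
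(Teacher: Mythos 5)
Your converse direction is essentially correct and in fact a bit slicker than the paper's: you reduce ``$H$ abelian'' to total isotropy of $\overline{H}$ under the alternating form $B$ (via Lemma~\ref{lem:B detects edges}), and the parity count $B(t_i+t_j,t_k+t_l)=0$ when $\mathcal{G}(T_0)$ is complete bipartite finishes it, whereas the paper writes elements of $H$ as lifts $\widehat{t_it_j}$, $\widehat{s_1s_2}$ and checks commutation case by case inside the semidirect product $H\rtimes C_2$. These are equivalent; yours is the more uniform bookkeeping.

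The forward direction has a genuine gap. You claim that if $t_i,t_j$ lie ``on the same side'' then $x_ix_j\in H$, ``forcing $[x_i,x_j]=1$.'' But $x_ix_j\in H$ holds for \emph{every} pair $t_i,t_j\in T_0$: $\overline{H}$ is an index-$2$ subspace disjoint from $T_0$, so $t_i+t_j\in\overline{H}$ always, and $\Phi(G)\subset H$; moreover $x_ix_j\in H$ together with $H$ abelian does not force $[x_i,x_j]=1$. If $[x_i,x_j]=a$ then, since $x_i^2=x_j^2=1$, one gets $(x_ix_j)^2=a$, so $x_ix_j$ is simply an order-$4$ element of the abelian group $H$ — no contradiction. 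Your later assertion that for $t_i,t_j$ ``on opposite sides'' the product $x_ix_j$ ``lie[s] outside $H$'' and is ``in $T_0$'' is inconsistent with the preceding parenthetical and is false. The underlying issue is that you never exhibit what the bipartition is. The paper does this by first establishing $G\cong H\rtimes C_2$ with $C_2$ acting by inversion, then computing directly that $x_1\sigma$ and $x_2\sigma$ commute iff $x_1,x_2$ have the same order in $H$; the parts are $W_1=\{\overline{x\sigma}\mid\mathrm{ord}(x)=2\}$ and $W_2=\{\overline{x\sigma}\mid\mathrm{ord}(x)=4\}$. If you want to stay in the bilinear-form language, total isotropy of $\overline{H}$ gives, on setting $u=t_i+t_j$, $v=t_k+t_j$ and using $B(t_j,t_j)=0$, the cocycle identity $B(t_i,t_k)=B(t_i,t_j)+B(t_j,t_k)$; then $t\mapsto B(t_1,t)$ for a fixed $t_1\in T_0$ is a $2$-coloring with $B(t_i,t_j)=1$ exactly for mixed pairs, which is precisely complete bipartiteness. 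Either of these closes the hole; as written your argument does not.
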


\begin{proof}
Suppose $H$ is abelian. Then $G\cong H\rtimes C_{2}$ with $C_{2}$
acting by inversion. Hence every element of $G$ can be written as
$x\sigma$ with $x\in H$ and $\sigma\in C_{2}$. Every element of
$G-H$ has order 2 since $x\sigma x\sigma=xx^{-1}\sigma^{2}=1$ since
$\sigma$ acts by inversion. Thus $T_{0}=\overline{G-H}\subset G/\Phi\left(G\right)$.

If $y\in G$ has order 4 then $y^{2}=a$ where $\left\langle a\right\rangle =\Phi\left(G\right)$
and $a$ is central of order 2. Hence $y^{-1}=ya$. Let $x_{1}\sigma,x_{2}\sigma\in G$.
Then 
\[
x_{1}\sigma x_{2}\sigma=x_{2}^{-1}\sigma x_{1}^{-1}\sigma.
\]
Thus $x_{1}\sigma$ and $x_{2}\sigma$ commute if and only if $x_{i}$
both have order 2, or both have order 4 in $H$.

Combining the above we see that $\mathcal{G}\left(T_{0}\right)=\left(W_{1},W_{2}\right)$
is complete and bipartite where $W_{1}=\left\{ \overline{x\sigma}\mid\mathrm{ord}\left(x\right)=2\right\} $
and $W_{2}=\left\{ \overline{x\sigma}\mid\mathrm{ord}\left(x\right)=4\right\} $.

Now suppose $\mathcal{G}\left(T_{0}\right)=\left(W_{1},W_{2}\right)$
is complete and bipartite. Since $H$ has index 2 in $G$, clearly
$H=\left\{ \widehat{t_{i}t_{j}}\mid t_{i},t_{j}\in T_{0}\right\} $
where $\widehat{t_{i}t_{j}}$ denotes the lift from $G/\Phi\left(G\right)$
to $G$. Suppose $x_{1}=\widehat{t_{1}t_{2}}$ and $x_{2}=\widehat{s_{1}s_{2}}$
with $t_{i},s_{i}\in T_{0}$. By checking the cases $t_{i}\in W_{k}$,
$s_{j}\in W_{l},$ with $i,j,k,l=1,2$ we see that $x_{1}$ and $x_{2}$
commute, hence $H$ is abelian.
\end{proof}
Throughout this paper we will primarily work with the sets $T_{0}\subset G/\Phi\left(G\right)$
and hence refer to the condition on $\mathcal{G}\left(T_{0}\right)$
rather than that on $\left(G,H\right)$ from the above lemma. The
lemma will be applied at the end to translate the theorem statements
to ones about $\left(G,H\right)$.

For any $T\subset G/\Phi\left(G\right)$ define $c_{T}$ to be the
number of orbits of the set of lifts of $T$ acting on itself by conjugation.
If $T$ is a generating set this is the number of conjugacy classes
of the lifts of $T$ in $G$.

We will write $c=c_{T_{0}}$.

The numbers $c_{T}$ will be crucial to determining the maximal disconnected
subgraphs of $\mathcal{G}^{*}$.
\begin{lem}
\label{lem:c_T_bound}Let $T\subseteq T_{0}$. Then $c_{T}\le c$.
Furthermore if $T$ is a generating set then equality holds if and
only if $T=T_{0}$.
\end{lem}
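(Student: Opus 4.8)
The plan is to analyze the conjugation action of the lifts of $T$ on themselves by relating orbit counts to the commutator structure recorded in the graph $\mathcal{G}(T_0)$. Recall from the Remark following Theorem \ref{thm:f_T_formula} that a lift $x_i$ of $t_i \in T_0$ is central in $G$ exactly when $S_i = \varnothing$ (i.e. $t_i$ is an isolated vertex of $\mathcal{G}(T_0)$), in which case $t_i$ contributes two singleton conjugacy classes $\{x_i\}$ and $\{x_i a\}$; and when $S_i \neq \varnothing$ it contributes a single class $\{x_i, x_i a\}$ of size $2$. Thus for any $T \subseteq T_0$, writing $T = T^{\mathrm{iso}} \sqcup T^{\mathrm{conn}}$ according to whether the vertex is isolated in $\mathcal{G}(T)$, we get the clean formula $c_T = 2|T^{\mathrm{iso}}| + |T^{\mathrm{conn}}|$. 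The first step is to establish this formula carefully, noting the subtlety that ``isolated in $\mathcal{G}(T)$'' is not the same as ``isolated in $\mathcal{G}(T_0)$'': passing to a subset $T \subsetneq T_0$ can only delete edges, so a vertex isolated in $\mathcal{G}(T_0)$ is isolated in $\mathcal{G}(T)$, but the converse may fail.

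Given the formula $c_T = 2|T^{\mathrm{iso}}_{\mathcal{G}(T)}| + |T^{\mathrm{conn}}_{\mathcal{G}(T)}| = 2|T| - |T^{\mathrm{conn}}_{\mathcal{G}(T)}|$, I would prove the inequality $c_T \le c$ by a direct comparison. Let $I_0$ denote the isolated vertices of $\mathcal{G}(T_0)$, so $c = 2|I_0| + (|T_0| - |I_0|) = |T_0| + |I_0|$. For $T \subseteq T_0$, every vertex of $T$ that is isolated in $\mathcal{G}(T)$ and lies outside $I_0$ still has all its $\mathcal{G}(T_0)$-neighbours lying in $T_0 \setminus T$; I would bound $|T^{\mathrm{iso}}_{\mathcal{G}(T)}| \le |I_0 \cap T| + |T_0 \setminus T|$, since each vertex counted on the left but not in $I_0$ can be injectively charged to one of the deleted vertices (pick any of its $\mathcal{G}(T_0)$-neighbours; a slightly more careful argument is needed to ensure injectivity, using that such a charged deleted vertex, if also charged by another newly-isolated vertex, still only accounts for a bounded deficit — in fact the cruder bound $|T^{\mathrm{iso}}_{\mathcal{G}(T)} \setminus I_0| \le |T_0\setminus T|$ suffices because a vertex isolated in $\mathcal{G}(T)$ but not in $\mathcal{G}(T_0)$ must have lost at least one neighbour, and one can set up an injection from this set into $T_0 \setminus T$ by a matching argument, or more simply observe that removing a single vertex $v$ from $T_0$ can newly isolate at most ... ). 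Rather than the matching, the cleanest route is to induct on $|T_0 \setminus T|$: removing one vertex $v$ from a set $T'$ changes $c$ by: $-2$ if $v$ was isolated in $\mathcal{G}(T')$; $-1$ minus (number of vertices of $T'\setminus\{v\}$ adjacent only to $v$) $\ge -1 - \dots$ Hmm — I would instead directly show each single-vertex deletion decreases $c_{T'}$ by at least $1$ when $T'$ is a generating set, which is exactly the content needed.

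So the core of the argument reduces to the single-step claim: if $T' \subseteq T_0$ is a generating set and $v \in T'$ with $T' \setminus \{v\}$ still generating, then $c_{T'\setminus\{v\}} < c_{T'}$. Using $c_{T'} = 2|T'| - (\text{number of non-isolated vertices of }\mathcal{G}(T'))$, removing $v$ decreases $2|T'|$ by $2$, and can increase the count of isolated vertices by at most $1 + N$, where $N$ is the number of vertices of $T'$ whose only neighbour in $\mathcal{G}(T')$ was $v$. If $N = 0$ the change in $c$ is $\le -2 + 1 + 0 \cdot(\dots)$ wait — removing $v$ itself removes one vertex from the non-isolated count (if $v$ was non-isolated) and the net effect on $c$ is $-2 - (\text{change in non-isolated count})$; the non-isolated count drops by (at least $1$ for $v$ itself) and possibly more for newly-isolated neighbours, so $c$ drops by $2$ minus (that drop), which is $\le -2 + 1 + N = N - 1$. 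Hence $c$ strictly decreases unless $N \ge 2$. I then need to rule out $N\ge 2$ using the generation hypothesis: if two distinct vertices $w_1, w_2 \in T'$ each have $v$ as their unique neighbour, one shows $T' \setminus \{v\}$ fails to generate $G$, or rather derives a contradiction with admissibility — this is where I expect the \textbf{main obstacle} to lie, and it will require invoking Lemma \ref{lem:complete bipartite abelian} and the structure of central extensions of $\mathbb{F}_2^n$ by $\mathbb{F}_2$ (in particular that the commutator form $B$ is the alternating form attached to $Q$, and that $\mathcal{G}(T_0)$ has no isolated vertices unless those generate a central $C_2$ factor) to show a ``pendant pair'' $\{w_1, v, w_2\}$ forces a relation obstructing generation. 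Finally, for the equality case: $c_T = c$ with $T$ generating forces every deletion step to be impossible, i.e. $T_0 \setminus T$ cannot be nonempty while keeping generation — but admissibility of $(G,H)$ means $T_0$ itself generates, and one checks that no proper generating subset of $T_0$ can have $c_T = c$ by running the strict-decrease argument once, giving $T = T_0$.
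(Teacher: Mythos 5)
Your reduction to the formula $c_T = |T| + |\mathrm{iso}(T)|$ (where $\mathrm{iso}(T)$ denotes vertices of $T$ isolated in $\mathcal{G}(T)$) is correct, and you are also right that the crux is a bound of the form $|\mathrm{iso}(T)\setminus \mathrm{iso}(T_0)| \le |T_0\setminus T|$. But this is where the proposal stops being a proof: you explicitly flag the charging/injectivity step and the ``pendant pair'' case as the main obstacle and then do not resolve it, and in fact it \emph{cannot} be resolved by graph theory alone. The inequality is false for arbitrary graphs: a path $w_1 - v - w_2$ on three vertices has $\mathrm{iso}(\{w_1,w_2\})$ of size $2$ while only one vertex was deleted, so $c_{\{w_1,w_2\}} = 4 > 3 = c_{\{w_1,v,w_2\}}$. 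That such a configuration never actually arises as $\mathcal{G}(T_0)$ is a theorem about quadratic forms in characteristic $2$, not a combinatorial tautology --- in the paper the needed bound is exactly Proposition~\ref{prop:perp_sets_bound2}, $|T'^{\perp}\cap T'|\le |T_0'|-|T'|$, whose proof occupies all of Section~\ref{sec:isotropic subsets quad forms} (building on Lemma~\ref{lem:if--is} about totally singular subspaces and Proposition~\ref{lem:Let--be}). Citing Lemma~\ref{lem:complete bipartite abelian} and ``the structure of central extensions'' gestures in the right direction but does not supply this ingredient, so the proposal has a genuine, load-bearing gap at exactly the point you yourself identified.

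There is also a structural issue with the one-vertex-at-a-time induction: the lemma asserts $c_T\le c$ for \emph{every} $T\subseteq T_0$, and most such $T$ cannot be reached from $T_0$ by a chain of deletions each of which preserves generation. The paper avoids this by working with the decomposition $T_0 = T_0'\sqcup T_0''$ (according to whether a vertex lies in $\ker(Q\mid_{\mathbb{F}_2^{n\perp}})$) and writing $c_T - c$ directly as $(|T'|-|T_0'|) + 2(|T''|-|T_0''|) + |T'^\perp\cap T'|$, which handles arbitrary $T$ in one stroke once Proposition~\ref{prop:perp_sets_bound2} is available; the equality case then follows from a short centrality argument rather than from iterating a strict-decrease step. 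So to salvage the proposal you would need both to prove the isotropic-subset bound and to recast the comparison so it does not depend on a generation-preserving deletion chain.
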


\begin{proof}
Note $T\subset T_{0}=\ker Q\backslash H$ by definition and Lemma
\ref{lem:ker Q is order 2}.

By definition of the quadratic form $Q$ and its associated bilinear
form $B$ and Lemma \ref{lem:B detects edges}, $T^{\perp}\cap T$
is the subset of $T$ whose lifts have trivial action. There are two
lifts of any $t\in T$, and hence they form one orbit if $t\notin T^{\perp}\cap T$
and two distinct orbits if $t\in T^{\perp}\cap T$.

Let $K=\ker(Q\mid_{\mathbb{F}_{2}^{n\perp}})$. Let $T_{0}'=T_{0}\backslash K$
and $T_{0}''=T_{0}\cap K$. Similarly let $T'=T\cap T_{0}'$ and $T''=T\cap T_{0}''$.
The above implies that the lifts of each element of $T_{0}'$ form
one orbit (under the action of $T_{0}$) and the lifts of each element
of $T_{0}''$ form two orbits.

Then 
\[
c_{T}\le c-(\left|T_{0}'\right|-\left|T'\right|)-2\left(\left|T_{0}''\right|-\left|T''\right|\right)+\left|T'^{\perp}\cap T'\right|.
\]
It follows from Proposition \ref{prop:perp_sets_bound2} that 
\[
\left|T'^{\perp}\cap T'\right|\le\left|T'_{0}\right|-\left|T'\right|.
\]
 Thus we have $c_{T}\le c$ for any $T\subseteq T_{0}$.

Suppose $c_{T}=c$ and $T$ is a generating set. If $T\neq T_{0}$
then there is some $x\in T$ which is fixed by the conjugation action
of $T$ but not $T_{0}$. Since any set of lifts of $T$ generates
$G$ this implies any lift $x'\in Z\left(G\right)$. This is a contradiction
since $x$ is not fixed by $T_{0}$. Thus $T=T_{0}$.
\end{proof}
If $T$ is a generating set then the proof of the above result does
not require Proposition \ref{prop:perp_sets_bound2}. However the
proofs in Section \ref{sec:Maximal disconnected subgraphs} will necessitate
dealing with sets $T$ which are not generating sets.

\subsection{Maximal unlinked sets}

We will now define certain subsets of $\mathcal{G}^{*}(\widehat{T})$
called maximal unlinked sets, which we will show are equal to maximal
disconnected subgraphs of $\mathcal{G}^{*}(\widehat{T})$ of maximum
size.

As above fix a set $T\subset G/\Phi\left(G\right)$ given by $T=\left\{ t_{1},\ldots t_{r}\right\} $.
Throughout this section we will use the identification made in Section
\ref{sec:The graph} between the vertices of $\mathcal{G}^{*}\left(T\right)$
and $U=\left\{ 1,\ldots,2r\right\} $. We partition the vertices of
$\mathcal{G}^{*}\left(T\right)$ as follows.

For $1\le j\le s$ we let
\begin{align*}
A_{j} & =\left\{ 2i-1\mid1\le i\le r_{1},i\in C_{j}\right\} ,\\
B_{j} & =\left\{ 2i\mid1\le i\le r_{1},i\in C_{j}\right\} 
\end{align*}
 Also we let
\[
C=\left\{ 2i-1,2i\mid i=r_{1}+1,\ldots,r\right\} .
\]
Let $E_{j}=A_{j}\cup B_{j}$ so that $U=\cup_{j=1}^{s}E_{j}\cup C$.

It is easy to verify the following description of $\mathcal{G}^{*}\left(T\right)$.
If $i_{1}\in C_{j}$ and $u=2i_{1}-1$ (that is $u\in A_{j}$) then
$u$ has an edge to every $2i_{2}$ with $i_{2}\in C_{j}$ and $i_{2}\neq i_{1}$.
It has no edge to every other element of $U$. Similarly $u=2i_{1}$
(that is $u\in B_{j}$) has an edge to every $2i_{2}-1$ with $i_{2}\in C_{j}$
and $i_{2}\neq i_{1}$ and has no edge to everything else. Every element
of $C$ is disconnected.

Consider the set of subsets of $U$
\begin{equation}
\mathcal{T}=\left\{ \cup_{j=1}^{s}W_{j}\cup C\mid W_{j}=A_{j},B_{j}\right\} .\label{eq:type2_sets}
\end{equation}

Let $\left(G,H\right)=\left(D_{4},C_{4}\right)$ and $S\subset\mathbb{F}_{2}^{2}$
be the maximal adimissible generating set for this pair. For the purpose
of the next subsection we will identify $\mathcal{G}^{*}\left(S^{k}\right)$
with $\mathbb{F}_{2}^{2k}$ and refer to disconnected subgraphs of
$\mathbb{F}_{2}^{2k}$.

Suppose $T$ is a generating set, such that $T=\left(W_{1},W_{2}\right)$
is a complete bipartite graph. Partition the vertices into four disjoint
sets 
\begin{align*}
A_{0} & =\left\{ 2u-1\mid u\in W_{1}\right\} ,\\
A_{3} & =\left\{ 2u-1\mid u\in W_{2}\right\} ,\\
A_{1} & =\left\{ 2u\mid u\in W_{1}\right\} ,\\
A_{2} & =\left\{ 2u\mid u\in W_{2}\right\} .
\end{align*}

Let $T_{1},\ldots,T_{k}$ be a sequence of such generating sets. Define
a bijection between $\left\{ A_{0},A_{1},A_{2},A_{3}\right\} ^{k}$
and elements of $\mathbb{F}_{2}^{2k}$ which acts coordinatewise by
sending $A_{i}$ to the binary representation of $i$ in $\mathbb{F}_{2}^{2}$.
We will implicitly use this identification in what follows. For any
$\mathbf{q}\in\left(\mathbb{F}_{2}^{2}\right)^{k}$ let
\[
\mathcal{R}_{\mathbf{q}}=\left\{ x\in\prod_{i=1}^{k}U\mid x_{i}\in\mathbf{q}_{i}\right\} .
\]

Let $\mathcal{D}$ be the set of maximal disconnected subgraphs of
$\mathcal{G}^{*}\left(S\right)$ with full support on $S$ (see Section
\ref{sec:The graph} for the definition of the support). For any $V\subset\left(\mathbb{F}_{2}^{2}\right)^{k}$
define $\mathcal{R}_{V}=\bigcup_{\mathbf{q}\in V}\mathcal{R}_{\mathbf{q}}$.
Then define

\begin{equation}
\mathcal{R}=\left\{ R_{V}\mid V\in\mathcal{D}\right\} .\label{eq:type1_sets}
\end{equation}

Now we put together the above definitions of $\mathcal{T}$ and $\mathcal{R}$.
For generating sets $T_{1},\ldots,T_{k}$ let $Y$ be the subset of
indices $i=1,\ldots,k$ for which $T_{i}$ is a complete bipartite
graph. Let $\mathcal{T}_{i}$ correspond to $T_{i}$ for $i\notin Y$
and let $\mathcal{R}$ correspond to the set of $\left\{ T_{i}\right\} _{i\in Y}$
as defined above. Then let 
\[
\mathcal{M}(\widehat{T})=\mathcal{R}\times\prod_{j\notin Y}\mathcal{T}_{j}.
\]
 We will denote maximal unlinked sets in $\mathcal{R}$ to be of \textit{$\textit{type 1}$
}and those in $\prod_{j}\mathcal{T}_{j}$ of $\textit{type 2}$.

\subsection{Classification when $T=T_{0}$}
\begin{defn}
For any $\widehat{T}=\prod_{i=1}^{k}T_{i}$ let $\mathbf{U}(\widehat{T})$
be the set of largest maximal disconnected subgraphs of $\mathcal{G}^{*}(\widehat{T})$
with full support on $\widehat{T}$ (see Section \ref{sec:The graph}
for the definition of the support of $\widehat{T}$).
\end{defn}

We will show that when $\widehat{T}=T_{0}^{k}$ we have the equality
of sets $\mathcal{M}(\widehat{T})=\mathbf{U}(\widehat{T})$.

Throughout this section we will say $u,v\in\mathcal{G}^{*}(\mathbb{F}_{2}^{n})$
are unlinked if there is no edge between them.
\begin{lem}
\label{lem:bounded_size}Let $t\in\mathcal{M}(\widehat{T})$. Then
$\left|t\right|\le c^{k}$. Furthermore, if $T$ is a product of generating
sets then equality holds if and only if $T_{i}=T_{0}$ for all $i$.
\end{lem}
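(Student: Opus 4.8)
The plan is to reduce the bound on $|t|$ to a product of bounds on each factor, treating type 1 and type 2 factors separately. First I would recall that an element $t\in\mathcal{M}(\widehat{T})$ is by definition of the form $R_V\times\prod_{j\notin Y}W_j$ where $R_V\in\mathcal{R}$ and each $W_j\in\mathcal{T}_j$. So $|t|=|R_V|\cdot\prod_{j\notin Y}|W_j|$, and it suffices to bound each of these.

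For a type 2 factor $W_j=\cup_{l=1}^{s_j}W_{j,l}\cup C_j\in\mathcal{T}_j$ (notation as in \eqref{eq:type2_sets}), the size is $\sum_l |W_{j,l}| + |C_j|$. Here each $W_{j,l}$ is either $A_{j,l}$ or $B_{j,l}$, both of which have size $r_{1,l}$ (the size of the connected component $C_l$), and $|C_j|=2(r_j - r_{1,j})=2r_{2,j}$. Thus $|W_j| = r_{1,j}+2r_{2,j}$, which, as observed in the Remark following Theorem \ref{thm:f_T_formula}, is exactly $c_{T_j}$, the number of conjugacy classes of lifts of $T_j$. By Lemma \ref{lem:c_T_bound}, $c_{T_j}\le c$, with equality iff $T_j=T_0$ (when $T_j$ is a generating set). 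For a type 1 factor $R_V$ with $V\in\mathcal{D}$, I would use $|R_V| = \sum_{\mathbf{q}\in V}|\mathcal{R}_{\mathbf{q}}|$, and since $\mathcal{R}_{\mathbf{q}}=\prod_i \mathbf{q}_i$ with each $\mathbf{q}_i$ one of the four parts $A_0,A_1,A_2,A_3$ of equal size $r_{1,i}/2$ or $r_{2,i}/2$... actually I would argue more directly: the set $V$ is a maximal disconnected subgraph of $\mathcal{G}^*(S^k)$ identified with $\mathbb{F}_2^{2k}$, and the $k=1$ analysis (the complete bipartite case of the graph $\mathcal{G}(T_0)$) shows $|R_V|\le c^{|Y|}$ for the subproduct over $i\in Y$, with equality under the stated condition. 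Combining, $|t|\le c^{|Y|}\cdot\prod_{j\notin Y}c = c^k$.

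The main obstacle I expect is handling the type 1 factor: unlike the type 2 case, where $|W_j|=c_{T_j}$ is immediate from the Remark, the type 1 bound requires understanding the maximal disconnected subgraphs of $\mathcal{G}^*(S^k)\cong\mathbb{F}_2^{2k}$ and relating their size to $c^{|Y|}$. This is where the quadratic-form machinery of Section \ref{sec:isotropic subsets quad forms} — in particular Proposition \ref{prop:perp_sets_bound2} applied through Lemma \ref{lem:c_T_bound} — should be invoked, encoding $\mathcal{G}^*(\widehat{T})$ via the bilinear form $B$ and bounding $|T'^\perp\cap T'|$; when $T_i=T_0$ the inequality in Lemma \ref{lem:c_T_bound} is an equality, which is precisely what forces the equality case $T_i=T_0$ for all $i$ here. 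For the converse direction of the equality statement, I would check directly that when every $T_i=T_0$ the sets in $\mathcal{M}(\widehat{T})$ do attain size $c^k$, by exhibiting the standard maximal disconnected subgraph (taking all $W_j=A_j$, say) and counting.
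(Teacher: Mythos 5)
Your high-level plan matches the paper's: write $t=R_V\times\prod_{j\notin Y}t_j$, bound each factor separately, and multiply. The type~2 bound is fine — $|t_j|=c_{T_j}\le c$ by Lemma~\ref{lem:c_T_bound}, with equality (for generating sets) iff $T_j=T_0$.

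The gap is in the type~1 bound, and it is genuine. You invoke ``the $k=1$ analysis'' and Proposition~\ref{prop:perp_sets_bound2} (via Lemma~\ref{lem:c_T_bound}) to conclude $|R_V|\le c^{|Y|}$, but that machinery only delivers $c_{W_{i,j}}\le c$, i.e., $|W_{i,j}|\le c/2$. That bounds each $|\mathcal{R}_{\mathbf q}|$ by $\prod_{i\in Y}|W_{i,1}|$, but by itself says nothing about $|R_V|=\sum_{\mathbf q\in V}|\mathcal{R}_{\mathbf q}|$ because you still need to control $|V|$. The missing ingredient is Lemma~\ref{lem:Let-k-=002265} (the Fouvry--Kl\"uners bound): a disconnected subgraph $V$ of $\mathcal{G}^{*}(S^{|Y|})\cong\mathbb{F}_2^{2|Y|}$ satisfies $|V|\le 2^{|Y|}$, and maximal ones are affine subspaces of dimension $|Y|$. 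That bound uses the nondegenerate Fouvry--Kl\"uners quadratic form $P$ on $\mathbb{F}_2^{2|Y|}$, which is a different object from the bilinear form $B$ of Section~\ref{sec:The graph} that underlies Proposition~\ref{prop:perp_sets_bound2}. Only with $|V|\le 2^{|Y|}$ do you get $|R_V|\le 2^{|Y|}\prod_{i\in Y}|W_{i,1}|=\prod_{i\in Y}c_{W_{i,1}}\le c^{|Y|}$.

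The equality discussion inherits the same gap. For type~1 factors, equality needs \emph{both} $c_{T_i}=c$ and $|W_{i,1}|=|W_{i,2}|$ for all $i\in Y$: the second condition is what forces $|\mathcal{R}_{\mathbf q}|$ to be constant over the (full-support) set $V$. Your writeup only mentions the first. (In the end $c_{T_i}=c$ for a generating set implies $T_i=T_0$, and $T_0$ complete bipartite happens to have $|W_1|=|W_2|$, so the conclusion is correct — but that equality of part sizes needs to be either observed or derived from full support, as the paper does.) Your converse direction (check directly that $T_i=T_0$ gives $|t|=c^k$) is fine.
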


\begin{proof}
Write $t=R_{V}\times\prod_{j\notin Y}t_{j}$. Clearly $\left|t_{j}\right|=c_{T_{j}}$
and by Lemma \ref{lem:c_T_bound} $c_{T_{j}}\le c$. For $i\in Y$
write $T_{i}=\left(W_{i,1},W_{i,2}\right)$ and without loss of generality
suppose $\left|W_{i,1}\right|\ge\left|W_{i,2}\right|$ for all $i$.
Note $2\left|W_{i,j}\right|=c_{W_{i,j}}\le c$ for all $i,j$. Since
$\left|R_{\mathbf{q}}\right|\le\prod_{i\in Y}\left|W_{i,1}\right|^{r}$
for all $\mathbf{q}\in V$ we have 
\begin{align*}
\left|R_{V}\right| & \le2^{r}\prod_{i\in Y}\left|W_{i,1}\right|^{r}\\
 & =\prod_{i\in Y}c_{W_{i,1}}\\
 & \le c^{r}.
\end{align*}
 Suppose $\left|W_{i,1}\right|>\left|W_{i,2}\right|$ for some $i$.
Since $t$ has full support on $T$ there is some $\mathbf{q}\in V$
such that for some $i$ we have $\mathbf{q}_{i}\in\left\{ \left(0,1\right),\left(1,1\right)\right\} $,
that is, for $j=1$ or 3, every $x\in R_{\mathbf{q}}$ satisfies $x_{i}\in A_{j}$.
Hence 
\begin{align*}
\left|R_{\mathbf{q}}\right| & \le\left|W_{i,2}\right|\prod_{i'\neq i}\left|W_{i',1}\right|^{r}\\
 & <\prod_{i\in Y}\left|W_{i,1}\right|^{r}.
\end{align*}
Thus in this case equality holds if and only if $\left|W_{i,1}\right|=\left|W_{i,2}\right|$
(which implies $\left|R_{\mathbf{q}}\right|=(c_{T_{i}}/2)^{r}$) and
$c_{T_{i}}=c$ (which for generating sets only holds when $T_{i}=T_{0}$)
for all $i$.

Since $\left|t\right|=\left|R_{V}\right|\cdot\prod_{j\notin Y}\left|t_{j}\right|$
this completes the proof.
\end{proof}
Note that for any $x\in\mathcal{R}_{\mathbf{q}_{1}}$ and $y\in\mathcal{R}_{\mathbf{q}_{2}}$
the value of $\Phi_{r}\left(x,y\right)$ and hence $\Delta_{r}\left(x,y\right)$
is determined by $\mathbf{q}_{1}$ and $\mathbf{q}_{2}$. This induces
the function $\Delta_{r}:\mathbb{F}_{2}^{2r}\times\mathbb{F}_{2}^{2r}\longrightarrow\mathbb{F}_{2}$.
This is the same function used to detect maximal disconnected subsets
in the sense of \cite[Section 5.2, p. 472]{fk1} (which they refer
to as maximal unlinked sets). In particular there exists a non-degenerate
quadratic form $P$ defined on $\mathbb{F}_{2}^{2r}$ such that $\Delta_{r}\left(u,v\right)=P\left(u+v\right)$.
It can be seen from the explicit formula for $P$ given there that
$P$ is non-degenerate.

We will show that when $\widehat{T}$ is a product of complete bipartite
generating sets the elements of $\mathbf{U}(\widehat{T})$ are indexed
by maximal unlinked sets in the sense of Fouvry-Kl\"uners. Recall
in the previous subsection we identified $\mathcal{G}^{*}\left(S^{k}\right)$
with $\mathbb{F}_{2}^{2k}$.
\begin{lem}[{\cite[Lemma 18, Section 5.5, p. 478]{fk1}}]
\label{lem:Let-k-=002265}Let $\mathcal{U}\subset\mathbb{F}_{2}^{2r}$
be a disconnected subgraph. Then $\left|\mathcal{U}\right|\le2^{r}$
and for any $c\in\mathbb{F}_{2}^{2r}$, $c+\mathcal{U}$ is also a
disconnected subgraph. If $\left|\mathcal{U}\right|=2^{r}$, then
either $\mathcal{U}$ is a subspace of $\mathbb{F}_{2}^{2r}$ of dimension
$r$ or a coset of such a subspace.
\end{lem}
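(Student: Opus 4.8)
The plan is to recognize the statement as the classical bound on totally singular subspaces of a non-degenerate quadratic space over $\mathbb{F}_{2}$, applied to the form $P$ on $\mathbb{F}_{2}^{2r}$ recalled just above, for which two vertices $u,v$ are joined by an edge precisely when $P(u+v)=1$. So a disconnected subgraph $\mathcal{U}$ is exactly a subset with $P(u+v)=0$ for all $u,v\in\mathcal{U}$. The translation claim is then immediate: since $P\bigl((u+c)+(v+c)\bigr)=P(u+v)$, a set is disconnected if and only if each of its translates is, and translation is a bijection so $\lvert c+\mathcal{U}\rvert=\lvert\mathcal{U}\rvert$. Hence I may assume $\mathcal{U}\neq\varnothing$, fix $u_{0}\in\mathcal{U}$, and replace $\mathcal{U}$ by $W:=u_{0}+\mathcal{U}$ (recall $-1=1$ in $\mathbb{F}_{2}$), a disconnected subgraph containing $0$ with $\lvert W\rvert=\lvert\mathcal{U}\rvert$.

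The key step is to show that the $\mathbb{F}_{2}$-span $V:=\langle W\rangle$ is totally singular for $P$. Since $W$ is disconnected and $0\in W$, taking the pair $(w,0)$ gives $P(w)=0$ for every $w\in W$, and hence $B(w,w')=P(w+w')+P(w)+P(w')=0$ for all $w,w'\in W$, where $B$ is the bilinear form associated to $P$. Over $\mathbb{F}_{2}$ every element of $V$ is a sum $\sum_{i\in I}w_{i}$ of pairwise distinct elements of $W$, and
\[
P\Bigl(\sum_{i\in I}w_{i}\Bigr)=\sum_{i\in I}P(w_{i})+\sum_{i<j}B(w_{i},w_{j})=0 ,
\]
so $V$ is totally singular. (Equivalently: disconnectedness of a set containing $0$ passes to its $\mathbb{F}_{2}$-span.)

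It then remains to bound $\dim V$. A totally singular subspace of $P$ is totally isotropic for $B$, and since $P$ is non-degenerate $B$ is a non-degenerate alternating form on $\mathbb{F}_{2}^{2r}$, so $\dim V\le r$ by the standard symplectic bound (see, e.g., \cite{grovesbook}). Therefore $\lvert\mathcal{U}\rvert=\lvert W\rvert\le\lvert V\rvert=2^{\dim V}\le 2^{r}$. If equality $\lvert\mathcal{U}\rvert=2^{r}$ holds, then $2^{r}=\lvert W\rvert\le\lvert V\rvert\le 2^{r}$ forces $W=V$ (as $W\subseteq V$), so $W$ is a totally singular subspace of dimension $r$ and $\mathcal{U}=u_{0}+W$ is a coset of it; choosing $u_{0}=0$ when $0\in\mathcal{U}$ shows it is a subspace in that case.

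I do not expect a genuine obstacle here: the content is precisely the standard fact (which is why the statement is quoted from \cite{fk1}) that a non-degenerate quadratic form on $\mathbb{F}_{2}^{2r}$ has no totally singular subspace of dimension exceeding $r$. The only point deserving a line of care is the observation in the second paragraph that, once normalized to contain the origin, a disconnected set has totally singular span — which rests only on the $\mathbb{F}_{2}$-identity expanding $P$ on a sum — and one could equally well just cite \cite[Lemma 18]{fk1} verbatim.
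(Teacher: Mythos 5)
The paper itself gives no proof of this lemma — it is quoted verbatim as \cite[Lemma 18]{fk1}, with the paragraph just above recalling the non-degenerate quadratic form $P$ on $\mathbb{F}_{2}^{2r}$ satisfying $\Delta_{r}(u,v)=P(u+v)$. Your reconstruction is correct and is essentially the argument behind the cited result: translate so that $0\in\mathcal{U}$, observe $P(w)=0$ and hence $B(w,w')=0$ on the translated set, deduce the span is totally singular, and invoke the symplectic bound. Since this is the same mechanism the paper is implicitly invoking by naming $P$, there is nothing substantively different to compare.
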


\begin{lem}
\label{lem:For--let}For $i=1,\ldots,k$ let $T_{i}\subseteq T_{0}$
(not necessarily generating sets). Let $\mathcal{T}_{i},\mathcal{R},$
be as defined above. Let $T_{i}'\subset T_{i}$ for each $i$ and
define similarly $\mathcal{T}_{i}',\mathcal{R}'$. Suppose $c_{T_{1}',\ldots,T_{k}'}=c_{T_{1},\ldots,T_{k}}=c^{k}$.

Let $x\in t$ for some $t\in\mathcal{M}(\widehat{T})$. Then for any
$t'\in\mathcal{M}\left(T'\right)$ there exists some $y\in t'$ such
that $x$ and $y$ are unlinked.
\end{lem}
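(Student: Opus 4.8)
The plan is to reduce the statement to an existence result about totally singular subspaces (of the appropriate quadratic forms) intersecting a given subspace in a controlled way, which is exactly what Proposition \ref{lem:Let--be} and Proposition \ref{prop:perp_sets_bound2} provide. First I would use the product structure $\mathcal{M}(\widehat{T}) = \mathcal{R}\times\prod_{j\notin Y}\mathcal{T}_j$ to split the problem coordinate-by-coordinate (or rather, into the ``type 1'' block governed by $\mathcal{R}$ and the ``type 2'' blocks governed by the $\mathcal{T}_j$). For a fixed element $x\in t$, ``unlinked from $y$'' means $\Delta_k(x,y)=0$, and since $\Delta_k$ is additive across the $k$ coordinates it suffices to produce, in each block, an appropriate component of $y$ and then check that the total sum vanishes — but because the hypothesis forces $c_{T_1',\ldots,T_k'}=c^k$, each block is ``as large as possible'' and in fact one can choose $y$ so that $\Delta_k(x,y)=0$ identically, not just in sum.

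The key steps, in order: (1) In a type 2 block $\mathcal{T}_j$, recall from the previous subsection that the maximal unlinked sets are the sets $\cup W_j \cup C$ with $W_j\in\{A_j,B_j\}$, and that within a connected component $C_j$ of $\mathcal{G}(T_j)$ the edge structure of $\mathcal{G}^*(T_j)$ is bipartite between the ``$A$'' and ``$B$'' vertices. Using Lemma \ref{lem:B detects edges} and Lemma \ref{lem:ker Q is order 2}, translate the linking condition into the bilinear form $B$ associated to $Q$; then the condition $c_{T_j'}=c$ (forced by the hypothesis $c_{T_1',\ldots,T_k'}=c^k$ together with $c_{T_i'}\le c$ from Lemma \ref{lem:c_T_bound}) means $T_j' = T_{0}$ is forced to be the full maximal admissible set when $T_j$ is a generating set, and more generally that $T_j'$ is ``$B$-saturated'' enough. (2) In the type 1 block, use the identification of $\mathcal{G}^*(S^k)$ with $\mathbb{F}_2^{2r}$ and the non-degenerate quadratic form $P$ with $\Delta_r(u,v)=P(u+v)$; here being unlinked from $x$ means lying in the coset $x + \ker(P$-restricted$)$-type set, and Lemma \ref{lem:Let-k-=002265} guarantees that maximal disconnected subgraphs are cosets of $r$-dimensional subspaces, so translation by $x$ maps one into another. (3) Assemble: pick $y$ component-wise, in each type 2 block choosing the component of the maximal unlinked set $t'$ on the ``opposite side'' of the bipartition from $x$'s component so that $L(x_i,y_i)=0$ (possible precisely because $c_{T_j'}=c$ guarantees both sides $A_j, B_j$ of $\mathcal{T}_j'$ are nonempty and of equal size), and in the type 1 block using Proposition \ref{lem:Let--be} / \ref{prop:perp_sets_bound2} to find a totally singular vector in $t'$ in the right coset relative to $x$. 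Then $\Delta_k(x,y) = \sum_i L(x_i,y_i) = 0$.

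The main obstacle I expect is step (2)–(3) in the type 1 block: one must show that the maximal disconnected subgraph $t'$ (an element of $\mathcal{R}'$, hence indexed by a maximal disconnected $V\in\mathcal{D}$ for the smaller generating sets $\{T_i'\}_{i\in Y}$) actually contains a vertex unlinked from the given $x$. This is where the hypothesis $c_{T_1',\ldots,T_k'}=c^k$ is essential — it forces $|t'|$ to achieve the extremal bound $c^k$ (by Lemma \ref{lem:bounded_size}), so $t'$ is a coset of a maximal totally singular (equivalently, maximal disconnected) subspace of the right dimension, and then by Lemma \ref{lem:Let-k-=002265} together with Proposition \ref{lem:Let--be} the translate $x + (\text{the subspace part of }t')$ still meets $t'$; equivalently one checks $x$ lies in the span needed so that $\Delta_k(x,\cdot)$ restricted to $t'$ is not identically $1$. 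Handling the degenerate radical of $P$ correctly (the $R_0$ part in Proposition \ref{prop:perp_sets_bound2}) and tracking that $x$'s support lies in $T$ while $y$ must have support in $T'\subset T$ will require the full strength of the quadratic-form lemmas from Section \ref{sec:isotropic subsets quad forms}, rather than just a dimension count.
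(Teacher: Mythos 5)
Your high-level plan (decompose coordinate-wise across type~1/type~2 blocks, reduce the type~1 block to a quadratic-form complement lemma, and arrange $L(x_i,y_i)=0$ for each $i$ separately) matches the paper's strategy, but several of your key steps are either wrong or miss the crucial case.

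In the type~2 blocks your prescription fails. If $x_i$ sits in $A_j$ and you pick $y_i$ on the ``opposite side'' $B_j'$ of the component, then $x_i=2i_1-1$ has an edge to \emph{every} $2i_2$ with $i_2\in C_j$, $i_2\ne i_1$, so $L(x_i,y_i)=1$ unless $y_i=2i_1$ exactly. The correct move (what the paper does) is to pick the element of $t_i'$ with the \emph{same support} as $x_i$: the pair $\{2u-1,2u\}$ is always unlinked, so matching support gives $L(x_i,y_i)=0$ regardless of which side $t_i'$ takes on that component. Your parenthetical ``$c_{T_j'}=c$ guarantees both sides $A_j,B_j$ are nonempty and of equal size'' is not a meaningful justification: $|A_j|=|B_j|$ is automatic from their definitions and has nothing to do with $c_{T_j'}=c$.

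More importantly, you do not address what to do when $\operatorname{supp}(x_i)\notin T_i'$, which can happen since $T_i'\subset T_i$. This is exactly where the hypothesis $c_{T_i'}=c_{T_i}$ earns its keep: the paper deduces from it that some $u\in T_i'$ is a fixed point for the $T_i'$-action but not the $T_i$-action, hence $\{2u-1,2u\}\subset t_i'$, and then chooses $y_i\in\{2u-1,2u\}$ of the same parity as $x_i$. Without this step the construction has no candidate for $y_i$. Relatedly, your assertion that the hypothesis ``forces $T_j'=T_0$'' only applies when $T_j'$ is a generating set, and the lemma explicitly allows non-generating $T_i'$; you flag this with a vague ``$B$-saturated enough'' but never supply the actual argument.

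Finally, in the type~1 block you reach for Proposition~\ref{lem:Let--be} and Proposition~\ref{prop:perp_sets_bound2} (the degenerate-radical machinery), but the form $P$ on $\mathbb{F}_2^{2r}$ is non-degenerate, as the paper points out when defining it. The right tool is the simpler Lemma~\ref{lem:if--is}: after translating so that $t_{Y'}'=R_V$ with $V$ a totally singular subspace (using Lemma~\ref{lem:Let-k-=002265}), take a complementary totally singular $W$, write $\mathbf{q}=v+w$, and set $\mathbf{q}'=v$ so $\Delta_r(\mathbf{q},\mathbf{q}')=P(w)=0$. Invoking the degenerate versions here is not only unnecessary but requires hypotheses (e.g.\ $\ker(Q|_R)=0$) that you never verify for $P$.
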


\begin{proof}
First note that since $c_{T_{i}'},c_{T_{i}}\le c$ for each $i$ by
Lemma \ref{lem:c_T_bound}, this implies that $c=c_{T_{i}'}=c_{T_{i}}$
for each $i$. Let $Y'$ be the set of indices for which $T_{i}'$
is a complete bipartite graph. Let $t=t_{Y'}\times t_{0}$ and similarly
for $t'$. Let $\overline{x}$ be the projection of $x$ onto the
coordinates in $Y'$. We construct an element $y$ such that $\overline{y}$
is unlinked with $\overline{x}$ and $y_{i}$ is unlinked with $x_{i}$
for all $i\notin Y'$.

First suppose $i\notin Y'$. If $supp\left(x_{i}\right)\in T_{i}'$
then let $y_{i}$ be the element of $t_{i}'$ with $supp\left(y_{i}\right)=supp\left(x_{i}\right)$.
Now suppose $supp\left(x_{i}\right)\notin T_{i}'$. Since $c_{T_{i}'}=c_{T_{i}}$
this implies there exists some $u\in T_{i}'$ which is a fixed point
for the action of $T_{i}'$ but is not fixed for the action of $T_{i}$.
Hence $2u-1,2u\in t_{i}'$ so we let $y_{i}\in\left\{ 2u-1,2u\right\} $
have the same pairity as $x_{i}$.

Now consider all $i\in Y'$. If $T_{i}'$ is strictly contained in
$T_{i}$ with $c_{T_{i}'}=c_{T_{i}}$ then $T_{i}'$ must contain
at least one disconnected element, which contradicts that it is complete.
Thus $T_{i}'=T_{i}$. Suppose $\overline{x}\in\mathbf{q}$. Since
the function $\Delta_{r}$ is invariant under translation (simulatneously
in both coordinates) by Lemma \ref{lem:Let-k-=002265}, we can assume
$t_{Y'}'=R_{V}$ where $V$ is a subspace of $\mathbb{F}_{2}^{2r}$.
Thus we want to show that there exists $\mathbf{q}'\in V$ with $\Delta_{r}\left(\mathbf{q},\mathbf{q}'\right)=0$.

Since $V$ is an unlinked set containing 0 we have $P\left(V\right)=0$,
that is $V$ is a totally singular space for $P$. By Lemma \ref{lem:if--is}
there exists a subspace $W\subset\mathbb{F}_{2}^{2r}$ such that $P\left(W\right)=0$
and $W\cap V=\varnothing$ and $\left|W\right|=\left|V\right|=2^{r}$.
Thus $V\oplus W=\mathbb{F}_{2}^{2r}$. Write $\mathbf{q}=v+w$ for
some $v\in V$ and $w\in W$. Let $\mathbf{q}'=v$. Then 
\begin{align*}
\Delta_{r}\left(\mathbf{q},\mathbf{q}'\right) & =P\left(\mathbf{q}+\mathbf{q}'\right)\\
 & =P\left(w\right)\\
 & =0.
\end{align*}
 Thus we can let $\overline{y}\in\mathcal{R}_{\mathbf{q'}}$ be any
element and we get $\Delta_{r}\left(\overline{x},\overline{y}\right)=0$.

Combining the above for all coordinates we construct an element with
$\Delta_{k}\left(x,y\right)=0$.
\end{proof}
We need one more graph-theoretic lemma. For any edge $\left(u,v\right)\in E\left(T\right)$
consider the property $P\left(u,v\right)$: there exists some $w\in T$
such that either $\left(w,u\right),\left(w,v\right)\notin E\left(T\right)$
or $\left(w,u\right),\left(w,v\right)\in E\left(T\right)$.
\begin{lem}
\label{lem:The-condition-}The condition $P\left(u,v\right)$ is false
for all $\left(u,v\right)\in E\left(T\right)$ if and only if $T$
is a complete bipartite graph.
\end{lem}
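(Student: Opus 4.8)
**The plan is to prove both implications directly from the definition of $P(u,v)$ and the characterization of complete bipartite graphs.**

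First I would handle the easy direction: suppose $T = (W_1, W_2)$ is a complete bipartite graph, and let $(u,v) \in E(T)$. Then $u$ and $v$ lie in different parts, say $u \in W_1$ and $v \in W_2$. I claim $P(u,v)$ is false, i.e., for every $w \in T$, exactly one of $(w,u), (w,v)$ is an edge. Indeed, if $w \in W_1$ then $(w,u) \notin E(T)$ (same part, so no edge) while $(w,v) \in E(T)$ (different parts, and the graph is complete bipartite so all cross-pairs are edges); if $w \in W_2$ the situation is reversed. In either case exactly one of the two pairs is an edge, so neither alternative in $P(u,v)$ holds. Hence $P(u,v)$ is false for all edges.

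For the converse I would argue contrapositively: assume $T$ is \emph{not} complete bipartite and produce an edge $(u,v)$ for which $P(u,v)$ holds. The key structural fact is that a graph is complete bipartite if and only if the relation ``$x \sim y$ iff $x = y$ or $x,y$ have exactly the same neighbors'' partitions the vertices into exactly two classes with all cross-edges present and no within-class edges — equivalently, $T$ is complete bipartite iff it contains no induced $P_3$ (path on three vertices) complement obstruction and is connected in the right way. More concretely, I would use: $T$ is complete bipartite iff for every edge $(u,v)$ and every vertex $w$, $w$ is adjacent to exactly one of $u,v$. (This is precisely the negation of ``$P(u,v)$ holds for some edge.'') So the statement to prove reduces to showing this ``exactly one neighbor'' condition on all edges is equivalent to being complete bipartite. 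One direction is the paragraph above. For the other: suppose every vertex is adjacent to exactly one endpoint of every edge. Fix any edge $(u_0, v_0)$ and define $W_1 = \{u_0\} \cup \{w : w \text{ adjacent to } v_0, \text{ not to } u_0\}$ and $W_2 = \{v_0\} \cup \{w : w \text{ adjacent to } u_0\}$. By the hypothesis applied to the edge $(u_0,v_0)$, every vertex lands in $W_1$ or $W_2$ (and not both, unless isolated — but in the context of $\mathcal{G}(T_0)$ for a generating set there are no isolated vertices once we are in the complete-bipartite regime; in general one notes isolated vertices are vacuously fine or excluded by the setup). Then one checks $W_1, W_2$ are independent sets and every cross pair is an edge, using the hypothesis repeatedly on edges $(u_0, w)$ and $(w, v_0)$.

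\textbf{The main obstacle} will be bookkeeping the edge cases — vertices adjacent to neither endpoint of $(u_0,v_0)$, or isolated vertices, or the possibility that $T$ has several connected components. The cleanest route is probably to first reduce to the connected case (note $P(u,v)$ only references vertices, and if $T$ has an edge in one component and any vertex $w$ in another component then $(w,u),(w,v) \notin E(T)$, so $P(u,v)$ holds trivially — hence ``$P$ false for all edges'' already forces $T$ to have at most one nontrivial component, with any extra vertices being isolated, and then one argues those isolated vertices cannot coexist with an edge either, so $T$ is connected or edgeless). Once connectivity is in hand, the two-coloring argument above is routine. I would also remark that this lemma is exactly what licenses the case split in Section~\ref{sec:Maximal disconnected subgraphs}: property $P(u,v)$ is what fails to let one merge the $A_j, B_j$ blocks, so its universal failure is the combinatorial shadow of $H$ being abelian via Lemma~\ref{lem:complete bipartite abelian}.
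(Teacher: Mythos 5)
Your proposal is correct and takes essentially the same approach as the paper: both fix an edge $(u_0,v_0)$, partition $T$ according to which endpoint each vertex is adjacent to, and verify the partition is complete bipartite using the universal failure of $P$. You fill in the paper's ``it is not hard to see that $(W_u,W_v)$ is a complete partition'' with more explicit bookkeeping about isolated vertices and disconnected components, but the core argument is identical.
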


\begin{proof}
Let $\left(u,v\right)\in E\left(T\right)$. If $P\left(u,v\right)$
is false then for every $w\in T$ either $\left(w,u\right)\in E\left(T\right)$
or $\left(w,v\right)\in E\left(T\right)$ but not both. Let $W_{u}=\left\{ w\in T\mid\left(w,v\right)\in E\left(T\right)\right\} $
and similarly for $W_{v}$. Then it is not hard to see that $\left(W_{u},W_{v}\right)$
is a complete partition of $T$. The other direction is trivial.
\end{proof}
We will make some notation which will be used in the following proofs.
Given any subset $\mathcal{U}\subseteq U^{k}$ we can write it as
a disjoint union $\mathcal{U}=\bigcup_{j=1}^{s+1}\mathcal{U}_{j}$
where we let
\begin{align*}
\mathcal{U}_{j} & =\left\{ u\in\mathcal{U}\mid u_{k}\in E_{j}\right\} \text{ for }1\le j\le s,\\
\mathcal{U}_{s+1} & =\left\{ u\in\mathcal{U}\mid u_{k}\in C\right\} .
\end{align*}
 Let $p$ be the projection onto the first $k-1$ coordinates. Let
$\mathcal{U}_{j,i}=\left\{ u\in\mathcal{U}_{j}\mid u_{k}=i\right\} $.
For simplicity we will write $V_{i}=p\left(\mathcal{U}_{j,i}\right)$.
\begin{lem}
\label{lem:unlinkedset_bound}Any disconnected subgraph $\mathcal{U}$
of $\mathcal{G}^{*}(\widehat{T})$ satisfies $\left|\mathcal{U}\right|\le c^{k}$.
\end{lem}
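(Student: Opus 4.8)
The plan is to prove the bound $|\mathcal{U}|\le c^k$ by induction on $k$, slicing the disconnected subgraph $\mathcal{U}\subset\mathcal{G}^*(\widehat T)$ along its last coordinate. The base case $k=1$ is immediate: a disconnected subgraph of $\mathcal{G}^*(T)$ is a union of some $W_j\in\{A_j,B_j\}$ together with a subset of $C$, and since $A_j,B_j$ and the two-element orbit classes inside $C$ each correspond to distinct conjugacy classes of lifts of $T$, any such set has size at most $c_T\le c$ (using Lemma \ref{lem:c_T_bound}, noting the components $C_1,\dots,C_s$ are independent and $C$ contributes $2r_2$ to the count $r_1+2r_2=c_T$). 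For the inductive step I would use the decomposition $\mathcal{U}=\bigcup_{j=1}^{s+1}\mathcal{U}_j$ with $\mathcal{U}_j=\{u\in\mathcal{U}\mid u_k\in E_j\}$ for $j\le s$ and $\mathcal{U}_{s+1}=\{u\in\mathcal{U}\mid u_k\in C\}$ introduced just before the lemma, and further slice each $\mathcal{U}_j$ by the exact value of $u_k$, writing $\mathcal{U}_{j,i}=\{u\in\mathcal{U}_j\mid u_k=i\}$ and $V_i=p(\mathcal{U}_{j,i})$ where $p$ projects onto the first $k-1$ coordinates.

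**Key steps.** The crucial observation is that for each $j$, the fibers $V_i$ with $i$ ranging over $E_j=A_j\cup B_j$ interact via the structure of $\mathcal{G}^*(T_k)$ restricted to $E_j$, which is the complete bipartite graph on $(A_j,B_j)$. If $u\in\mathcal{U}_{j,i}$ and $u'\in\mathcal{U}_{j,i'}$ with $i\in A_j$, $i'\in B_j$ and $i,i'$ coming from distinct elements of the component $C_j$, then $L(u_k,u'_k)=1$, so for $u,u'$ to be unlinked in $\mathcal{G}^*(\widehat T)$ we need $\sum_{\ell<k}L(u_\ell,u'_\ell)=1$, i.e. $p(u)$ and $p(u')$ are \emph{linked} in $\mathcal{G}^*(\widehat T')$ where $\widehat T'=\prod_{\ell<k}T_\ell$. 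Conversely, fibers over vertices $i,i'$ in the same part, or over the same underlying component element, force $p(u),p(u')$ to be unlinked. So within $\mathcal{U}_j$ the union $\bigcup_{i\in A_j}V_i$ is a disconnected subgraph of $\mathcal{G}^*(\widehat T')$, likewise $\bigcup_{i\in B_j}V_i$, and moreover $\bigl(\bigcup_{i\in A_j}V_i\bigr)$ and $\bigl(\bigcup_{i\in B_j}V_i\bigr)$ are ``cross-linked'' in the sense that each element of one is linked to each element of the other. A set of the form $X\cup Y$ where $X,Y$ are each disconnected and everything in $X$ is linked to everything in $Y$ has size at most $2^{?}$... more precisely, by the same argument as Lemma \ref{lem:For--let} / the isotropic-subspace machinery of Section \ref{sec:isotropic subsets quad forms}, such a configuration is controlled: $|X\cup Y|$ (as a subset of one ``side'') is bounded by $c^{k-1}$, and the count $|\mathcal{U}_j|=\sum_{i\in E_j}|V_i|$ is then at most $|A_j|$ (or $|B_j|$) times $c^{k-1}$, but the linking constraint between the two halves means only one half can be ``full'', giving $|\mathcal{U}_j|\le c_{E_j}\cdot c^{k-1}$. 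Summing, $|\mathcal{U}|=\sum_{j=1}^{s}|\mathcal{U}_j|+|\mathcal{U}_{s+1}|\le\bigl(\sum_{j}c_{E_j}+c_C\bigr)c^{k-1}=c_{T_0^{(k)}}\cdot c^{k-1}\le c\cdot c^{k-1}=c^k$, where the last inequality is Lemma \ref{lem:c_T_bound} applied to the $k$-th set.

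**Main obstacle.** The delicate point is the bookkeeping in the inductive step: showing that the linking/unlinking pattern among the fibers $V_i$ forces exactly the ``disconnected subgraph of $\mathcal{G}^*(\widehat T')$'' structure I need, so that the inductive hypothesis applies to each slice, \emph{and} that the cross-linking between the $A_j$-fibers and $B_j$-fibers does not let the total exceed $c_{E_j}c^{k-1}$. Concretely one must argue that if both $\bigcup_{i\in A_j}V_i$ and $\bigcup_{i\in B_j}V_i$ were of the maximal size $|A_j|\cdot c^{k-1}$ and $|B_j|\cdot c^{k-1}$ respectively, the complete cross-linking would be incompatible with both being realized — this is where a pigeonhole or dimension argument on the quadratic form $P$ (as in the proof of Lemma \ref{lem:For--let}, via Lemma \ref{lem:if--is} on disjoint totally singular subspaces) is needed. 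The elements of $C$ are easy (each disconnected, so $\mathcal{U}_{s+1}$ is an arbitrary disconnected subgraph in the first $k-1$ coordinates times $C$, contributing $\le |C|\cdot c^{k-1}=c_C\cdot c^{k-1}$ after checking the $C$-fibers don't over-count). I expect the core inequality $\sum_j c_{E_j}+c_C\le c$ to follow cleanly from Lemma \ref{lem:c_T_bound}, so the real work is the fiber-structure analysis in the bipartite slices.
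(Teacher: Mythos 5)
Your decomposition---induction on $k$, slicing $\mathcal{U}$ along the last coordinate into the $\mathcal{U}_{j,i}$ with fibers $V_i=p(\mathcal{U}_{j,i})$, and summing to reach $c_{T_k}\cdot c^{k-1}\le c^k$ via Lemma~\ref{lem:c_T_bound}---is the same as the paper's, and the per-component target $|\mathcal{U}_j|\le|C_j|\,c^{k-1}$ is the right one. But the mechanism you propose for that bound does not work. You assert that $\mathcal{G}^*(T_k)$ restricted to $E_j$ is the complete bipartite graph on $(A_j,B_j)$, and then argue from ``complete cross-linking'' between $\bigcup_{i\in A_j}V_i$ and $\bigcup_{i\in B_j}V_i$, invoking the quadratic form $P$ and Lemmas~\ref{lem:if--is}, \ref{lem:For--let}. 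The premise is false: an edge between $2u-1$ and $2v$ exists only when $(u,v)\in E(\mathcal{G}(T_k))$, i.e.\ when lifts of $u$ and $v$ fail to commute, and a connected component of $\mathcal{G}(T_k)$ is generally not a complete graph (and $2u-1$ is never linked to $2u$). So the cross-linking between the two sides is only partial, the isotropic-subspace argument does not attach, and the ``only one half can be full'' step---which you yourself flag as the crux---remains a genuine gap. Your $k=1$ base case also overstates: the claim that every disconnected subgraph of $\mathcal{G}^*(T)$ is a union of $W_j\in\{A_j,B_j\}$ plus a subset of $C$ describes only the maximal ones with full support, i.e.\ the family $\mathcal{T}$.

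The paper's proof is simpler and avoids the quadratic-form machinery entirely in this lemma. It takes the base case at $k=r$, the block of complete bipartite factors, where $\mathbf{U}(\widehat T)=\mathcal{M}(\widehat T)$ follows from Lemma~\ref{lem:Let-k-=002265} and the size bound from Lemma~\ref{lem:bounded_size}. For the inductive step, the key observation is that $V_{2u-1}\cup V_{2u}$ is automatically a disconnected subgraph of $\mathcal{G}^*(\prod_{i<k}T_i)$, since $L(2u-1,2u)=0$; the hypothesis then gives $|V_{2u-1}\cup V_{2u}|\le c^{k-1}$. When $u$ lies in a non-trivial component with some neighbour whose fiber is non-empty, a one-line linking argument forces $V_{2u-1}\cap V_{2u}=\varnothing$, so $|\mathcal{U}_{j,2u-1}|+|\mathcal{U}_{j,2u}|\le c^{k-1}$; isolated $u\in C$ contribute at most $c^{k-1}$ per fiber. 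Summing over $U_k$ gives $c_{T_k}c^{k-1}\le c^k$. The form $P$ and Lemma~\ref{lem:For--let} only enter when one goes on to \emph{classify} the maximal unlinked sets in Theorem~\ref{thm:The-maximal-unlinked}; they are not needed for the size bound.
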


\begin{proof}
We proceed by induction on $k$. Let $\widehat{T}=\prod_{i=1}^{k}T_{i}$
and $\mathcal{U}\in\mathbf{U}(\widehat{T})$. Assume without loss
of generality that $T_{i}$ is bipartite complete exactly for $i=1,\ldots,r$.
The base case $k=r$ follows by Lemma \ref{lem:Let-k-=002265} which
says that the $\mathbf{U}(\widehat{T})=\mathcal{M}(\widehat{T})$.
Any element of $\mathcal{M}(\widehat{T})$ has size bounded by $c^{k}$
by Lemma \ref{lem:bounded_size}.

Suppose the statement is true for $k-1$. For any $u\in T_{k}$, $V_{2u-1}\cup V_{2u}$
is an unlinked set, hence by the induction hypothesis $\left|V_{2u-1}\cup V_{2u}\right|\le c^{k-1}$.
Suppose $\left\{ 2u-1,2u\right\} \subset E_{j}$ for some $1\le j\le s$,
so $\left(u,v\right)\in E\left(T_{k}\right)$ for some $v\in T_{k}$
and either $V_{2v-1}$ or $V_{2v}$ is non-empty. This implies that
$V_{2u-1}\cap V_{2u}=\varnothing$. Hence $|\mathcal{U}_{j,2u-1}|+|\mathcal{U}_{j,2u}|=|\mathcal{U}_{j,2u-1}\cup\mathcal{U}_{j,2u}|\le c^{k-1}$.
If $\left\{ 2u-1,2u\right\} \subset C$ then clearly $\left|\mathcal{U}_{j,2u-1}\right|,\left|\mathcal{U}_{j,2u}\right|\le c^{k-1}$.

Thus summing $\left|\mathcal{U}_{j,i}\right|$ over all $i\in U_{k}$
gives the desired result.
\end{proof}
\begin{thm}
\label{thm:The-maximal-unlinked}Let $\widehat{T}=\prod_{i=1}^{k}T_{i}$
where $T_{i}$ are all generating sets. Then any $\mathcal{U}\in\mathbf{U}(\widehat{T})$
satisfies $\left|\mathcal{U}\right|=c^{k}$ if and only if $T_{i}=T_{0}$
for all $i$. In this case $\mathbf{U}(\widehat{T})=\mathcal{M}(\widehat{T})$.
\end{thm}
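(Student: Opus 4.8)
The proof splits into three parts. The size bound $|\mathcal U|\le c^{k}$ for any disconnected subgraph of $\mathcal G^{*}(\widehat T)$ is already Lemma~\ref{lem:unlinkedset_bound}, so what remains is (a) showing this bound is attained exactly when every $T_{i}=T_{0}$, and (b) identifying the maximizers with $\mathcal M(\widehat T)$ in that case. For the easy half of (a) and the inclusion $\mathcal M(\widehat T)\subseteq\mathbf U(\widehat T)$, I would check that every $t\in\mathcal M(\widehat T)$ is a disconnected subgraph of $\mathcal G^{*}(\widehat T)$ with full support. For a type-2 factor $\mathcal T_{j}$ this is immediate from the explicit edge description of $\mathcal G^{*}(T_{j})$ given after $(\ref{eq:type2_sets})$: each $A_{j}$, each $B_{j}$, and each singleton of $C$ is an independent set, $A_{j}$ and $B_{j}$ are the only pair of these joined by edges, and only one of them is taken, so every coordinate is independent and the relation $\sum_{i}L(u_{i},v_{i})=1$ fails by parity across coordinates. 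For the type-1 factor $\mathcal R_{V}$, recall from Section~\ref{sec:The graph} that $\Delta_{r}$ restricted to $\mathcal R_{\mathbf q}\times\mathcal R_{\mathbf q'}$ depends only on $\mathbf q,\mathbf q'$ and equals $P(\mathbf q+\mathbf q')$ for the non-degenerate Fouvry--Kl\"uners form $P$, so a disconnected subgraph $V\subset\mathbb F_{2}^{2r}$ lifts to a disconnected $\mathcal R_{V}$, and full support is built into the definition of $\mathcal D$. By Lemma~\ref{lem:bounded_size}, if every $T_{i}=T_{0}$ then each such $t$ has size exactly $c^{k}$, which by Lemma~\ref{lem:unlinkedset_bound} is the largest possible; hence $t$ is maximal, $\mathcal M(\widehat T)\subseteq\mathbf U(\widehat T)$, and the bound $c^{k}$ is attained.

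For the converse half of (a) I would refine the inductive step from the proof of Lemma~\ref{lem:unlinkedset_bound}. Reorder so that $T_{k}\ne T_{0}$, let $\mathcal U\in\mathbf U(\widehat T)$, and write $\mathcal U=\bigsqcup_{i\in U_{k}}\mathcal U_{\ast,i}$ according to the last coordinate; each projection $p(\mathcal U_{\ast,i})$ is a disconnected subgraph of $\mathcal G^{*}(T_{1}\times\cdots\times T_{k-1})$, hence of size $\le c^{k-1}$. Grouping the indices into the pairs $\{2u-1,2u\}$ for $u\in T_{k}$: an isolated vertex of $\mathcal G(T_{k})$ contributes at most $2c^{k-1}$, while for a non-isolated vertex $u$ full support of $\mathcal U$ forces $V_{2u-1}\cap V_{2u}=\varnothing$ (a common element would make $V_{2v-1}=V_{2v}=\varnothing$ for every neighbour $v$ of $u$), so that pair contributes $|V_{2u-1}\cup V_{2u}|\le c^{k-1}$. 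Since the number of conjugacy classes of lifts of $T_{k}$ equals the number of non-isolated vertices of $\mathcal G(T_{k})$ plus twice the number of isolated ones, this yields $|\mathcal U|\le c_{T_{k}}c^{k-1}$; and as $T_{k}\subseteq T_{0}$ is a generating set with $T_{k}\ne T_{0}$, Lemma~\ref{lem:c_T_bound} gives $c_{T_{k}}<c$, so $|\mathcal U|<c^{k}$. Thus $|\mathcal U|=c^{k}$ for some (equivalently every) $\mathcal U\in\mathbf U(\widehat T)$ forces every $T_{i}=T_{0}$.

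It remains to prove $\mathbf U(\widehat T)\subseteq\mathcal M(\widehat T)$ when all $T_{i}=T_{0}$, which I would do by induction on $k$, distinguishing whether $T_{0}$ is complete bipartite. If it is, the statement reduces, via the $A_{0},\dots,A_{3}$ identification of Section~\ref{sec:Maximal disconnected subgraphs}, to classifying maximal disconnected subgraphs of $\mathbb F_{2}^{2k}$ for $P$: since $\Delta_{k}$ vanishes identically on each $\mathcal R_{\mathbf q}\times\mathcal R_{\mathbf q}$, maximality forces a disconnected subgraph to be a union of full blocks $\mathcal R_{\mathbf q}$, hence equal to some $\mathcal R_{V}$; because $|W_{1}|=|W_{2}|$ in this case all blocks have size $(c/2)^{k}$, so $|\mathcal R_{V}|=c^{k}$ forces $|V|=2^{k}$, and Lemma~\ref{lem:Let-k-=002265} identifies $V$ as a subspace of dimension $k$ or a coset of one, i.e.\ as a member of $\mathcal D$, giving $\mathcal U=\mathcal R_{V}\in\mathcal R=\mathcal M(\widehat T)$. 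If $T_{0}$ is not complete bipartite, take $\mathcal U\in\mathbf U(\widehat T)$ with $|\mathcal U|=c^{k}$ and peel off the last coordinate as above; equality in the inequalities of the previous paragraph forces $V_{2u-1}\sqcup V_{2u}\in\mathcal M(T_{0}^{k-1})$ for every $u$ (induction hypothesis), with $V_{2u-1}\cap V_{2u}=\varnothing$ when $u$ is non-isolated and both summands of size $c^{k-1}$ when $u$ is isolated. Using the remaining linking relations among coordinate-$k$ vertices of a common component $C_{j}$ --- the $V_{2u-1}$ are pairwise unlinked, the $V_{2u}$ are pairwise unlinked, and $\Delta_{k-1}(x,y)=1$ whenever $x\in V_{2u-1}$, $y\in V_{2v}$ with $u\ne v$ --- together with the rigidity of the product sets in $\mathcal M(T_{0}^{k-1})$, one shows the data $\{V_{2u-1},V_{2u}\}_{u\in C_{j}}$ is forced into the ``$A_{j}$ or $B_{j}$'' pattern; patching over all components, all isolated vertices, and all coordinates then exhibits $\mathcal U$ as an element of $\prod_{i}\mathcal T=\mathcal M(\widehat T)$.

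I expect the last paragraph, and specifically the non--complete-bipartite inductive step, to be the main obstacle: extracting from ``all the inequalities are equalities'' the rigid block structure matching the definition of $\mathcal M(\widehat T)$. The bookkeeping is delicate because $\mathcal G(T_{0})$ may simultaneously carry several nontrivial connected components and several isolated vertices, so the type-1 (Lagrangian) and type-2 (product) structures must be disentangled within a single induction; the complete-bipartite base case, by contrast, is essentially a repackaging of \cite[Section~5.5]{fk1}.
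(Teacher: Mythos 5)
Your treatment of the easy inclusion $\mathcal{M}(\widehat{T})\subseteq\mathbf{U}(\widehat{T})$ and of the ``only if'' part of the size claim is sound, and the latter is arguably cleaner than the paper's: you sharpen the inductive step of Lemma \ref{lem:unlinkedset_bound} to $|\mathcal{U}|\le c_{T_{k}}\,c^{k-1}$ and then invoke the strict inequality from Lemma \ref{lem:c_T_bound}, whereas the paper reaches the same conclusion only after establishing $\mathbf{U}\subseteq\mathcal{M}$ and appealing to Lemma \ref{lem:bounded_size}. Your complete-bipartite base case is essentially the paper's, a reduction to Lemma \ref{lem:Let-k-=002265}.

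The genuine gap, which you yourself flag, is the inclusion $\mathbf{U}(\widehat{T})\subseteq\mathcal{M}(\widehat{T})$ in the non-complete-bipartite inductive step. Asserting that the residual linking relations ``together with the rigidity of the product sets'' force the $A_j$-or-$B_j$ pattern is exactly where all the content lives, and parity across coordinates does not deliver it by itself. The paper's mechanism is twofold: Lemma \ref{lem:The-condition-} uses non-complete-bipartiteness to produce an edge $(u,v)$ admitting a third vertex $w$ joined to both $u$ and $v$ or to neither, and then Lemma \ref{lem:For--let} --- the key technical lemma, whose proof descends through Lemma \ref{lem:if--is} and Proposition \ref{lem:Let--be} on disjoint totally singular subspaces in characteristic $2$ --- is used to show that $V_{2u-1},V_{2u},V_{2v-1},V_{2v}$ cannot all be nonempty. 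Your outline has no stand-in for this lemma. Two further slips in that paragraph: you say equality ``forces $V_{2u-1}\sqcup V_{2u}\in\mathcal{M}(T_{0}^{k-1})$ by the induction hypothesis,'' but these projections need not have full support nor be supported on generating sets, so the inductive statement as you state it does not directly apply (this is why Lemma \ref{lem:For--let} is formulated for sub-tuples $T_{i}'\subseteq T_{i}$ with matching conjugacy-class count, and why Proposition \ref{prop:perp_sets_bound2} had to be proved without a generating hypothesis); and ``$\Delta_{k-1}(x,y)=1$ whenever $x\in V_{2u-1}$, $y\in V_{2v}$ with $u\ne v$'' is only true when $(u,v)\in E(T_{k})$, not for arbitrary distinct vertices of a common component.
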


\begin{proof}
It is clear that if $T_{i}=T_{0}$ for all $i$ then any $t\in\mathcal{M}(\widehat{T})$
is an unlinked set of size $c^{k}$ with full support. This gives
one direction of the first part of the theorem statement.

To prove the other direction and the second part we proceed by induction
on $k$. Let $\mathcal{U}\in\mathbf{U}(\widehat{T})$. Assume without
loss of generality that $T_{i}$ is bipartite complete exactly for
$i=1,\ldots,r$. The base case is $k=r$. As above, by Lemmas \ref{lem:Let-k-=002265}
and \ref{lem:bounded_size} $\mathbf{U}(\widehat{T})=\mathcal{M}(\widehat{T})$
and its elements have size bounded by $c^{r}$ with equality if and
only if $T_{i}=T_{0}$ for all $i$.

Suppose the statement is true for $k-1$. Suppose $\mathcal{U}$ is
an unlinked set with full support on $T$ such that $\left|\mathcal{U}\right|=c^{k}$.
We will show that either $p\left(\mathcal{U}_{j,i}\right)=\varnothing$
exactly for all $i\in A_{j}$ or for all $i\in B_{j}$. From the coordinatewise
definition of $\Delta_{k}$ it is clear that for any $\left(u,v\right)\in E\left(T_{k}\right)$
the elements of the set
\[
\mathcal{X}=\left\{ V_{2u-1}\cup V_{2u},V_{2u-1}\cup V_{2v-1},V_{2u}\cup V_{2v},V_{2v-1}\cup V_{2v}\right\} 
\]
 are unlinked sets. Furthermore every element of $V_{2u-1}$ is linked
with every element of $V_{2v}$, and similarly for $V_{2u}$ and $V_{2v-1}$.
Since $\mathcal{U}$ has full support, either $V_{2u-1}$ or $V_{2u}$
is non-empty.  Also note that $V_{2u-1}\cap V_{2u}=\varnothing$.
Lemma \ref{lem:unlinkedset_bound} implies $\left|\mathcal{U}_{j,2u-1}\cup\mathcal{U}_{j,2u}\right|\le c^{k-1}$.

By Lemma \ref{lem:The-condition-} there exists $\left(u,v\right)\in E\left(T_{k}\right)$
such that the condition $P\left(u,v\right)$ is true. We will use
this to rule out the case when $V_{2u-1},V_{2u},V_{2v-1},V_{2v}$
are all non-empty. Suppose towards a contradiction that this is true.
This implies $V_{2u-1},V_{2u},V_{2v-1},V_{2v}$ are all disjoint.

Lemma \ref{lem:unlinkedset_bound} implies $\left|\mathcal{U}_{2u}\cup\mathcal{U}_{2v}\right|\le c^{k-1}$.
Let $w\in T_{k}$ verify $P\left(u,v\right)$. First suppose that
$\left(w,u\right),\left(w,v\right)\in E\left(T_{k}\right)$ and without
loss of generality that $V_{2w}\neq\varnothing$. 

Let $T'=\prod T_{i}'$ be the support of the coordinates of the unlinked
set $V_{2u-1}\cup V_{2v-1}$. By the induction hypothesis, if $\left|V_{2u-1}\cup V_{2v-1}\right|=c^{k-1}$
then $T_{i}'=T_{i}=T_{0}$ and hence $c_{T_{i}'}=c_{T_{i}}=c$ for
all $1\le i\le k-1$. Any element of $V_{2w}$ is linked with every
element of $V_{2u-1}\cup V_{2v-1}$ and hence by Lemma \ref{lem:For--let}
$\left|V_{2u-1}\cup V_{2v-1}\right|<c^{k-1}$. Since $V_{2u-1}\cap V_{2v-1}=\varnothing$
this implies $\left|\mathcal{U}_{2u-1}\cup\mathcal{U}_{2v-1}\right|<c^{k-1}$.
Thus 
\[
\left|\mathcal{U}_{2u-1}\cup\mathcal{U}_{2v-1}\cup\mathcal{U}_{2u}\cup\mathcal{U}_{2v}\right|<2c^{k-1}
\]
 which implies $\left|\mathcal{U}_{j}\right|<\left|C_{j}\right|c^{k-1}$.

Now suppose that $\left(w,u\right),\left(w,v\right)\notin E\left(T_{k}\right)$
and again without loss of generality that $V_{2w}\neq\varnothing$.
Then any element of $V_{2w}$ is unlinked with every set in $\mathcal{X}$.
Since $V_{2u-1}\cup V_{2u}$ and $V_{2v-1}\cup V_{2v}$ are disjoint
this implies they cannot both be maximal so as above we get $\left|\mathcal{U}_{j}\right|<\left|C_{j}\right|c^{k-1}$.

Thus $V_{2u-1},V_{2u},V_{2v-1},V_{2v}$ cannot all be non-empty. Suppose
without loss of generality that $V_{2u-1}=\varnothing$ which implies
$V_{2u}\neq\varnothing$. 

If $V_{2v-1}\neq\varnothing$ and $x\in V_{2v-1}$ then every element
of $V_{2u}$ is connected to $x$. We will apply Lemma \ref{lem:For--let}
to show that $V_{2u}$ cannot be a maximal diconnected subgraph. Let
$T'=\prod T_{i}'$ be the support of the coordinates of the unlinked
set $V_{2u}$. If $\left|V_{2u}\right|=c^{k-1}$ by the same argument
as above by Lemma \ref{lem:For--let} there is an element of $V_{2u}$
which is unlinked with $x$, a contradiction. Thus $\left|V_{2u}\right|<c^{k-1}$.
If in addition $V_{2v}=\varnothing$ then this implies $\left|\mathcal{U}_{j}\right|<\left|C_{j}\right|c^{k-1}$.

Thus we have shown that if $\mathcal{U}\in\mathbf{U}(\widehat{T})$
such that $\left|\mathcal{U}\right|=c^{k}$ then $\mathcal{U}\in\mathcal{M}(\widehat{T})$.

Thus we have shown that either $p\left(\mathcal{U}_{j,i}\right)=\varnothing$
exactly for all $i\in A_{j}$ or exactly for all $i\in B_{j}$ as
claimed. Hence $\cup_{j,i}p\left(\mathcal{U}_{j,i}\right)$ is an
unlinked set, which implies $\mathcal{U}_{j}$ has maximal size when
$p\left(\mathcal{U}_{j,i}\right)$ is equal to a unique maximal unlinked
set fully supported on $\prod_{i=1}^{k-1}T_{i}$ for all $i$ and
$j$.

\end{proof}

\section{\label{sec:Asymptotic-analysis-of}Asymptotic analysis}

 Given a product of generating sets $\widehat{T}=\prod_{i}T_{i}$
we define $f_{\widehat{T}}=\prod_{i}f_{T_{i}}$. We will compute the
averages of functions of the form $f_{\widehat{T}}$, and use them
to obtain the $k$th moments of $f$ which will in turn determine
a distribution.

\subsection{Notational preliminaries}

We define some notation for the next theorem. Let $\mathrm{Aut}_{H,\widehat{T}}\left(G,\Phi(G)\right)=\prod_{i}\mathrm{Aut}_{H,T_{i}}\left(G,\Phi(G)\right)$.
Define $\mathcal{N}^{\pm}\left(T_{i}\right)$ be the set of subsets
$N\subset T_{i}$ of even (respectively odd) cardinality. Let $\mathcal{N^{\pm}}(\widehat{T})=\prod_{i}\mathcal{N}^{\pm}\left(T_{i}\right)$.

We define a condition on tuples of congruence classes which will be
used in the next theorem. Let $r\left(j\right)=\left|T_{j}\right|$
for $j=1,\ldots,k$. Let $\Upsilon\in\prod_{j=1}^{k}\left[r\left(j\right)\right]$,
$\alpha\in\left\{ 0,2,3\right\} $. Let $\widehat{U}=\prod_{i=1}^{k}U_{i}$
with $U_{i}=\left[2r\left(j\right)\right]$ (recall from Section \ref{sec:The graph}
there is a bijection $\widehat{U}\longrightarrow\mathcal{G}^{*}(\widehat{T})$).

Let $\left(h_{u}\right)_{u\in\widehat{U}}$ be a tuple of integers.
For each $1\le j\le k$ and $i\in\left[r\left(j\right)\right]$ define
the following conditions on $\left(h_{u}\right)$ modulo 4
\begin{equation}
\prod_{u,v}h_{u}h_{v}\equiv\begin{cases}
-1 & i\in N_{j},i\neq\Upsilon_{j}\\
1 & i\notin N_{j},i\neq\Upsilon_{j}\\
1 & i\in N_{j},i=\Upsilon_{j},\alpha=2\\
\pm1 & i\in N_{j},i=\Upsilon_{j},\alpha=3
\end{cases}\label{eq:general cong cond}
\end{equation}
 where the product is over all $u,v\in\widehat{U}$ with $u_{j}=2i-1,v_{j}=2i$.
For any positive integer $m$ with $4\mid m$ let 
\[
\Lambda_{m}\left(N,\Upsilon\right)=\left\{ \left(h_{u}\right)\in\left(\mathbb{Z}/m\mathbb{Z}\right)^{\widehat{U}}\mid\left(h_{u}\mathrm{mod}4\right)_{u}\text{ satisfies }(\ref{eq:general cong cond})\right\} ,
\]
\[
\Lambda_{m}\left(\mathcal{U},N,\Upsilon\right)=\left\{ \left(h_{u}\right)_{u\in\widehat{U}}\in\Lambda_{m}\left(N,\Upsilon\right)\mid h_{u}=1\text{ if }u\notin\mathcal{U}\right\} .
\]
 We will use $\Lambda_{m}\left(N,\Upsilon\right)$ to restrict the
congruence classes of certain factorizations of the discriminant in
the upcoming theorem.

Next we define some functions which will be used in the description
of the coefficient of the main term of $f_{\widehat{T}}$. For $N\in\mathcal{N^{\pm}}\left(T\right)$
and $u\in\mathcal{G}^{*}(\widehat{T})$ let
\[
\lambda^{N}\left(u\right)=\begin{cases}
1 & u=2i,\text{ }i\le r_{1},\text{ }\left|N\cap S_{i}\right|\text{ odd}\\
0 & \text{else.}
\end{cases}
\]
 Also recall we defined $\Phi_{k}\left(u,v\right)=\sum_{i=1}^{k}\Phi\left(u_{i},v_{i}\right)$
for $u,v\in\mathcal{G}^{*}\left(T\right)$, where the function $\Phi$
is defined at the start of Section \ref{sec:The graph}. Let $\Gamma\subseteq\left[k\right]$.
Then let

\begin{align}
\lambda_{k}^{N}\left(u\right) & =\sum_{i=1}^{k}\lambda^{N_{i}}\left(u_{i}\right),\label{eq:lambda_def}\\
\gamma_{\Upsilon,\Gamma}\left(u\right) & =\sum_{i\in\Gamma}\Phi\left(u_{i},2\Upsilon_{i}\right),\nonumber \\
\psi_{\Upsilon}\left(u\right) & =\sum_{i=1}^{k}\Phi\left(2\Upsilon_{i},u_{i}\right).\nonumber 
\end{align}

Finally for $(h_{u})\in\Lambda_{m}\left(N,\Upsilon\right)$ and $\mathcal{U}\subseteq\widehat{U}$
define 
\begin{align}
\chi_{1}\left(\left(h_{u}\right),\mathcal{U}\right) & =\prod_{\left\{ u,v\right\} \subset\mathcal{U}}\left(-1\right)^{\Phi_{k}\left(u,v\right)\frac{h_{u}-1}{2}\frac{h_{v}-1}{2}},\label{eq:chi def}\\
\chi_{2}\left(\left(h_{u}\right),\mathcal{U}\right) & =\prod_{u\in\mathcal{U}}\left(-1\right)^{\lambda_{k}^{N}\left(u\right)\frac{h_{u}-1}{2}},\nonumber \\
\chi_{3}\left(\left(h_{u}\right),\mathcal{U}\right) & =\prod_{u\in\mathcal{U}}\left(-1\right)^{\frac{h_{u}^{2}-1}{8}(\gamma_{\Upsilon,\Gamma}\left(u\right)+\psi_{\Upsilon}\left(u\right)\alpha)}.\nonumber 
\end{align}

Let $\mathcal{D}_{X,\alpha}^{\pm}$ to be the set of positive (resp.
negative) fundamental discriminants divisible by $2^{\alpha}$.

\subsection{Proof of the theorem}
\begin{thm}
\label{thm:asymptotic_analysis}Let $\widehat{T}=\prod_{i}T_{i}$
be a product of generating sets and let $\mathbf{c}=\max_{\mathcal{U}\in\mathbf{U}(\widehat{T})}\left|\mathcal{U}\right|$.
With the above notation, for any positive constant $a\in\mathbb{R}$
and $\alpha\in\left\{ 0,2,3\right\} $ the sum of $f_{\widehat{T}}\left(d\right)/a^{\omega\left(d\right)}$
over $\mathcal{D}_{X,\alpha}^{\pm}$ is

\begin{align*}
\sum_{d\in\mathcal{D}_{X,\alpha}^{\pm}}\frac{f_{\widehat{T}}\left(d\right)}{a^{\omega\left(d\right)}} & =\frac{\Gamma_{\mathbf{U}(\widehat{T})}}{4^{\mathbf{c}}2^{kn}\left|\mathrm{Aut}_{H,\widehat{T}}\left(G,\Phi(G)\right)\right|}\frac{4}{\pi^{2}}X(\log X)^{(\mathbf{c}/a)-1}+o\left(X(\log X)^{(\mathbf{c}/a)-1}\right)
\end{align*}
where
\begin{equation}
\Gamma{}_{\mathbf{U}(\widehat{T})}=\sum_{N\in\mathcal{N^{\pm}}(\widehat{T})}\sum_{\Gamma\subset\left[k\right]}\sum_{\Upsilon\in\prod_{j}\left[r\left(j\right)\right]}S_{N,\Gamma,\Upsilon}(\widehat{T}),\label{eq:Gamma def}
\end{equation}
 and
\begin{align}
S_{N,\Gamma,\Upsilon} & =\sum_{\substack{\mathcal{U}\in\mathbf{U}\left(\widehat{T}\right)\\
\left|\mathcal{U}\right|=\mathbf{c}
}
}\sum_{\left(h_{u}\right)\in\Lambda_{8}\left(\mathcal{U},N,\Upsilon\right)}\prod_{i=1}^{3}\chi_{i}\left(\left(h_{u}\right),\mathcal{U}\right).\label{eq:reduced_sum}
\end{align}
\end{thm}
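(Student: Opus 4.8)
The plan is to expand $f_{\widehat T}(d)/a^{\omega(d)}$ into a multiple character sum, sum it over $d\in\mathcal D_{X,\alpha}^\pm$, isolate via the Fouvry--Kl\"uners cancellation estimates which configurations of primes can contribute to the leading term, and finally evaluate the surviving main terms by a density computation whose only delicate point is the behaviour at the prime $2$. Concretely, I would start from Theorem~\ref{thm:f_T_formula} applied to each factor $f_{T_i}$ and expand the products $\prod_{p\mid d_i}\bigl(1+(\cdots)\bigr)$, splitting $d$ into pairwise coprime fundamental discriminants indexed by the vertex set $\widehat U$ of $\mathcal G^*(\widehat T)$, so that, as in Section~\ref{sec:The graph},
\[
2^{kn}\bigl|\mathrm{Aut}_{H,\widehat T}(G,\Phi(G))\bigr|\,f_{\widehat T}(d)=\sum_{d=\prod_u D_u}\ \prod_{u,v\in\widehat U}\Bigl(\tfrac{D_u}{D_v}\Bigr)^{\Phi_k(u,v)},
\]
with $D_u=1$ permitted; since $a^{\omega(d)}=\prod_u a^{\omega(D_u)}$ the weight distributes over the $D_u$. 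Interchanging summation, I would organise the result by the support $\mathcal U=\{u:D_u\neq1\}$ and by finitely much $2$-adic data: the prescribed divisibility $2^\alpha\mid d$ together with the sign data $(N,\Gamma,\Upsilon)$ recording which reciprocity and supplementary-law contributions occur among the $\bigl(\tfrac{D_u}{D_v}\bigr)$. Applying quadratic reciprocity replaces each $\bigl(\tfrac{D_u}{D_v}\bigr)$ by $\bigl(\tfrac{D_v}{D_u}\bigr)$ times an explicit power of $-1$ depending on $D_u,D_v$ modulo $4$ (and modulo $8$ at $2$); once the residues $h_u$ of the $D_u$ modulo $8$ are fixed, collecting these powers over $\mathcal U$ yields exactly the product $\chi_1\chi_2\chi_3$, and the constraints \eqref{eq:general cong cond} cut out the residue patterns $\Lambda_8(\mathcal U,N,\Upsilon)$ for which the lifts involved have order $2$, equivalently for which the contribution is nonzero.

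For a fixed support $\mathcal U$ and fixed $2$-adic data, what remains is a sum of Jacobi-symbol products over $|\mathcal U|$ pairwise coprime odd fundamental discriminants, weighted by $1/a^{\omega}$ and restricted to congruence classes modulo $8$. By the character-sum estimates of Fouvry and Kl\"uners \cite{fk1}, applied after the standard splitting of the variables by dyadic size and the partial summation needed to absorb the multiplicative weight $1/a^\omega$, this sum is $o\bigl(X(\log X)^{\mathbf c/a-1}\bigr)$ unless $|\mathcal U|=\mathbf c$ and the subgraph of $\mathcal G^*(\widehat T)$ induced on $\mathcal U$ is disconnected, in which case every symbol present is non-oscillating, the sum depends only on residues modulo $8$, and it is of exact order $X(\log X)^{\mathbf c/a-1}$: when $|\mathcal U|<\mathbf c$ even the full main term has too few logarithmic factors, while when $\mathcal U$ induces a connected subgraph the Fouvry--Kl\"uners cancellation wins. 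Hence the surviving configurations are precisely the maximal disconnected subgraphs of top size $\mathbf c$, namely the elements of $\mathbf U(\widehat T)$ of size $\mathbf c$ classified in Sections~\ref{sec:The graph}--\ref{sec:Maximal disconnected subgraphs}; in particular only $\mathcal U$ of full support contribute.

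For each such $\mathcal U$ and each admissible residue pattern $(h_u)\in\Lambda_8(\mathcal U,N,\Upsilon)$, the corresponding subsum equals $\prod_{i=1}^3\chi_i\bigl((h_u),\mathcal U\bigr)$ times the $1/a^{\omega}$-weighted number of $d\in\mathcal D_{X,\alpha}^\pm$ admitting a factorization $d=\prod_{u\in\mathcal U}D_u$ into coprime fundamental discriminants with each $D_u$ in its prescribed class modulo $8$. A Selberg--Delange / Landau-type estimate evaluates this weighted count as $\frac{1}{4^{\mathbf c}}\cdot\frac{4}{\pi^2}\,X(\log X)^{\mathbf c/a-1}+o\bigl(X(\log X)^{\mathbf c/a-1}\bigr)$, with $4^{-\mathbf c}=|(\mathbb Z/8\mathbb Z)^\times|^{-\mathbf c}$ coming from pinning each of the $\mathbf c$ nontrivial $D_u$ to one class modulo $8$ and $\frac{4}{\pi^2}$ from the density of fundamental discriminants (this is consistent with the heuristic following Theorem~\ref{thm:f_T_formula}, where the average of $f_{\widehat T}/a^\omega$ behaves like $(\mathbf c/a)^{\omega(d)}$). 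Summing over $(h_u)$, then over $\mathcal U$ with $|\mathcal U|=\mathbf c$, then over $(N,\Gamma,\Upsilon)$ reproduces $\Gamma_{\mathbf U(\widehat T)}$ as in \eqref{eq:Gamma def}--\eqref{eq:reduced_sum}, and dividing by $2^{kn}\bigl|\mathrm{Aut}_{H,\widehat T}(G,\Phi(G))\bigr|$ gives the claimed asymptotic.

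The genuinely laborious part is the $2$-adic bookkeeping underlying the first and third steps: one must check that the reciprocity and supplementary-law signs package precisely into $\chi_1,\chi_2,\chi_3$ and that the order-$2$ conditions on the lifts correspond exactly to $\Lambda_8(\mathcal U,N,\Upsilon)$, uniformly in $\alpha\in\{0,2,3\}$. The second technical hurdle is justifying the Fouvry--Kl\"uners cancellation in the presence of the weight $1/a^{\omega(d)}$ and of the self-referential Jacobi symbols among the $D_u$, which is what forces the dyadic decomposition and the summation of the resulting geometric series mentioned above.
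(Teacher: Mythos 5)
Your proposal follows the same route as the paper: expand $f_{\widehat T}$ via Theorem~\ref{thm:f_T_formula} into a multilinear Jacobi-symbol sum indexed by the vertex set of $\mathcal G^*(\widehat T)$, organise by the support $\mathcal U=\{u:D_u\neq 1\}$ and by the finite $2$-adic data $(N,\Gamma,\Upsilon)$, invoke the Fouvry--Kl\"uners cancellation machinery to kill every configuration except the maximal disconnected subgraphs of size $\mathbf c$ with full support, and evaluate the surviving non-oscillating main terms with a density estimate producing the $4^{-\mathbf c}$ and $\tfrac{4}{\pi^2}X(\log X)^{\mathbf c/a-1}$ factors. This is precisely what the paper does (its own proof is a terse sequence of citations to \cite{fk1}, pp.~470--482), and your fuller narrative of each stage is consistent with it.

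One point in your third paragraph is misstated, though it does not affect the structure of the argument: the congruence conditions $\Lambda_8(\mathcal U,N,\Upsilon)$ do not encode ``the lifts involved having order $2$.'' The order-$2$ condition on lifts is already built into the Embedding Criterion and hence into the formula of Theorem~\ref{thm:f_T_formula}; what $\Lambda_8$ records is the bookkeeping needed to pass from signed/even fundamental discriminants $d_i$ to positive odd factors $D_u$ together with the prescribed $2$-power $2^\alpha$. Concretely, $N$ marks which $d_i$ are negative (so that $\prod h_u h_v \equiv -1 \pmod 4$ on the corresponding block), $\Upsilon$ marks which $d_i$ absorbs $2^\alpha$, and the characters $\lambda_k^N,\gamma_{\Upsilon,\Gamma},\psi_\Upsilon$ track the resulting $(-1)$ and $2$-symbols; the sum over $\Gamma\subset[k]$ arises specifically from expanding $\prod_{j=1}^k\bigl(1+\prod_u(\tfrac{D_u}{2})^{\gamma_\Upsilon(u)}\bigr)$. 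Correcting this description brings your write-up into full agreement with the paper's proof.
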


\begin{proof}
Let $d\in\mathcal{D}_{X,\alpha}^{\pm}$. For any $T$ which is one
of the factors of $\widehat{T}$ we expand the formula from Theorem
(\ref{thm:f_T_formula}) as
\begin{align*}
f_{T}\left(d\right)= & \frac{1}{2^{n}}\frac{1}{\left|\mathrm{Aut}_{H,\widehat{T}}\left(G,\Phi(G)\right)\right|}\sum_{N\in\mathcal{N^{\pm}}(T)}\sum_{\Upsilon=1}^{r}\sum_{d=2^{\alpha}\prod D_{u}}\prod_{u,v}\left(\frac{D_{u}}{D_{v}}\right)^{\Phi\left(u,v\right)}\prod_{u}\left(\frac{-1}{D_{u}}\right)^{\lambda_{1}^{N}\left(u\right)}\\
 & \times\prod_{u}\left(\frac{2^{\alpha}}{D_{u}}\right)^{\psi_{\Upsilon}\left(u\right)}\left(1+\prod_{u}\left(\frac{D_{u}}{2}\right)^{\gamma_{\Upsilon,1}\left(u\right)}\right).
\end{align*}
where the second sum only appears when $\alpha\neq0$ and the third
sum is over factorizations of $d$ into positive integers $D_{u}$
such that $\left(D_{u}\right)\in\Lambda_{8}\left(N,l\right)$ and
the functions appearing in the exponents are defined in (\ref{eq:lambda_def})
with $k=1$. Note the $D_{u}$ were allowed to be negative and divisible
by 2 previously, hence we introduce $\lambda_{1}^{N}$ to keep track
of the negative signs and $\gamma_{\Upsilon,1},\psi_{\Upsilon}$ to
keep track of the factor $2^{\alpha}$.

For $\widehat{T}=\prod_{i=1}^{k}T_{i}$ we obtain, using the same
procedure as in \cite[p.470-472]{fk1}

\begin{align*}
f_{\widehat{T}}\left(d\right)= & \frac{1}{2^{kn}}\frac{1}{\left|\mathrm{Aut}_{H,\widehat{T}}\left(G,\Phi(G)\right)\right|}\sum_{\Gamma\subset\left\{ 1,\ldots,k\right\} }\sum_{N\in\mathcal{N^{\pm}}(\widehat{T})}\sum_{\Upsilon\in\prod_{j}\left[r\left(j\right)\right]}\sum_{d=2^{\alpha}\prod D_{u}}\prod_{u,v}\left(\frac{D_{u}}{D_{v}}\right)^{\Phi_{k}\left(u,v\right)}\\
 & \times\prod_{u}\left(\frac{-1}{D_{u}}\right)^{\lambda_{k}^{N}\left(u\right)}\left(\frac{D_{u}}{2}\right)^{\gamma_{\Upsilon,\Gamma}\left(u\right)}\left(\frac{2^{\alpha}}{D_{u}}\right)^{\psi_{\Upsilon}\left(u\right)}
\end{align*}
where $\left(D_{u}\right)\in\Lambda_{8}\left(N,\Upsilon\right)$ and
$\lambda_{k}^{N},\gamma_{\Upsilon,\Gamma},\psi_{\Upsilon}$ are defined
in the theorem statement. The sum over $\Gamma\subset\left[k\right]$
comes from expanding the product $\prod_{j=1}^{k}(1+\prod_{u}\left(\frac{D_{u}}{2}\right)^{\gamma_{\Upsilon}\left(u\right)})$.

An analysis of character sums and their cancellation, and a removal
of the congruence conditions on the $D_{u}$ as in \cite[p.473-478]{fk1}
gives
\begin{align}
\sum_{d\in\mathcal{D}_{X,\alpha}^{\pm}}\frac{f_{\widehat{T}}\left(d\right)}{a^{\omega\left(d\right)}}= & \frac{1}{2^{kn}}\frac{1}{4^{\mathbf{c}}}\frac{1}{\left|\mathrm{Aut}_{H,\widehat{T}}\left(G,\Phi(G)\right)\right|}\left[\sum_{N,\Gamma,\Upsilon}S_{N,\Gamma,\Upsilon}\right]\left[\sum_{\left(D_{u}\right)}\frac{\mu^{2}\left(2\prod D_{u}\right)}{a^{\omega\left(\prod D_{u}\right)}}\right]\label{eq:sum}\\
 & +o\left(X(\log X)^{(\mathbf{c}/a)-1}\right)\nonumber 
\end{align}
where the range of the last sum is over factorizations into $\mathbf{c}$-tuples
of coprime positive integers up to $X$ and $S_{N,\Gamma,\Upsilon}$
is as given by (\ref{eq:reduced_sum}) in the statement. The factor
of $4^{-\mathbf{c}}$ comes from removing the congruence conditions
on the remaining $\mathbf{c}$ variables in the last sum in (\ref{eq:sum})
as in the argument from \cite[p.480-482]{fk1} (Lemma 19 is modified
to give a factor of $1/4$ instead of $1/2$ since we are considering
odd congruence classes mod 8 instead of mod 4).

Since the number of ways of writing any integer $x<X$ as a product
of $\mathbf{c}$ integers is $\mathbf{c}^{\omega\left(x\right)}$
we have
\begin{align*}
\sum_{\left(D_{u}\right)}\frac{\mu^{2}\left(2\prod D_{u}\right)}{a^{\omega\left(\prod D_{u}\right)}} & =\left[\sum_{x<X}\mu^{2}\left(2x\right)\left(\frac{\mathbf{c}}{a}\right)^{\omega\left(x\right)}\right]+o\left(X(\log X)^{(\mathbf{c}/a)-1}\right)\\
 & =\frac{4}{\pi^{2}}X(\log X)^{(\mathbf{c}/a)-1}+o\left(X(\log X)^{(\mathbf{c}/a)-1}\right)
\end{align*}
 and the theorem follows.
\end{proof}

\section{\label{sec:Computing-the-moments}Computing the moments}

Throughout this section we will prove results about $\Gamma_{\mathcal{M}(\widehat{T})}$
(defined as in Theorem \ref{thm:asymptotic_analysis} with $\mathcal{M}(\widehat{T})$
in place of $\mathbf{U}(\widehat{T})$) when $\widehat{T}=\prod_{i=1}^{k}T_{i}$
with all $T_{i}$ not complete bipartite. We will apply them in the
case when $\mathbf{U}(\widehat{T})=\mathcal{M}(\widehat{T})$. Let
$c_{\widehat{T}}=\prod_{i=1}^{k}c_{T_{i}}$ be the cardinality of
any element of $\mathcal{M}(\widehat{T})$.

\subsection{The congruence conditions}

We will first reduce to a simpler set of congruence conditions. Let
$\widehat{T}=\prod_{i=1}^{k}T_{i}$ where all $T_{i}$ are not complete
bipartite. Let $N\in\mathcal{N^{\pm}}(\widehat{T})$ and $\mathcal{U}\in\mathcal{M}(\widehat{T})$.
Let $\Lambda\left(\mathcal{U},N\right)$ be the set of tuples of congruence
classes modulo 4 such that for each $1\le j\le k$ and $i\in\left[r\left(j\right)\right]$
\begin{equation}
\prod_{u,v}h_{u}h_{v}\equiv\begin{cases}
-1 & i\in N_{j}\\
1 & i\notin N_{j}
\end{cases}\label{eq:general cong cond-1}
\end{equation}
 where the product is over all $u,v\in\widehat{U}$ with $u_{j}=2i-1,v_{j}=2i$.
\begin{lem}
Suppose $\mathbf{U}(\widehat{T})=\mathcal{M}(\widehat{T})$. In the
notation of Theorem \ref{thm:asymptotic_analysis} we have
\[
\Gamma{}_{\mathcal{M}(\widehat{T})}=2^{c_{\widehat{T}}}\left|\widehat{T}\right|\sum_{N\in\mathcal{N^{\pm}}(\widehat{T})}\sum_{\mathcal{U}\in\mathcal{M}(\widehat{T})}\sum_{\left(h_{u}\right)\in\Lambda\left(\mathcal{U},N\right)}\chi_{1}\left(\left(h_{u}\right),\mathcal{U}\right)\chi_{2}\left(\left(h_{u}\right),\mathcal{U}\right)
\]
 where $\chi_{i}$ is defined in (\ref{eq:chi def}).
\end{lem}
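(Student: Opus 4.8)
The plan is to start from the expansion of $\Gamma_{\mathcal{M}(\widehat{T})}$ supplied by Theorem \ref{thm:asymptotic_analysis} and to collapse, one at a time, the innermost sum over residues modulo $8$, then the sum over $\Gamma$, and finally the sum over $\Upsilon$. Since all $T_i$ are not complete bipartite and $\mathbf{U}(\widehat{T})=\mathcal{M}(\widehat{T})$, every $\mathcal{U}\in\mathcal{M}(\widehat{T})$ has $\left|\mathcal{U}\right|=\mathbf{c}=c_{\widehat{T}}$, so the restriction $\left|\mathcal{U}\right|=\mathbf{c}$ in (\ref{eq:reduced_sum}) is vacuous and $\Gamma_{\mathcal{M}(\widehat{T})}=\sum_{N}\sum_{\mathcal{U}}\sum_{\Gamma}\sum_{\Upsilon}\sum_{(h_u)\in\Lambda_8(\mathcal{U},N,\Upsilon)}\chi_1\chi_2\chi_3$. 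Fixing $N$ and $\mathcal{U}$, it suffices to show that the inner sum over $\Gamma,\Upsilon,(h_u)$ equals $2^{c_{\widehat{T}}}\left|\widehat{T}\right|\sum_{(h_u)\in\Lambda(\mathcal{U},N)}\chi_1\chi_2$.

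First I would perform the sum over the $2^{\left|\mathcal{U}\right|}$ lifts of a fixed tuple of residues modulo $4$ to residues modulo $8$. Both $\chi_1$ and $\chi_2$ factor through the residues modulo $4$, so only $\chi_3$ is sensitive to these lifts, and only through the parity of $\tfrac{h_u^2-1}{8}$. Since $\tfrac{(h+4)^2-1}{8}\equiv\tfrac{h^2-1}{8}+1\pmod 2$ for odd $h$, the two lifts of a given residue contribute $1+(-1)^m=2\cdot\mathbf{1}[m\equiv 0\ (2)]$; applying this in each variable and using $\left|\mathcal{U}\right|=c_{\widehat{T}}$ replaces $\sum_{(h_u)\in\Lambda_8(\mathcal{U},N,\Upsilon)}\chi_1\chi_2\chi_3$ by
\[
2^{c_{\widehat{T}}}\sum_{(h_u)\in\Lambda_4(\mathcal{U},N,\Upsilon)}\chi_1\chi_2\prod_{u\in\mathcal{U}}\mathbf{1}[\gamma_{\Upsilon,\Gamma}(u)+\alpha\,\psi_{\Upsilon}(u)\equiv 0\ (2)],
\]
where $\Lambda_4(\mathcal{U},N,\Upsilon)$ denotes the reduction of $\Lambda_8(\mathcal{U},N,\Upsilon)$ modulo $4$. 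This is the source of the factor $2^{c_{\widehat{T}}}$.

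Next I would collapse the sums over $\Gamma\subseteq[k]$ and $\Upsilon\in\prod_j[r(j)]$. Writing each indicator as $\tfrac12\sum_{\varepsilon_u\in\{0,1\}}(-1)^{\varepsilon_u(\gamma_{\Upsilon,\Gamma}(u)+\alpha\psi_\Upsilon(u))}$, interchanging the order of summation, and using $\gamma_{\Upsilon,\Gamma}(u)=\sum_{i\in\Gamma}\Phi(u_i,2\Upsilon_i)$, the sum over $\Gamma$ becomes $\prod_{i=1}^{k}\bigl(1+\prod_{u\in\mathcal{U}}(-1)^{\varepsilon_u\Phi(u_i,2\Upsilon_i)}\bigr)$, which confines the auxiliary variables $(\varepsilon_u)$ to the $\mathbb{F}_2$-subspace cut out by $\sum_{u}\varepsilon_u\Phi(u_i,2\Upsilon_i)=0$ for every $i$. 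At this point one uses the explicit form of $\Phi$ from Section \ref{sec:The graph}---whereby $\Phi(u_i,2\Upsilon_i)$ depends on $u$ only through the block of $u_i$ in $T_i$, recording adjacency of that block to $\Upsilon_i$ in $\mathcal{G}(T_i)$---together with the structure of the type-$2$ set $\mathcal{U}$ (a product of elements of the $\mathcal{T}_j$), to identify this subspace and to evaluate the residual character $(-1)^{\alpha\sum_u\varepsilon_u\psi_\Upsilon(u)}$ on it. The upshot, which must be checked by a direct (if lengthy) computation, is that combining this constrained sum with the congruence conditions of $\Lambda_4(\mathcal{U},N,\Upsilon)$---which agree with those of $\Lambda(\mathcal{U},N)$ except at the distinguished indices $i=\Upsilon_j$, where they are relaxed---removes all dependence on $\Upsilon$ and reduces each term to $\sum_{(h_u)\in\Lambda(\mathcal{U},N)}\chi_1\chi_2$, so that the sum over $\Upsilon$ contributes exactly $\prod_j r(j)=\left|\widehat{T}\right|$. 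Summing over $N$ and $\mathcal{U}$ then gives the identity.

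The main obstacle is this last reduction: one must show that the three effects depending on $\Gamma$ and $\Upsilon$---the relaxed congruence conditions at $i=\Upsilon_j$, the factor $\gamma_{\Upsilon,\Gamma}$ inside $\chi_3$, and the factor $\alpha\psi_\Upsilon$ inside $\chi_3$---cancel cleanly enough that the net effect is the single multiplicity $\left|\widehat{T}\right|$ together with the $\Upsilon$-free constraint set $\Lambda(\mathcal{U},N)$ (in particular, the $\alpha$-dependent pieces must disappear). This is exactly where the hypothesis that no $T_i$ is complete bipartite is used: it guarantees $\mathcal{M}(\widehat{T})=\prod_j\mathcal{T}_j$ with no type-$1$ components, so that $\Phi$ restricted to the vertices of $\mathcal{U}$ has the block-wise form exploited above; the computation is then carried out for each relevant value of $\alpha$ (and each sign of the $2$-part of the discriminant) in the same way.
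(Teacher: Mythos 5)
Your opening moves mirror the paper's: you separate the $\chi_1\chi_2$ factor (which depends only on $h_u\bmod 4$) from $\chi_3$ (which sees $h_u\bmod 8$), then sum over the two lifts of each mod-$4$ residue to mod-$8$. Your observation that $\tfrac{(h+4)^2-1}{8}\equiv\tfrac{h^2-1}{8}+1\ (\mathrm{mod}\ 2)$, so the two lifts of each $h_u'$ contribute $1+(-1)^{\gamma_{\Upsilon,\Gamma}(u)+\alpha\psi_\Upsilon(u)}$, is exactly the paper's computation of $\kappa(\Upsilon)=\sum_\Gamma\prod_{u\in\mathcal{U}}\bigl(1+(-1)^{\gamma_{\Upsilon,\Gamma}(u)+\alpha\psi_\Upsilon(u)}\bigr)$, and both derivations produce the factor $2^{c_{\widehat{T}}}$ the same way.

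Where you diverge, and where the proposal is not complete, is the final reduction of the $\Gamma$ and $\Upsilon$ sums. The paper disposes of the $\Gamma$ sum by a single structural observation: for each fixed $\Upsilon$ there is \emph{exactly one} $\Gamma\subset[k]$ for which $\gamma_{\Upsilon,\Gamma}(u)+\alpha\psi_\Upsilon(u)$ is even for all $u\in\mathcal{U}$, so every factor in the product is $2$ rather than $0$, giving $\kappa(\Upsilon)=2^{c_{\widehat{T}}}$ identically, and the $\Upsilon$ sum then supplies the multiplicity $\prod_j r(j)=\bigl|\widehat{T}\bigr|$. (Which $\Gamma$ works is determined by parity of $\alpha$ and whether the coordinates $u_i$ of elements of $\mathcal{U}$ are uniformly even; this is where non-complete-bipartiteness of each $T_i$ and the block structure of $\mathcal{M}(\widehat{T})=\prod_j\mathcal{T}_j$ enter.) You instead introduce a Fourier expansion $\tfrac12\sum_{\varepsilon_u\in\{0,1\}}(-1)^{\varepsilon_u(\cdots)}$ of the indicators, which replaces one sum by another without simplifying it, and then defer the crux to ``a direct (if lengthy) computation.'' That deferral is precisely the content of the lemma: the claim that the $\Upsilon$-relaxed congruence conditions of $\Lambda_8(\mathcal{U},N,\Upsilon)$, the $\gamma_{\Upsilon,\Gamma}$ factor, and the $\alpha\psi_\Upsilon$ factor cancel against each other. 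You have correctly identified this as the hard part, but you have not done it, and the Fourier detour does not make it easier --- the paper's direct uniqueness-of-$\Gamma$ argument is the cleaner route and is what actually closes the gap.
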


\begin{proof}
Since $\chi_{1}$ and $\chi_{2}$ in (\ref{eq:reduced_sum}) only
depend on the class of $h_{u}$ modulo $4$, we can let $\left(h_{u}'\right)$
denote the reduction of $\left(h_{u}\right)$ modulo 4 and factor
the sum 
\begin{align*}
\sum_{\Gamma\subset\left[k\right]}\sum_{\Upsilon\in\prod_{j}\left[r\left(j\right)\right]}S_{N,\Gamma,\Upsilon} & =\sum_{\mathcal{U}\in\mathcal{M}(\widehat{T})}\sum_{\left(h_{u}'\right)}\chi_{1}\left(\left(h_{u}'\right),\mathcal{U}\right)\chi_{2}\left(\left(h_{u}'\right),\mathcal{U}\right)\\
 & \qquad\times\sum_{\Gamma\subset\left[k\right]}\sum_{\Upsilon\in\prod_{j}\left[r\left(j\right)\right]}\sum_{\left(h_{u}\right)}\chi_{3}\left(\left(h_{u}\right),\mathcal{U}\right).
\end{align*}
Note in the above sum $\left(h_{u}\right)$ depends on $\left(h_{u}'\right)$
since $h_{u}\equiv h_{u}'\mathrm{mod}4$ and hence there are two possible
choices for the class of $h_{u}$ modulo 8. If $h_{u}'\equiv1\left(4\right)$
then $h_{u}\equiv1$or 5 modulo 8. If $h_{u}\equiv1\left(8\right)$
then $\frac{h_{u}^{2}-1}{8}$ is even, and if $h_{u}\equiv5$ then
$\frac{h_{u}^{2}-1}{8}$ is odd. Hence both pairities are possible.
Similarly when $h_{u}'\equiv3\left(4\right)$.

Fix $\Upsilon$ and let 
\[
\kappa\left(\Upsilon\right)=\sum_{\Gamma\subset\left[k\right]}\sum_{(h_{u})}\prod_{u\in\mathcal{U}}\left(-1\right)^{\frac{h_{u}^{2}-1}{8}(\gamma_{\Upsilon,\Gamma}\left(u\right)+\psi_{\Upsilon}\left(u\right)\alpha)}.
\]
Using the above observation we can replace the sum over $\left(h_{u}\right)$
as follows
\begin{align*}
\kappa\left(\Upsilon\right) & =\sum_{\Gamma\subset\left[k\right]}\prod_{u\in\mathcal{U}}\left(1+\left(-1\right)^{(\gamma_{\Upsilon,\Gamma}\left(u\right)+\psi_{\Upsilon}\left(u\right)\alpha)}\right).
\end{align*}
Recall the definitions $\gamma_{\Upsilon,\Gamma}\left(u\right)=\sum_{i\in\Gamma}\Phi\left(u_{i},2\Upsilon_{i}\right)$
and $\psi_{\Upsilon}\left(u\right)=\sum_{i=1}^{k}\Phi\left(2\Upsilon_{i},u_{i}\right)$.

It follows from the definition of $\Phi$ and $\mathcal{M}(\widehat{T})$
for $\widehat{T}$ not complete bipartite that there is only one $\Gamma$
in the above sum for which $\gamma_{\Upsilon,\Gamma}\left(u\right)+\psi_{\Upsilon}\left(u\right)\alpha$
is even for all $u\in\mathcal{U}$. In particular if $\alpha$ is
even $\gamma_{\Upsilon,\Gamma}\left(u\right)$ is even for all $u\in\mathcal{U}$
only when $\Gamma\neq\varnothing$. If $\alpha$ is odd then $\gamma_{\Upsilon,\Gamma}\left(u\right)+\psi_{\Upsilon}\left(u\right)$
is even for all $u\in\mathcal{U}$ only when $\Gamma=\left\{ i\in\left[k\right]\mid u_{i}\text{ is even for all }u\in\mathcal{U}\right\} $.
In both cases we get $\kappa\left(\Upsilon\right)=2^{c_{\widehat{T}}}$
for all $\Upsilon$. Finally note that $\left|\prod_{j=1}^{k}r\left(j\right)\right|=\left|\widehat{T}\right|$.
This completes the proof.
\end{proof}
Next we rewrite the congruence conditions (\ref{eq:general cong cond})
in linear algebraic terms.

Let $\mathcal{U}\in\mathcal{M}(\widehat{T})$. Let $\left(h_{u}\right)\in\Lambda\left(\mathcal{U},N\right)$.
These congruence conditions can be encoded as vectors lying in a certain
coset of a subspace of $\mathbb{F}_{2}^{\left|\mathcal{U}\right|}$.
For any $\left(h_{u}\right)$ define the corresponding element $x\in\mathbb{F}_{2}^{\left|\mathcal{U}\right|}$
by 
\begin{equation}
x_{u}=\begin{cases}
1 & \text{if }h_{u}=-1\\
0 & \text{if }h_{u}=1.
\end{cases}\label{eq:congruence_correspondence}
\end{equation}
 Let $r_{k}=\sum_{j=1}^{k}r\left(j\right)$. Then there is a $r_{k}\times\left|\mathcal{U}\right|$
matrix $M_{\widehat{T}}$ over $\mathbb{F}_{2}$ and vector $\mathbf{h}\in\mathbb{F}_{2}^{r_{k}}$
such that the set of elements $x\in\mathbb{F}_{2}^{\left|\mathcal{U}\right|}$
corresponding to $\left(h_{u}\right)\in\Lambda\left(\mathcal{U},N\right)$
is equal to the set of solutions of the system $M_{\widehat{T}}x=\mathbf{h}$
(that is, equal to a coset of $\ker M_{T}$). 

\subsection{Factoring the main term}

We begin by investigating how the congruence conditions (\ref{eq:general cong cond})
behave under products of maximal unlinked sets.

Factor $\widehat{T}=\widehat{T}_{1}\times\widehat{T}_{2}$. Suppose
$\mathcal{U}\in\mathcal{M}(\widehat{T})$ factors as $\mathcal{U}_{1}\times\mathcal{U}_{2}$.
We can correspondingly factor $N=N_{1}\times N_{2}$. Define a map
$\lambda_{1}:\Lambda\left(\mathcal{U},N\right)\longrightarrow\Lambda\left(\mathcal{U}_{1},N_{1}\right)$
by

\[
[\lambda_{1}\left(\left(h_{u}\right)\right)]_{u_{1}}=\prod_{\substack{u\in\mathcal{U}_{1}\times\mathcal{U}_{2}\\
u=\left(u_{1},u_{2}\right)
}
}h_{u}
\]
 for each $u_{1}\in\mathcal{U}_{1}$. It follows from the conditions
(\ref{eq:general cong cond}) that this map is well-defined. Let $p_{1}$
be the projection onto the coordinates in $\left[k\right]$ corresponding
to $\widehat{T}_{1}$. If we let $x_{1}\in\mathbb{F}_{2}^{r_{1}}$
represent $\lambda_{1}\left(\left(h_{u}\right)\right)$ then $M_{\widehat{T}_{1}}x_{1}=p_{1}\left(\mathbf{h}\right)$.
Similarly define $\lambda_{2}:\Lambda\left(\mathcal{U},N\right)\longrightarrow\Lambda\left(\mathcal{U}_{2},N_{2}\right)$.
\begin{lem}
\label{lem:factor_lambda}In the above notation the map $\lambda=\lambda_{1}\times\lambda_{2}:\Lambda\left(\mathcal{U},N\right)\longrightarrow\Lambda\left(\mathcal{U}_{1},N_{1}\right)\times\Lambda\left(\mathcal{U}_{2},N_{2}\right)$
is a $2^{\left|\mathcal{U}_{1}\right|\left|\mathcal{U}_{2}\right|-\left|\mathcal{U}_{1}\right|-\left|\mathcal{U}_{2}\right|+1}$
to 1 function.
\end{lem}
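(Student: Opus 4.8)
The plan is to reduce the statement to $\mathbb{F}_{2}$-linear algebra. Recall from the paragraph preceding the lemma that, under the correspondence (\ref{eq:congruence_correspondence}), $\Lambda\!\left(\mathcal{U},N\right)$ is a coset of $\ker M_{\widehat{T}}$ inside $\mathbb{F}_{2}^{\mathcal{U}}$, and likewise $\Lambda\!\left(\mathcal{U}_{1},N_{1}\right)$ and $\Lambda\!\left(\mathcal{U}_{2},N_{2}\right)$ are cosets of $\ker M_{\widehat{T}_{1}}$ and $\ker M_{\widehat{T}_{2}}$. Writing the factorization $\widehat{T}=\widehat{T}_{1}\times\widehat{T}_{2}$ as a splitting of $\{1,\dots,k\}$ into two blocks, we may identify $\mathbb{F}_{2}^{\mathcal{U}}=\mathbb{F}_{2}^{\mathcal{U}_{1}\times\mathcal{U}_{2}}$ with the space of $\left|\mathcal{U}_{1}\right|\times\left|\mathcal{U}_{2}\right|$ matrices over $\mathbb{F}_{2}$; then $\lambda=\lambda_{1}\times\lambda_{2}$ is the restriction to $\Lambda\!\left(\mathcal{U},N\right)$ of the linear map $\Lambda_{\mathrm{lin}}$ sending a matrix to the pair (vector of row sums, vector of column sums). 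The inclusion $\lambda\!\left(\Lambda\!\left(\mathcal{U},N\right)\right)\subseteq\Lambda\!\left(\mathcal{U}_{1},N_{1}\right)\times\Lambda\!\left(\mathcal{U}_{2},N_{2}\right)$ has already been noted, so it remains to prove that $\lambda$ is surjective onto this target and that every nonempty fibre has cardinality $2^{\left|\mathcal{U}_{1}\right|\left|\mathcal{U}_{2}\right|-\left|\mathcal{U}_{1}\right|-\left|\mathcal{U}_{2}\right|+1}$.

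First I would determine $\ker\Lambda_{\mathrm{lin}}$. Over $\mathbb{F}_{2}$ a matrix of zeros and ones realizing a prescribed vector of row sums $y_{1}$ and a prescribed vector of column sums $y_{2}$ exists precisely when the total sums coincide, $\textstyle\sum_{u_{1}}(y_{1})_{u_{1}}=\sum_{u_{2}}(y_{2})_{u_{2}}$, so $\mathrm{im}\,\Lambda_{\mathrm{lin}}$ is a hyperplane and $\dim\ker\Lambda_{\mathrm{lin}}=\left|\mathcal{U}_{1}\right|\left|\mathcal{U}_{2}\right|-\left|\mathcal{U}_{1}\right|-\left|\mathcal{U}_{2}\right|+1$, the exponent in the statement. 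Second, I would check $\ker\Lambda_{\mathrm{lin}}\subseteq\ker M_{\widehat{T}}$: each defining equation of $\Lambda\!\left(\mathcal{U},N\right)$ in (\ref{eq:general cong cond-1}) is attached to a pair $\left(j,i\right)$ and involves the variables $h_{u}$ only through the coordinate $u_{j}$, so when $j$ lies in the first block its coefficient vector is constant along $\mathcal{U}_{2}$-fibres, i.e. the associated functional factors through $\lambda_{1}$ (and symmetrically for the second block); since $\ker\Lambda_{\mathrm{lin}}=\ker\lambda_{1}\cap\ker\lambda_{2}$, every defining functional of $M_{\widehat{T}}$ vanishes on $\ker\Lambda_{\mathrm{lin}}$. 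Granting these, the fibre count is immediate: two elements $x,x'$ of the coset $\Lambda\!\left(\mathcal{U},N\right)$ differ by a vector of $\ker M_{\widehat{T}}$, and $\lambda\!\left(x\right)=\lambda\!\left(x'\right)$ exactly when that vector lies in $\ker\Lambda_{\mathrm{lin}}$, so every nonempty fibre is a coset of $\ker\Lambda_{\mathrm{lin}}$ of the claimed size.

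For surjectivity I would argue by cardinalities. Combined with the fibre count, surjectivity onto $\Lambda\!\left(\mathcal{U}_{1},N_{1}\right)\times\Lambda\!\left(\mathcal{U}_{2},N_{2}\right)$ is equivalent to $\left|\Lambda\!\left(\mathcal{U},N\right)\right|=2^{\left|\mathcal{U}_{1}\right|\left|\mathcal{U}_{2}\right|-\left|\mathcal{U}_{1}\right|-\left|\mathcal{U}_{2}\right|+1}\left|\Lambda\!\left(\mathcal{U}_{1},N_{1}\right)\right|\left|\Lambda\!\left(\mathcal{U}_{2},N_{2}\right)\right|$, which unwinds to the rank identity $\mathrm{rank}\,M_{\widehat{T}}=\mathrm{rank}\,M_{\widehat{T}_{1}}+\mathrm{rank}\,M_{\widehat{T}_{2}}-1$. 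I would obtain this from the description of the row space of $M_{\widehat{T}}$ as $\lambda_{1}^{*}\!\left(\mathrm{row}\,M_{\widehat{T}_{1}}\right)+\lambda_{2}^{*}\!\left(\mathrm{row}\,M_{\widehat{T}_{2}}\right)$ --- each defining functional of $M_{\widehat{T}}$ being a pullback of a defining functional of one of the two factors, and conversely --- together with the observation that, $\lambda_{1}^{*}$ and $\lambda_{2}^{*}$ being injective, these two pullback subspaces meet exactly in the one-dimensional span of the total-weight functional $x\mapsto\sum_{u}x_{u}$ (the common pullback of the all-ones functional); inclusion--exclusion then yields the identity. An alternative is to lift a given $\left(x_{1},x_{2}\right)$ by hand, choosing a matrix of zeros and ones with row sums $x_{1}$ and column sums $x_{2}$ --- possible because the total parities of $x_{1}$ and $x_{2}$ agree --- and verifying directly that it satisfies (\ref{eq:general cong cond-1}).

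I expect the first two paragraphs to be routine. The main obstacle is the surjectivity step, that is, controlling $\mathrm{rank}\,M_{\widehat{T}}$ in terms of the two factor ranks; concretely one must verify that $\lambda_{1}^{*}\!\left(\mathrm{row}\,M_{\widehat{T}_{1}}\right)\cap\lambda_{2}^{*}\!\left(\mathrm{row}\,M_{\widehat{T}_{2}}\right)$ is exactly one-dimensional, which forces a careful bookkeeping of the equations (\ref{eq:general cong cond-1}) relative to the block splitting of $\widehat{T}$, with attention to the parities of $\left|\mathcal{U}_{1}\right|$ and $\left|\mathcal{U}_{2}\right|$.
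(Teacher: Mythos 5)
Your proposal follows the same route as the paper's proof on the parts the paper covers: translate to $\mathbb{F}_{2}$-linear algebra via (\ref{eq:congruence_correspondence}), observe that $\lambda$ is the linear map $\Lambda_{\mathrm{lin}}$ (row sums, column sums) with $\dim\ker\Lambda_{\mathrm{lin}}=\left|\mathcal{U}_{1}\right|\left|\mathcal{U}_{2}\right|-\left|\mathcal{U}_{1}\right|-\left|\mathcal{U}_{2}\right|+1$, and show $\ker\Lambda_{\mathrm{lin}}\subseteq\ker M_{\widehat{T}}$. Your remark that each defining functional of $\Lambda\left(\mathcal{U},N\right)$ factors through $\lambda_{1}$ or $\lambda_{2}$ (depending on the block containing $j$) is exactly the content of the paper's factorization $M_{\widehat{T}}=M'M_{\lambda}$ with $M'$ block-diagonal; the two phrasings are interchangeable.

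Where you go beyond the paper is the point you yourself flag as the obstacle: surjectivity onto $\Lambda\left(\mathcal{U}_{1},N_{1}\right)\times\Lambda\left(\mathcal{U}_{2},N_{2}\right)$. The paper's proof stops after establishing that nonempty fibers of $\lambda$ on $\Lambda\left(\mathcal{U},N\right)$ are cosets of $\ker M_{\lambda}$; it never checks that the image is all of $\Lambda\left(\mathcal{U}_{1},N_{1}\right)\times\Lambda\left(\mathcal{U}_{2},N_{2}\right)$, even though surjectivity is what the ``$2^{\theta}$-to-$1$'' claim literally asserts and what the application in Lemma \ref{lem:factor_gamma} requires when it replaces $\sum_{x\in\Lambda\left(\mathcal{U},N\right)}$ by $2^{\theta}\sum_{\left(x_{1},x_{2}\right)\in\Lambda_{1}\times\Lambda_{2}}$. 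Since $\mathrm{im}\,M_{\lambda}$ is a proper hyperplane, surjectivity is not automatic: it amounts to showing $\ker M_{\widehat{T}_{1}}\oplus\ker M_{\widehat{T}_{2}}$ lies in the ``equal total parities'' hyperplane, equivalently that the all-ones functional lies in each $\mathrm{row}\,M_{\widehat{T}_{i}}$, which is precisely your rank identity $\mathrm{rank}\,M_{\widehat{T}}=\mathrm{rank}\,M_{\widehat{T}_{1}}+\mathrm{rank}\,M_{\widehat{T}_{2}}-1$. Your cardinality unwinding of this reduction is correct. Of the two routes you propose, the direct lifting is cleanest and fully closes the gap: given $\left(x_{1},x_{2}\right)\in\Lambda_{1}\times\Lambda_{2}$ with matching total parities, the explicit matrix $x_{i,j}=\left(x_{1}\right)_{i}\delta_{j,1}+\left(x_{2}\right)_{j}\delta_{i,1}+\bigl(\textstyle\sum_{j}\left(x_{2}\right)_{j}\bigr)\delta_{i,1}\delta_{j,1}$ has row sums $x_{1}$ and column sums $x_{2}$, and then lies in $\Lambda\left(\mathcal{U},N\right)$ because $p_{i}\left(M_{\widehat{T}}x\right)=M_{\widehat{T}_{i}}\lambda_{i}\left(x\right)=M_{\widehat{T}_{i}}x_{i}=p_{i}\left(\mathbf{h}\right)$. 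What remains to be checked in either route is that the total parity is indeed fixed on each $\Lambda_{i}$, i.e.\ that the all-ones functional is a combination of the rows of (\ref{eq:general cong cond-1}); you correctly note this depends on the bookkeeping of those equations and on the parities involved, and the paper does not supply it. In short: your proposal is the paper's argument plus a correct identification of a surjectivity step the paper silently skips, together with a workable plan to fill it.
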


\begin{proof}
Under the correspondence (\ref{eq:congruence_correspondence}) the
map $\lambda_{1}$ becomes
\[
[\lambda_{1}\left(x\right)]_{u_{1}}=\sum_{\substack{u\in\mathcal{U}_{1}\times\mathcal{U}_{2}\\
u=\left(u_{1},u_{2}\right)
}
}x_{u}.
\]
 Put an arbitrary ordering on the elements of $\mathcal{U}_{1}$ and
$\mathcal{U}_{2}$, and the lexicographic ordering on $\mathcal{U}_{1}\times\mathcal{U}_{2}$.
Then it is easy to see that $\lambda_{1}$ is given by a matrix $M_{1}$
of size $\left|\mathcal{U}_{1}\right|\times\left|\mathcal{U}\right|$
of full rank. Similarly $\lambda_{2}$ is given by $M_{2}$ of size
$\left|\mathcal{U}_{2}\right|\times\left|\mathcal{U}\right|$ and
full rank. Thus $\lambda$ is the matrix $M_{\lambda}=\left[M_{1}^{T},M_{2}^{T}\right]^{T}$
which has size $(\left|\mathcal{U}_{1}\right|+\left|\mathcal{U}_{2}\right|)\times\left|\mathcal{U}\right|$
and rank $\left|\mathcal{U}_{1}\right|+\left|\mathcal{U}_{2}\right|-1$.
It follows that $\dim\ker M_{\lambda}=\left|\mathcal{U}_{1}\right|\left|\mathcal{U}_{2}\right|-\left|\mathcal{U}_{1}\right|-\left|\mathcal{U}_{2}\right|+1$.

Finally we need to show that $\Lambda\left(\mathcal{U},N\right)+\ker M_{\lambda}\subset\Lambda\left(\mathcal{U},N\right)$
which is equivalent to $\ker M_{\lambda}\subset\ker M_{\widehat{T}}$
by definition of $\Lambda\left(\mathcal{U},N\right)$ as a coset of
$\ker M_{\widehat{T}}$. We have $\Lambda\left(\mathcal{U}_{i},N_{i}\right)=p_{i}\left(\mathbf{h}\right)+\ker M_{\widehat{T}_{i}}$.
This implies that $M_{\widehat{T}_{i}}M_{i}x=p_{i}\left(\mathbf{h}\right)=p_{i}\left(M_{\widehat{T}}x\right)$
for any $x\in\mathbb{F}_{2}^{\left|\mathcal{U}\right|}$.

Let $M'$ be the block diagonal matrix constructed from $M_{\widehat{T}_{1}}$
and $M_{\widehat{T}_{2}}$. Then for any $x\in\Lambda\left(\mathcal{U},N\right)$
and $y\in\ker M_{\lambda}$ we have $M_{\widehat{T}}y=M'M_{\lambda}y=0$.
Hence $\ker M_{\lambda}\subset\ker M_{\widehat{T}}$ as required.
This completes the proof.
\end{proof}
By defintion each $\mathcal{U}\in\mathcal{M}(\widehat{T})$ factors
into $\mathcal{U}=\mathcal{U}_{1}\times\mathcal{U}_{2}\in\mathcal{M}(\widehat{T})$.
We can factor $N=N_{1}\times N_{2}$ and let $x\in\Lambda\left(\mathcal{U},N\right)$.
Suppose $\chi\left(\mathcal{U},x\right)$ is a function satisfying
\begin{equation}
\chi\left(\mathcal{U},x\right)=\chi\left(\mathcal{U}_{1},\lambda_{1}\left(x\right)\right)\chi\left(\mathcal{U}_{2},\lambda_{2}\left(x\right)\right)\label{eq:chi character}
\end{equation}
 for all $\mathcal{U}\in\mathcal{M}(\widehat{T})$. Define 
\begin{equation}
\gamma\left(\mathcal{U},N\right)=\sum_{x\in\Lambda\left(\mathcal{U},N\right)}\chi\left(\mathcal{U},x\right)\label{eq:lambda as chi}
\end{equation}
 and $\gamma_{\mathcal{M}(\widehat{T}),N}=\sum_{\mathcal{U\in\mathcal{M}}(\widehat{T})}\gamma\left(\mathcal{U},N\right)$.
Then $\Gamma_{\mathcal{M}(\widehat{T})}=2^{c^{k}}\left|\widehat{T}\right|\sum_{N\in\mathcal{N}^{\pm}(\widehat{T})}\gamma_{\mathcal{M}(\widehat{T}),N}$.
\begin{lem}
\label{lem:factor_gamma}Let $\widehat{T}=\widehat{T}_{1}\times\widehat{T}_{2}$
and $N=N_{1}\times N_{2}$. Let $c_{\widehat{T}_{i}}$ be the cardinality
of any element of $\mathcal{M}(\widehat{T}_{i})$. Then
\begin{align*}
\gamma_{\mathcal{M}(\widehat{T}),N} & =2^{c_{\widehat{T}_{1}}c_{\widehat{T}_{2}}-c_{\widehat{T}_{1}}-c_{\widehat{T}_{2}}+1}\gamma_{\mathcal{M}(\widehat{T}_{1}),N_{1}}\gamma_{\mathcal{M}(\widehat{T}_{2}),N_{2}}.
\end{align*}
\end{lem}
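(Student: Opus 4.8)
The plan is to combine the factorization lemma for the congruence-condition map (Lemma~\ref{lem:factor_lambda}) with the multiplicativity property \eqref{eq:chi character} of the character $\chi$, and then count fibers. First I would fix $N = N_1 \times N_2$ and $\mathcal{U} = \mathcal{U}_1 \times \mathcal{U}_2 \in \mathcal{M}(\widehat{T})$, noting that every element of $\mathcal{M}(\widehat{T})$ factors this way since $\mathcal{M}(\widehat{T}) = \mathcal{M}(\widehat{T}_1) \times \mathcal{M}(\widehat{T}_2)$ by definition of the product of maximal-unlinked-set families. By \eqref{eq:chi character} we have $\chi(\mathcal{U}, x) = \chi(\mathcal{U}_1, \lambda_1(x))\,\chi(\mathcal{U}_2, \lambda_2(x))$, so summing over $x \in \Lambda(\mathcal{U}, N)$ and pushing forward along $\lambda = \lambda_1 \times \lambda_2$ gives
\[
\gamma(\mathcal{U}, N) = \sum_{x \in \Lambda(\mathcal{U},N)} \chi(\mathcal{U}_1, \lambda_1(x))\,\chi(\mathcal{U}_2,\lambda_2(x)) = \sum_{(y_1, y_2) \in \mathrm{im}(\lambda)} \bigl|\lambda^{-1}(y_1,y_2)\bigr| \, \chi(\mathcal{U}_1, y_1)\,\chi(\mathcal{U}_2, y_2).
\]
The key input here is that by Lemma~\ref{lem:factor_lambda} the map $\lambda$ is exactly $2^{|\mathcal{U}_1||\mathcal{U}_2| - |\mathcal{U}_1| - |\mathcal{U}_2| + 1}$-to-one onto its image, which is all of $\Lambda(\mathcal{U}_1, N_1) \times \Lambda(\mathcal{U}_2, N_2)$; in particular the fiber size is constant.

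Next I would pull the constant fiber size out of the sum and split the resulting double sum as a product. Since $|\mathcal{U}_i| = c_{\widehat{T}_i}$ (the common cardinality of elements of $\mathcal{M}(\widehat{T}_i)$), the fiber exponent is $c_{\widehat{T}_1} c_{\widehat{T}_2} - c_{\widehat{T}_1} - c_{\widehat{T}_2} + 1$, so
\[
\gamma(\mathcal{U}, N) = 2^{c_{\widehat{T}_1} c_{\widehat{T}_2} - c_{\widehat{T}_1} - c_{\widehat{T}_2} + 1} \Bigl(\sum_{y_1 \in \Lambda(\mathcal{U}_1, N_1)} \chi(\mathcal{U}_1, y_1)\Bigr)\Bigl(\sum_{y_2 \in \Lambda(\mathcal{U}_2, N_2)} \chi(\mathcal{U}_2, y_2)\Bigr) = 2^{c_{\widehat{T}_1} c_{\widehat{T}_2} - c_{\widehat{T}_1} - c_{\widehat{T}_2} + 1}\,\gamma(\mathcal{U}_1, N_1)\,\gamma(\mathcal{U}_2, N_2).
\]
Finally, summing over $\mathcal{U} \in \mathcal{M}(\widehat{T})$, and using that this is the product set $\mathcal{M}(\widehat{T}_1) \times \mathcal{M}(\widehat{T}_2)$, the double sum over $(\mathcal{U}_1, \mathcal{U}_2)$ again factors, yielding
\[
\gamma_{\mathcal{M}(\widehat{T}), N} = 2^{c_{\widehat{T}_1} c_{\widehat{T}_2} - c_{\widehat{T}_1} - c_{\widehat{T}_2} + 1}\,\gamma_{\mathcal{M}(\widehat{T}_1), N_1}\,\gamma_{\mathcal{M}(\widehat{T}_2), N_2},
\]
as claimed.

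The one point that needs care — and the main potential obstacle — is verifying that the character $\chi = \chi_1 \chi_2$ actually satisfies the multiplicativity property \eqref{eq:chi character} with respect to the factorization $\widehat{T} = \widehat{T}_1 \times \widehat{T}_2$, i.e.\ that the exponents $\Phi_k(u,v)\frac{h_u - 1}{2}\frac{h_v-1}{2}$ and $\lambda_k^N(u)\frac{h_u-1}{2}$ split additively when $u = (u_1, u_2)$ and $v = (v_1, v_2)$ run over a product set and the $h$'s are pushed forward through $\lambda_1, \lambda_2$. This should follow from the coordinatewise additive definition $\Phi_k = \sum_i \Phi(u_i, v_i)$ and $\lambda_k^N = \sum_i \lambda^{N_i}$ together with the fact that $\frac{h_u - 1}{2} \bmod 2$ is additive in $h_u$ under multiplication mod $4$; one has to check that the cross terms coming from pairs $u, v$ that agree in the $\widehat{T}_1$-coordinates but differ in the $\widehat{T}_2$-coordinates (and vice versa) either vanish or get absorbed into the $\gamma(\mathcal{U}_i, N_i)$ factors correctly. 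Granting that bookkeeping, which is essentially the same computation as in the analogous step of \cite{fk1}, the rest is the clean fiber-counting argument above. I would state the verification of \eqref{eq:chi character} as a short separate claim (or fold it into this proof) and then invoke Lemma~\ref{lem:factor_lambda} to finish.
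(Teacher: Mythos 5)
Your proof is correct and follows essentially the same approach as the paper: factor $\mathcal{U}=\mathcal{U}_1\times\mathcal{U}_2$, use Lemma~\ref{lem:factor_lambda} to count the fibers of $\lambda=\lambda_1\times\lambda_2$ (all of size $2^{c_{\widehat{T}_1}c_{\widehat{T}_2}-c_{\widehat{T}_1}-c_{\widehat{T}_2}+1}$), use the multiplicativity (\ref{eq:chi character}) of $\chi$ to split the inner sum, and finally factor the outer sum over $\mathcal{M}(\widehat{T})=\mathcal{M}(\widehat{T}_1)\times\mathcal{M}(\widehat{T}_2)$. One small calibration of your last paragraph: the verification that the concrete $\chi=\chi_1\chi_2$ satisfies (\ref{eq:chi character}) is not needed here, because the lemma is stated for an abstract $\chi$ for which (\ref{eq:chi character}) is \emph{assumed} (``Suppose $\chi(\mathcal{U},x)$ is a function satisfying\ldots''); that check belongs to the later application of the lemma (after the definition of $\chi$ in the ``Computing the main term'' subsection), not to the lemma's proof itself.
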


\begin{proof}
Write $\mathcal{U}=\mathcal{U}_{1}\times\mathcal{U}_{2}$. We have
$\left|\mathcal{U}_{1}\right|=c_{\widehat{T}_{1}}$ and $\left|\mathcal{U}_{2}\right|=c_{\widehat{T}_{2}}$.
Let $\theta=c_{\widehat{T}_{1}}c_{\widehat{T}_{2}}-c_{\widehat{T}_{1}}-c_{\widehat{T}_{2}}+1$.
Hence by Lemma \ref{lem:factor_lambda} we can factor the resulting
sum
\begin{align*}
\gamma_{\mathcal{M}(\widehat{T}),N} & =\sum_{\mathcal{U}\in\mathcal{M}\left(\widehat{T}\right)}\sum_{x\in\Lambda\left(\mathcal{U},N\right)}\chi\left(\mathcal{U},x\right)\\
 & =2^{\theta}\sum_{\mathcal{U}=\mathcal{U}_{1}\times\mathcal{U}_{2}}\sum_{\left(x_{1},x_{2}\right)\in\Lambda\left(\mathcal{U}_{1},N_{1}\right)\times\Lambda\left(\mathcal{U}_{2},N_{2}\right)}\chi\left(\mathcal{U}_{1},x_{1}\right)\chi\left(\mathcal{U}_{2},x_{2}\right)\\
 & =2^{\theta}\sum_{\mathcal{U}=\mathcal{U}_{1}\times\mathcal{U}_{2}}\left(\sum_{x_{1}\in\Lambda\left(\mathcal{U}_{1},N_{1}\right)}\chi\left(\mathcal{U}_{1},x_{1}\right)\right)\left(\sum_{x_{2}\in\Lambda\left(\mathcal{U}_{2},N_{2}\right)}\chi\left(\mathcal{U}_{2},x_{2}\right)\right)\\
 & =2^{\theta}\gamma_{\mathcal{M}(\widehat{T}_{1}),N_{1}}\gamma_{\mathcal{M}(\widehat{T}_{2}),N_{2}}.
\end{align*}
\end{proof}

\subsection{Computing the main term}

Let $s\left(j\right)=s_{1}\left(j\right)+s_{2}\left(j\right)$ be
the number of connected components of $T_{j}$ where $s_{2}\left(j\right)$
is the number of components of size 1. Recall $\widehat{T}=\prod_{i=1}^{k}T_{i}$
and $\mathcal{M}(\widehat{T})=\prod_{j=1}^{k}\mathcal{T}_{j}$. We
recall some notation from previous sections. For $j\in\left[k\right]$
we denote by $C_{j}$ the set of fixed points for the action of $T_{j}$
on itself. Hence $\left|C_{j}\right|=s_{2}\left(j\right)$.

Let $\mathcal{C}_{j,i}$ be the $i$th connected component of $T_{j}$.
For $i=1.\ldots,s_{1}\left(j\right)$ we let $A_{j,i},B_{j,i}$ be
the $i$-th connected component of $A_{j},B_{j}$ respectively.

For any $\mathcal{U}\in\mathcal{M}(\widehat{T})$ and $x\in\Lambda\left(\mathcal{U},N\right)$
define $\chi\left(\mathcal{U},x\right)=\chi_{1}\left(\mathcal{U},x\right)\chi_{2}(\mathcal{U},x)$
(see \ref{eq:chi def}). Then $\chi$ satisfies (\ref{eq:chi character}).

We want to compute $\Gamma_{\mathcal{M}(\widehat{T})}=2^{c^{k}}\left|\widehat{T}\right|\sum_{N\in\mathcal{N}^{\pm}(\widehat{T})}\gamma_{\mathcal{M}(\widehat{T}),N}$
where
\[
\gamma_{\mathcal{M}(\widehat{T}),N}=\sum_{\mathcal{U}\in\mathcal{M}(\widehat{T})}\sum_{x\in\Lambda\left(\mathcal{U},N\right)}\chi\left(\mathcal{U},x\right).
\]

\begin{lem}
\label{prop:gamma_type2}Let $\omega=c_{\widehat{T}}-\sum_{j\in\left[k\right]}c_{T_{j}}+\sum_{j\in\left[k\right]}s\left(j\right)+\left(k-1\right)$.
For any $N$ let $m_{j,i}=\left|N_{j}\cap\mathrm{supp}B_{j,i}\right|$.
Then we have 
\[
\gamma_{\mathcal{M}(\widehat{T}),N}=\begin{cases}
2^{\omega} & \text{if \ensuremath{m_{j,i}\equiv0,1} mod4 for all \ensuremath{i,j}}\\
0 & \text{else}
\end{cases}.
\]
\end{lem}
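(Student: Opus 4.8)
The plan is to reduce the statement first to the case $k=1$ and then to a single connected component of $\mathcal{G}(T_j)$, where the dichotomy becomes a short explicit computation.

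\textbf{Reduction to $k=1$.} First I would apply Lemma \ref{lem:factor_gamma} repeatedly, splitting off one factor of $\widehat T$ at a time; since $c_{\widehat T}=\prod_j c_{T_j}$, the exponents produced telescope and one obtains
\[
\gamma_{\mathcal{M}(\widehat T),N}=2^{\,c_{\widehat T}-\sum_{j=1}^{k}c_{T_j}+(k-1)}\prod_{j=1}^{k}\gamma_{\mathcal{M}(T_j),N_j}.
\]
Since $\omega=\bigl(c_{\widehat T}-\sum_j c_{T_j}+(k-1)\bigr)+\sum_j s(j)$, it then suffices to prove: for a single generating set $T$ that is not complete bipartite, with connected components $C_1,\dots,C_s$ of $\mathcal{G}(T)$ and $N\subseteq T$, one has $\gamma_{\mathcal{M}(T),N}=2^{s}$ if $|N\cap C_i|\equiv 0,1\bmod 4$ for every $i$, and $\gamma_{\mathcal{M}(T),N}=0$ otherwise.

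\textbf{Reduction to a single component.} For $k=1$ we have $\mathcal{M}(T)=\mathcal{T}$, so a member $\mathcal{U}$ of $\mathcal{M}(T)$ amounts to a choice of $A_i$ or $B_i$ for each component $C_i$ with $|C_i|\ge 2$, together with the fixed set $C$ of vertices of one-element components. Each congruence condition in (\ref{eq:general cong cond-1}) only involves $h_{2i-1}$ and $h_{2i}$, whose supports lie in one component; $\chi_2$ is a product of per-vertex factors; and $\Phi(u,v)=0$ whenever the supports of $u,v$ lie in different components, so $\chi_1$ also factors over components. Hence $\Lambda(\mathcal{U},N)$ and $\chi=\chi_1\chi_2$ split as products indexed by the components, and $\gamma_{\mathcal{M}(T),N}$ is the product of a factor $\gamma^{(i)}_N$ for each component of size $\ge 2$ (the sum over $A_i,B_i$ and over the admissible $h$'s there) and a factor $\gamma^{(C)}_N$ for $C$. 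For a one-element component $C_i=\{l\}$ one has $l>r_1$, both $2l-1,2l\in\mathcal{U}$, $\Phi$ vanishes on the pair $\{2l-1,2l\}$, and $\lambda^N(2l)=0$, so the single condition $h_{2l-1}h_{2l}\equiv\epsilon_l$ has two solutions with $\chi=1$; thus each such component contributes a factor $2$, giving $\gamma^{(C)}_N=2^{s_2}$.

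\textbf{The main computation.} The crux is the evaluation of $\gamma^{(i)}_N$ for a component $C_i$ of size $\ge 2$. Once $W\in\{A_i,B_i\}$ is chosen, for each $l\in C_i$ exactly one of $h_{2l-1},h_{2l}$ is a free coordinate and the condition $h_{2l-1}h_{2l}\equiv\epsilon_l$ together with $h_u=1$ off $\mathcal{U}$ determines it, so $\Lambda^{(i)}$ has a single element, with the active coordinate equal to $-1$ exactly when its support lies in $N$. I would then evaluate $\chi_1\chi_2$ on this tuple for both choices $W=A_i$ and $W=B_i$, using the definitions of $\Phi$, $\lambda^N$ and (\ref{eq:general cong cond-1}); the key point is that the two values agree up to a sign that is $+1$ precisely when $|N\cap C_i|\equiv 0,1\bmod 4$ (the parity of $\binom{|N\cap C_i|}{2}$), the asymmetry coming from the fact that $\lambda^N$ is supported on even-indexed vertices while the reciprocity contributions to $\chi_1$ behave differently on the odd- and even-indexed vertices of $C_i$. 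Consequently $\gamma^{(i)}_N$ equals $2$ or $0$ according to whether this parity condition holds. Multiplying the component contributions yields $2^s$ when all the conditions hold and $0$ as soon as one fails; combined with the reduction above this gives $2^\omega$ or $0$ as claimed.

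\textbf{Expected main obstacle.} Everything except the per-component computation is bookkeeping. The delicate point is pinning down the exact sign that $\chi_1\chi_2$ picks up on a component of size $\ge 2$ when one interchanges the two maximal unlinked sets $A_i$ and $B_i$, and verifying that it is controlled by the parity of $\binom{|N\cap C_i|}{2}$; this is precisely where the modulo-$4$ congruence on $m_{j,i}$ enters, and it requires carefully unwinding the definitions of $\Phi$, $\lambda^N$, and the congruence conditions (\ref{eq:general cong cond-1}).
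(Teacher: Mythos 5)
Your outline matches the paper's proof essentially step for step: iterated use of Lemma \ref{lem:factor_gamma} to reduce to $k=1$, peeling off the singleton components (each contributing a factor $2$, i.e.\ $2^{s_2(j)}$ in total), and then factoring the remaining sum over the connected components of size $\ge 2$, where each component contributes $1+\chi(B_{j,i},\cdot)$ because $\chi(A_{j,i},\cdot)=1$ (no $\Phi$-edges between odd-indexed vertices, and $\lambda^N$ is supported on even indices). You also name the correct per-component sign $(-1)^{\binom{m_{j,i}}{2}}$.

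The one piece not actually carried out --- which you flag yourself as the main obstacle --- is the verification of that sign, and your sketch of it is slightly misleading: you suggest that both $\lambda^N$ and the $\Phi$-quadratic term contribute to the eventual parity. The paper instead computes the quadratic term $\sum_{\{u,v\}\subset B_{j,i}}\Phi(u,v)\,x_u x_v=\binom{m_{j,i}}{2}$ and separately proves that the linear term $\sum_{u\in B_{j,i}}\lambda^{N_j}(u)\,x_u$ is \emph{always} even, so the sign is governed solely by $\binom{m_{j,i}}{2}$ and hence by $m_{j,i}\bmod 4$. (That parity follows because $\sum_{l\in N_j\cap\mathcal{C}_{j,i}}\big[\,|N_j\cap S_l|\text{ odd}\,\big]\equiv\sum_{l\in N_j\cap\mathcal{C}_{j,i}}|N_j\cap S_l|\pmod 2$, and the latter double-counts unordered pairs by the symmetry $m\in S_l\Leftrightarrow l\in S_m$.) That single parity check on the $\lambda^N$-term is the small but essential piece your proposal leaves open.
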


\begin{proof}
For the remainder of the proof we will write $\widehat{T}=\prod_{j=1}^{k}T_{j}$,
and let $\widehat{T}_{i}=\prod_{j\ge i}T_{j}$. Any $\mathcal{U}\in\mathcal{M}(\widehat{T})$
factors as $\mathcal{U}=\prod_{j=1}^{k}\mathcal{W}_{j}$ with each
$\mathcal{W}_{j}$ maximal unlinked with full support on $T_{j}$.
Hence by repeated application of Proposition \ref{lem:factor_gamma}
we have
\begin{align}
\gamma_{\mathcal{M}(\widehat{T}),N} & =2^{c_{\widehat{T}}-\sum_{j}c_{T_{j}}+\left(k-1\right)}\prod_{j=1}^{k}\sum_{\mathcal{W}_{j}}\sum_{x_{j}\in\Lambda\left(\mathcal{W}_{j},N_{j}\right)}\chi\left(\mathcal{W}_{j},x_{j}\right)\label{eq:gamma2_firstline}
\end{align}
 where we are using that 
\begin{align*}
\sum_{i=1}^{k-1}c_{\widehat{T}_{i}}-c_{\widehat{T}_{i+1}}-c_{T_{i}}+1 & =c_{\widehat{T}}-\sum_{j}c_{T_{j}}+\left(k-1\right).
\end{align*}
 Note that $\left|\Lambda\left(\mathcal{W}_{j},N_{j}\right)\right|=2^{s_{2}\left(j\right)}$
but the value of $e\left(\mathcal{W}_{j},\cdot\right)+f_{N_{j}}\left(\mathcal{W}_{j},\cdot\right)$
is constant on $\Lambda\left(\mathcal{W}_{j},N_{j}\right)$ (since
$\Phi$ and $\lambda_{N_{j}}$ are zero on $C_{j}$).

Let $\mathcal{W}_{j}'=\mathcal{W}_{j}\backslash C_{j}$. Then $\left|\Lambda\left(\mathcal{W}_{j}',N_{j}\right)\right|=1$.
Hence (\ref{eq:gamma2_firstline}) becomes
\begin{equation}
2^{\omega_{0}}\prod_{j=1}^{k}\sum_{\mathcal{W}_{j}'}\chi\left(\mathcal{W}_{j}',x_{j}\right)\label{eq:gamma2_secondline}
\end{equation}
 where $\omega_{0}=2^{c_{\widehat{T}}-\sum_{j}c_{T_{j}}+\left(k-1\right)+\sum_{j}s_{2}\left(j\right)}$.

Note that for every $j$, if $u\in A_{j,i}$ then $\Phi\left(v,u\right)=0$
for all $v\in\mathcal{W}_{j}$. Furthemore for every $u,v\in\mathcal{W}_{j}$
we have $\Phi\left(u,v\right)=\Phi\left(v,u\right)$. Hence it follows
from the definition of the sets $\mathcal{W}_{j}=\left\{ \left(\cup_{i=1}^{s_{1}\left(j\right)}W_{i}\right)\cup C_{j}\mid W_{i}=A_{j,i}\text{ or }B_{j,i}\right\} $
that (\ref{eq:gamma2_secondline}) factors as

\[
2^{\omega_{0}}\prod_{j=1}^{k}\prod_{i=1}^{s_{1}\left(j\right)}\left(1+\chi\left(B_{j,i},x_{j}\right)\right)
\]
where by abuse of notation we are denoting by $x_{j}$ its projection
onto the coordinates corresponding to the $i$th connected component
$\mathcal{C}_{j,i}$.

We now turn to computing $\chi\left(B_{j,i},x_{j}\right)$ which is
determined by the pairity of 
\[
\sum_{\left\{ u,v\right\} \subset B_{j,i}}\Phi\left(u,v\right)x_{u}x_{v}+\sum_{u\in B_{j,i}}\lambda^{N_{j}}\left(u\right)x_{u}.
\]
 Note that $\Phi\left(u,v\right)=1$ for all $u,v\in B_{j}$ if $u\neq v$.
It follows from $\left(\ref{eq:congruenceconditions_k}\right)$ that
for any $u=2l\in B_{j,i}$ 
\[
x_{u}=\begin{cases}
1 & \text{if }l\in N_{j}\\
0 & \text{if }i\notin N_{j}.
\end{cases}
\]
It follows from this that $\sum_{\left\{ u,v\right\} \subset B_{j,i}}\Phi\left(u,v\right)x_{u}x_{v}=\binom{m_{j,i}}{2}$
which is even when $m_{j,i}\equiv0,1\mod4$. Furthermore for $u=2l\in B_{j,i}$
we have
\[
\lambda^{N_{j}}\left(u\right)=\begin{cases}
1 & \text{if }\left|N_{j}\cap S_{l}\right|\text{ odd}\\
0 & \text{else}
\end{cases}
\]
 and hence 
\begin{align*}
\sum_{u\in B_{j,i}}\lambda^{N_{j}}\left(u\right)x_{u} & =\begin{cases}
m_{j,i} & \text{if }m_{j,i}\text{ even}\\
0 & \text{else}
\end{cases}
\end{align*}
which is always even. Thus if $m_{j,i}\equiv0,1\mod4$ for all $i$
and $j$ then
\begin{align*}
\gamma_{\mathcal{M}(\widehat{T}),N} & =2^{\omega_{0}}\prod_{j=1}^{k}\prod_{i=1}^{s_{1}\left(j\right)}\left(1+\chi\left(B_{j,i},x_{j}\right)\right)\\
 & =2^{\omega_{0}+\sum_{j}s_{1}\left(j\right)}
\end{align*}
where $\omega_{0}=2^{c_{\widehat{T}}-\sum_{j}c_{T_{j}}+\left(k-1\right)+\sum_{j}s_{2}\left(j\right)}$.
If $m_{j,i}\neq0,1\mod4$ for any $i$ or $j$ then $\gamma_{\mathcal{M}(\widehat{T}),N}=0$.
Since $s\left(j\right)=s_{1}\left(j\right)+s_{2}\left(j\right)$ for
all $j$, this completes the proof.
\end{proof}
Recall for the next proposition that we defined $\mathcal{C}_{j,i}$
be the $i$th connected component of $T_{j}$.
\begin{prop}
\label{prop:Gamma type 2}Let $a=0,1$ in the case of positive (respectively
negative) discriminants. We have
\begin{equation}
\Gamma_{\mathcal{M}(\widehat{T})}=2^{\omega}\left|\widehat{T}\right|\prod_{j=1}^{k}\left[\sum_{m\equiv a\left(2\right)}^{r\left(j\right)}\sum_{\substack{m_{i}\equiv0,1\left(4\right)\\
\sum_{i=1}^{s\left(j\right)}m_{i}=m
}
}\prod_{i=1}^{s\left(j\right)}\binom{\left|\mathcal{C}_{j,i}\right|}{m_{i}}\right]\label{eq:gamma_2}
\end{equation}
where $\omega=2c_{\widehat{T}}-\sum_{j\in\left[k\right]}c_{T_{j}}+\sum_{j\in\left[k\right]}s\left(j\right)+\left(k-1\right)$.
\end{prop}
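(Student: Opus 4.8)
The plan is to feed the evaluation of $\gamma_{\mathcal{M}(\widehat{T}),N}$ supplied by Lemma~\ref{prop:gamma_type2} into the identity $\Gamma_{\mathcal{M}(\widehat{T})}=2^{c_{\widehat{T}}}\left|\widehat{T}\right|\sum_{N\in\mathcal{N}^{\pm}(\widehat{T})}\gamma_{\mathcal{M}(\widehat{T}),N}$ established earlier in this subsection, thereby reducing the whole computation to a purely combinatorial count. By Lemma~\ref{prop:gamma_type2}, $\gamma_{\mathcal{M}(\widehat{T}),N}$ equals the fixed power $2^{\omega_{0}}$ with $\omega_{0}=c_{\widehat{T}}-\sum_{j\in[k]}c_{T_{j}}+\sum_{j\in[k]}s(j)+(k-1)$, which does not depend on $N$, exactly when $m_{j,i}=\left|N_{j}\cap\mathrm{supp}B_{j,i}\right|\equiv0,1\pmod 4$ for all $i,j$, and vanishes otherwise. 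Consequently
\[
\Gamma_{\mathcal{M}(\widehat{T})}=2^{c_{\widehat{T}}+\omega_{0}}\left|\widehat{T}\right|\cdot\#\bigl\{N\in\mathcal{N}^{\pm}(\widehat{T})\ \bigm|\ m_{j,i}\equiv0,1\ (\mathrm{mod}\ 4)\text{ for all }i,j\bigr\},
\]
and $c_{\widehat{T}}+\omega_{0}$ is precisely the exponent $\omega=2c_{\widehat{T}}-\sum_{j\in[k]}c_{T_{j}}+\sum_{j\in[k]}s(j)+(k-1)$ in the statement.

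The remaining task is to evaluate that count. Since $\mathcal{N}^{\pm}(\widehat{T})=\prod_{j=1}^{k}\mathcal{N}^{\pm}(T_{j})$ and the congruence condition indexed by $j$ involves only the coordinate $N_{j}$, the count factors over $j$, so it suffices to count, for each fixed $j$, the subsets $N_{j}\subseteq T_{j}$ with $|N_{j}|\equiv a\pmod 2$ (where $a=0$ in the positive and $a=1$ in the negative case, by definition of $\mathcal{N}^{\pm}$) satisfying $|N_{j}\cap\mathcal{C}_{j,i}|\equiv0,1\pmod 4$ for every connected component $\mathcal{C}_{j,i}$ of $T_{j}$. Here one uses that $\mathrm{supp}B_{j,i}$ is the $i$-th non-singleton component $\mathcal{C}_{j,i}$, and that for the $s_{2}(j)$ singleton components $|\mathcal{C}_{j,i}|=1$, so $|N_{j}\cap\mathcal{C}_{j,i}|\in\{0,1\}$ automatically; hence the congruence condition may be imposed vacuously over all $s(j)$ components. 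Choosing $N_{j}$ component-by-component, writing $m_{i}=|N_{j}\cap\mathcal{C}_{j,i}|$ and $m=\sum_{i}m_{i}=|N_{j}|$, and summing over the admissible total sizes $m\equiv a\pmod 2$ with $m\le r(j)=|T_{j}|$, produces exactly the bracketed factor
\[
\sum_{\substack{m\equiv a\left(2\right)}}^{r(j)}\ \sum_{\substack{m_{i}\equiv0,1\left(4\right)\\ \sum_{i=1}^{s(j)}m_{i}=m}}\ \prod_{i=1}^{s(j)}\binom{\left|\mathcal{C}_{j,i}\right|}{m_{i}}
\]
appearing in $(\ref{eq:gamma_2})$.

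Taking the product over $j=1,\ldots,k$ and reinserting the prefactor $2^{\omega}\left|\widehat{T}\right|$ yields the claimed formula. The only point requiring care is book-keeping: checking that the condition ``$m_{j,i}\equiv0,1\pmod 4$ for all $i,j$'' of Lemma~\ref{prop:gamma_type2}, which is stated over the non-singleton components via $\mathrm{supp}B_{j,i}$, unpacks into the component-wise condition of $(\ref{eq:gamma_2})$ ranging over all $s(j)$ components, and that the exponent $\omega$ correctly absorbs the extra factor $2^{c_{\widehat{T}}}$ from the prefactor. There is no substantive obstacle here, since the analytic and linear-algebraic content has already been handled in Lemma~\ref{prop:gamma_type2} and its predecessors.
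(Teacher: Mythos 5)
Your proposal is correct and follows essentially the same route as the paper: plug the evaluation of $\gamma_{\mathcal{M}(\widehat{T}),N}$ from Lemma~\ref{prop:gamma_type2} into the identity $\Gamma_{\mathcal{M}(\widehat{T})}=2^{c_{\widehat{T}}}|\widehat{T}|\sum_{N}\gamma_{\mathcal{M}(\widehat{T}),N}$, observe that the exponent consolidates to $\omega$, and then count the admissible $N$ by factoring over $j$ and then over connected components. You actually supply slightly more detail than the paper on the one point worth spelling out, namely that the mod-$4$ constraint is stated in Lemma~\ref{prop:gamma_type2} only over the non-singleton components $\mathrm{supp}B_{j,i}$ but is vacuously true on the $s_{2}(j)$ singleton components, so the product may range over all $s(j)$ components as in the stated formula for $Q_{j}^{\pm}$.
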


\begin{proof}
We begin with the formula from Lemma \ref{prop:gamma_type2} and sum
over $N\in\mathcal{N}^{\pm}(\widehat{T})$ satisfying the appropriate
conditions. For any $N$ let $m_{j,i}=\left|N_{j}\cap\mathrm{supp}B_{j,i}\right|$.
Then we have $\sum_{N}\gamma_{\mathcal{M}(\widehat{T}),N}=2^{\omega}\sideset{}{_{N}^{\prime}}\sum1$
where the prime indicates summation over $N$ satisfying $\left|N_{j}\right|\equiv a\left(2\right)$
and $m_{j,i}\equiv0,1\left(4\right)$. If we let $Q_{j}^{\pm}=\left|\left\{ N_{j}\subset T_{j}\mid\left|N_{j}\right|\equiv a\left(2\right),m_{i,j}\equiv0,1\left(4\right)\right\} \right|$
then we have $\sideset{}{_{N}^{\prime}}\sum1=\prod_{j=1}^{k}Q_{j}^{\pm}.$
One can further compute
\begin{equation}
Q_{j}^{\pm}=\sum_{m\equiv a\left(2\right)}^{r\left(j\right)}\sum_{\substack{m_{i}\equiv0,1\left(4\right)\\
\sum_{i=1}^{s\left(j\right)}m_{i}=m
}
}\prod_{i=1}^{s\left(j\right)}\binom{\left|\mathcal{C}_{i,j}\right|}{m_{i}}.\label{eq:Q}
\end{equation}
The outer summation corresponds to choices of possible sizes of the
set $N_{j}$ and the inner sum to the choices of elements of $T_{j}$
contained in $N_{j}$.
\end{proof}

\subsection{The $k$th moment}

We can combine the results of the previous two subsections.
\begin{prop}
\label{prop:T_moments}Let $\widehat{T}=\prod_{i=1}^{k}T_{i}$ be
a product of generating sets and suppose $T_{i}$ is not complete
bipartite for all $i$. Suppose $\mathbf{U}(\widehat{T})=\mathcal{M}(\widehat{T})$.
Then for all positive integers $k$
\[
\lim_{X\longrightarrow\infty}\frac{\sum_{d\in\mathcal{D}_{X,\alpha}^{\pm}}\frac{f_{\widehat{T}}\left(d\right)}{c_{\widehat{T}}^{\omega\left(d\right)}}}{\sum_{d\in\mathcal{D}_{X,\alpha}^{\pm}}1}=\frac{2^{k+\sum_{j\in\left[k\right]}\left(s\left(j\right)-c_{T_{j}}\right)-kn}\left|\widehat{T}\right|}{\left|\mathrm{Aut}_{H,\widehat{T}}\left(G/\Phi(G)\right)\right|}\prod_{j=1}^{k}Q_{j}^{\pm}
\]
where
\begin{itemize}
\item $Q_{j}^{\pm}$ is given by (\ref{eq:Q}),
\item $s\left(j\right)$ is the number of connected components of $T_{j}$.
\end{itemize}
Furthermore if $a>c_{\widehat{T}}$ then
\[
\lim_{X\longrightarrow\infty}\frac{\sum_{d<X}\frac{f_{\widehat{T}}\left(d\right)}{a^{\omega\left(d\right)}}}{\sum_{d<X}1}=0.
\]
\end{prop}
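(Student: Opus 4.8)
The plan is to deduce both statements directly from Theorem~\ref{thm:asymptotic_analysis}, after substituting the evaluation of $\Gamma_{\mathcal{M}(\widehat{T})}$ supplied by Proposition~\ref{prop:Gamma type 2}; the conceptual content lies entirely in those two results together with the identification $\mathbf{U}(\widehat{T})=\mathcal{M}(\widehat{T})$, and what remains is bookkeeping of constants. First I would observe that the hypothesis $\mathbf{U}(\widehat{T})=\mathcal{M}(\widehat{T})$ forces $\mathbf{c}=\max_{\mathcal{U}\in\mathbf{U}(\widehat{T})}|\mathcal{U}|$ to equal $c_{\widehat{T}}=\prod_{i=1}^{k}c_{T_{i}}$, because every element of $\mathcal{M}(\widehat{T})=\prod_{j}\mathcal{T}_{j}$ has cardinality $\prod_{j}c_{T_{j}}$; the same hypothesis gives $\Gamma_{\mathbf{U}(\widehat{T})}=\Gamma_{\mathcal{M}(\widehat{T})}$, and this is exactly the point at which the computations of Section~\ref{sec:Computing-the-moments} (carried out for $\mathcal{M}$, not $\mathbf{U}$) become applicable.

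I would then apply Theorem~\ref{thm:asymptotic_analysis} with $a=c_{\widehat{T}}=\mathbf{c}$, so that the exponent $(\mathbf{c}/a)-1$ vanishes and $\sum_{d\in\mathcal{D}_{X,\alpha}^{\pm}}f_{\widehat{T}}(d)/c_{\widehat{T}}^{\omega(d)}$ is asymptotic to a constant times $X$. Inserting $\Gamma_{\mathcal{M}(\widehat{T})}=2^{\,2c_{\widehat{T}}-\sum_{j}c_{T_{j}}+\sum_{j}s(j)+(k-1)}|\widehat{T}|\prod_{j}Q_{j}^{\pm}$ and cancelling the denominator factor $4^{c_{\widehat{T}}}=2^{2c_{\widehat{T}}}$ against the matching power of $2$ leaves
\[
\sum_{d\in\mathcal{D}_{X,\alpha}^{\pm}}\frac{f_{\widehat{T}}(d)}{c_{\widehat{T}}^{\omega(d)}}=\frac{2^{\,k-1+\sum_{j}(s(j)-c_{T_{j}})-kn}\,|\widehat{T}|\,\prod_{j}Q_{j}^{\pm}}{\bigl|\mathrm{Aut}_{H,\widehat{T}}(G,\Phi(G))\bigr|}\cdot\frac{4}{\pi^{2}}X+o(X).
\]
Dividing by $|\mathcal{D}_{X,\alpha}^{\pm}|$ — which is asymptotic to a constant multiple of $X$, the standard count of fundamental discriminants in the relevant $2$-adic residue classes, obtainable from precisely the squarefree sieve already used in the proof of Theorem~\ref{thm:asymptotic_analysis} — the $\pi^{-2}$ factors and one power of $2$ cancel, leaving the asserted limit. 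For the second assertion, when $a>c_{\widehat{T}}=\mathbf{c}$ the exponent $(\mathbf{c}/a)-1$ lies in $(-1,0)$, so Theorem~\ref{thm:asymptotic_analysis} (with $\alpha=0$, for which $\mathcal{D}_{X,0}^{\pm}$ is the full set of discriminants of bounded absolute value) bounds $\sum_{d<X}f_{\widehat{T}}(d)/a^{\omega(d)}$ by $O\bigl(X(\log X)^{(\mathbf{c}/a)-1}\bigr)=o(X)$, while $\sum_{d<X}1\asymp X$, so the ratio tends to $0$.

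The step I expect to require the most care is making the constants line up: the normalizing factor $4/\pi^{2}$ appearing in Theorem~\ref{thm:asymptotic_analysis} has to be reconciled with the exact asymptotic density of $\mathcal{D}_{X,\alpha}^{\pm}$ (including its $\alpha$-dependence) so that the final coefficient comes out to exactly $2^{\,k+\sum_{j}(s(j)-c_{T_{j}})-kn}|\widehat{T}|\prod_{j}Q_{j}^{\pm}/|\mathrm{Aut}_{H,\widehat{T}}(G/\Phi(G))|$; everything else is a direct substitution. It is also worth remarking that, by Theorem~\ref{thm:The-maximal-unlinked}, the hypothesis $\mathbf{U}(\widehat{T})=\mathcal{M}(\widehat{T})$ holds only when each $T_{i}=T_{0}$, which is consistent with how the proposition will be used.
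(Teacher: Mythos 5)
Your proposal is correct and follows essentially the same route as the paper: read off $\mathbf{c}=c_{\widehat{T}}$ and $\Gamma_{\mathbf{U}(\widehat{T})}=\Gamma_{\mathcal{M}(\widehat{T})}$ from the hypothesis, substitute the evaluation of $\Gamma_{\mathcal{M}(\widehat{T})}$ from Proposition~\ref{prop:Gamma type 2} into Theorem~\ref{thm:asymptotic_analysis} at $a=c_{\widehat{T}}$, cancel $4^{c_{\widehat{T}}}$ and divide by $\sum_{d<X}1\sim\tfrac{2}{\pi^2}X$, and your exponent bookkeeping agrees with the paper's. One small inaccuracy: $\mathcal{D}_{X,0}^{\pm}$ is the set of \emph{odd} fundamental discriminants, not the full set of discriminants of bounded absolute value; this does not damage the second assertion (the numerator is still $o(X)$ for each $\alpha$, and $\sum_{d<X}1\asymp X$), but you should not identify $\alpha=0$ with the unrestricted count.
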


\begin{proof}
By Theorem \ref{thm:asymptotic_analysis} we have 
\[
\sum_{d<X}\frac{f_{\widehat{T}}\left(d\right)}{a^{\omega\left(d\right)}}=\frac{\Gamma_{\mathbf{U}(\widehat{T})}}{4^{\mathbf{c}}2^{kn}\left|\mathrm{Aut}_{H,\widehat{T}}\left(G/\Phi(G)\right)\right|}\frac{4}{\pi^{2}}X(\log X)^{(\mathbf{c}/a)-1}+o\left(X(\log X)^{(\mathbf{c}/a)-1}\right).
\]
By assumption $\Gamma_{\mathbf{U}(\widehat{T})}=\Gamma_{\mathcal{M}(\widehat{T})}$
which is given by Proposition \ref{prop:Gamma type 2}. We defined
$\mathbf{c}=\max_{\mathcal{U}\in\mathbf{U}\left(\widehat{T}\right)}\left|\mathcal{U}\right|$
which by assumption is given by $\mathbf{c}=c_{\widehat{T}}.$ Thus
setting $a=c_{\widehat{T}}$ and noting the well known formula for
the number of real quadratic fields $\sum_{d<X}1\sim\frac{2}{\pi^{2}}X$
gives
\begin{align*}
\lim_{X\longrightarrow\infty}\frac{\sum_{d\in\mathcal{D}_{X,\alpha}^{\pm}}\frac{f_{\widehat{T}}\left(d\right)}{a^{\omega\left(d\right)}}}{\sum_{d\in\mathcal{D}_{X,\alpha}^{\pm}}1} & =\frac{2^{\omega+1}\left|\widehat{T}\right|}{4^{c_{\widehat{T}}}2^{kn}\left|\mathrm{Aut}_{H,\widehat{T}}\left(G/\Phi(G)\right)\right|}\prod_{j=1}^{k}Q_{j}^{\pm}X(\log X)^{(\mathbf{c}/a)-1}\\
 & +o\left(X(\log X)^{(\mathbf{c}/a)-1}\right)
\end{align*}
 and $\omega+1-2c_{\widehat{T}}-kn=k+\sum_{j\in\left[k\right]}\left(s\left(j\right)-c_{T_{j}}\right)-kn$.
Clearly if $a>c_{\widehat{T}}$ then the above limit is 0.
\end{proof}

\section{\label{sec:Determining-the-distribution}Determining the distribution}

\subsection{\label{subsec:Preliminaries-for-distributions}Preliminaries for
distributions of functions on quadratic fields}

We start by recalling some preliminaries on measures and distributions.
In particular we specify what we mean by the distribution of a function
on the set of quadratic fields.

Let $\mathcal{K}$ be the set of real or imaginary quadratic fields.
Let $\Pi$ be the $\sigma$-algebra on $\mathcal{K}$ consisting of
all subsets and define the measurable space $\mathcal{M}=\left(\mathcal{K},\Pi\right)$.

For any positive $X\in\mathbb{R}$ let $\mu_{X}$ be the measure on
$\mathcal{M}$ with mass supported uniformly on $\left\{ K\in\mathcal{K}\mid\left|D_{K}\right|<X\right\} $.
Then for any measurable space $\mathcal{I}=\left(\mathcal{X},\Sigma\right)$
and function $g:\mathcal{M}\longrightarrow\mathcal{I}$ we have a
measure $\mu_{g,X}=g_{*}\mu_{X}$ on $\mathcal{I}$ (note any function
is measurable since $\Pi$ is the power set of $\mathcal{K}$). Thus
for any measurable set $U\in\Sigma$, $\mu_{g,X}\left(U\right)$ is
the proportion of quadratic fields in $K\in\mathcal{K}$ with $\left|D_{K}\right|<X$
such that $g\left(K\right)\in U$.

Now suppose $\mathcal{X}$ is also a metric space. Then for any function
$g:\mathcal{K}\longrightarrow\mathcal{X}$ we define 
\[
\mu_{g}=\lim_{X\longrightarrow\infty}\mu_{g,X}
\]
to be weak limit of measures, if it exists. If $\mathcal{X}$ has
the discrete topology this is equivalent to strong convergence: $\mu_{g,X}\left(A\right)\longrightarrow\mu_{g}\left(A\right)$
for all $A$, but this equivalence is not true in general. Note also
the limit $\mu_{g}$ may not be a probability measure (that is escape
of mass can occur). We call $\mu_{g}$ the distribution of $g$ if
$\mu_{g}$ is a probability measure on $\mathcal{I}$.
\begin{example*}
Let $\mathcal{X}$ be the set of finite abelian $p$-groups, and let
$\Sigma$ be the power set of $\mathcal{X}$. Let $g:\mathcal{K}\longrightarrow\mathcal{X}$
be defined by $g\left(K\right)=Cl_{K,p}^{2}$ (note squaring only
has an effect when $p=2$).

Then the Cohen-Lenstra conjectures state that $\mu_{g}$ exists and
describe what it should be equal to, for any $p$.
\begin{example*}
Let $\mathcal{X}$ be the set of finite abelian $p$-torsion groups,
and let $\Sigma$ be the power set of $\mathcal{X}$. In this case
any $A\in\mathcal{X}$ is determined by its order, and hence we can
identify $\mathcal{X}$ with the set $\left\{ p^{i}\mid i\in\mathbb{Z}_{\ge0}\right\} $.
Let $g:\mathcal{K}\longrightarrow\mathcal{X}$ be defined by $g\left(K\right)=\left|Cl_{K}^{2}\left[p\right]\right|$.
Fouvry and Kl\"uners computed $\mu_{g}$ in the case when $p=2$.

Since in this case $\mathcal{X}$ is discrete we have strong convergence
of the $\mu_{g,X}$. Since $\mathcal{X}\subset\mathbb{Q}$ it is easy
to see that this also implies convergence when the measures in question
are viewed on $\mathbb{Q}$ with its usual metric as follows. Let
$\mathcal{B}$ be the Borel $\sigma$-algebra on $\mathbb{Q}$ (the
$\sigma$-algebra generated by the open subsets of $\mathbb{Q}$).
Then we can equivalently consider $\mu_{g}$ on the measurable space
$\mathcal{I}'=\left(\mathbb{Q},\mathcal{B}\right)$. Letting $\mu_{g}'$
be the new measure on $\mathcal{I}'$ the connection is given by $\mu_{g}'=\sum_{i=0}^{\infty}\delta_{\left\{ p^{i}\right\} }\mu_{g}\left(p^{i}\right)$
where $\delta_{\left\{ p^{i}\right\} }$ is the points-mass with respect
to $\mathcal{B}$.
\end{example*}
\end{example*}
In our case the functions $f$ are rational valued, however taking
the discrete metric on the set of values we find that $\lim_{X\longrightarrow\infty}\mu_{f,X}\left(x\right)=0$
for all $x\in\mathbb{Q}$, that is $\mu_{f}$ exists but there is
escape of mass. This issue is corrected if we instead view the set
of values lying inside $\mathbb{Q}$ with its usual metric, as in
the second example above. We will see that in this case $\mu_{f}$
will be a probability measure.

We prove a lemma which will be needed throughout the next two sections.

\begin{lem}
\label{lem:distribution plus pointmass}Let $\mathcal{M},\mathcal{I}$
be measurable spaces with $\mathcal{I}=\left(\mathbb{R},\mathcal{B}\right)$.
Let $g,h:\mathcal{M}\longrightarrow\mathcal{I}$ measurable functions
such that $\mu_{g}$ exists and $\mu_{h}=\delta_{a}$ and let $t_{a}$
be translation by $a$. Then
\[
\mu_{g+h}=\left(t_{a}\right)_{*}\mu_{g}.
\]
\end{lem}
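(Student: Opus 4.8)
The plan is to prove the convergence $\mu_{g+h,X}\to (t_a)_*\mu_g$ directly, which simultaneously shows $\mu_{g+h}$ exists and identifies it. I would fix a bounded, uniformly continuous $\varphi:\mathbb{R}\to\mathbb{R}$ — such functions suffice to detect the weak limit (for probability measures by the portmanteau theorem, and also for vague limits since elements of $C_c(\mathbb{R})$ are bounded and uniformly continuous), and uniform continuity is the property that makes the estimate below work. Evaluating $\mu_{g+h,X}=(g+h)_*\mu_X$ against $\varphi$ gives $\int_{\mathcal{K}}\varphi\big(g(K)+h(K)\big)\,d\mu_X(K)$, and the goal is to compare this with $\int_{\mathcal{K}}\varphi\big(g(K)+a\big)\,d\mu_X(K)=\int_{\mathbb{R}}\varphi(x+a)\,d\mu_{g,X}(x)$, which tends to $\int_{\mathbb{R}}\varphi(x+a)\,d\mu_g(x)=\int_{\mathbb{R}}\varphi\,d\big((t_a)_*\mu_g\big)$ because $\mu_{g,X}\to\mu_g$ and $x\mapsto\varphi(x+a)$ is again bounded and uniformly continuous.

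To control the difference I would, given $\varepsilon>0$, pick $\delta>0$ with $|\varphi(x)-\varphi(y)|<\varepsilon$ whenever $|x-y|<\delta$, and split $\mathcal{K}$ into $\{K:|h(K)-a|<\delta\}$ and its complement. On the first set the integrand $\varphi(g+h)-\varphi(g+a)$ has absolute value at most $\varepsilon$; on the second it is at most $2\|\varphi\|_\infty$. Hence the difference is bounded by $\varepsilon+2\|\varphi\|_\infty\cdot\mu_X\big(\{K:|h(K)-a|\ge\delta\}\big)$. The key input is that this last measure tends to $0$: since $\mu_h=\delta_a$, I would obtain it by evaluating $\mu_{h,X}\to\delta_a$ on a continuous bump function $\rho$ with $\rho(a)=1$ and $\mathrm{supp}\,\rho\subset(a-\delta,a+\delta)$, which gives $\liminf_X\mu_X\big(\{|h-a|<\delta\}\big)\ge\liminf_X\int_{\mathcal{K}}\rho(h(K))\,d\mu_X(K)=\rho(a)=1$, and then using that each $\mu_X$ is a probability measure. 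Letting $X\to\infty$ and then $\varepsilon\to0$ yields $\int\varphi\,d\mu_{g+h,X}\to\int\varphi\,d\big((t_a)_*\mu_g\big)$ for all such $\varphi$, which is the assertion.

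The argument uses nothing about the arithmetic of $f$ or $f_T$ — only that $g,h$ are measurable and that $\mu_g,\mu_h$ behave as hypothesized. I expect the main (and rather minor) obstacle to be purely bookkeeping: pinning down the precise notion of ``weak limit'' from Section \ref{subsec:Preliminaries-for-distributions} so that convergence tested against bounded uniformly continuous functions genuinely identifies $\mu_{g+h}$, and recording that pushforward along the homeomorphism $t_a$ commutes with taking that limit. The analytic content — the $\varepsilon$-$\delta$ split together with the mass-concentration statement $\mu_X(\{|h-a|\ge\delta\})\to0$ — is routine.
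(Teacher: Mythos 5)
Your argument is correct and proves the lemma. The paper works instead with cumulative distribution functions: it records that $F_{\mu_g}(x-a)=F_{(t_a)_*\mu_g}(x)$, fixes a continuity point $x$, and sandwiches $F_{\mu_{g+h,X}}(x)$ between $F_{\mu_{g,X}}(x-a-\epsilon_0)$ and $F_{\mu_{g,X}}(x-a+\epsilon_0)$ after restricting to $W=h^{-1}(a-\epsilon_0,a+\epsilon_0)$, whose $\mu_X$-measure tends to $1$. The core mechanism is the same as yours — decompose on $\{|h-a|<\delta\}$ versus its complement, exploit the mass concentration of $h$ near $a$, and use a continuity property (of the limit CDF in the paper, of the test function in your version) to control the main piece. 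What the test-function route buys is that uniform continuity of $\varphi$ replaces the explicit bookkeeping over continuity points of $F_{\mu_g}$, so the estimate is somewhat cleaner. You are also right to be cautious about which test class to use: since the paper's definition of $\mu_g$ permits escape of mass, the limit should be understood vaguely, and your argument goes through verbatim with $\varphi\in C_c(\mathbb{R})$ because $\varphi(\cdot+a)$ stays in $C_c(\mathbb{R})$ and the bump-function argument for $\mu_X(\{|h-a|\ge\delta\})\to 0$ only uses that each $\mu_X$ is a probability measure.
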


\begin{proof}
For any measure $\mu$ let $F_{\mu}$ denote its cumulative distribution
function. It is a fact that for a sequence of measures $\mu_{n}\longrightarrow\mu$
weakly if and only if $F_{\mu_{n}}\left(x\right)\longrightarrow F_{\mu}\left(x\right)$
at all points of continuity of $F_{n},F$.

First note that $F_{\mu_{g}}\left(x-a\right)=F_{\left(t_{a}\right)_{*}\mu_{g}}\left(x\right)$,
since for any $U\in\mathcal{B}$, $\left(t_{a}\right)_{*}\mu_{g}\left(U\right)=\mu_{g}\left(U-a\right)$.
Hence we need to show that 
\[
\lim_{X\longrightarrow\infty}F_{\mu_{g+h,X}}\left(x\right)=F_{\mu_{g}}\left(x-a\right).
\]
 By definition we have
\begin{align*}
F_{\mu_{g+h,X}}\left(x\right) & =\mu_{g+h,X}\left((-\infty,x]\right)\\
 & =\mu_{X}\left(\left(g+h\right)^{-1}\left((-\infty,x]\right)\right).
\end{align*}
Fix $\epsilon>0$. Let $\epsilon_{0}>0$ be small enough such that
$\left|F_{\mu_{g}}\left(x-a\right)-F_{\mu_{g}}\left(x-a\pm\epsilon_{0}\right)\right|<\epsilon$
(this is possible since $F_{\left(t_{a}\right)_{*}\mu_{g}}$ is continuous
at $x$ which implies $F_{\mu_{g}}$ is continuous at $x-a$) and
there exists $X$ large enough such that for $W=h^{-1}\left(a-\epsilon_{0},a+\epsilon_{0}\right)$
we have $\mu_{X}\left(W\right)\ge1-\epsilon$.

We have the set inclusions

\[
\left(g\right)^{-1}\left((-\infty,x-a-\epsilon_{0}]\right)\cap W\subset\left(h+g\right)^{-1}\left((-\infty,x]\right)\cap W,
\]
\[
\left(h+g\right)^{-1}\left((-\infty,x]\right)\cap W\subset\left(g\right)^{-1}\left((-\infty,x-a+\epsilon_{0}]\right)\cap W.
\]
 By choice of $X$ we have $\left|\mu_{X}\left(U\cap W\right)-\mu_{X}\left(U\right)\right|<\epsilon$
for any $U\in\mathcal{B}$. Hence 
\[
\left|F_{\mu_{g,X}}\left(x-a\pm\epsilon_{0}\right)-\mu_{X}(\left(g\right)^{-1}\left((-\infty,x-a\pm\epsilon_{0}]\right)\cap W)\right|<\epsilon,
\]
\[
\left|F_{\mu_{g+h,X}}\left(x\right)-\mu_{X}(\left(h+g\right)^{-1}\left((-\infty,x]\right)\cap W)\right|<\epsilon.
\]
Furthermore since $F_{\mu_{g,X}}\longrightarrow F_{\mu_{g}}$ we have
\[
\left|F_{\mu_{g,X}}\left(x-a\pm\epsilon_{0}\right)-F_{\mu_{g,X}}\left(x-a\right)\right|<2\epsilon.
\]
Putting the above together we obtain that for any $\epsilon>0$ there
exists $X$ large enough such that 
\[
F_{\mu_{g,X}}\left(x-a\right)-\epsilon_{1}\le F_{\mu_{g+h,X}}\left(x\right)\le F_{\mu_{g,X}}\left(x-a\right)+\epsilon_{1}
\]
 for some $\epsilon_{1}$ a multiple of $\epsilon$. Thus 
\begin{align*}
\lim_{X\longrightarrow\infty}F_{\mu_{g+h,X}}\left(x\right) & =\lim_{X\longrightarrow\infty}F_{\mu_{g,X}}\left(x-a\right)\\
 & =F_{\mu_{g}}\left(x-a\right).
\end{align*}
As stated at the beginning $F_{\mu_{g}}\left(x-a\right)=F_{\left(t_{a}\right)_{*}\mu_{g}}\left(x\right)$
which implies $\mu_{g+h,X}\longrightarrow\left(t_{a}\right)_{*}\mu_{g}$
weakly. This completes the proof.
\end{proof}
We are now interested in computing the distribution $\mu_{f}$ of
the function $f=\sum_{T_{i}}f_{T_{i}}$ where the sum is over generating
sets $T_{i}\subset T_{0}$.

We first consider the case when $T_{0}$ is not complete bipartite
and later apply this to handle the complete bipartite case.

\subsection{\label{subsec:The-non-complete}The non complete bipartite case}
\begin{thm}
\label{thm:k_moments pointmass}Let $a=\left|\mathrm{Aut}_{H}\left(G/\Phi(G)\right)\right|$.
Suppose $T_{0}$ is not complete bipartite. For all positive integers
$k$
\[
\lim_{X\longrightarrow\infty}\frac{\sum_{d\in\mathcal{D}_{X,\alpha}^{\pm}}\left(\frac{f\left(d\right)}{c^{\omega\left(d\right)}}\right)^{k}}{\sum_{d\in\mathcal{D}_{X,\alpha}^{\pm}}1}=\begin{cases}
\left(2^{c-s+n-1}a\right)^{-k}\cdot\left(\left|T_{0}\right|Q^{\pm}\right)^{k} & \text{if }T_{i}=T_{0}\text{ for all \ensuremath{i}}\\
0 & \text{else}
\end{cases}
\]
 where $Q^{\pm}$ is defined in (\ref{eq:Q}).
\end{thm}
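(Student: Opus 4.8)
The plan is to expand the $k$th power multilinearly and reduce each resulting term to the averages already established in Proposition \ref{prop:T_moments}. Writing $f=\sum_{T}f_{T}$ with the sum over generating subsets $T\subseteq T_{0}$, and using $c^{k\omega(d)}=(c^{k})^{\omega(d)}$, one has
\[
\left(\frac{f(d)}{c^{\omega(d)}}\right)^{k}=\sum_{(T_{1},\dots,T_{k})}\frac{f_{\widehat{T}}(d)}{(c^{k})^{\omega(d)}},\qquad\widehat{T}=\prod_{i=1}^{k}T_{i},
\]
the sum running over the finitely many ordered $k$-tuples of generating subsets of $T_{0}$. Since there are finitely many terms, it suffices to find the limit of the normalized average $\bigl(\sum_{d\in\mathcal{D}_{X,\alpha}^{\pm}}f_{\widehat{T}}(d)/(c^{k})^{\omega(d)}\bigr)/\bigl(\sum_{d\in\mathcal{D}_{X,\alpha}^{\pm}}1\bigr)$ of each summand and add them.

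First I would treat the diagonal term $\widehat{T}=T_{0}^{k}$. Here $c_{\widehat{T}}=\prod_{i}c_{T_{i}}=c^{k}$ by Lemma \ref{lem:c_T_bound}, so $f_{\widehat{T}}/(c^{k})^{\omega}=f_{\widehat{T}}/c_{\widehat{T}}^{\omega}$; moreover $\mathbf{U}(T_{0}^{k})=\mathcal{M}(T_{0}^{k})$ by Theorem \ref{thm:The-maximal-unlinked}, and $T_{0}$ is a generating set that by hypothesis is not complete bipartite, so Proposition \ref{prop:T_moments} applies. I would then specialize its formula by putting $T_{j}=T_{0}$ for all $j$: $c_{T_{j}}=c$, $s(j)=s$ (the number of connected components of $\mathcal{G}(T_{0})$), $Q_{j}^{\pm}=Q^{\pm}$, $\left|\widehat{T}\right|=\left|T_{0}\right|^{k}$, and $\mathrm{Aut}_{H,T_{0}^{k}}=\mathrm{Aut}_{H}^{\,k}$ (since $\Phi(G)\subseteq H$, any automorphism of $G/\Phi(G)$ preserving $\overline{H}$ that lifts to $G$ permutes the order-$2$ elements of $G\setminus H$ and hence preserves $T_{0}$, so the extra $T_{0}$-condition is automatic). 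This turns the formula of Proposition \ref{prop:T_moments} into
\[
\left(\frac{2^{\,1+s-c-n}\left|T_{0}\right|Q^{\pm}}{a}\right)^{k}=\left(2^{c-s+n-1}a\right)^{-k}\left(\left|T_{0}\right|Q^{\pm}\right)^{k},
\]
which is exactly the value claimed in the case $T_{i}=T_{0}$ for all $i$.

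Next I would show that every other tuple, i.e.\ one with $T_{i}\subsetneq T_{0}$ for some $i$, contributes $0$. Put $\mathbf{c}=\max_{\mathcal{U}\in\mathbf{U}(\widehat{T})}\left|\mathcal{U}\right|$. By Lemma \ref{lem:unlinkedset_bound} $\mathbf{c}\le c^{k}$, and by Theorem \ref{thm:The-maximal-unlinked} the value $c^{k}$ is attained only when all $T_{i}=T_{0}$, so here $\mathbf{c}\le c^{k}-1$. Applying Theorem \ref{thm:asymptotic_analysis} with normalizing base $c^{k}$ gives
\[
\sum_{d\in\mathcal{D}_{X,\alpha}^{\pm}}\frac{f_{\widehat{T}}(d)}{(c^{k})^{\omega(d)}}=O\!\left(X(\log X)^{\mathbf{c}/c^{k}-1}\right)=o(X),
\]
because $\mathbf{c}/c^{k}-1<0$ (the explicit constant $\Gamma_{\mathbf{U}(\widehat{T})}$ plays no role here). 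Since $\sum_{d\in\mathcal{D}_{X,\alpha}^{\pm}}1\asymp X$, this summand contributes $0$, which is the ``else'' case. Summing the contributions of all tuples, only the diagonal one survives and the theorem follows.

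The computation has no genuine obstacle once the earlier results are in hand: the substantive inputs are Proposition \ref{prop:T_moments} together with Theorems \ref{thm:asymptotic_analysis} and \ref{thm:The-maximal-unlinked}. The points requiring care are the bookkeeping identities $k+\sum_{j}(s(j)-c_{T_{j}})-kn=-(c-s+n-1)k$ and $\mathrm{Aut}_{H,T_{0}}=\mathrm{Aut}_{H}$, and — the only place where something could go wrong — making sure the power of $\log X$ is \emph{strictly} negative for every off-diagonal tuple; this is precisely the sharpness clause of Theorem \ref{thm:The-maximal-unlinked}, that $\mathcal{G}^{*}(\widehat{T})$ has a disconnected subgraph of size $c^{k}$ only when $\widehat{T}=T_{0}^{k}$.
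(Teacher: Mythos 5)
Your proposal is correct and takes essentially the same route as the paper: expand $f^k=\sum_{\widehat{T}}f_{\widehat{T}}$ over $k$-tuples of generating subsets, treat the diagonal tuple $T_0^k$ via Proposition \ref{prop:T_moments} (using Theorem \ref{thm:The-maximal-unlinked} to guarantee $\mathbf{U}(T_0^k)=\mathcal{M}(T_0^k)$ and $\mathbf{c}=c^k$), and kill all off-diagonal tuples via the strictness clause of Theorem \ref{thm:The-maximal-unlinked} combined with Theorem \ref{thm:asymptotic_analysis}. Your explicit bookkeeping — the exponent identity $k+\sum_j(s(j)-c_{T_j})-kn=-k(c-s+n-1)$ and the observation that $\mathrm{Aut}_{H,T_0}=\mathrm{Aut}_H$ because any lift to $\mathrm{Aut}(G)$ preserving $H=\pi^{-1}(\overline{H})$ necessarily permutes the order-2 elements of $G\setminus H$ and hence fixes $T_0$ setwise — is correct and is left implicit in the paper's terser proof.
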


\begin{proof}
Since $f=\sum_{T_{i}}f_{T_{i}}$, taking the $k$th power gives $f^{k}=\sum_{\widehat{T}}f_{\widehat{T}}$
where $\widehat{T}=\prod_{i=1}^{k}T_{i}$.

Fix $\widehat{T}$ and let $\mathbf{c}=\max_{\mathcal{U}\in\mathbf{U}(\widehat{T})}\left|\mathcal{U}\right|$.
First suppose$\widehat{T}\neq\prod_{i=1}^{k}T_{0}$ . By Theorem \ref{thm:The-maximal-unlinked}
$\mathbf{c}<c^{k}$. Then the second case of the statement follows
by Theorem \ref{thm:asymptotic_analysis} with $a=c^{k}$.

Now suppose $T_{i}=T_{0}\text{ for all \ensuremath{i}}$. Then by
Theorem \ref{thm:The-maximal-unlinked} $\mathbf{c}=c^{k}$ and $\mathbf{U}(\widehat{T})=\mathcal{M}(\widehat{T})$.
Then the first case of the statement follows by Proposition \ref{prop:T_moments}.
\end{proof}
\begin{cor}
\label{cor:distribution point mass}Let $a=\left|\mathrm{Aut}_{H}\left(G/\Phi(G)\right)\right|$.
Let $q=\left(2^{c-s+n-1}a\right)^{-k}\cdot\left(Q^{\pm}\right)^{k}$.
Suppose $T_{0}$ is not complete bipartite.

Then the distribution of $h\left(d\right)=\frac{f\left(d\right)}{c^{\omega\left(d\right)}}$
is $\mu_{h}=\delta_{q}$ where $\delta_{q}$ is the point-mass at
$q$.
\end{cor}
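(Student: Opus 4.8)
The plan is to deduce the point-mass directly from the moment computation of Theorem \ref{thm:k_moments pointmass}. Write $P$ for the common value $q$ of the statement, so that Theorem \ref{thm:k_moments pointmass} (applied with $T_{i}=T_{0}$ for all $i$, the only product that survives in the limit once $T_{0}$ is not complete bipartite, and noting that the limit there is the same for each admissible $2$-adic valuation $\alpha\in\{0,2,3\}$ of $d$, hence also for the full set $\mathcal{D}_{X}^{\pm}$) gives
\[
\lim_{X\to\infty}\frac{1}{|\mathcal{D}_{X}^{\pm}|}\sum_{d\in\mathcal{D}_{X}^{\pm}}\Bigl(\frac{f(d)}{c^{\omega(d)}}\Bigr)^{k}=P^{k}\qquad\text{for every }k\ge 1.
\]
In particular the first and second moments of $h$ converge to $P$ and $P^{2}$, so the variance tends to zero:
\[
\lim_{X\to\infty}\frac{1}{|\mathcal{D}_{X}^{\pm}|}\sum_{d\in\mathcal{D}_{X}^{\pm}}\bigl(h(d)-P\bigr)^{2}=P^{2}-2P\cdot P+P^{2}=0.
\]

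From here the argument is routine. Recall from Section \ref{subsec:Preliminaries-for-distributions} that $\mu_{h,X}$ is the pushforward under $h$ of the uniform measure on $\{K\in\mathcal{K}:|D_{K}|<X\}$. Chebyshev's inequality and the vanishing of the variance give, for every $\varepsilon>0$,
\[
\mu_{h,X}\bigl(\{x\in\mathbb{R}:|x-P|\ge\varepsilon\}\bigr)\le\frac{1}{\varepsilon^{2}}\cdot\frac{1}{|\mathcal{D}_{X}^{\pm}|}\sum_{d\in\mathcal{D}_{X}^{\pm}}\bigl(h(d)-P\bigr)^{2}\longrightarrow 0
\]
as $X\to\infty$; equivalently $\mu_{h,X}\bigl((P-\varepsilon,P+\varepsilon)\bigr)\to 1$ for every $\varepsilon>0$. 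This is precisely weak convergence $\mu_{h,X}\to\delta_{P}$ (the cumulative distribution functions converge at every continuity point of that of $\delta_{P}$, i.e.\ everywhere except at $P$), and since $\delta_{P}$ is a probability measure there is no escape of mass; hence $\mu_{h}=\delta_{P}$ is the distribution of $h$, as claimed. One could instead cite the method of moments (Fr\'echet--Shohat): the probability measures $\mu_{h,X}$ have $k$th moments tending to $P^{k}$, and $(P^{k})_{k}$ grows only geometrically, so it satisfies Carleman's condition and is the moment sequence of the unique distribution $\delta_{P}$, whence $\mu_{h,X}\to\delta_{P}$.

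I expect no genuine obstacle here: the content is entirely in Theorem \ref{thm:k_moments pointmass}, and the hypothesis that $T_{0}$ is not complete bipartite enters only through that theorem. The only point needing a word is the uniqueness of the distribution having moments $P^{k}$, which is immediate (a point mass is pinned down by mean and second moment via the vanishing-variance estimate, or by Carleman). For completeness one also records $P\neq 0$: one has $|T_{0}|\ge 1$ since $T_{0}$ generates a nontrivial group, and admissibility of $(G,H)$ together with $T_{0}$ not being complete bipartite forces $Q^{\pm}\ge 1$ (in the positive case $N=\varnothing$ already meets the constraints defining $Q^{+}$; in the negative case a suitable odd-cardinality $N\subset T_{0}$ satisfying the congruence conditions exists), so $P=q$ is a positive rational.
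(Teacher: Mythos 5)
Your proof is correct and follows essentially the same route as the paper: read off from Theorem \ref{thm:k_moments pointmass} that all moments of $h$ converge to powers of a fixed nonzero rational, and conclude a point mass. The paper's proof of this corollary is a one-liner that simply asserts the implication, so your Chebyshev / vanishing-variance argument (or the Fr\'echet--Shohat alternative) is a welcome filling-in of the details, including the reduction from the residue-class-restricted sums $\mathcal{D}_{X,\alpha}^{\pm}$ to all of $\mathcal{D}_{X}^{\pm}$ and the observation that $P\neq 0$, both of which the paper leaves implicit.
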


\begin{proof}
In the case where $T_{i}=T_{0}$ and $T_{0}$ is complete bipartite
the $k$th moment of $f\left(d\right)/(c)^{\omega\left(d\right)}$
is the $k$th power of some non-zero rational number by Theorem \ref{thm:k_moments pointmass}.
Thus the distribution determined is a point-mass.
\end{proof}

\subsection{\label{subsec:The-complete-bipartite}The complete bipartite case}

Suppose $T_{0}$ is complete bipartite. Fix a quadratic discriminant
$d$ of the quadratic field $K$. Let
\begin{equation}
g\left(d\right)=\sum_{\left(d_{1},\ldots,d_{r}\right)\in\mathcal{D}}\prod_{i=1}^{r}\prod_{p\mid d_{i}}\left(1+\left(\frac{\prod_{j\in S_{i}}d_{j}}{p}\right)^{\mathrm{ord}_{p}\left(d\right)}\right).\label{eq:g for T}
\end{equation}
 Note $g\left(d\right)=2^{n}\left|\mathrm{Aut}_{H}(G/\Phi\left(G\right))\right|f_{T_{0}}\left(d\right)$.
Let $\widehat{\mathcal{D}}=\left\{ \left(d_{1},\ldots,d_{r}\right)\mid\prod_{i=1}^{r}d_{i}=d\right\} $.
Recall we defined $\mathcal{D}\subset\widehat{\mathcal{D}}$ to be
the subset such that $d_{i}\neq1$ for all $i$. In the following
we will write $\mathcal{D}_{T}$ and $\widehat{\mathcal{D}}_{T}$
to keep track of tuples corresponding to any subset $T\subset T_{0}$
and $g_{\mathcal{D}_{T}}$ for the function in (\ref{eq:g for T})
with $\mathcal{D}=\mathcal{D}_{T}$. We have
\begin{align*}
g_{\widehat{\mathcal{D}}_{T_{0}}}\left(d\right) & =\sum_{T\subset T_{0}}g_{\mathcal{D}_{T}}\left(d\right)
\end{align*}
where we sum over all subsets $T\subset T_{0}$ (not necessarily generating
sets).

Note $g_{\mathcal{D}_{T_{0}}}/c^{\omega\left(d\right)}=2^{n}\left|\mathrm{Aut}_{H}(G/\Phi\left(G\right))\right|f_{T_{0}}\left(d\right)/c^{\omega\left(d\right)}$
is the function whose distribution we want to compute. We can write
\begin{equation}
\frac{g_{\mathcal{D}_{T_{0}}}\left(d\right)}{c^{\omega\left(d\right)}}=\frac{g_{\widehat{\mathcal{D}}_{T_{0}}}\left(d\right)}{c^{\omega\left(d\right)}}-\sum_{T\subset T_{0}}\frac{g_{\mathcal{D}_{T}}\left(d\right)}{c^{\omega\left(d\right)}}.\label{eq:rewriting g/c}
\end{equation}
 Now we consider the distribution of each of the functions on the
right-hand side. We start with $g_{\widehat{\mathcal{D}}_{T_{0}}}\left(d\right)/c^{\omega\left(d\right)}$.

For any $d'_{1},d_{2}'$ dividing $d$ define 
\[
P_{d_{1}'}\left(d_{2}'\right)=\prod_{p\mid d_{i}}\left(1+\left(\frac{d_{2}'}{p}\right)^{\mathrm{ord}_{p}\left(d\right)}\right).
\]
Now define the function 
\[
C\left(d\right)=\frac{1}{2^{\omega\left(d\right)}}\sum_{\substack{\left(d_{1}',d_{2}'\right)\\
d_{1}'d_{2}'=d
}
}P_{d_{1}'}\left(d_{2}'\right)P_{d_{2}'}\left(d_{1}'\right).
\]
 We will use this function to relate $g_{\widehat{\mathcal{D}}_{T_{0}}}/c^{\omega\left(d\right)}$
to the 4-torsion in $Cl_{K}$.
\begin{lem}
\label{lem:g in terms of C}For any quadratic field $K$ with discriminant
$d$
\[
\frac{g_{\widehat{\mathcal{D}}_{T_{0}}}\left(d\right)}{c^{\omega\left(d\right)}}=C\left(d\right).
\]
\end{lem}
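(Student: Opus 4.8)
The goal is to show $g_{\widehat{\mathcal{D}}_{T_0}}(d)/c^{\omega(d)} = C(d)$ when $T_0$ is complete bipartite. The starting point is that a complete bipartite graph $\mathcal{G}(T_0) = (W_1, W_2)$ has exactly two connected components' worth of structure collapsed into a single bipartition: for any $t_i$, the set $S_i$ of indices of non-commuting lifts is precisely $W_2$ if $t_i \in W_1$ and $W_1$ if $t_i \in W_2$. So in the sum $g_{\widehat{\mathcal{D}}_{T_0}}(d) = \sum_{(d_1,\dots,d_r) \in \widehat{\mathcal{D}}} \prod_i \prod_{p\mid d_i}(1 + (\tfrac{\prod_{j\in S_i} d_j}{p})^{\mathrm{ord}_p(d)})$, the Legendre symbol attached to a prime $p \mid d_i$ with $t_i \in W_1$ only ever sees $\prod_{j : t_j \in W_2} d_j$, and symmetrically for $W_2$. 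This means the entire sum depends only on the two "lumped" divisors $d_1' = \prod_{j : t_j \in W_1} d_j$ and $d_2' = \prod_{j : t_j \in W_2} d_j$, which form a factorization $d_1' d_2' = d$ into coprime fundamental discriminants (allowing $1$).

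**Key steps.** First I would make precise the reduction just sketched: fix a factorization $d = \prod_i d_i$ in $\widehat{\mathcal{D}}_{T_0}$, group the factors into $d_1' = \prod_{t_i \in W_1} d_i$ and $d_2' = \prod_{t_i \in W_2} d_i$, and observe that $\prod_i \prod_{p \mid d_i}(1 + (\tfrac{\prod_{j\in S_i}d_j}{p})^{\mathrm{ord}_p(d)}) = P_{d_1'}(d_2')\, P_{d_2'}(d_1')$, since a prime $p \mid d_1'$ contributes the factor $1 + (\tfrac{d_2'}{p})^{\mathrm{ord}_p(d)}$ (as $p \mid d_i$ for some $i$ with $S_i = \{j : t_j \in W_2\}$, and $\prod_{j : t_j \in W_2} d_j = d_2'$), and symmetrically for $p \mid d_2'$. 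Second, I would count the multiplicity: for a fixed target pair $(d_1', d_2')$ with $d_1' d_2' = d$, the number of tuples $(d_i)_{i} \in \widehat{\mathcal{D}}_{T_0}$ mapping to it under the lumping is the number of ways to distribute each prime (or prime-power factor $\pm 4, \pm 8$ for $p = 2$) of $d_1'$ among the $|W_1|$ slots and each of $d_2'$ among the $|W_2|$ slots, which is $|W_1|^{\omega(d_1')} \cdot |W_2|^{\omega(d_2')}$. Third, since $T_0$ complete bipartite with $\mathcal{G}(T_0) = (W_1, W_2)$ gives $c = c_{T_0} = 2\max(|W_1|,|W_2|)$ — wait, more carefully: every element of $T_0$ lies in a non-trivial component so has non-central lift, giving one conjugacy class each, hence $c = |W_1| + |W_2| = |T_0|$; but actually by Lemma \ref{lem:complete bipartite abelian} and the bipartite structure $c = |W_1| + |W_2|$. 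I would need the identity $|W_1|^{\omega(d_1')} |W_2|^{\omega(d_2')}$ summed appropriately against $c^{\omega(d)} = (|W_1| + |W_2|)^{\omega(d)}$; this is where the multinomial/binomial expansion $(|W_1| + |W_2|)^{\omega(d)} = \sum_{d_1'd_2' = d} |W_1|^{\omega(d_1')}|W_2|^{\omega(d_2')}$ over factorizations of the squarefree-part enters, matching denominators. Combining, $g_{\widehat{\mathcal{D}}_{T_0}}(d) = \sum_{d_1'd_2'=d} |W_1|^{\omega(d_1')}|W_2|^{\omega(d_2')} P_{d_1'}(d_2')P_{d_2'}(d_1')$, and dividing by $c^{\omega(d)}$ — hmm, this does not obviously give the clean $2^{-\omega(d)}$ in $C(d)$ unless $|W_1| = |W_2|$, so I must re-examine: in fact the relevant $c$ here is governed by $c_{W_i}$, and the correct normalization uses that each lumped factor has its own weight; the cleanest route is to verify directly that $g_{\widehat{\mathcal{D}}_{T_0}}(d)/c^{\omega(d)}$ and $C(d) = 2^{-\omega(d)}\sum_{d_1'd_2'=d} P_{d_1'}(d_2')P_{d_2'}(d_1')$ agree by checking both are multiplicative in $d$ (over prime-power components) and comparing at prime powers.

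**Main obstacle.** The delicate point is the bookkeeping of the normalizing constant: matching the combinatorial factor $|W_1|^{\omega(d_1')}|W_2|^{\omega(d_2')}$ arising from the fibers of the lumping map against the clean $c^{\omega(d)} = |T_0|^{\omega(d)}$ in the denominator and the $2^{\omega(d)}$ in $C(d)$. I expect that $T_0$ complete bipartite forces $|W_1| = |W_2|$ is \emph{not} generally true, so the resolution must be that $C(d)$ is defined precisely so that the two sides match regardless — i.e., the sum defining $C$ with the $2^{-\omega(d)}$ factor already anticipates a two-to-one-type grouping, and the key computation is to push the per-prime distribution count through and observe the telescoping. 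Concretely, the main work is: (i) establish $g_{\widehat{\mathcal{D}}_{T_0}}(d) = \sum_{d_1'd_2' = d} (\text{mult}) \cdot P_{d_1'}(d_2')P_{d_2'}(d_1')$ with the multiplicity identified, and (ii) verify the resulting expression equals $c^{\omega(d)} C(d)$ by reducing to the prime-power case and using that the multiplicity at a single prime $p$ is split between "$p$ goes into the $W_1$-block" and "$p$ goes into the $W_2$-block," each of which carries the weight that, after dividing by $c$, yields the factor $\tfrac12$ per prime. I would carry out (ii) by induction on $\omega(d)$, peeling off one prime at a time.
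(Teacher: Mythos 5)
Your reduction in steps (i)–(ii) is the right structural move — grouping the $r$-tuple $(d_1,\dots,d_r)$ into lumped factors $d_1'=\prod_{t_i\in W_1}d_i$, $d_2'=\prod_{t_i\in W_2}d_i$, observing each $\prod_{p\mid d_i}(1+(\tfrac{\prod_{j\in S_i}d_j}{p}))$ depends only on $(d_1',d_2')$, and counting the fiber of the lumping map as $\lvert W_1\rvert^{\omega(d_1')}\lvert W_2\rvert^{\omega(d_2')}$ — and this matches the paper. However, you then talk yourself out of the one fact that makes the computation close: you write that you \emph{expect} $\lvert W_1\rvert=\lvert W_2\rvert$ to fail in general and try to route around it by ``checking both sides are multiplicative in $d$ and comparing at prime powers.'' That route cannot work, for two reasons. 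First, neither side is multiplicative in $d$: the Legendre symbols $(\tfrac{d_2'}{p})$ couple distinct primes of $d$, so there is no reduction to prime-power $d$. Second, and decisively, the claimed identity is genuinely \emph{false} unless $\lvert W_1\rvert=\lvert W_2\rvert$. With $a=\lvert W_1\rvert$, $b=\lvert W_2\rvert$, your step (i) gives $g_{\widehat{\mathcal{D}}_{T_0}}(d)=\sum_{d_1'd_2'=d}a^{\omega(d_1')}b^{\omega(d_2')}P_{d_1'}(d_2')P_{d_2'}(d_1')$, while $c^{\omega(d)}C(d)=\bigl(\tfrac{a+b}{2}\bigr)^{\omega(d)}\sum_{d_1'd_2'=d}P_{d_1'}(d_2')P_{d_2'}(d_1')$; already for $\omega(d)=2$ the coefficients of the two $P$-products are $(a^2+b^2,2ab)$ versus $\bigl(\tfrac{(a+b)^2}{2},\tfrac{(a+b)^2}{2}\bigr)$, which agree only when $a=b$. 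There is no bookkeeping that ``anticipates'' the mismatch; the denominator $2^{\omega(d)}$ in $C(d)$ is exactly the statement $\lvert W_i\rvert=c/2$.

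The missing step, then, is to actually establish $\lvert W_1\rvert=\lvert W_2\rvert$, which is true in this setting. Two ways to see it. Structurally: by Lemma \ref{lem:complete bipartite abelian}, $H$ is abelian, $G\cong H\rtimes C_2$ with $C_2$ inverting, and (since $G$ is a non-split central extension of $\mathbb{F}_2^n$ by $\mathbb{F}_2$) $H\cong C_4\times\mathbb{F}_2^{n-2}$; here $\lvert H[2]\rvert=\lvert H\setminus H[2]\rvert=2^{n-1}$, and multiplication by the central element $a$ identifies $\Phi(G)$-orbits, giving $\lvert W_1\rvert=\lvert W_2\rvert=2^{n-2}$. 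Or in the quadratic-form language of Section \ref{sec:isotropic subsets quad forms}: fix $\sigma_1\in W_1$, $\sigma_2\in W_2$; the affine map $t\mapsto t+\sigma_1+\sigma_2$ preserves $\ker Q\setminus\overline{H}=T_0$ (since $Q(\sigma_1)=Q(\sigma_2)=0$ and $B(\sigma_1,\sigma_2)=1$) and, for $t\in W_1$, satisfies $B(t+\sigma_1+\sigma_2,t')=1$ for every $t'\in W_1$ and $=0$ for every $t'\in W_2$, so it is an involution of $T_0$ swapping $W_1$ and $W_2$. With this in hand your step (i) gives $g_{\widehat{\mathcal{D}}_{T_0}}(d)=(c/2)^{\omega(d)}\sum_{d_1'd_2'=d}P_{d_1'}(d_2')P_{d_2'}(d_1')$, and dividing by $c^{\omega(d)}$ completes the proof.
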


\begin{proof}
Since $T_{0}$ is bipartite we have $T_{0}=\left(W_{1},W_{2}\right)$.
Let $T_{0,i}=T_{0}\cap W_{i}$. Then we have
\begin{align*}
\frac{g_{\widehat{\mathcal{D}}_{T_{0}}}\left(d\right)}{c^{\omega\left(d\right)}} & =\frac{1}{c^{\omega\left(d\right)}}\sum_{\left(d_{1},\ldots,d_{r}\right)\in\mathcal{D}}\prod_{i=1}^{r}\prod_{p\mid d_{i}}\left(1+\left(\frac{\prod_{j\in S_{i}}d_{j}}{p}\right)^{\mathrm{ord}_{p}\left(d\right)}\right)\\
 & =\frac{1}{c^{\omega\left(d\right)}}\sum_{\substack{\left(d_{1}',d_{2}'\right)\\
d_{1}'d_{2}'=d
}
}\left(\sum_{\left(d_{1},\ldots,d_{r/2}\right)\in\widehat{\mathcal{D}}_{T_{0,1}}\left(d_{1}'\right)}P_{d_{1}'}\left(d_{2}'\right)\right)\left(\sum_{\left(d_{r/2+1},\ldots,d_{r}\right)\in\mathcal{\widehat{D}}_{T_{0,2}}\left(d_{2}'\right)}P_{d_{2}'}\left(d_{1}'\right)\right).
\end{align*}
 Noting that the terms in the inner summations depend only on the
factors $d_{1}'$ and $d_{2}'$ we can write this as
\begin{align*}
 & \frac{1}{c^{\omega\left(d\right)}}\sum_{\substack{\left(d_{1}',d_{2}'\right)\\
d_{1}'d_{2}'=d
}
}P_{d_{1}'}\left(d_{2}'\right)P_{d_{2}'}\left(d_{1}'\right)\left(\sum_{\left(d_{1},\ldots,d_{r/2}\right)\in\widehat{\mathcal{D}}_{T_{0,1}}\left(d_{1}'\right)}1\right)\left(\sum_{\left(d_{r/2+1},\ldots,d_{r}\right)\in\mathcal{\widehat{D}}_{T_{0,2}}\left(d_{2}'\right)}1\right).
\end{align*}
Now $\left|\widehat{\mathcal{D}}_{T_{0,i}}\left(d_{i}'\right)\right|=\left(r/2\right)^{\omega\left(d_{i}'\right)}=c_{T_{0,i}}^{\omega\left(d_{i}'\right)}$
since this is the number of ways of factoring $d_{i}'$ into $r/2$
elements. Note that $c_{T_{0,i}}=c_{T_{0}}/2$. Thus we get
\begin{align*}
 & \frac{1}{c^{\omega\left(d\right)}}\sum_{\substack{\left(d_{1}',d_{2}'\right)\\
d_{1}'d_{2}'=d
}
}P_{d_{1}'}\left(d_{2}'\right)P_{d_{2}'}\left(d_{1}'\right)c_{T_{0,1}}^{\omega\left(d_{1}'\right)}c_{T_{0,2}}^{\omega\left(d_{2}'\right)}\\
 & =\frac{1}{2^{\omega\left(d\right)}}\sum_{\substack{\left(d_{1}',d_{2}'\right)\\
d_{1}'d_{2}'=d
}
}P_{d_{1}'}\left(d_{2}'\right)P_{d_{2}'}\left(d_{1}'\right).
\end{align*}
This completes the proof.
\end{proof}
\begin{lem}
\label{lem:g in terms of Cl2}Suppose $d$ is the discriminant of
a quadratic field $K$. Then

\begin{align*}
\frac{g_{\widehat{\mathcal{D}}_{T_{0}}}\left(d\right)}{c^{\omega\left(d\right)}} & =2\left|Cl_{K}\left[4\right]/Cl_{K}\left[2\right]\right|.
\end{align*}
\end{lem}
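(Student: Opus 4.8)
The goal is to identify $C(d)=g_{\widehat{\mathcal{D}}_{T_0}}(d)/c^{\omega(d)}$ with $2\left|Cl_K[4]/Cl_K[2]\right|$. By Lemma \ref{lem:g in terms of C} we already have $C(d)=\tfrac{1}{2^{\omega(d)}}\sum_{d_1'd_2'=d}P_{d_1'}(d_2')P_{d_2'}(d_1')$, so the task reduces to showing this character sum equals $2^{\omega(d)+1}\left|Cl_K[4]/Cl_K[2]\right|$. The plan is to recognize the inner product $P_{d_1'}(d_2')P_{d_2'}(d_1')$ as a detector of a local condition: for each prime $p\mid d$ it is nonzero (and then equal to a power of $2$) exactly when a certain quadratic residue symbol involving the complementary part of $d$ is trivial at $p$. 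First I would unwind the definition $P_{d_1'}(d_2')=\prod_{p\mid d_1'}\left(1+\left(\tfrac{d_2'}{p}\right)^{\mathrm{ord}_p(d)}\right)$; since $d$ is a fundamental discriminant $\mathrm{ord}_p(d)=1$ for odd $p$, so each factor is $1+\left(\tfrac{d_2'}{p}\right)$, which is $2$ or $0$. Thus $P_{d_1'}(d_2')P_{d_2'}(d_1')$ is $2^{\omega(d)}$ if $\left(\tfrac{d_2'}{p}\right)=1$ for all $p\mid d_1'$ and $\left(\tfrac{d_1'}{q}\right)=1$ for all $q\mid d_2'$, and $0$ otherwise (the prime $2$ is handled by the analogous Kronecker-symbol conditions, which is where I expect the only real bookkeeping).

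The second step is to connect the surviving factorizations $d=d_1'd_2'$ to $4$-torsion in the class group via genus theory and Rédei's theory. The genus field of $K$ corresponds to factorizations of $d$ into coprime fundamental discriminants, and $Cl_K[2]\cong \mathbb{F}_2^{\omega(d)-1}$ is spanned by the classes of the ramified primes. The quotient $Cl_K[4]/Cl_K[2]$ is, by Rédei--Reichardt, computed by the Rédei matrix $R$ over $\mathbb{F}_2$ whose entries are $\left(\tfrac{d_j}{p_i}\right)$-type symbols: $\left|Cl_K[4]/Cl_K[2]\right| = 2^{\,\omega(d)-1-\mathrm{rank}\,R}$ (equivalently $4\text{-rank} = \omega(d)-1-\mathrm{rank}\,R$, recovering the classical count of $C_4$-extensions inside $K^{un}$). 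A factorization $d=d_1'd_2'$ into fundamental discriminants surviving the above condition is precisely a vector in the kernel of $R$ (or its symmetrized version): writing $d_1'=\prod_{i\in I}q_i$ with indicator vector $v\in\mathbb{F}_2^{\omega(d)}$, the conditions $\left(\tfrac{d_2'}{q_i}\right)=1$ for $i\in I$ and $\left(\tfrac{d_1'}{q_j}\right)=1$ for $j\notin I$ say exactly $Rv=0$ (using reciprocity to match the two families of conditions). Hence the number of such $(d_1',d_2')$ with $d_1'd_2'=d$ is $\left|\ker R\right| = 2^{\,\omega(d)-\mathrm{rank}\,R}$.

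Putting these together: $C(d) = \tfrac{1}{2^{\omega(d)}}\cdot 2^{\omega(d)}\cdot\left|\ker R\right| = 2^{\,\omega(d)-\mathrm{rank}\,R} = 2\cdot 2^{\,\omega(d)-1-\mathrm{rank}\,R} = 2\left|Cl_K[4]/Cl_K[2]\right|$, as claimed. The main obstacle I anticipate is making the identification "surviving factorization $\iff$ kernel vector of the Rédei matrix" fully precise, including (a) getting the reciprocity bookkeeping right so that the two asymmetric conditions on $d_1'$ and $d_2'$ combine into a single symmetric bilinear relation $Rv=0$, and (b) handling the prime $2$ and the sign/archimedean place correctly (the $d_i'$ are fundamental discriminants, possibly even, so one must check the $p=2$ Kronecker symbols and confirm the archimedean condition is automatically satisfied for an actual discriminant $d$ — this is where the hypothesis that $d$ is a genuine quadratic discriminant, rather than an arbitrary squarefree integer, is used). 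Once the combinatorial dictionary is set up, invoking the Rédei--Reichardt theorem for the value of $\left|Cl_K[4]/Cl_K[2]\right|$ finishes the proof.
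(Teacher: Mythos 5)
Your proposal is correct, but it takes a genuinely different route from the paper. After the common first step (Lemma \ref{lem:g in terms of C}, reducing to the character sum $C(d)$), the paper does \emph{not} analyze the sum directly via genus theory. Instead it applies its own Theorem \ref{thm:f_T_formula} with $(G,H)=(D_4,C_4)$ to rewrite $C(d)=\frac{1}{2^{\omega(d)}}(8f_T(d)+2\cdot 2^{\omega(d)})$, then argues that $f_T$ counts unramified $C_4$-extensions of $K$, so $2f_T=|\mathrm{Surj}(Cl_K,C_4)|$, and finishes purely by Hom-counting: $|\mathrm{Hom}(Cl_K,C_4)|=|\mathrm{Surj}(Cl_K,C_4)|+|\mathrm{Hom}(Cl_K,C_2)|$ together with $|Cl_K[2]|=2^{\omega(d)-1}$. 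This sidesteps R\'edei--Reichardt entirely, deriving the identity from class field theory and the paper's own machinery. Your route is the classical one: you recognize $P_{d_1'}(d_2')P_{d_2'}(d_1')$ as the indicator of a R\'edei $D_4$-splitting of $d$, count the surviving factorizations as $|\ker R|=2^{\omega(d)-\mathrm{rank}\,R}$, and invoke R\'edei--Reichardt to equate $2^{\omega(d)-1-\mathrm{rank}\,R}$ with $|Cl_K[4]/Cl_K[2]|$. This is essentially the Fouvry--Kl\"uners route (as the paper itself notes in Section \ref{subsec:Comparison-with-Fouvry-Klners}), and it works provided the $p=2$ bookkeeping (where $\mathrm{ord}_2(d)\in\{0,2,3\}$ behaves differently) and the narrow-vs-ordinary class group convention are handled — both of which you flag but do not carry out. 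What the paper's approach buys is that it is self-contained and avoids importing R\'edei's theorem as a black box; what your approach buys is a more direct and classical reading of the character sum that makes the genus-theoretic meaning transparent.
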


\begin{proof}
By Lemma \ref{lem:g in terms of C} $g_{\widehat{\mathcal{D}}_{T_{0}}}\left(d\right)/c^{\omega\left(d\right)}=C\left(d\right)$.

Let $G=D_{4}\cong C_{4}\rtimes C_{2}$, $H=C_{4}$ and $T$ be the
two non-trivial elements of $G/\Phi\left(G\right)\cong C_{2}\times C_{2}$
not in the projection of $H$. Then by Theorem \ref{thm:f_T_formula}
\begin{align}
C\left(d\right) & =\frac{2^{n}\left|\mathrm{Aut}_{H,T}(G/\Phi\left(G\right))\right|}{2^{\omega\left(d\right)}}f_{T}\left(d\right)+2\nonumber \\
 & =\frac{1}{2^{\omega\left(d\right)}}\left(8f_{T}\left(d\right)+2\cdot2^{\omega\left(d\right)}\right)\label{eq:C in terms of f}
\end{align}
where $n=2$ and $\left|\mathrm{Aut}_{H,T}(G/\Phi\left(G\right))\right|=2$.

Now $f_{T}$ is the number of $\left(G,H,T\right)$-extensions of
$K$. This is also the number of $\left(G,H\right)$-extensions since
this is the unique $T$ for the pair $\left(G,H\right)=\left(D_{4},C_{4}\right)$.

We claim that $f_{T}$ is in fact equal to the number of unramified
$C_{4}$-extensions of $K$. Clearly any $\left(G,H\right)$-extension
is an unramified $C_{4}$-extension of $K$. Conversely suppose $L/K$
is unramified and $\alpha:\mathrm{Gal}\left(L/K\right)\longrightarrow C_{4}$
is an isomorphism. Since $C_{4}$ is abelian $L$ is a subfield of
the Hilbert class field of $K$ hence Galois over $\mathbb{Q}$.
Then since $G\cong C_{4}\rtimes C_{2}$ and $\mathrm{Gal}\left(L/\mathbb{Q}\right)\cong\mathrm{Gal}\left(L/K\right)\rtimes\mathrm{Gal}\left(K/\mathbb{Q}\right)$
we can lift $\alpha$ to $\alpha':\mathrm{Gal}\left(L/\mathbb{Q}\right)\longrightarrow G$.
Hence $L/K$ is a $\left(G,H\right)$-extension.

For any such extension there are exactly 2 surjections from $Cl_{K}$
to $C_{4}$. Hence $2\cdot f_{T}=\left|\mathrm{Surj}\left(Cl_{K},C_{4}\right)\right|$.

Note $\left|\mathrm{Hom}\left(Cl_{K},C_{2^{m}}\right)\right|=\left|Cl_{K}\left[2^{m}\right]\right|$.
Furthermore 
\[
\left|\mathrm{Hom}\left(Cl_{K},C_{4}\right)\right|=\left|\mathrm{Surj}\left(Cl_{K},C_{4}\right)\right|+\left|\mathrm{Hom}\left(Cl_{K},C_{2}\right)\right|.
\]
 It is well known that $\left|Cl_{K}\left[2\right]\right|=2^{\omega\left(d\right)-1}$.
Combining the above facts with (\ref{eq:C in terms of f}) we get
\begin{align*}
C\left(d\right) & =\frac{1}{2^{\omega\left(d\right)}}\left(4\left|\mathrm{Surj}\left(Cl_{K},C_{4}\right)\right|+2\cdot2^{\omega\left(d\right)}\right)\\
 & =\frac{4}{2^{\omega\left(d\right)}}\left(\left|\mathrm{Surj}\left(Cl_{K},C_{4}\right)\right|+2^{\omega\left(d\right)-1}\right)\\
 & =2\left|Cl_{K}\left[4\right]\right|/\left|Cl_{K}\left[2\right]\right|.
\end{align*}
\end{proof}

Let $\mu_{CL}^{\pm}\left(H\right)$ be the Cohen-Lenstra probability
measure on finite abelian 2-groups $H$ which is proportional to $1/(\left|\mathrm{Aut}H\right|\cdot\left|H\right|^{a})$
where $a=0,1$ in the positive (resp. negative) case. Let $P_{CL}^{\pm}\left(i\right)$
be the probability with respect to the Cohen-Lenstra measure that
a finite abelian 2-group has rank $i$.

The distribution of the funtion $\left|Cl_{K}\left[4\right]/Cl_{K}\left[2\right]\right|$
over both imaginary and real quadratic fields was determined by Fouvry
and Kl\"uners in \cite{fk1} and \cite{fk2distribution}. Thus we
conclude the following.
\begin{lem}
\label{lem:cl dist}The function $h\left(d\right)=g_{\widehat{\mathcal{D}}_{T_{0}}}\left(d\right)/c^{\omega\left(d\right)}$
has distribution $\mu_{h}$ supported on the set $\left\{ 2^{i}\mid i\in\mathbb{Z}_{\ge1}\right\} $
given by
\[
\mu_{h}\left(2^{i}\right)=P_{CL}^{\pm}\left(i-1\right)
\]
 over all real (respectively imaginary) quadratic fields.
\end{lem}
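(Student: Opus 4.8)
The plan is to combine the two identifications established just above with the known distributional results of Fouvry and Klüners. By Lemma \ref{lem:g in terms of Cl2} we have the pointwise identity $h\left(d\right)=g_{\widehat{\mathcal{D}}_{T_{0}}}\left(d\right)/c^{\omega\left(d\right)}=2\left|Cl_{K}\left[4\right]/Cl_{K}\left[2\right]\right|$ for every quadratic field $K$ of discriminant $d$. So the distribution $\mu_{h}$ of $h$ is the pushforward under multiplication by $2$ of the distribution of the function $d\mapsto\left|Cl_{K}\left[4\right]/Cl_{K}\left[2\right]\right|$.

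Next I would invoke the theorems of Fouvry and Klüners on the $4$-rank of class groups of quadratic fields: in \cite{fk1} (imaginary case) and \cite{fk2distribution} (real case) they prove that the quantity $\mathrm{rk}_{4}\left(Cl_{K}\right)=\log_{2}\left|Cl_{K}\left[4\right]/Cl_{K}\left[2\right]\right|$ has a limiting distribution over quadratic fields ordered by discriminant, and that this limiting distribution is exactly the Cohen-Lenstra distribution on the rank of a random finite abelian $2$-group with respect to $\mu_{CL}^{\pm}$. In other words, the proportion of real (respectively imaginary) quadratic fields $K$ with $\left|D_{K}\right|<X$ for which $\left|Cl_{K}\left[4\right]/Cl_{K}\left[2\right]\right|=2^{j}$ tends, as $X\to\infty$, to $P_{CL}^{\pm}\left(j\right)$. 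Since the set of values $\left\{ 2^{j}\mid j\ge0\right\}$ is discrete in $\mathbb{R}$, weak convergence of $\mu_{h,X}$ is equivalent to convergence of the mass at each point, so no subtlety arises in passing between the discrete and Euclidean topologies here.

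Combining these two facts: $h\left(d\right)=2^{j+1}$ precisely when $\mathrm{rk}_{4}\left(Cl_{K}\right)=j$, so setting $i=j+1$ we obtain that $\mu_{h}$ is supported on $\left\{ 2^{i}\mid i\ge1\right\}$ with $\mu_{h}\left(2^{i}\right)=P_{CL}^{\pm}\left(i-1\right)$, as claimed. Since $\sum_{i\ge1}P_{CL}^{\pm}\left(i-1\right)=\sum_{j\ge0}P_{CL}^{\pm}\left(j\right)=1$, there is no escape of mass and $\mu_{h}$ is genuinely a probability measure, so it is a legitimate distribution in the sense of Section \ref{subsec:Preliminaries-for-distributions}.

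The only real content here is citational rather than computational: everything hinges on correctly matching the normalization of the Fouvry-Klüners $4$-rank distribution to the Cohen-Lenstra measure $\mu_{CL}^{\pm}$, including the shift by $1$ between imaginary and real quadratic fields built into the exponent $a$ in the definition of $\mu_{CL}^{\pm}$, and on the prefactor $2$ coming from Lemma \ref{lem:g in terms of Cl2}, which is what shifts the support from $\left\{ 2^{j}\mid j\ge0\right\}$ to $\left\{ 2^{i}\mid i\ge1\right\}$. Thus the main (and essentially only) point to be careful about is bookkeeping of these two shifts; the analytic heavy lifting has already been done, partly in \cite{fk1,fk2distribution} and partly in Lemmas \ref{lem:g in terms of C} and \ref{lem:g in terms of Cl2} above.
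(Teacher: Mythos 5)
Your argument is exactly the paper's: the proof consists of combining Lemma \ref{lem:g in terms of Cl2} (the pointwise identity $h(d)=2\left|Cl_K[4]/Cl_K[2]\right|$) with the Fouvry--Kl\"uners distributional result for $\left|Cl_K[4]/Cl_K[2]\right|$, and you handle the bookkeeping of the factor $2$ and the index shift correctly. The paper's proof is a one-liner citing these same two ingredients; you have simply spelled out the pushforward and the discrete-topology remark in more detail.
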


\begin{proof}
Follows from Lemma \ref{lem:g in terms of Cl2} and Theorem 3 from
\cite{fk1}.
\end{proof}
Next we consider the distribution of $h_{T}\left(d\right)=g_{\mathcal{D}_{T}}\left(d\right)/c^{\omega\left(d\right)}$
for $T\subset T_{0}$. We will do this by computing the $k$th moments
of $h_{T}$.
\begin{lem}
\label{lem:lower order dist}If $T\in\left\{ W_{1},W_{2}\right\} $
then $\mu_{h_{T}}=\delta_{1}$. If $T\notin\left\{ W_{1},W_{2},T_{0}\right\} $
then $\mu_{h_{T}}=\delta_{0}$.
\end{lem}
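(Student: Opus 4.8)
The plan is to treat $T\in\{W_{1},W_{2}\}$ by a direct computation and every other $T\subseteq T_{0}$ by combining the asymptotic analysis of Section \ref{sec:Asymptotic-analysis-of} with Markov's inequality. Take $T=W_{1}$; the case $T=W_{2}$ is identical. Since $\mathcal{G}(T_{0})=(W_{1},W_{2})$ is complete bipartite there are no edges inside $W_{1}$, so $S_{i}\cap W_{1}=\varnothing$ for each $i\in W_{1}$ and every Legendre symbol occurring in $g_{\mathcal{D}_{W_{1}}}(d)$ (that is, in (\ref{eq:g for T}) with $\mathcal{D}=\mathcal{D}_{W_{1}}$) is an empty product equal to $1$. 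Hence $g_{\mathcal{D}_{W_{1}}}(d)=2^{\omega(d)}\,|\mathcal{D}_{W_{1}}(d)|$, where $|\mathcal{D}_{W_{1}}(d)|$ is the number of ordered factorizations of $d$ into $|W_{1}|$ pairwise coprime fundamental discriminants, each $\neq1$. Because $T_{0}$ is a generating set and $\mathcal{G}(T_{0})$ is complete bipartite and non-trivial, no element of $T_{0}$ commutes with all of $T_{0}$, so $T_{0}^{\perp}\cap T_{0}=\varnothing$ and $c=c_{T_{0}}=|T_{0}|$; and since in the present (central extension, $H$ abelian) setting $W_{1},W_{2}$ are the images of $\{\overline{x\sigma}:\operatorname{ord}x=2\}$ and $\{\overline{x\sigma}:\operatorname{ord}x=4\}$ with $H\cong\mathbb{Z}/4\times(\mathbb{Z}/2)^{n-2}$ (because $\Phi(G)=H^{2}$ has order $2$), we get $|W_{1}|=|W_{2}|=|T_{0}|/2$. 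Therefore $h_{W_{1}}(d)=|\mathcal{D}_{W_{1}}(d)|/|W_{1}|^{\omega(d)}$, and by inclusion--exclusion
\[
|\mathcal{D}_{W_{1}}(d)|=\sum_{j=0}^{|W_{1}|}(-1)^{j}\binom{|W_{1}|}{j}(|W_{1}|-j)^{\omega(d)},
\]
so $|h_{W_{1}}(d)-1|\le\sum_{j=1}^{|W_{1}|-1}\binom{|W_{1}|}{j}\bigl(1-\tfrac{1}{|W_{1}|}\bigr)^{\omega(d)}\to0$ as $\omega(d)\to\infty$. Since for every fixed $M$ the set $\{d:\omega(d)\le M\}$ has density $0$, the set $\{d:|h_{W_{1}}(d)-1|>\varepsilon\}$ has density $0$ for every $\varepsilon>0$, whence $\mu_{h_{W_{1}}}=\delta_{1}$.

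Now let $T\subseteq T_{0}$ with $T\notin\{W_{1},W_{2},T_{0}\}$. Since $h_{T}\ge0$, Markov's inequality reduces $\mu_{h_{T}}=\delta_{0}$ to the estimate $\sum_{d<X}h_{T}(d)=o(X)$. The character-sum analysis of Section \ref{sec:Asymptotic-analysis-of}, applied to $g_{\mathcal{D}_{T}}$ — which enters that analysis only through the graph $\mathcal{G}^{*}(T)$ and is therefore valid for an arbitrary subset $T$ — yields $\sum_{d<X}g_{\mathcal{D}_{T}}(d)/c^{\omega(d)}=O\bigl(X(\log X)^{\mathbf{c}(T)/c-1}\bigr)$, where $\mathbf{c}(T)$ denotes the largest size of a disconnected subgraph of $\mathcal{G}^{*}(T)$ with full support on $T$. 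So it suffices to prove $\mathbf{c}(T)<c$. If $T$ is a generating set then $T\neq T_{0}$, and Theorem \ref{thm:The-maximal-unlinked} with $k=1$ gives $\mathbf{c}(T)<c$. If $T$ is not a generating set, then $\mathcal{G}(T)$, being an induced subgraph of the complete bipartite graph $\mathcal{G}(T_{0})$, is either edgeless or complete bipartite. In the edgeless case $T\subsetneq W_{i}$ for some $i$, the graph $\mathcal{G}^{*}(T)$ has no edges, and $\mathbf{c}(T)=2|T|<2|W_{i}|=c$. In the complete bipartite case $T$ meets both $W_{1}$ and $W_{2}$ but $T\neq T_{0}$, so no lift of an element of $T$ commutes with all lifts of $T$, giving $c_{T}=|T|<|T_{0}|=c$; and by the classification of maximal unlinked sets for a single complete bipartite set (Section \ref{sec:Maximal disconnected subgraphs}), $\mathbf{c}(T)=c_{T}<c$. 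In every case $(\log X)^{\mathbf{c}(T)/c-1}\to0$, so $\sum_{d<X}h_{T}(d)=o(X)$ and $\mu_{h_{T}}=\delta_{0}$.

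The main obstacle is the one ingredient above lying slightly outside the stated results: that the asymptotic analysis of Section \ref{sec:Asymptotic-analysis-of} (phrased for products of generating sets) and the size bound $\mathbf{c}(T)\le c_{T}$ of Section \ref{sec:Maximal disconnected subgraphs} both remain valid for a non-generating $T\subseteq T_{0}$. This should follow by inspecting those proofs, where the generating hypothesis is used only to identify the exponent $\mathbf{c}$ with $c^{k}$ and not in the character-cancellation estimates themselves; everything else is routine.
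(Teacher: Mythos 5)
Your proof is correct, and it is actually more careful than the paper's in two places. The paper handles $T\in\{W_1,W_2\}$ by asserting $h_T(d)=1$ exactly, implicitly taking $|\mathcal{D}_T|=(r/2)^{\omega(d)}$; this overlooks the constraint $d_i\neq 1$ in the definition of $\mathcal{D}_T$, so the correct count is the surjection number you wrote down, and the honest conclusion is $h_T(d)\to 1$ on a density-one set of discriminants rather than identically. You repair this by quantifying the inclusion--exclusion error and invoking the fact that $\{d:\omega(d)\le M\}$ has density zero; the conclusion $\mu_{h_T}=\delta_1$ is the same. For $T\notin\{W_1,W_2,T_0\}$ the paper introduces the subgroup $G'=\langle T\rangle$ with $H'=H\cap G'$ and invokes Theorem~\ref{thm:The-maximal-unlinked} for $(G',H',T_0')$ together with all $k$th moments; your route is a transparent three-way case split (edgeless, complete bipartite, generating) that bounds $\mathbf{c}(T)<c$ directly and then applies Markov's inequality, which only needs the first moment. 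Both arguments rest on the same implicit extension of Theorem~\ref{thm:asymptotic_analysis} from generating sets to arbitrary subsets $T\subset T_0$ --- the paper uses it just as silently in its proof of this lemma, so you are not incurring any additional debt there; it is fair of you to flag it, but correct that it requires no new idea beyond inspecting that proof. The one thing I would tighten in your write-up is the appeal to ``the classification of maximal unlinked sets for a single complete bipartite set'': what you need is exactly the $k=1$ content of Lemmas~\ref{lem:Let-k-=002265} and~\ref{lem:bounded_size}, giving $\mathbf{c}(T)=c_T$ for $T$ complete bipartite, and it is worth citing those explicitly rather than gesturing at the section.
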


\begin{proof}
Let $\mathbf{c}=\max_{\mathcal{U}\in\mathbf{U}\left(T^{k}\right)}\left|\mathcal{U}\right|$.
Since $T\subset T_{0}$ and $T_{0}$ is complete bipartite it is easy
to see that $c_{T}\le c_{T_{0}}$ with equality if and only if $T\in\left\{ W_{1},W_{2},T_{0}\right\} $.

Let $G'$ be the group generated by $T$, $H'=H\cap G$ and let $T_{0}'$
be the corresponding maximal generating set which lifts to order 2
elements in $G'-H'$. By Theorem \ref{thm:The-maximal-unlinked} applied
to $T_{0}'$ we see that $\mathbf{c}\le c_{T_{0}'}^{k}$ with equality
if and only if $T=T_{0}'$. By Lemma \ref{lem:unlinkedset_bound}
$c_{T_{0}'}\le c_{T_{0}}$.

If $T=T_{0}'$ then $\mathbf{c}=c_{T}^{k}=c_{T_{0}'}^{k}$. Putting
these facts together we see that $\mathbf{c}\le c_{T_{0}}^{k}$ with
equality if and only if $T\in\left\{ W_{1},W_{2},T_{0}\right\} $.

Hence if $T\notin\left\{ W_{1},W_{2},T_{0}\right\} $ by Theorem \ref{thm:asymptotic_analysis}
\[
\lim_{X\longrightarrow\infty}\frac{\sum_{d<X}\left(\frac{g_{\mathcal{D}_{T}}\left(d\right)}{c^{\omega\left(d\right)}}\right)^{k}}{\sum_{d<X}1}=0.
\]

If $T\in\left\{ W_{1},W_{2}\right\} $ then from the definition of
$g_{\mathcal{D}_{T}}$ we see that
\begin{align*}
h_{T}\left(d\right) & =\frac{1}{c^{\omega\left(d\right)}}\sum_{\left(d_{1},\ldots,d_{r/2}\right)\in\mathcal{D}_{T}}2^{\omega\left(d\right)}\\
 & =\frac{1}{c^{\omega\left(d\right)}}\left(\frac{r}{2}\right)^{\omega\left(d\right)}2^{\omega\left(d\right)}\\
 & =1.
\end{align*}
\end{proof}
We now put together the results of this section to determine the distribution
of $g_{\mathcal{D}_{T_{0}}}/c^{\omega\left(d\right)}$ and subsequently
$f$.
\begin{thm}
\label{thm:dist complete bipartite}Suppose $T_{0}$ is complete bipartite.
Let $q=2^{n-1}\left|\mathrm{Aut}_{H}(G/\Phi\left(G\right))\right|$.
Then $h\left(d\right)=f\left(d\right)/c^{\omega\left(d\right)}$ has
distribution $\mu_{h}$ supported on the set $\left\{ \left(2^{i}-1\right)/q\mid i\in\mathbb{Z}_{\ge0}\right\} $
given by
\[
\mu_{h}\left(\left(2^{i}-1\right)/q\right)=P_{CL}^{\pm}\left(i\right)
\]
 over the set of real (respectively imaginary) quadratic fields.
\end{thm}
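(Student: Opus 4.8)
The plan is to reduce the distribution of $h(d)=f(d)/c^{\omega(d)}$ to that of the function $2\lvert Cl_K[4]/Cl_K[2]\rvert$, whose distribution over real (resp.\ imaginary) quadratic fields is the theorem of Fouvry--Kl\"uners recorded in Lemma~\ref{lem:cl dist}; the bridge is the identity $g_{\widehat{\mathcal D}_{T_0}}=\sum_{T\subseteq T_0}g_{\mathcal D_T}$ together with the fact that all but three of the summands $g_{\mathcal D_T}/c^{\omega}$ vanish in distribution. I first reduce $f$ to $f_{T_0}$. Write $T_0=W_1\cup W_2$ as in Lemma~\ref{lem:complete bipartite abelian}. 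Since $H$ abelian forces $G\cong H\rtimes C_2$ with $C_2$ acting by inversion and $\Phi(G)\cong\mathbb F_2$, a short computation gives $H\cong C_4\times C_2^{\,n-2}$ and shows that $\langle W_1\rangle$ and $\langle W_2\rangle$ are \emph{proper} subgroups of $G/\Phi(G)$ (they are the images of proper subgroups of $G$, of index $2$ and of order at most $2^{n-1}$ respectively). Hence $W_1,W_2$ are not admissible generating sets, so every admissible generating set $T\subsetneq T_0$ lies outside $\{W_1,W_2,T_0\}$ and therefore $\mu_{f_T/c^{\omega}}=\delta_0$ by Lemma~\ref{lem:lower order dist}; the second-moment bounds in its proof (via Theorem~\ref{thm:asymptotic_analysis}), together with Cauchy--Schwarz and Markov, show the finite nonnegative sum $R:=f-f_{T_0}=\sum_{T\subsetneq T_0}f_T$ also satisfies $\mu_{R/c^{\omega}}=\delta_0$, so $\mu_h=\mu_{f_{T_0}/c^{\omega}}$ by Lemma~\ref{lem:distribution plus pointmass}. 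Finally $g_{\mathcal D_{T_0}}=2q\,f_{T_0}$ by Theorem~\ref{thm:f_T_formula} (using that $T_0$ is determined by $H$, so $\mathrm{Aut}_{H,T_0}=\mathrm{Aut}_H$), whence $f_{T_0}/c^{\omega}=g_{\mathcal D_{T_0}}/(2q\,c^{\omega})$.

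Next I isolate the class-group term. Expanding $g_{\widehat{\mathcal D}_{T_0}}=\sum_{T\subseteq T_0}g_{\mathcal D_T}$ and splitting off the terms $T=T_0,W_1,W_2$ yields
\[
\frac{g_{\mathcal D_{T_0}}(d)}{c^{\omega(d)}}=\frac{g_{\widehat{\mathcal D}_{T_0}}(d)}{c^{\omega(d)}}-h_{W_1}(d)-h_{W_2}(d)-\eta(d),
\]
where $h_{W_i}=g_{\mathcal D_{W_i}}/c^{\omega}$ has distribution $\delta_1$ (in fact it is $\equiv 1$) by Lemma~\ref{lem:lower order dist}, and $\eta=\sum_{T\subsetneq T_0,\ T\ne W_1,W_2}g_{\mathcal D_T}/c^{\omega}\ge 0$ again satisfies $\mu_\eta=\delta_0$ by the same second-moment argument. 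By Lemma~\ref{lem:g in terms of Cl2}, $g_{\widehat{\mathcal D}_{T_0}}/c^{\omega}=2\lvert Cl_K[4]/Cl_K[2]\rvert$, whose distribution by Lemma~\ref{lem:cl dist} assigns mass $P_{CL}^{\pm}(i-1)$ to $2^i$ for each $i\ge 1$.

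Now I assemble the pieces. Applying Lemma~\ref{lem:distribution plus pointmass} twice—once to remove the constant $2=h_{W_1}+h_{W_2}$, once to absorb $-\eta$ (distribution $\delta_0$)—shows that $g_{\mathcal D_{T_0}}/c^{\omega}$ has the distribution of $2\lvert Cl_K[4]/Cl_K[2]\rvert$ translated by $-2$, i.e.\ mass $P_{CL}^{\pm}(i-1)$ at $2^i-2$ for $i\ge 1$. Pushing forward by the continuous scaling $x\mapsto x/(2q)$ (which commutes with weak convergence) and recalling $\mu_h=\mu_{f_{T_0}/c^{\omega}}$ with $f_{T_0}/c^{\omega}=g_{\mathcal D_{T_0}}/(2q\,c^{\omega})$, one gets that $\mu_h$ assigns mass $P_{CL}^{\pm}(i-1)$ to $(2^{\,i-1}-1)/q$ for each $i\ge 1$. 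Reindexing by $j=i-1\ge 0$ gives $\mu_h\big((2^{\,j}-1)/q\big)=P_{CL}^{\pm}(j)$, supported on $\{(2^{\,j}-1)/q:j\in\mathbb Z_{\ge 0}\}$, which is the assertion.

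The load-bearing point is the group-theoretic input of Step~1—that $W_1$ and $W_2$ are not admissible generating sets. The rest is bookkeeping with the already-established Lemmas~\ref{lem:g in terms of Cl2}, \ref{lem:lower order dist}, \ref{lem:cl dist} and the measure-theoretic Lemma~\ref{lem:distribution plus pointmass}, with no new analytic estimates beyond Theorem~\ref{thm:asymptotic_analysis}; but the precise shape of the support—in particular that it contains $0$ and carries no extra additive shift—depends entirely on this fact, since an admissible generating $W_i$ would add the positive constant $1/(2^{n}\lvert\mathrm{Aut}_{H,W_i}(G,\Phi(G))\rvert)$ to $h$.
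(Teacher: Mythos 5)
Your proof is correct and follows essentially the same approach as the paper: reduce $\mu_h$ to $\mu_{f_{T_0}/c^{\omega}}$ by showing the moments of $f_T/c^{\omega}$ vanish for every proper admissible generating subset $T\subsetneq T_0$, decompose $g_{\widehat{\mathcal D}_{T_0}}$ over subsets of $T_0$ to isolate the class-group term of Lemmas \ref{lem:g in terms of Cl2} and \ref{lem:cl dist}, and finish by translating and rescaling via Lemma \ref{lem:distribution plus pointmass}. The only material addition is your explicit group-theoretic check that $W_1,W_2$ span proper subspaces of $G/\Phi(G)$ (hence are not generating sets); the paper reaches the same vanishing directly from Lemma \ref{lem:c_T_bound}, so your computation is a correct alternative verification of the same underlying fact rather than a different route.
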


\begin{proof}
Recall we have the relations 
\begin{equation}
f=\sum_{T\subset T_{0}}f_{T}\label{eq:expand f}
\end{equation}
 where the sum is over generating sets, 
\[
g_{\mathcal{D}_{T_{0}}}\left(d\right)=g_{\widehat{\mathcal{D}}_{T_{0}}}\left(d\right)-\sum_{T\subsetneqq T_{0}}g_{\mathcal{D}_{T}}\left(d\right)
\]
 where the sum is over all strict subsets of $T_{0}$, including non-generating
sets, and $g_{\mathcal{D}_{T_{0}}}\left(d\right)=2^{n}\left|\mathrm{Aut}_{H}(G/\Phi\left(G\right))\right|f_{T_{0}}\left(d\right)$.

By an argument similar to Lemma \ref{lem:lower order dist} we can
show that when $T\neq T_{0}$ the $k$th moments of $f_{T}/c^{\omega\left(d\right)}$
are all equal to 0 since in this case $T$ is a generating set and
hence $\max_{\mathcal{U}\in\mathbf{U}\left(T^{k}\right)}\left|\mathcal{U}\right|<c_{T_{0}}^{k}$.
Thus $\mu_{f_{T}/c^{\omega}}=\delta_{0}$. Combined with (\ref{eq:expand f})
this implies that $\mu_{h}=\mu_{f_{T_{0}}/c^{\omega}}$.

Let $h_{g}\left(d\right)=g_{\mathcal{D}_{T_{0}}}/c^{\omega\left(d\right)}$.
By Lemmas \ref{lem:cl dist} and \ref{lem:lower order dist} $\mu_{h_{g}}$
is supported on the set $\left\{ 2^{i}-2\mid i\in\mathbb{Z}_{\ge1}\right\} $
and
\[
\mu_{h_{g}}\left(2^{i}-2\right)=P_{CL}^{\pm}\left(i-1\right)
\]
 over real (respectively imaginary) quadratic fields.

Thus we conclude that $\mu_{h}$ is supported on $\left\{ \left(2^{i}-1\right)/q\mid i\in\mathbb{Z}_{\ge0}\right\} $
given by 
\[
\mu_{h}\left(\left(2^{i}-1\right)/q\right)=P_{CL}^{\pm}\left(i\right).
\]
\end{proof}
\begin{rem}
\label{rem:classgroup formula for f}From the above proof we see that
\begin{align}
f/c^{\omega\left(d\right)} & =\frac{1}{2^{n}\left|\mathrm{Aut}_{H}(G/\Phi\left(G\right))\right|}\left(2\left|Cl_{K}\left[4\right]/Cl_{K}\left[2\right]\right|-\sum_{T\subsetneqq T_{0}}g_{\mathcal{D}_{T}}\left(d\right)/c^{\omega\left(d\right)}\right)\nonumber \\
 & \qquad+\sideset{}{_{T\subsetneqq T_{0}}^{\prime}}\sum f_{T}/c^{\omega\left(d\right)}\nonumber \\
 & =\frac{1}{2^{n}\left|\mathrm{Aut}_{H}(G/\Phi\left(G\right))\right|}\left(2\left|Cl_{K}\left[4\right]/Cl_{K}\left[2\right]\right|-\sideset{}{_{T\subsetneqq T_{0}}^{\prime\prime}}\sum g_{\mathcal{D}_{T}}\left(d\right)/c^{\omega\left(d\right)}\right)\label{eq:class group formula for f}
\end{align}
 where the prime indicates a sum over subsets $T$ which are generating
sets, and the double-prime indicates a sum is over subsets $T$ which
are \textit{not} generating sets.

It is evident from this that when $T_{0}$ is complete bipartite the
distribution of $f\left(d\right)/c^{\omega\left(d\right)}$ is controlled
by the $4$-torsion of the class group. The values of the function
however depend on additional information, which comes from the other
unramified 2-extensions of the quadratic field $K$.
\end{rem}

Up until now we have been working with the formula for the function
$f_{T}$ which counts $\left(G,H,T\right)$-extensions unramified
away from infinity. For the remainder of the paper we switch notation
as follows. Let $f_{T}^{\prime}$ denote the function counting $\left(G,H,T\right)$-extensions
unramified away from infinity and let $f_{T}$ denote that counting
$\left(G,H,T\right)$-extensions unramified everywhere. Similarly
define $f^{\prime}$ and $f$. We now show that the distributions
and moments of these two functions are the same.
\begin{lem}
\label{lem:counting unram everywhere}Let $f_{T}^{\prime}\left(K\right)$
and $f_{T}\left(K\right)$ denote the number of $\left(G,H,T\right)$-extensions
of $K$ which are unramified away from infinity (resp. unramified
everywhere). Then $\mu_{f_{T_{0}}/c^{\omega}}=\mu_{f_{T_{0}}^{\prime}/c^{\omega}}$
and $E_{k}\left(f_{T_{0}}/c^{\omega}\right)=E_{k}\left(f_{T_{0}}^{\prime}/c^{\omega}\right)$
for all positive integers $k$.
\end{lem}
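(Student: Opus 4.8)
The plan is to separate the imaginary and real cases. For imaginary quadratic $K$ there are no real places, so every extension $L/K$ unramified at all finite primes is automatically unramified everywhere; hence $f_{T_{0}}=f_{T_{0}}^{\prime}$ as functions on imaginary quadratic fields and both assertions are trivial there. It remains to treat real quadratic $K=\mathbb{Q}(\sqrt d)$.

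Since $d>0$ the real place of $\mathbb{Q}$ splits in $K$, so any complex conjugation in $\mathrm{Gal}(L/\mathbb{Q})$ lies in $H$ and has order at most $2$; an unramified-away-from-$\infty$ $(G,H,T_{0})$-extension $L/K$ is unramified everywhere precisely when $L$ is totally real, i.e.\ when this complex conjugation is trivial. The first step is to render this as a sign condition on the data of Section \ref{sec:Counting 2-extensions}. Writing $L^{g}=L'=\mathbb{Q}(\sqrt{d_{1}'},\dots,\sqrt{d_{n}'})$ and $L=L'(\sqrt\mu)$, and using the twist $L'(\sqrt\mu)\mapsto L'(\sqrt{\delta\mu})$ of Lemma \ref{lem:Suppose--and} over fundamental discriminants $\delta\mid d$, one checks that total reality of $L'(\sqrt{\delta\mu})$ is controlled by $\mathrm{sign}(\delta)$ together with an archimedean invariant of $(L',\mu)$ (a rational divisor of $d>0$ is totally positive iff it is positive). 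Carrying this through the proofs of Propositions \ref{prop:If-there-exists} and \ref{prop:There-is-a} and of Theorem \ref{thm:f_T_formula} yields a formula for $f_{T_{0}}$ of the same shape as \eqref{eq:formulathmstatement}, but with the sum over $(d_{1},\dots,d_{r})\in\mathcal{D}$ restricted by an archimedean sign condition which is a product of the quadratic characters $\left(\tfrac{-1}{\cdot}\right)$ — exactly the same type of factor as the $\left(\tfrac{-1}{D_{u}}\right)^{\lambda^{N}}$ already appearing in the proof of Theorem \ref{thm:asymptotic_analysis}.

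The second step is to rerun the asymptotic analysis of Theorem \ref{thm:asymptotic_analysis} for $f_{T_{0}}^{k}$ (and the joint averages $\prod_{i}f_{T_{0}}$) with this extra archimedean weight. Since the weight involves only the fixed places $2$ and $\infty$, it does not alter the graph $\mathcal{G}^{*}(T_{0}^{k})$, its family $\mathbf{U}(T_{0}^{k})$ of maximal disconnected subgraphs, or the exponent $\mathbf{c}=c^{k}$, so the power $(\log X)^{c^{k}-1}$ is unchanged; its only effect is to further constrain the index sets $N\in\mathcal{N}^{\pm}(\widehat T)$ and the congruence systems $\Lambda_{8}(\mathcal{U},N,\Upsilon)$, which are precisely the objects already manipulated in \cite{fk1} and in Section \ref{sec:Computing-the-moments}. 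A direct recomputation of $\Gamma_{\mathbf{U}(T_{0}^{k})}$ with the extra condition then shows that it changes by exactly the power of $2$ that cancels the normalizing factor produced in Step~$1$, so that $\sum_{d<X}f_{T_{0}}(d)^{k}/c^{k\omega(d)}$ has the same leading term as $\sum_{d<X}f_{T_{0}}^{\prime}(d)^{k}/c^{k\omega(d)}$. This gives $E_{k}(f_{T_{0}}/c^{\omega})=E_{k}(f_{T_{0}}^{\prime}/c^{\omega})$ for all $k$. The equality of distributions then follows: when $T_{0}$ is not complete bipartite both $\mu_{f_{T_{0}}/c^{\omega}}$ and $\mu_{f_{T_{0}}^{\prime}/c^{\omega}}$ are point masses (Corollary \ref{cor:distribution point mass}) and a point mass is determined by its moments; when $T_{0}$ is complete bipartite one reruns Section \ref{subsec:The-complete-bipartite} with the archimedean condition inserted, which replaces $Cl_{K}[4]/Cl_{K}[2]$ in Lemma \ref{lem:g in terms of Cl2} by the narrow analogue $Cl_{K}^{+}[4]/Cl_{K}^{+}[2]$, whose distribution over real quadratic fields equals that of $Cl_{K}[4]/Cl_{K}[2]$ by the work of Fouvry and Kl\"uners (\cite{fk1}, \cite{fk2distribution}).

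The main obstacle is the recomputation in Step~$2$: one must check that the archimedean sign condition enters the Fouvry--Kl\"uners cancellation argument as a genuinely independent constraint — equivalently, that every mixed moment of $f_{T_{0}}^{\prime}/c^{\omega}$ with its archimedean $\left(\tfrac{-1}{\cdot}\right)$-twist agrees with the corresponding moment of $f_{T_{0}}^{\prime}/c^{\omega}$ — so that it only rescales the leading constant by the expected power of $2$ without perturbing $\mathbf{c}$ or the structure of the main-term sum.
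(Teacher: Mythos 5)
Your proposal takes a genuinely different route from the paper's proof, and it has a real gap at its core that you yourself flag.

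The paper's argument is short and indirect: it sets $g=f_{T_{0}}^{\prime}/c^{\omega}-f_{T_{0}}/c^{\omega}\ge0$ (the normalized count of extensions unramified at finite places but ramified at $\infty$), invokes \cite{albertsklys} (Prop.~11 and Lemma~12) to conclude $E_{k}(g)=0$ for all $k$, deduces $\mu_{g}=\delta_{0}$, and then applies Lemma \ref{lem:distribution plus pointmass} to transfer both the distribution and the moments from $f_{T_{0}}^{\prime}/c^{\omega}$ to $f_{T_{0}}/c^{\omega}$. In other words, the only asymptotic input is about the \emph{difference}, and it is outsourced to the cited work; everything else is the soft measure theory already developed in Section \ref{subsec:Preliminaries-for-distributions}. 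Your plan instead proposes to redo the entire chain of Sections \ref{sec:Counting 2-extensions} through \ref{sec:Computing-the-moments} from scratch with an archimedean sign constraint threaded through: re-derive a formula of the shape of \eqref{eq:formulathmstatement} with an extra $\left(\tfrac{-1}{\cdot}\right)$-type weight, rerun Theorem \ref{thm:asymptotic_analysis}, and recompute $\Gamma_{\mathbf{U}(T_{0}^{k})}$. That is a much heavier route, and in the complete bipartite case it would further require you to identify a narrow-class-group analogue of Lemma \ref{lem:g in terms of Cl2} and to re-prove the Fouvry--Kl\"uners distribution for $Cl_{K}^{+}[4]/Cl_{K}^{+}[2]$, none of which is in the paper.

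The substantive problem, however, is that the route is not actually traversed. The whole proof rests on the unverified assertion that the recomputed $\Gamma_{\mathbf{U}(T_{0}^{k})}$ ``changes by exactly the power of $2$ that cancels the normalizing factor produced in Step~1,'' and you explicitly call this ``the main obstacle'' in your closing paragraph. That is exactly the place where a computation is needed and none is given. It is not formally obvious that inserting the archimedean condition leaves $\mathbf{c}$, $\mathbf{U}(T_{0}^{k})$, and the leading constant unchanged after renormalization: a priori the sign condition could correlate with the quadratic-residue symbols in (\ref{eq:formulathmstatement}) and shift the constant in a nontrivial way, and establishing that it does not is the entire content of the lemma. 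There is also a smaller imprecision in Step~1: total reality of $L'(\sqrt{\delta\mu})$ is governed by the signs of $\delta\mu$ at all real embeddings of $L'$, not just by $\mathrm{sign}(\delta)\in\{\pm1\}$, so the reduction to ``a product of $\left(\tfrac{-1}{D_{u}}\right)$-factors'' is itself a claim that needs an argument before the Fouvry--Kl\"uners machine can be applied. As written, the proposal is a plausible program rather than a proof; the paper's strategy of isolating the difference function and citing \cite{albertsklys} bypasses all of this.
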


\begin{proof}
By the same proofs as in Propositions 11 and Lemma 12 of \cite{albertsklys}
we see that if we define $g=f_{T_{0}}/c^{\omega}-f_{T_{0}}^{\prime}/c^{\omega}$
then $E_{k}\left(g\right)=0$ for all $k$. This implies $\mu_{g}=\delta_{0}$
and hence by Lemma \ref{lem:distribution plus pointmass} $\mu_{f_{T_{0}}/c^{\omega}}=\mu_{f_{T_{0}}^{\prime}/c^{\omega}}$.
Then also clearly $E_{k}\left(f_{T_{0}}/c^{\omega}\right)=E_{k}\left(f_{T_{0}}^{\prime}/c^{\omega}\right)$.
\end{proof}

\section{\label{sec:Moments-and-Correlations}Moments and Correlations}

Using results from the previous section we can determine the moments
of $f$ for a fixed pair $\left(G,H\right)$ as well as correlations
of such functions for several different pairs $\left(G,H\right)$.

Using almost the same proof as in Lemma \ref{lem:distribution plus pointmass}
we can also prove
\begin{lem}
\label{lem:distribution times pointmass}Let $\mathcal{M},\mathcal{I}$
be measurable spaces with $\mathcal{I}=\left(\mathbb{R},\mathcal{B}\right)$.
Let $g,h:\mathcal{M}\longrightarrow\mathcal{I}$ measurable functions
such that $\mu_{g}$ exists and $\mu_{h}=\delta_{a}$ and let $m_{a}$
be multiplication by $a$. Then
\[
\mu_{g\cdot h}=\left(m_{a}\right)_{*}\mu_{g}.
\]
\end{lem}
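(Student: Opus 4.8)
The plan is to mimic the proof of Lemma \ref{lem:distribution plus pointmass} almost verbatim, replacing the translation $t_a$ by the multiplication map $m_a$ and adjusting the epsilon-bookkeeping accordingly. As in that proof, I would work with cumulative distribution functions and use the fact that weak convergence of measures on $\mathbb{R}$ is equivalent to pointwise convergence of the CDFs at all points of continuity of the limit. The first step is to reduce to the case $a>0$: if $a=0$ then $\mu_{g\cdot h}$ should be $\delta_0=(m_0)_*\mu_g$, which follows since $g\cdot h$ is concentrated near $0$ on the set where $h$ is near $0$ (a set of measure tending to $1$), and this case can be dispatched directly; if $a<0$ one reduces to $a>0$ by replacing $h$ with $-h$ and noting $m_{-1}$ is a homeomorphism of $\mathbb{R}$ intertwining the relevant pushforwards. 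So assume $a>0$.

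\textbf{Main argument.} Write $F_\mu$ for the CDF of a measure $\mu$. Since $a>0$, for $U\in\mathcal{B}$ we have $(m_a)_*\mu_g(U)=\mu_g(a^{-1}U)$, hence $F_{(m_a)_*\mu_g}(x)=F_{\mu_g}(x/a)$. We must show $\lim_{X\to\infty}F_{\mu_{g\cdot h,X}}(x)=F_{\mu_g}(x/a)$ at every point $x$ of continuity of $F_{(m_a)_*\mu_g}$, equivalently every $x$ with $x/a$ a continuity point of $F_{\mu_g}$. Fix such an $x$ and fix $\epsilon>0$. Choose $\epsilon_0>0$ small enough that $|F_{\mu_g}(x/a)-F_{\mu_g}((x\pm\epsilon_0)/a)|<\epsilon$ (possible by continuity of $F_{\mu_g}$ at $x/a$) and such that $X$ can be chosen large enough that the set $W=h^{-1}(a-\epsilon_0',a+\epsilon_0')$ — where $\epsilon_0'>0$ is picked small relative to $\epsilon_0$ and to $|x|$ — satisfies $\mu_X(W)\ge 1-\epsilon$ (possible since $\mu_h=\delta_a$). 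On $W$, the value $h(K)$ lies in a small interval around $a$, so for $K\in W$ the condition $g(K)h(K)\le x$ pinches $g(K)$ between $x/(a+\epsilon_0')$ and $x/(a-\epsilon_0')$, both of which lie within $\epsilon_0/a$ of $x/a$ once $\epsilon_0'$ is chosen appropriately (using that $g$ takes bounded values — indeed $g$ is rational-valued and controlled, but even without that one can restrict to the further set where $|g|$ is bounded, again of measure close to $1$; alternatively, since on $W$ one has $h$ bounded away from $0$, the two-sided inclusion is a purely algebraic statement). This yields the set inclusions
\[
g^{-1}\bigl((-\infty,\,x/a-\epsilon_0]\bigr)\cap W\subset(g\cdot h)^{-1}\bigl((-\infty,x]\bigr)\cap W\subset g^{-1}\bigl((-\infty,\,x/a+\epsilon_0]\bigr)\cap W.
\]

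\textbf{Conclusion.} From here the estimate is identical to Lemma \ref{lem:distribution plus pointmass}: since $\mu_X(W)\ge 1-\epsilon$, measures of events intersected with $W$ differ from the un-intersected measures by at most $\epsilon$; applying $\mu_X$ to the displayed inclusions and using $F_{\mu_{g,X}}\to F_{\mu_g}$ together with the choice of $\epsilon_0$, one obtains $F_{\mu_{g,X}}(x/a)-\epsilon_1\le F_{\mu_{g\cdot h,X}}(x)\le F_{\mu_{g,X}}(x/a)+\epsilon_1$ for some $\epsilon_1$ a fixed multiple of $\epsilon$, for all $X$ large. Letting $X\to\infty$ and then $\epsilon\to 0$ gives $\lim_{X\to\infty}F_{\mu_{g\cdot h,X}}(x)=F_{\mu_g}(x/a)=F_{(m_a)_*\mu_g}(x)$, hence $\mu_{g\cdot h,X}\to(m_a)_*\mu_g$ weakly, as claimed. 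I expect the only genuinely new wrinkle compared to the additive case to be the handling of the regime where $g$ is large while $h$ is close to (but not equal to) $a$, since multiplication amplifies errors proportionally to $|g|$; this is why I would either invoke boundedness of $g$ on a set of density close to $1$, or — cleaner — exploit that on $W$ the quantity $h$ is bounded below by $a-\epsilon_0'>0$ so that "$gh\le x$" is equivalent to "$g\le x/h$" with $x/h$ trapped in a controlled interval, making the inclusions exact without any size hypothesis on $g$. With that observation the proof is otherwise a routine transcription of Lemma \ref{lem:distribution plus pointmass}.
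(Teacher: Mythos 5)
Your main line of argument is correct and is precisely what the paper gestures at — the paper gives no proof of this lemma beyond the phrase ``using almost the same proof as in Lemma~\ref{lem:distribution plus pointmass},'' and you have correctly filled in the one genuine wrinkle: since multiplication scales by $|g|$, the naive analogue of the additive inclusions would need a bound on $g$, but on $W=h^{-1}(a-\epsilon_0',a+\epsilon_0')$ one has $h>0$, so $gh\le x\Leftrightarrow g\le x/h$ with $x/h$ trapped near $x/a$, and the inclusions become exact with no size hypothesis on $g$. That observation, together with the CDF bookkeeping copied from the additive lemma, carries the $a>0$ case cleanly; the reduction of $a<0$ to $a>0$ via the homeomorphism $m_{-1}$ is also fine.

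The one place your proposal is not quite right is the $a=0$ subcase. The claim ``$g\cdot h$ is concentrated near $0$ on the set where $h$ is near $0$'' is false as stated: on $W=h^{-1}(-\epsilon_0',\epsilon_0')$ nothing prevents $|g|$ from being of order $1/\epsilon_0'$, so $g\cdot h$ need not be small there. (Concretely, take $g=1/h$; then $g\cdot h\equiv 1$, yet $\mu_h=\delta_0$ is consistent with $\mu_g$ existing as a weak limit with total escape of mass, in which case $(m_0)_*\mu_g=0$ but $\mu_{g\cdot h}=\delta_1$.) To dispatch $a=0$ one genuinely needs a tightness input for $(\mu_{g,X})_X$ — this does hold automatically when $\mu_g$ is a \emph{probability} measure (Prokhorov), so the case can be repaired under that strengthened hypothesis, but the paper's definition of ``$\mu_g$ exists'' allows a defective weak limit, and then the $a=0$ conclusion can fail. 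In practice the paper only invokes this lemma with $a\ne 0$ (see Remark~\ref{rem:limiting moments} and the proof of Theorem~\ref{thm:correlations}, where the point masses are explicitly nonzero), so this is a caveat rather than a fatal error, but you should either add the tightness argument for $a=0$ or state the lemma for $a\ne 0$.
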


We first need a lemma on correlations of the type of functions we
will encounter. For any measure $\mu$ let $E_{k}\left(\mu\right)$
denote the $k$th moment defined by $\int x^{k}d\mu$.
\begin{lem}
\label{lem:limiting moments}Let $\mathcal{M},\mathcal{I}$ be measurable
spaces with $\mathcal{I}=\left(\mathbb{R},\mathcal{B}\right)$. Let
$g,h:\mathcal{M}\longrightarrow\mathcal{I}$ be positive measurable
functions. Suppose $g$ has countable well-ordered image and $\mu_{g,X}\longrightarrow\mu_{g}$
strongly and $\mu_{h}=\delta_{a}$. Furthemore suppose $E_{k}\left(\mu_{g,X}\right)\longrightarrow E_{k}\left(\mu_{g}\right)$
and $E_{k}\left(\mu_{h,X}\right)\longrightarrow E_{k}\left(\mu_{h}\right)$
for $k=1,2$. Then $E_{1}\left(\mu_{g\cdot h,X}\right)\longrightarrow E_{1}\left(\mu_{g\cdot h}\right)$.
\end{lem}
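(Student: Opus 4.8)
The plan is to reduce the statement about $E_1(\mu_{g\cdot h,X})$ to the hypotheses on $g$ and $h$ separately by a truncation argument. First I would fix $\epsilon>0$ and, using that $g$ has countable well-ordered image and $\mu_{g,X}\to\mu_g$ strongly with $E_1(\mu_{g,X})\to E_1(\mu_g)$ and $E_2(\mu_{g,X})\to E_2(\mu_g)$, choose a finite truncation level $B$ in the image of $g$ so that the tail contribution $\int_{g>B} g\,d\mu_{g,X}$ is uniformly small in $X$ (large enough $X$); this uses the second-moment bound via Cauchy--Schwarz, $\int_{g>B} g\,d\mu_{g,X}\le E_2(\mu_{g,X})^{1/2}\mu_{g,X}(g>B)^{1/2}$, together with $\mu_{g,X}(g>B)\le B^{-1}E_1(\mu_{g,X})$ and the convergence of the moments. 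Write $g=g_B+g^B$ where $g_B=g\cdot\mathbf{1}_{g\le B}$ is bounded and takes finitely many values, and $g^B$ is the tail.

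Next I would handle the bounded piece. Since $g_B$ takes finitely many values $v_1,\dots,v_N\le B$, we have $E_1(\mu_{g_B\cdot h,X})=\sum_{j} v_j\,\mu_X(g=v_j)\cdot(\text{average of }h\text{ on }\{g=v_j\})$. Because $\mu_h=\delta_a$ with $h$ positive and $E_1(\mu_{h,X})\to a$, the mass of $h$ outside a small neighborhood of $a$ tends to $0$; combined with $E_2(\mu_{h,X})\to a^2$ one gets that on each set $\{g=v_j\}$ the conditional average of $h$ is close to $a$ for $X$ large (the sets $\{g=v_j\}$ have density bounded below by strong convergence of $\mu_{g,X}$, so the small-mass exceptional set for $h$ is negligible relative to them). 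Hence $E_1(\mu_{g_B\cdot h,X})\to a\sum_j v_j\mu_g(g=v_j)=E_1((m_a)_*\mu_{g_B})$, and $\sum_j v_j\mu_g(g=v_j)\to E_1(\mu_g)$ as $B\to\infty$ by monotone convergence. By Lemma \ref{lem:distribution times pointmass}, $E_1((m_a)_*\mu_g)=E_1(\mu_{g\cdot h})$.

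Finally I would control the tail piece $g^B\cdot h$: by Cauchy--Schwarz $E_1(\mu_{g^B\cdot h,X})\le E_1(\mu_{(g^B)^2,X})^{1/2}E_1(\mu_{h^2,X})^{1/2}$, and $E_1(\mu_{(g^B)^2,X})\le E_2(\mu_{g,X})^{1/2}\cdot B\cdot\mu_{g,X}(g>B)^{1/2}$ or more simply $\int (g^B)^2\,d\mu_{g,X}=\int_{g>B}g^2\,d\mu_{g,X}\to 0$ as $B\to\infty$ uniformly in large $X$, using convergence of $E_2(\mu_{g,X})$ and tightness of the tail (here the well-ordered countable image is what lets us write the tail as a genuine sum over values exceeding $B$ and apply dominated convergence to $E_2$). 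Since $E_1(\mu_{h^2,X})\to a^2$ is bounded, the tail contribution to $E_1(\mu_{g\cdot h,X})$ is $O(\sup_X(\int_{g>B}g^2 d\mu_{g,X})^{1/2})$, which is small. Combining the three steps and letting $X\to\infty$ then $B\to\infty$ gives $E_1(\mu_{g\cdot h,X})\to E_1(\mu_{g\cdot h})$.

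The main obstacle I expect is the second step: making rigorous that the conditional average of $h$ on each level set $\{g=v_j\}$ converges to $a$. This requires that the level sets do not shrink (which follows from strong convergence $\mu_{g,X}(g=v_j)\to\mu_g(g=v_j)$, at least along the $v_j$ with positive limiting mass, and careful bookkeeping of the finitely many $v_j$ with vanishing mass) and that the small-mass set where $h$ is far from $a$ — which has $\mu_X$-mass $\to 0$ by the first and second moment hypotheses on $h$ — is genuinely negligible against each fixed-positive-mass level set. A clean way to package this is to write $E_1(\mu_{g_B\cdot h,X})-a\,E_1(\mu_{g_B,X})=\int g_B(h-a)\,d\mu_X$ and bound $|\int g_B(h-a)\,d\mu_X|\le B\int|h-a|\,d\mu_X$, then observe $\int|h-a|\,d\mu_X\to 0$ because $\mu_{h,X}\to\delta_a$ weakly with $E_2(\mu_{h,X})\to a^2$ (uniform integrability of $h$, hence of $|h-a|$). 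This avoids level sets entirely and is the argument I would actually write.
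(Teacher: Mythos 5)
Your final packaging (the last paragraph, which you flag as the one you would actually write) is correct and uses the same three ingredients as the paper's proof: a truncation of $g$ at a level $B$ controlled by the second moment, the first- and second-moment convergence of $h$ to a point mass, and Cauchy--Schwarz to dispose of the tail $g\mathbf{1}_{g>B}\cdot h$. The route differs in one place worth noting. The paper splits the \emph{domain} into $W_1=h^{-1}(a-\delta,a+\delta)$ and its complement, sandwiches $\int_{W_1}gh\,d\mu_X$ between $(a\pm\delta)\int_{W_1}g\,d\mu_X$, and tracks several $\epsilon$'s from there. You instead avoid the domain split entirely by observing that
\[
\int|h-a|\,d\mu_X\le\Bigl(\int(h-a)^2\,d\mu_X\Bigr)^{1/2}=\bigl(E_2(\mu_{h,X})-2aE_1(\mu_{h,X})+a^2\bigr)^{1/2}\longrightarrow 0,
\]
and then $\bigl|\int g_B(h-a)\,d\mu_X\bigr|\le B\int|h-a|\,d\mu_X$. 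This is a genuinely cleaner treatment of the bounded piece: it replaces the $\delta$-neighborhood bookkeeping with a single variance estimate, and it makes transparent exactly which hypotheses on $h$ are used ($E_1\to a$ and $E_2\to a^2$, i.e.\ the variance of $h$ under $\mu_X$ vanishes). The tail estimate and the diagonal $B\to\infty$, $X\to\infty$ argument are essentially identical to the paper's. One minor caution: your first attempt (the level-set conditional-average argument in the middle paragraph) has real gaps of the kind you yourself identify — it is good that you abandoned it, since the sets $\{g=v_j\}$ with small limiting mass would be delicate to handle uniformly; the $L^1$ estimate sidesteps all of that.
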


\begin{proof}
Let $\left\{ y_{i}\right\} _{i=1}^{\infty}$ be the image of $g$
in increasing order. Then for each $k=1,2$ we have the equations
\[
E_{k}\left(\mu_{g}\right)=\sum_{i=0}^{\infty}\mu_{g}\left(y_{i}\right)y_{i}^{k}.
\]
Let $\epsilon_{1}>0$. Since the above series converges there exists
$N$ large such that $\int_{x>N}x^{k}d\mu_{g}<\epsilon_{1}$ for $k=1,2$.
Since $E_{k}\left(\mu_{g,X}\right)\longrightarrow E_{k}\left(\mu_{g}\right)$
and $\mu_{g,X}\longrightarrow\mu_{g}$ strongly we can make $X$ large
enough so that $\int_{x>N}x^{k}d\mu_{g,X}<\epsilon_{1}$ for $k=1,2$.

Let $\epsilon_{2},\delta>0$ and $I=\left(a-\delta,a+\delta\right)$
and let $J=\mathbb{R}\backslash I$. Since $\mu_{h}=\delta_{a}$ we
can find $X$ large enough such that $\mu_{h,X}\left(I\right)>1-\epsilon_{2}$
and $\mu_{h,X}\left(J\right)<\epsilon_{2}$. Since $E_{k}\left(\mu_{h,X}\right)\longrightarrow E_{k}\left(\mu_{h}\right)=a^{k}$
we can also find $X$ large enough such that $\left|a^{k}-\int x^{k}d\mu_{h,X}\right|<\epsilon_{2}$
for $k=1,2$. Hence 
\begin{align}
\left|\int_{J}x^{k}d\mu_{h,X}\right| & <\epsilon_{2}+\left|a^{k}-\int_{I}x^{k}d\mu_{h,X}\right|\label{eq:correlation ineq1}
\end{align}
and the right side goes to 0 with $\epsilon_{2}$ and $\delta$. Also
choose $\epsilon_{2}$ such that $\epsilon_{2}N^{2}$ is small. So
we can choose $\epsilon_{1},\epsilon_{2},\delta$ arbitrarily small
and $X$ large enough such that all of the above holds.

Now fix such an $X$ and let $W_{1}=h^{-1}\left(I\right)\cap\mathcal{D}_{X}$
and $W_{1}^{c}=\mathcal{D}_{X}\backslash W_{1}$. Note $\mu_{X}\left(W_{1}\right)=\mu_{h,X}\left(I\right)>1-\epsilon_{2}$
and $\mu_{X}\left(W_{1}^{c}\right)=\mu_{h,X}\left(J\right)<\epsilon_{2}$.
By definition

\begin{align*}
E_{1}\left(\mu_{g\cdot h,X}\right) & =\int_{\mathcal{D}_{X}}\left(g\cdot h\right)\left(x\right)d\mu_{X}\\
 & =\int_{W_{1}}\left(g\cdot h\right)\left(x\right)d\mu_{X}+\int_{W_{1}^{c}}\left(g\cdot h\right)\left(x\right)d\mu_{X}
\end{align*}

For $k=1,2$ we have
\begin{align}
\int_{W_{1}^{c}}g\left(x\right)^{k}d\mu_{X} & =\int_{W_{1}^{c}\cap g^{-1}(\left[0,N\right])}g\left(x\right)^{k}d\mu_{X}+\int_{W_{1}^{c}\cap g^{-1}(N,\infty)}g\left(x\right)^{k}d\mu_{X}\label{eq:correlation ineq2}\\
 & \le\epsilon_{2}N^{k}+\int_{x>N}x^{k}d\mu_{g,X}\nonumber \\
 & \le\epsilon_{2}N^{2}+\epsilon_{1}.\nonumber 
\end{align}
 Setting $k=1$ this implies that $\left|E_{1}\left(\mu_{g}\right)-\int_{W_{1}}g\left(x\right)d\mu_{X}\right|<\left|E_{1}\left(\mu_{g}\right)-E_{1}\left(\mu_{g,X}\right)\right|+\epsilon_{2}N^{2}+\epsilon_{1}$.
Then it follows from the definition of $W_{1}$ and the previous fact
that 
\[
\left(a-\delta\right)\int_{W_{1}}g\left(x\right)d\mu_{X}\le\int_{W_{1}}\left(g\cdot h\right)\left(x\right)d\mu_{X}\le\left(a+\delta\right)\int_{W_{1}}g\left(x\right)d\mu_{X}
\]
 so $\int_{W_{1}}\left(g\cdot h\right)\left(x\right)d\mu_{X}$ can
be made arbitrarily close to $aE_{1}\left(\mu_{g}\right)=E_{1}\left(\mu_{g\cdot h}\right)$
(see Lemma \ref{lem:distribution times pointmass}).

Finally it follows from Cauchy-Schwarz that 
\[
\int_{W_{1}^{c}}\left(g\cdot h\right)\left(x\right)d\mu_{X}\le\left(\int_{W_{1}^{c}}g\left(x\right)^{2}d\mu_{X}\right)^{1/2}\left(\int_{W_{1}^{c}}h\left(x\right)^{2}d\mu_{X}\right)^{1/2}.
\]
 Note $\int_{W_{1}^{c}}h\left(x\right)^{2}d\mu_{X}=\int_{J}x^{2}d\mu_{h,X}$.
Hence by (\ref{eq:correlation ineq1}) and (\ref{eq:correlation ineq2})
with $k=2$ we see that $\int_{W_{1}^{c}}\left(g\cdot h\right)\left(x\right)d\mu_{X}$
goes to 0 with $\epsilon_{1},\epsilon_{2}$ and $\delta$.
\end{proof}
\begin{rem}
\label{rem:limiting moments}A similar easier proof can be used to
show Lemma \ref{lem:limiting moments} holds when $g$ satisfies the
same conditions as $h$ and $\mu_{g}=\delta_{b}$ for some non-zero
$b\in\mathbb{R}$.
\end{rem}

 Considering the explicit form of the function $f$ when $T_{0}$
is complete bipartite we obtain the following result on correlations.
\begin{thm}
\label{thm:correlations}Let $\left(G_{1},H_{1}\right),\ldots,\left(G_{k},H_{k}\right)$
be a sequence of admissible pairs. Let $T_{i,0}$ be the maximal admissible
generating set corresponding to $\left(G_{i},H_{i}\right)$. Let $f_{i}$
be the function counting $\left(G_{i},H_{i}\right)$-extensions. Let
$Y\subset\left[k\right]$ be the set of indices for which $T_{i,0}$
is complete bipartite. Then
\[
\lim_{X\longrightarrow\infty}\frac{\sum_{d\in\mathcal{D}_{X}^{\pm}}\prod_{i=1}^{k}\left(\frac{f_{i}\left(d\right)}{c_{T_{i,0}}^{\omega\left(d\right)}}\right)}{\sum_{d\in\mathcal{D}_{X}^{\pm}}1}=\left(\prod_{j=1}^{k}q_{j}^{-1}\right)\left(\prod_{j\in\left[k\right]\backslash Y}P_{j}^{\pm}\right)\left(\sum_{i=0}^{\left|Y\right|}\left(-1\right)^{i}\binom{\left|Y\right|}{i}M^{\pm}\left(\left|Y\right|-i\right)\right)
\]
 where
\begin{itemize}
\item $q_{j}=2^{n_{j}-1}\left|\mathrm{Aut}_{H_{j}}(G_{j},\Phi\left(G_{j}\right))\right|$
\item $M^{-}\left(j\right)=N\left(j\right)$ and $M^{+}\left(j\right)=2^{-j}\left(N\left(j+1\right)-N\left(j\right)\right)$
\item $P_{j}^{\pm}=2^{c_{T_{j,0}}-s_{j}+n_{j}-1}\left|T_{j,0}\right|Q_{j}^{\pm}$
where $s_{j}$ is the number of connected components of $T_{j,0}$
and $Q_{j}^{\pm}$ is defined in (\ref{eq:Q}).
\end{itemize}
\end{thm}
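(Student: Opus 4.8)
The plan is to compute the correlation $\lim_X \frac{1}{|\mathcal{D}_X^\pm|}\sum_d \prod_i f_i(d)/c_{T_{i,0}}^{\omega(d)}$ by splitting the product according to the partition $[k] = Y \sqcup ([k]\setminus Y)$ and handling each type of factor via the distributional results already established. For the indices $j \notin Y$, Theorem \ref{thm:k_moments pointmass} (together with Corollary \ref{cor:distribution point mass}) tells us that $h_j(d) := f_j(d)/c_{T_{j,0}}^{\omega(d)}$ has distribution $\delta_{P_j^\pm/q_j}$, a point mass at the nonzero rational $P_j^\pm/q_j = 2^{c_{T_{j,0}}-s_j+n_j-1}|T_{j,0}|Q_j^\pm / (2^{n_j-1}|\mathrm{Aut}_{H_j}(G_j,\Phi(G_j))|)$; here the $q_j$ absorbs the $2^{n_j-1}\cdot|\mathrm{Aut}|$ normalization exactly as in the theorem statement. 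For $j \in Y$, by Theorem \ref{thm:dist complete bipartite} and Remark \ref{rem:classgroup formula for f}, $h_j(d)$ is controlled by $2|Cl_K[4]/Cl_K[2]|$ divided by $2^{n_j}|\mathrm{Aut}_{H_j}(G_j,\Phi(G_j))|$ plus lower-order terms whose moments vanish; after normalization its distribution is supported on $\{(2^i-1)/q_j : i \ge 0\}$ with weights $P_{CL}^\pm(i)$.

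The key technical point is that a correlation of several functions, each having a point-mass distribution except possibly for a cluster of functions all governed by the \emph{same} arithmetic invariant $|Cl_K[4]/Cl_K[2]|$, factors as the product of the point-mass values times the expectation of the single random variable $\bigl(2|Cl_K[4]/Cl_K[2]|\bigr)^{|Y|}/\prod_{j\in Y}(2^{n_j}|\mathrm{Aut}|)$ — i.e. the $|Y|$-th moment of a fixed function, not a product of independent copies. First I would reduce, using Lemma \ref{lem:counting unram everywhere}, to the functions counting extensions unramified away from infinity, so that the explicit formula (\ref{eq:class group formula for f}) applies on the nose. Then, using the decomposition $f_j/c^\omega = (\text{class-group term}) + (\text{lower-order terms})$ for $j \in Y$ and $f_j/c^\omega = (\text{point-mass term}) + (\text{moment-zero terms})$ for $j \notin Y$, I would expand the product $\prod_i h_i$ into a sum of products of individual summands and argue that every term involving at least one "moment-zero" or "lower-order" factor contributes $0$ in the limit. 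This uses Lemma \ref{lem:limiting moments} and Remark \ref{rem:limiting moments}: a product of a function with convergent moments against a point-mass function has convergent first moment equal to the product of the limits; and a function whose $k$-th moments all vanish, multiplied (via Cauchy--Schwarz bounds as in the proof of Lemma \ref{lem:limiting moments}) against functions with bounded second moments, has vanishing correlation. Crucially one needs the relevant second moments to exist, which follows from Theorem \ref{thm:asymptotic_analysis} / Proposition \ref{prop:T_moments} applied to the $f_{\widehat T}$ appearing when we square.

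After this reduction, the surviving main term is
\[
\Bigl(\prod_{j\notin Y}\tfrac{P_j^\pm}{q_j}\Bigr)\cdot \tfrac{1}{\prod_{j\in Y}(2^{n_j}|\mathrm{Aut}_{H_j}(G_j,\Phi(G_j))|)}\cdot \boldsymbol{E}^\pm\!\bigl[(2|Cl_K[4]/Cl_K[2]|)^{|Y|}\bigr],
\]
and since $2^{n_j}|\mathrm{Aut}| = 2 q_j$, the $|Y|$ factors of $2$ in the numerator cancel the $\prod_{j\in Y} 2$ in the denominator, leaving $\prod_j q_j^{-1}$ overall. It remains to identify $\boldsymbol{E}^\pm\bigl[|Cl_K[4]/Cl_K[2]|^{\,m}\bigr]$, where $m = |Y|$, with $\sum_{i=0}^{m}(-1)^i\binom{m}{i}M^\pm(m-i)$. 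By Theorem \ref{thm:dist complete bipartite}, $|Cl_K[4]/Cl_K[2]|$ has distribution $P_{CL}^\pm(i)$ on the value $2^i$, so its $m$-th moment is $\sum_i 2^{mi}P_{CL}^\pm(i) = \boldsymbol{E}^\pm[|A[2]|^m]$ for a Cohen--Lenstra-distributed group $A$ (using $|A[2]| = 2^{\mathrm{rank}(A)}$). The standard Cohen--Lenstra moment computation gives $\boldsymbol{E}^\pm[|A[2]|^m] = \boldsymbol{E}^\pm[|\mathrm{Hom}(A,\mathbb{F}_2^m)|] = \sum_{V \le \mathbb{F}_2^m}\boldsymbol{E}^\pm[\#\mathrm{Surj}(A, \hat V)]$ summed over subspaces $V$, and by the defining property of the Cohen--Lenstra measure each surjection-count moment is $1$ (negative case) or $|\hat V|^{-1} = 2^{-\dim V}$ (positive case), yielding $\sum_{V\le\mathbb{F}_2^m} 1 = N(m)$ in the negative case and $\sum_{j}2^{-j}(\#\{j\text{-dim subspaces}\})$ in the positive case; inclusion--exclusion over the containment lattice of subspaces of $\mathbb{F}_2^m$ then rewrites these in the claimed form $\sum_i (-1)^i\binom{m}{i}M^\pm(m-i)$. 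The main obstacle is the second paragraph's bookkeeping: carefully verifying that \emph{all} cross-terms in the expansion genuinely have vanishing correlation — in particular controlling the correlation of a lower-order term for one $j\in Y$ against the class-group terms for the other indices in $Y$, since those are \emph{not} independent — which requires the Cauchy--Schwarz argument of Lemma \ref{lem:limiting moments} applied iteratively and the uniform moment bounds from Section \ref{sec:Computing-the-moments}.
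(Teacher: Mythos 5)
Your overall strategy matches the paper's proof: split into the bipartite indices $Y$ and the non-bipartite indices $[k]\setminus Y$, treat the latter via the point-mass result (Corollary \ref{cor:distribution point mass}), reformulate the former in terms of $|Cl_K[4]/Cl_K[2]|$ via Remark \ref{rem:classgroup formula for f}, expand the product, kill the cross-terms using Lemma \ref{lem:limiting moments} and Remark \ref{rem:limiting moments}, and identify the remaining moment. However, there is a concrete error in the reduction of the $j\in Y$ factors that propagates into the final formula.

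You write that for $j\in Y$, $h_j$ equals $2|Cl_K[4]/Cl_K[2]|/(2^{n_j}|\mathrm{Aut}_{H_j}(G_j,\Phi(G_j))|)$ plus lower-order terms \emph{whose moments all vanish}, and so the surviving main term is $\prod_{j\notin Y}(P_j^\pm/q_j)\cdot\prod_{j\in Y}(2^{n_j}|\mathrm{Aut}|)^{-1}\cdot\boldsymbol{E}^\pm\bigl[(2|Cl_K[4]/Cl_K[2]|)^{|Y|}\bigr]$. This is not correct. In formula (\ref{eq:class group formula for f}) the doubly-primed sum $\sum''_{T\subsetneqq T_0}g_{\mathcal{D}_T}/c^\omega$ runs over non-generating subsets, and by Lemma \ref{lem:lower order dist} two of these (namely $T=W_1,W_2$) satisfy $g_{\mathcal{D}_T}/c^\omega\equiv 1$ identically — they have distribution $\delta_1$, not $\delta_0$, and certainly do not have vanishing moments. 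Keeping them gives
\[
h_j \;=\; \frac{|Cl_K[4]/Cl_K[2]|-1}{q_j} \;+\; (\text{$\delta_0$ terms}),
\]
and only this latter decomposition allows you to discard the remaining terms. Consequently the surviving main term is $\bigl(\prod_j q_j^{-1}\bigr)\bigl(\prod_{j\notin Y}P_j^\pm\bigr)\boldsymbol{E}^\pm\bigl[(|Cl_K[4]/Cl_K[2]|-1)^{|Y|}\bigr]$, and the identification with the alternating sum is immediate from the binomial theorem, $\boldsymbol{E}[(X-1)^m]=\sum_i(-1)^i\binom{m}{i}\boldsymbol{E}[X^{m-i}]=\sum_i(-1)^i\binom{m}{i}M^\pm(m-i)$, since $M^\pm(j)$ is by definition the $j$-th Fouvry--Kl\"uners moment of $|Cl_K[4]/Cl_K[2]|$. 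Your final claim — ``it remains to identify $\boldsymbol{E}^\pm[|Cl_K[4]/Cl_K[2]|^m]$ with $\sum_i(-1)^i\binom{m}{i}M^\pm(m-i)$'' — is false as stated ($\boldsymbol{E}^\pm[|Cl_K[4]/Cl_K[2]|^m]=M^\pm(m)$, full stop), and the subsequent attempt to derive it via a surjection-counting inclusion-exclusion would not produce the alternating sign pattern. Restoring the $-1$ shift fixes the argument; the rest of the plan (the Cauchy--Schwarz handling of cross-terms, the iterated application of Lemma \ref{lem:limiting moments}) is sound and is what the paper does.
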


\begin{proof}
From Remark \ref{rem:classgroup formula for f} we see that if $T_{i,0}$
is complete bipartite then $f_{i}\left(d\right)/c_{T_{i,0}}^{\omega\left(d\right)}$
can be written as a sum of $(\left|Cl_{K}\left[4\right]/Cl_{K}\left[2\right]\right|-1)/q_{i}$
and functions $g_{i,j}$ satisfying $\mu_{g_{i,j}}=\delta_{0}$ and
$E_{k}\left(\mu_{g_{i,j},X}\right)\longrightarrow E_{k}\left(\mu_{g_{i,j}}\right)$.
Let $f\left(K\right)=\left|Cl_{K}\left[4\right]/Cl_{K}\left[2\right]\right|-1$.
Note $f^{k}\left(K\right)=\sum_{i=0}^{k}\left(-1\right)^{i}\binom{k}{i}\left|Cl_{K}\left[4\right]/Cl_{K}\left[2\right]\right|^{i}$.
By Fouvry-Kl\"uners
\[
\lim_{X\longrightarrow\infty}\frac{\sum_{K\in\mathcal{D}_{X}^{\pm}}\left(\left|Cl_{K}\left[4\right]/Cl_{K}\left[2\right]\right|\right)^{k}}{\sum_{K\in\mathcal{D}_{X}^{\pm}}1}=M^{\pm}\left(k\right).
\]
 Hence 
\[
\lim_{X\longrightarrow\infty}\frac{\sum_{d\in\mathcal{D}_{X}^{\pm}}\left(f\left(K\right)\right)^{k}}{\sum_{d\in\mathcal{D}_{X}^{\pm}}1}=\sum_{i=0}^{k}\left(-1\right)^{i}\binom{k}{i}M^{\pm}\left(k-i\right)
\]
and $\mu_{f}\left(2^{i}-1\right)=P_{CL}^{\pm}\left(i\right)$ for
$i\in\mathbb{Z}_{\ge0}$. Note $f\left(K\right)^{k}$ satisfies the
conditions of Lemma \ref{lem:limiting moments}.

By Theorem \ref{thm:k_moments pointmass} and Corollary \ref{cor:distribution point mass}
we know the limiting distribution and moments of $f_{i}\left(d\right)/c_{T_{i,0}}^{\omega\left(d\right)}$
for each $i\notin Y$. Let $g\left(d\right)=\prod_{i\in\left[k\right]\backslash Y}\left(f_{i}\left(d\right)/c_{T_{i,0}}^{\omega\left(d\right)}\right)$.
By applying Remark \ref{rem:limiting moments} iteratively to products
of powers of the functions $f_{i}\left(d\right)/c_{T_{i,0}}^{\omega\left(d\right)}$
for $i\in\left[k\right]\backslash Y$ we obtain that $\mu_{g}$ is
a point-mass supported at $\prod_{j\in\left[k\right]\backslash Y}P_{j}^{\pm}/q_{j}$
and

\[
\lim_{X\longrightarrow\infty}\frac{\sum_{d\in\mathcal{D}_{X}^{\pm}}g\left(d\right)}{\sum_{d\in\mathcal{D}_{X}^{\pm}}1}=\prod_{j\in\left[k\right]\backslash Y}P_{j}^{\pm}/q_{j}.
\]
Let $h\left(d\right)=\prod_{i\in Y}(f_{i}\left(d\right)/c_{T_{i,0}}^{\omega\left(d\right)})$.
Then similarly by Remark \ref{rem:limiting moments} applied to the
functions $g_{i,j}$ for $i\in Y$ and Lemma \ref{lem:limiting moments}
to products of $f$ and the $g_{i,j}$ we see
\[
\lim_{X\longrightarrow\infty}\frac{\sum_{d\in\mathcal{D}_{X}^{\pm}}h\left(d\right)}{\sum_{d\in\mathcal{D}_{X}^{\pm}}1}=\sum_{i=0}^{k}\left(-1\right)^{i}\binom{k}{i}M^{\pm}\left(k-i\right)
\]
 and $\mu_{h}\left(2^{i}-1\right)=P_{CL}^{\pm}\left(i\right)$. Finally
applying Lemma \ref{lem:limiting moments} to $g$ and $h$ once more
completes the proof.

\end{proof}
\begin{cor}
\label{cor:compositum density}Let $\left(G_{1},H_{1},T_{0,1}\right),\ldots,\left(G_{k},H_{k},T_{0,k}\right)$
be a set of admissible tuples. Let $\mathcal{K}_{0}\subset\mathcal{K}$
be the set of all quadratic fields $K$ whose maximal unramified extension
$K^{un}/\mathbb{Q}$ contains a $\left(G_{i},H_{i}\right)$ extension
for all $i$.

If $H_{i}$ is non-abelian for all $i$ then $\mathcal{K}_{0}$ has
density 1. Otherwise $\mathcal{K}_{0}$ has density $1-P_{CL}\left(0\right)$.
\end{cor}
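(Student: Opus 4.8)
The plan is to reinterpret membership in $\mathcal{K}_{0}$ as positivity of the counting functions $f_{i}$, and then to extract the density of each set $\{f_{i}\ge 1\}$ from the distribution of $f_{i}/c_{T_{i,0}}^{\omega}$ computed in Theorem \ref{thm:main thm}, together with a group-theoretic observation and the unconditional Fouvry--Kl\"uners $4$-rank density. First I would note that an everywhere-unramified extension $L/K$ is by definition contained in $K^{un}$, so $K^{un}/\mathbb{Q}$ contains a $(G_{i},H_{i})$-extension precisely when $f_{i}(K)\ge 1$, where $f_{i}$ denotes the everywhere-unramified count (as used from Section \ref{sec:Determining-the-distribution} on). Hence $\mathcal{K}_{0}=\bigcap_{i}\{K\mid f_{i}(K)\ge 1\}=\bigcap_{i}\{K\mid h_{i}(K)>0\}$ with $h_{i}=f_{i}/c_{T_{i,0}}^{\omega}$, and it suffices to control the density of each $\{f_{i}\ge 1\}$ and of finite intersections of these sets. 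Let $Y$ be the set of indices with $H_{i}$ abelian, equivalently with $\mathcal{G}(T_{i,0})$ complete bipartite (Lemma \ref{lem:complete bipartite abelian}).

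For an index $i\notin Y$, Theorem \ref{thm:main thm} (equivalently Corollary \ref{cor:distribution point mass}) gives $\mu_{h_{i}}=\delta_{P_{i}}$ with $P_{i}\ne 0$, and $P_{i}>0$ since $h_{i}\ge 0$ forces $\mu_{h_i}$ to be supported on $[0,\infty)$. Applying the portmanteau theorem to the open set $(0,\infty)$, on which $\delta_{P_{i}}$ has full mass, yields $\liminf_{X}\mu_{h_{i},X}((0,\infty))\ge 1$, so $\{f_{i}\ge 1\}=h_{i}^{-1}((0,\infty))$ has density $1$. A finite intersection of density-$1$ sets has density $1$; this already proves the first assertion, and in general shows $\bigcap_{i\notin Y}\{f_{i}\ge 1\}$ has density $1$, so one may ignore the indices outside $Y$ when computing the density of $\mathcal{K}_{0}$.

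Now take $i\in Y$. In the admissible setting of the paper $G_{i}$ is a non-abelian central extension of $\mathbb{F}_{2}^{n_{i}}$ by $\mathbb{F}_{2}$, so $G_{i}\cong H_{i}\rtimes C_{2}$ with $\Phi(G_{i})=H_{i}^{2}$ of order $2$; thus $H_{i}$ is abelian of exponent exactly $4$. Any unramified $(G_{i},H_{i})$-extension $L/K$ is then abelian over $K$ with group $H_{i}$, lies in the Hilbert class field of $K$, and forces a surjection $Cl_{K}\twoheadrightarrow H_{i}\twoheadrightarrow C_{4}$; equivalently $|Cl_{K}[4]/Cl_{K}[2]|>1$. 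Hence $\{f_{i}\ge 1\}\subseteq\{K\mid |Cl_{K}[4]/Cl_{K}[2]|>1\}$, and by Fouvry--Kl\"uners (cf. Lemma \ref{lem:cl dist} and \cite{fk1}) the right-hand set has density $1-P_{CL}^{\pm}(0)$, so $\{f_{i}\ge 1\}$ has upper density $\le 1-P_{CL}^{\pm}(0)$. Conversely, by Theorem \ref{thm:main thm} the measure $\mu_{h_{i}}$ is a probability measure supported on the discrete set $\{(2^{j}-1)/q_{i}\}_{j\ge 0}$ with $\mu_{h_{i}}(\{0\})=P_{CL}^{\pm}(0)$, hence $\mu_{h_{i}}((0,\infty))=1-P_{CL}^{\pm}(0)$, and portmanteau again gives $\liminf_{X}\mu_{h_{i},X}((0,\infty))\ge 1-P_{CL}^{\pm}(0)$. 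The two bounds force $\{f_{i}\ge 1\}$ to have density exactly $1-P_{CL}^{\pm}(0)$; being contained in $\{|Cl_{K}[4]/Cl_{K}[2]|>1\}$, which has the same density, we conclude that $\{|Cl_{K}[4]/Cl_{K}[2]|>1\}\setminus\{f_{i}\ge 1\}$ has density $0$ for each $i\in Y$. Therefore
\[
\bigcap_{i\in Y}\{f_{i}\ge 1\}\ \supseteq\ \{|Cl_{K}[4]/Cl_{K}[2]|>1\}\ \setminus\ \bigcup_{i\in Y}\Bigl(\{|Cl_{K}[4]/Cl_{K}[2]|>1\}\setminus\{f_{i}\ge 1\}\Bigr),
\]
and the right-hand side has density $1-P_{CL}^{\pm}(0)$ (a finite union of null sets removed from a set of that density), while $\bigcap_{i\in Y}\{f_{i}\ge 1\}$ is also contained in any single $\{f_{i}\ge 1\}$, of density $1-P_{CL}^{\pm}(0)$. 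So it has density exactly $1-P_{CL}^{\pm}(0)$, and intersecting with the density-$1$ set from the previous paragraph leaves the density unchanged, which gives the second assertion.

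The main obstacle is that one may \emph{not} simply read off the density of $\{f_{i}=0\}$ as the atom $\mu_{h_{i}}(\{0\})=P_{CL}^{\pm}(0)$: as stressed in the introduction, $h_{i}$ takes values outside the support of $\mu_{h_{i}}$ on a set of positive density, so weak convergence only yields the one-sided bound $\limsup_{X}\mu_{h_{i},X}(\{0\})\le P_{CL}^{\pm}(0)$ (closedness of $\{0\}$), and dually only a \emph{lower} bound on the density of $\{f_{i}\ge 1\}$. The way around this is to obtain the matching \emph{upper} bound from a different source entirely---the purely group-theoretic inclusion $\{f_{i}\ge 1\}\subseteq\{|Cl_{K}[4]/Cl_{K}[2]|>1\}$---and then to invoke the \emph{exact} Fouvry--Kl\"uners $4$-rank density to see that the two bounds coincide. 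The points needing care are verifying that in the admissible central-extension setting $H_{i}$ really has exponent $4$ (so that $C_{4}$ is genuinely a quotient of $Cl_{K}$), checking that the convergence supplied by Theorem \ref{thm:main thm} is weak convergence of probability measures with no escape of mass, and setting aside the elementary-abelian case exactly as in Corollary \ref{cor:conj is true}.
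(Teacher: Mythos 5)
Your proof is correct, and it is noticeably more careful and more self-contained than the paper's own two-sentence argument. The paper handles the non-abelian indices the same way you do (point mass at a nonzero value $\Rightarrow$ density $1$), but for the abelian indices it only asserts that the $h_{i}$ are ``maximally correlated'' without elaboration, implicitly leaning on Theorem~\ref{thm:correlations} and the identity in Remark~\ref{rem:classgroup formula for f}. Your route is genuinely different at this point: instead of invoking the correlations, you derive the one-sided inclusion $\{f_{i}\ge 1\}\subseteq\{|Cl_{K}[4]/Cl_{K}[2]|>1\}$ from the purely group-theoretic fact that an admissible abelian $H_{i}$ (with $|\Phi(G_{i})|=|H_{i}^{2}|=2$) has exponent exactly $4$, so any everywhere-unramified $(G_{i},H_{i})$-extension forces $Cl_{K}\twoheadrightarrow C_{4}$; you then squeeze against the exact Fouvry--Kl\"uners $4$-rank density and the portmanteau lower bound. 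This buys two things the paper leaves implicit: (i) it makes transparent why all the abelian $h_{i}$ vanish on essentially the same set, with an elementary argument independent of the correlation machinery; and (ii) it confronts head-on the convergence issue you flag --- that $h_{i}$ takes values off the support of $\mu_{h_{i}}$ with positive density, so weak convergence alone gives only a lower bound on the density of $\{h_{i}>0\}$ --- and supplies the matching upper bound from a different source. The paper's argument, as written, does not address this point. Your treatment of the intersection (finite unions of null sets removed from a set of the target density, then intersecting with density-$1$ sets) is also spelled out correctly, whereas the paper leaves it to the reader.
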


\begin{proof}
Let $f_{i}$ and $c_{i}$ be the counting function and normalizing
constant corresponding to $\left(G_{i},H_{i},T_{0,i}\right)$. Since
in the first case for each $i$ the function $h_{i}=f_{i}/c_{i}^{\omega}$
is distributed as a point mass at some non-zero value this implies
that $h_{i}$ evaluates to be non-zero at one hundred percent of quadratic
fields. Clearly the field $K^{un}$ contains the desired compositum
if $\prod_{i=1}^{k}h_{i}$ is non-zero at $K$.

On the other hand if $H_{i}$ is abelian for at least one $i$ then
$\prod_{i=1}^{k}h_{i}$ will be zero with density $P_{CL}\left(0\right)$
(since for such $i$ the functions $h_{i}$ are maximally correlated).
\end{proof}

\section{\label{sec:Additional remarks}Additional remarks}

\subsection{\label{subsec:Malle-Bhargava-heuristics}Malle-Bhargava heuristics}

As further evidence for the refined Conjecture \ref{conj:Let--be-1}
we obtain an asymptotic for $\sum_{K,0<\pm D_{K}<X}f_{T}\left(K\right)$
by giving a minor modification of an argument of Wood \cite{Wood}
who computed an asymptotic for $\sum_{K,0<\pm D_{K}<X}f\left(K\right)$.
The argument employs the field counting heuristics put forth by Bhargava
\cite{Bhargavamassformula}. In the setting of counting $G$ extensions
$L/\mathbb{Q}$ with $D_{L}<X$ the heuristics say that the asymptotic
is determined by the product of the local masses at each prime $p$:
\[
m_{p}=\frac{1}{\left|G\right|}\sum_{\phi\in S_{\mathbb{Q}_{p},G}}\frac{1}{p^{c\left(\phi\right)}}
\]
 where $S_{\mathbb{Q}_{p},G}$ is the set of homomorphisms $\mathrm{Gal}\left(\overline{\mathbb{Q}_{p}}/\mathbb{Q}_{p}\right)\longrightarrow G$,
and $c\left(\phi\right)$ is a function giving the power of $p$ in
$D_{L_{p}}$ where $L_{p}/\mathbb{Q}_{p}$ is the extension corresponding
to $\phi$.

Since we are interested only in fields counted by $f_{T}$ we modify
the above local mass as follows. The sum is restricted to morphisms
which map the absolute inertia group $I_{p}\subset\mathrm{Gal}\left(\overline{\mathbb{Q}_{p}}/\mathbb{Q}_{p}\right)$
to an element $x$ which projects to an element of $T$ in $G/\Phi\left(G\right)$.
Furthermore since we are counting extensions $L/K/\mathbb{Q}$ with
$D_{K}<X$ (rather than $D_{L}<X$) we let $c\left(\phi\right)$ be
the function giving the power of $p$ in $D_{K_{p}}$ where $K_{p}$
is the ramified quadratic subfield of $L_{p}/\mathbb{Q}_{p}$ the
field corresponding to $\phi$.

With these considerations, for each odd prime $p$ the modified local
mass is easily shown to be
\begin{align}
m_{p}^{'} & =\frac{1}{\left|G\right|}\sum_{c}\sum_{x\in c}\sum_{\left\langle x\right\rangle \le D\le G}\left|\mathrm{Aut}{}_{\left\langle x\right\rangle }\left(D\right)\right|\left|\left\{ F/\mathbb{Q}_{p}\mid\mathrm{Gal}\left(F/\mathbb{Q}_{p}\right)\cong D\right\} \right|\cdot p^{-1}\nonumber \\
 & +\sum_{D\le G}\left|\mathrm{Aut}{}_{\left\langle x\right\rangle }\left(D\right)\right|\left|\left\{ F/\mathbb{Q}_{p}\mid\mathrm{Gal}\left(F/\mathbb{Q}_{p}\right)\cong D\right\} \right|\label{eq:localmass}
\end{align}
 where the first sum is over conjugacy classes of order 2 elements
lifting $T$ and the third sum is over possible decomposition groups
with inertia group $\left\langle x\right\rangle $ and the sum in
the second term is over cyclic subgroups, since this term corresponds
to the unramified extensions.
\begin{prop}
With the above notation
\[
m_{p}^{'}=1+\frac{c_{T}}{p}.
\]
\end{prop}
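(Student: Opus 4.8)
The plan is to rewrite the modified local mass $m_p'$ as a weighted count of homomorphisms $\phi\colon\mathrm{Gal}(\overline{\mathbb{Q}_p}/\mathbb{Q}_p)\to G$ and then evaluate the unramified and ramified $\phi$ separately. First I would note the standard dictionary: for subgroups $I\le D\le G$, a surjection $\phi$ onto $D$ with $\phi(I_p)=I$ is the same datum as a field $F/\mathbb{Q}_p$ with $\mathrm{Gal}(F/\mathbb{Q}_p)\cong D$ whose inertia subgroup maps to $I$, together with a choice of such isomorphism; two such isomorphisms differ by an automorphism of $D$ preserving $I$, so the set of isomorphisms is a torsor under the group $\mathrm{Aut}_I(D)$ occurring in (\ref{eq:localmass}). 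Hence $\#\{\phi\colon\text{surjection onto }D,\ \phi(I_p)=I\}=|\mathrm{Aut}_I(D)|\cdot\#\{F/\mathbb{Q}_p:\mathrm{Gal}(F/\mathbb{Q}_p)\cong D,\ \mathrm{inertia}=I\}$. Summing over $D$ and over $I\in\{1\}\cup\{\langle x\rangle: x^2=1,\ \bar x\in T\}$, and using that $c(\phi)=0$ when $\phi(I_p)=1$ (the relevant quadratic subfield of $L_p/\mathbb{Q}_p$ is unramified) while $c(\phi)=1$ when $\phi(I_p)=\langle x\rangle\neq 1$ (since $\bar x\notin\overline H$ the decomposition group is not contained in $H$, so the relevant quadratic subfield is ramified, and at the odd prime $p$ a ramified quadratic extension has discriminant of valuation $1$), the formula (\ref{eq:localmass}) becomes
\[
m_p'=\frac{1}{|G|}\#\{\phi:\phi(I_p)=1\}+\frac{1}{|G|p}\sum_{\substack{x\in G,\ x^2=1\\ \bar x\in T}}\#\{\phi:\phi(I_p)=\langle x\rangle\}.
\]

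For the unramified term, a homomorphism with $\phi(I_p)=1$ factors through the unramified quotient $\mathrm{Gal}(\overline{\mathbb{Q}_p}/\mathbb{Q}_p)\twoheadrightarrow\widehat{\mathbb{Z}}$ and is determined by the image of Frobenius, so there are exactly $|G|$ of them and this term contributes $1$. For the ramified term, fix $x\in G$ with $x^2=1$. Since $p$ is odd, every homomorphism from $\mathrm{Gal}(\overline{\mathbb{Q}_p}/\mathbb{Q}_p)$ to the $2$-group $G$ factors through the tame quotient $\langle\tau,\sigma\mid\sigma\tau\sigma^{-1}=\tau^p\rangle$, and $\phi(I_p)=\langle x\rangle$ forces $\phi(\tau)=x$; the defining relation then reads $\phi(\sigma)x\phi(\sigma)^{-1}=x^p=x$, using $x^p=x$ because $x$ has order $2$ and $p$ is odd, so $\phi(\sigma)$ may be any element of the centralizer $C_G(x)$ and there are exactly $|C_G(x)|$ such $\phi$. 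The set of order-$2$ elements $x$ with $\bar x\in T$ is a union of $G$-conjugacy classes (conjugation fixes the image in $G/\Phi(G)$), namely the $c_T$ classes of lifts of $T$; for each such class $c$ orbit--stabilizer gives $\sum_{x\in c}|C_G(x)|=|c|\cdot|C_G(x)|=|G|$, so $\sum_{x}|C_G(x)|=c_T|G|$ and the ramified term contributes $\frac{1}{|G|p}\cdot c_T|G|=c_T/p$. Adding the two contributions yields $m_p'=1+c_T/p$.

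The step requiring the most care is the first one: matching the $\mathrm{Aut}_{\langle x\rangle}(D)$-weighted field counts of (\ref{eq:localmass}) with homomorphism counts, and verifying that $c(\phi)$ is genuinely constant on each stratum at the odd prime $p$ (equal to $0$ on the unramified stratum, to $1$ on each ramified stratum, which in turn uses $\bar x\notin\overline H$ to know the relevant quadratic subfield is ramified). Once this reformulation is established, the remainder is the elementary group-theoretic computation above, and the hypothesis that $p$ is odd enters only through $x^p=x$ and the tameness of all extensions involved.
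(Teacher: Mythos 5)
Your proof is correct, but it takes a genuinely different route from the paper's. The paper works from the field-counting form of the local mass \eqref{eq:localmass}: it observes that the possible decomposition groups must be of the shape $C_2\times C_{2^m}$, writes the contribution from each conjugacy class $c$ and each $m$ as a quantity $r_p(c,m)$ involving $|C_G(x)[2^m]|$, and then performs a telescoping sum over $m$ plus orbit--stabilizer to arrive at $c_T$. You instead go back to the homomorphism-counting form of the Malle--Bhargava mass (which the paper starts from but immediately converts away from) and evaluate it directly: the unramified stratum gives $|G|$ choices of Frobenius image, and on each ramified stratum $\phi(I_p)=\langle x\rangle$ the tame relation $\sigma\tau\sigma^{-1}=\tau^p$ together with $x^p=x$ reduces the count to $|C_G(x)|$, after which orbit--stabilizer immediately gives $c_T|G|$. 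This avoids the $C_2\times C_{2^m}$ casework and the telescoping entirely. Your argument is cleaner and more self-contained; the paper's version has the advantage that it matches the form of \eqref{eq:localmass} term by term, which is the form actually displayed in the text, but your direct count is what \eqref{eq:localmass} was abbreviating in the first place. One small point worth spelling out when you invoke the dictionary between field counts weighted by $|\mathrm{Aut}_{\langle x\rangle}(D)|$ and homomorphism counts: the Malle--Bhargava sum is over \emph{all} homomorphisms, not just surjections, so you should note that summing your surjection counts over all subgroups $D$ with prescribed inertia recovers the full homomorphism count on that stratum -- which you do implicitly, but it is the crux of matching the two formulations.
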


\begin{proof}
Note that since $D/\left\langle x\right\rangle $ has to be cyclic
and $x$ cannot be a square in $G$ (if $x$ is a square then $x\in\Phi\left(G\right)$)
$D$ has to be of the form $C_{2}\times C_{2^{m}}$ where $m\ge0$.

For any $c$ and $m$ let 
\[
r_{p}\left(c,m\right)=\sum_{x\in c}\sum_{\substack{\left\langle x\right\rangle \le D\le G,\\
D\cong C_{2}\times C_{2^{m}}
}
}\left|\mathrm{Aut}{}_{\left\langle x\right\rangle }\left(D\right)\right|\left|\left\{ F/\mathbb{Q}_{p}\mid\mathrm{Gal}\left(F/\mathbb{Q}_{p}\right)\cong D\right\} \right|
\]
 so that by switching summations the first term in $\left(\ref{eq:localmass}\right)$
becomes
\[
\frac{1}{\left|G\right|}\sum_{c}\sum_{m\ge0}r_{p}\cdot p^{-1}.
\]

We now compute the values of $r_{p}\left(c,m\right)$. Note that $\left|\mathrm{Aut}{}_{\left\langle x\right\rangle }\left(D\right)\right|=2^{m}$.
Also, for any odd prime there is exactly 1 ramified extension $F/\mathbb{Q}_{p}$
with $\mathrm{Gal}\left(F/\mathbb{Q}_{p}\right)\cong C_{2}\times C_{2^{m}}$
for $m>0$ and exactly 2 ramified extensions for $m=0$. 

If $D\cong C_{2}$ (that is $m=0$) then
\begin{align*}
r_{p}\left(c,0\right) & =\sum_{x\in c}2\\
 & =2\left|c\right|.
\end{align*}
 Now suppose $m\ge1$. Then
\[
r_{p}\left(c,m\right)=2^{m}\sum_{x\in c}\sum_{\substack{\left\langle x\right\rangle \le D\le G,\\
D\cong C_{2}\times C_{2^{m}}
}
}1
\]
 and
\[
\sum_{\substack{\left\langle x\right\rangle \le D\le G,\\
D\cong C_{2}\times C_{2^{m}}
}
}1=\frac{\left|C_{G}\left(x\right)\left[2^{m}\right]\right|-\left|C_{G}\left(x\right)\left[2^{m-1}\right]\right|-\delta\left(m\right)}{\left|\mathrm{Aut}{}_{\left\langle x\right\rangle }\left(D\right)\right|}
\]
 where $\delta\left(m\right)=1$ if $m=1$ and 0 otherwise. We subtract
$\delta\left(m\right)$ to account for the element $x\in C_{G}\left(x\right)\left[2\right]$
which does not contribute to the count of $D\cong C_{2}\times C_{2}$
in the case $m=1$.

Thus for any $m\ge1$ we get 
\begin{align*}
r_{p}\left(c,m\right) & =\sum_{x\in c}\left|C_{G}\left(x\right)\left[2^{m}\right]\right|-\left|C_{G}\left(x\right)\left[2^{m-1}\right]\right|\\
 & =\left|c\right|\left(\left|C_{G}\left(x\right)\left[2^{m}\right]\right|-\left|C_{G}\left(x\right)\left[2^{m-1}\right]-\delta\left(m\right)\right|\right).
\end{align*}
Noting that $\left|C_{G}\left(x\right)\left[2^{m}\right]\right|=\left|C_{G}\left(x\right)\right|$
for large enough $m$ and $\left|C_{G}\left(x\right)\left[2^{0}\right]\right|+\delta\left(1\right)=2$
we get 
\begin{align*}
\frac{1}{\left|G\right|}\sum_{c}\sum_{m\ge0}r_{p}\left(c,m\right) & =\frac{1}{\left|G\right|}\sum_{c}\left|c\right|\left|C_{G}\left(x\right)\right|\\
 & =\sum_{c}1\\
 & =c_{T}.
\end{align*}
In the second equality we are using the orbit stabilizer theorem which
says $\left|G\right|=\left|c\right|\left|C_{G}\left(x\right)\right|$. 

By a similar argument one can show that 
\[
\sum_{D\le G}\left|\mathrm{Aut}{}_{\left\langle x\right\rangle }\left(D\right)\right|\left|\left\{ F/\mathbb{Q}_{p}\mid\mathrm{Gal}\left(F/\mathbb{Q}_{p}\right)\cong D\right\} \right|=\left|G\right|
\]
 where the sum is over all cyclic subgroups (recall this term corresponds
to the unramified extensions).

Thus the local mass at $p$ is 
\[
1+\frac{c_{T}}{p}.
\]
 This translates to an asymptotic of $X\left(\log X\right)^{c_{T}-1}$
for the global count, in agreement with Conjecture \ref{conj:Let--be-1}.
\end{proof}

\subsection{\label{subsec:Examples}Examples}

We list all the examples of admissible pairs $\left(G,H\right)$ along
with their graphs and asymptotics for $n=2$ and 3.

When $n=2$ the only example is $\left(G,H\right)=\left(D_{4},C_{4}\right)$
and $T_{0}=\left\{ t_{1},t_{2}\right\} $ which form an edge in $\mathcal{G}\left(T_{0}\right)$.
Hence $c=2.$

The following example shows that $f_{\left(G,H\right)}$ can have
different asymptotics (that is different values $c_{T_{0}}$) for
different subgroups $H$ of the same group $G$.

Let $n=3$. There are two possibilities for $G$ which is an admissible
central extension of $\mathbb{F}_{2}^{3}$ by $\mathbb{F}_{2}$.

Let $G$ be the central product of $D_{4}$ and $C_{4}$ (the quotient
of $D_{4}\oplus C_{4}$ obtained by identifying their Frattini subgroups).
It is isomorphic to both $Q_{8}\rtimes C_{2}$ and $\left(C_{4}\times C_{2}\right)\rtimes C_{2}$
with the appropriate actions.

If we let $H=Q_{8}$ then $T_{0}=\left\{ t_{1},t_{2},t_{3}\right\} $
such that $\mathcal{G}\left(T_{0}\right)$ is complete on its vertices.
Hence $c=3$.

If we let $H=C_{4}\times C_{2}$ then $T_{0}=\left\{ t_{1},t_{2},t_{3},t_{1}t_{2}t_{3}\right\} $
such that $\mathcal{G}\left(T_{0}\right)$ is complete bipartite with
partitions $W_{1}=\left\{ t_{1},t_{2}\right\} $ and $W_{2}=\left\{ t_{3},t_{1}t_{2}t_{3}\right\} $.
Hence $c=4$.

Let $G=D_{4}\times C_{2}$. The only possibility is $H=D_{4}$ in
which case $T_{0}=\left\{ t_{1},t_{2},t_{3}\right\} $ such that $\left(t_{1},t_{2}\right)\in\mathcal{G}\left(T_{0}\right)$
and $t_{3}$ is disconnected. Hence $c=4$.

\subsection{\label{subsec:Comparison-with-Fouvry-Klners}Comparison with Fouvry-Kl\"uners}

In the case $\left(G,H\right)=\left(D_{4},C_{4}\right)$ the formula
for our counting function $f$ in Theorem \ref{thm:f_T_formula} closely
resembles the expression computed by Fouvry and Kl\"uners for $\left|Cl_{K}\left[4\right]/Cl_{K}\left[2\right]\right|$.
They showed that the function $h\left(d\right)=\left|Cl_{K}\left[4\right]/Cl_{K}\left[2\right]\right|$
has distribution $\mu_{h}\left(2^{i}\right)=P_{CL}^{\pm}\left(i\right)$
for all $i\in\mathbb{Z}_{\ge0}$. We make explicit the relationship.

In this case the only admissible generating set $T_{0}$ is the maximal
one consisting of two elements, hence $f=f_{T_{0}}$ and $c_{T_{0}}=2$.
Hence (using the notation of Section \ref{subsec:The-complete-bipartite})
we have

\begin{align*}
g_{\mathcal{D}_{T_{0}}}\left(d\right) & =g_{\widehat{\mathcal{D}}_{T_{0}}}\left(d\right)-\sum_{T\subsetneqq T_{0}}g_{\mathcal{D}_{T}}\left(d\right)\\
 & =g_{\widehat{\mathcal{D}}_{T_{0}}}\left(d\right)-2\cdot2^{\omega\left(d\right)}
\end{align*}
 where the sum in the first line is over all strict subsets of $T_{0}$,
including non-generating sets. Furthermore in this case
\begin{align*}
g_{\mathcal{D}_{T_{0}}}\left(d\right) & =2^{n}\left|\mathrm{Aut}_{H}(G/\Phi\left(G\right))\right|f_{T_{0}}\left(d\right)\\
 & =8f_{T_{0}}\left(d\right).
\end{align*}
By Lemma \ref{lem:g in terms of Cl2} 
\[
\frac{g_{\widehat{\mathcal{D}}_{T_{0}}}\left(d\right)}{2^{\omega\left(d\right)}}=2\left|Cl_{K}\left[4\right]/Cl_{K}\left[2\right]\right|
\]
 so by the above it follows that 
\begin{align*}
\frac{f\left(d\right)}{2^{\omega\left(d\right)}} & =\frac{1}{8}\left(2\left|Cl_{K}\left[4\right]/Cl_{K}\left[2\right]\right|-2\right)\\
 & =\frac{\left|Cl_{K}\left[4\right]/Cl_{K}\left[2\right]\right|-1}{4}.
\end{align*}
 Thus in this case the function $f$ is supported exactly on the same
set as its distribution $\mu_{f/2^{\omega}}$ which by Theorem \ref{thm:dist complete bipartite}
(or directly from the above result of Fouvry-Kl\"uners) is
\[
\mu_{h}\left(\left(2^{i}-1\right)/4\right)=P_{CL}^{\pm}\left(i\right)
\]
 for all $i\in\mathbb{Z}_{\ge0}$ (we reiterate that for any pair
$\left(G,H\right)\neq\left(D_{4},C_{4}\right)$ $f$ will be supported
away from this set with positive density).

\bibliographystyle{alpha}
\bibliography{2groups}

\textsc{~}

\textsc{\small{}Department of Mathematics and Statistics, University
of Calgary, Canada}{\small\par}

{\small{}$\mathtt{jack.klys@ucalgary.ca}$}{\small\par}
\end{document}